\documentclass[reqno]{amsart}

\usepackage{amsmath, amssymb}
\usepackage{amsthm, stmaryrd}
\usepackage{mathrsfs}

\usepackage[top=40truemm,bottom=40truemm,left=30truemm,right=30truemm]{geometry}

\usepackage{hyperref}

\usepackage{url}

\makeatletter
	\@addtoreset{equation}{section}

\theoremstyle{plain}
\newtheorem{theorem}{Theorem}[section]
\newtheorem{proposition}[theorem]{Proposition}
\newtheorem{lemma}[theorem]{Lemma}
\newtheorem{corollary}[theorem]{Corollary}

\theoremstyle{definition}
\newtheorem{definition}[theorem]{Definition}
\newtheorem{remark}[theorem]{Remark}
\newtheorem{example}{Example}[section]

\allowdisplaybreaks[4]

\date{}

\author{Takumu Ooi}
\address{Department of Mathematics,
Faculty of Science and Technology, Tokyo University of Science,
Noda, Chiba, 278-8510, Japan}
\email{ooitaku@rs.tus.ac.jp}

\author{Motohiro Sobajima}
\address{Department of Mathematics,
Faculty of Science and Technology, Tokyo University of Science,
Noda, Chiba, 278-8510, Japan}
\email{msobajima1984@gmail.com}

\title[Subcriticality of subordinated Schr\"{o}dinger op. and wave equations]{Subcriticality of subordinated Schr\"{o}dinger operators and their application to wave equations}

\begin{document}
\maketitle

\begin{abstract}
We provide a probabilistic characterization of criticality, subcriticality, and supercriticality for subordinated Schr\"{o}dinger operators. We also investigate the relationship between the subcriticality of these operators and the uniform boundedness of solutions to the associated wave equation.
\end{abstract}

\section{Introduction}
For a non-positive self-adjoint operator \(\mathcal{L}\) corresponding to a symmetric Hunt process on a locally compact separable metric space \(E\), a signed smooth Radon measure \(\mu\), and a Bernstein function \(\Phi\), we consider subordinated Schr\"{o}dinger operators \(\Phi(-\mathcal{L}+\mu)\) on \(E\). A symmetric Hunt process is a type of Markov process whose infinitesimal generator is a symmetric self-adjoint operator, which is not necessarily local. A Bernstein function is the Laplace exponent of a non-negative increasing L\'{e}vy process. For example, in the case \(\mathcal{L}:=\Delta\) on \(\mathbb{R}^d\), \(\mu:=Vdx\) for a continuous function \(V\), and \(\Phi(\lambda):=\lambda^\beta\) with \(0<\beta<1\), we consider \(\Phi(-\mathcal{L}+\mu)=(-\Delta + V)^\beta\). In previous work by the second author \cite{S26}, although subcriticality is not explicitly defined, subcriticality for \((-\Delta + V)^\beta\) is investigated via the range of \((-\Delta + V)^\beta\), and this approach is applied to establish the boundedness of solutions to the wave equation associated with \((-\Delta + V)^\beta\).

To the best of the authors' knowledge, criticality, subcriticality, and supercriticality for subordinated Schr\"{o}dinger operators \(\Phi(-\mathcal{L}+\mu)\) have not been explicitly defined when \(\mu\) has a nontrivial negative part and \(\Phi\) is not the identity. In this paper, we investigate a probabilistic characterization of criticality, subcriticality, and supercriticality for subordinated Schr\"{o}dinger operators \(\Phi(-\mathcal{L}+\mu)\) on a locally compact separable metric space \(E\) using Dirichlet form theory, which provides a general framework for Markov processes.

Let \((\mathcal{E}, \mathcal{D}(\mathcal{E}))\) be an irreducible regular Dirichlet form on \(L^2(E;m)\), and let \(\mathcal{L}\) be its associated self-adjoint operator. We take a signed smooth Radon measure \(\mu\) on \(E\) such that \((\mathcal{E}^\mu, \mathcal{D}(\mathcal{E}) \cap C_c(E))\) is non-negative and closable, where
\[
\mathcal{E}^\mu(f,g):=\mathcal{E}(f,g)-\int_E fg \, d\mu := \mathcal{E}(f,g)-\int_E fg \, d\mu^+ + \int_E fg \, d\mu^- 
\]
for a positive (resp. negative) part \(\mu^+\) (resp. \(\mu^-\)) for \(\mu\).
We define a Schr\"{o}dinger form \((\mathcal{E}^\mu, \mathcal{D}(\mathcal{E}^\mu))\) on \(L^2(E;m)\) by letting \(\mathcal{D}(\mathcal{E}^\mu)\) be the closure of \(\mathcal{D}(\mathcal{E}) \cap C_c(E)\) with respect to the norm induced by \(\mathcal{E}^\mu_1(f,g):=\mathcal{E}^\mu(f,g)+\langle f, g \rangle_m\), where \(\langle \cdot, \cdot \rangle_m\) denotes the inner product on \(L^2(E;m)\). This form \((\mathcal{E}^\mu, \mathcal{D}(\mathcal{E}^\mu))\) corresponds to \(\mathcal{L}-\mu\), but it does not necessarily correspond to a stochastic process. Takeda \cite{T14} characterized subcriticality (resp.\ criticality) for \(\mathcal{L}-\mu\) as transience (resp.\ recurrence) of the Markov process associated with Doob's \(h\)-transform of \((\mathcal{E}^\mu, \mathcal{D}(\mathcal{E}^\mu))\) for some (equivalently, any) admissible superharmonic function \(h\). He also proved that this probabilistic definition is consistent with the spectral analytic definition.

As in the case of a Schr\"{o}dinger operator, a subordinated Schr\"{o}dinger operator \(\Phi(-\mathcal{L}+\mu)\) does not necessarily correspond to a stochastic process nor Schr\"{o}dinger form in general. Therefore, we introduce a subordinated Schr\"{o}dinger form \((\mathcal{E}^{\mu,\Phi}, \mathcal{D}(\mathcal{E}^{\mu,\Phi}))\) associated with \(\Phi(-\mathcal{L}+\mu)\) using a method similar to the subordination of Dirichlet forms \cite{O02}. Under an additional assumption on \(\Phi\) related to irreducibility, we define \(\Phi(-\mathcal{L}+\mu)\) to be subcritical (resp.\ critical) if there exists an admissible superharmonic function \(h\) such that Doob's \(h\)-transform of \((\mathcal{E}^{\mu,\Phi}, \mathcal{D}(\mathcal{E}^{\mu,\Phi}))\) is transient (resp.\ recurrent). This definition is independent of the choice of \(h\), that is, if the \(h\)-transform is transient (resp.\ recurrent) for some admissible function \(h\), then it is also transient (resp.\ recurrent) for any admissible \(h\) (Theorems \ref{subodinated_subcri}, \ref{subodinated_cri}, \ref{subodinated_supercri}).

We also study the preservation of criticality and subcriticality. If a Schr\"{o}dinger operator \(\mathcal{L}-\mu\) is subcritical, then the subordinated Schr\"{o}dinger operator \(\Phi(-\mathcal{L}+\mu)\) is also subcritical (Corollary \ref{subcrinomama}). Under an additional assumption that suppresses large jumps of the subordinator, criticality is preserved under subordination (Proposition \ref{cri_inv}). We also present examples where a Schr\"{o}dinger operator is critical, but its subordinated operators are critical for some Bernstein functions \(\Phi\) and subcritical for others (Section \ref{sec:Example}). Moreover, we obtain equivalent conditions for subcriticality by determining the range of \(\sqrt{-\Phi(-\mathcal{L}+\mu)}\) (Theorems \ref{thm_range}, \ref{extSoba}, \ref{extSoba2} and Corollaries \ref{cor_extSoba1}, \ref{cor_extSoba2}), which generalize \cite[Theorem 1.1]{S26}.

We further derive properties of solutions to the following wave equation associated with \(\Phi(-\mathcal{L}+\mu)\):
\begin{eqnarray}
\begin{cases}
\frac{\partial^2}{\partial t^2} w(x,t) = 
-\Phi(-\mathcal{L}+\mu) w(x,t) \quad \text{for } (x,t)\in E\times (0,\infty),\\
\frac{\partial}{\partial t} w(x,0) = g(x)\in C_c \quad \text{for } x\in E,\\
w(x,0) = 0 \quad \text{for } x\in E.
\end{cases}
\label{eq:intro}
\end{eqnarray}

Under suitable conditions, if \(\Phi(-\mathcal{L}+\mu)\) is subcritical, then the solution to \eqref{eq:intro} is uniformly bounded in \(L^2(E;m)\) (Theorems \ref{wave_bdd}, \ref{wave_bdd_SF}). Conversely, for the specific class of subordinators given by \(\Phi(\lambda)=\lambda^\beta\), if every solution to the wave equation \eqref{eq:intro} with \(\Phi(-\mathcal{L}+\mu)\) replaced by \(-\mathcal{L}+\mu\) is uniformly bounded, then \((-\mathcal{L}+\mu)^\beta\) is subcritical (Theorem \ref{oppose_wave_bdd}). These relationships were established in \cite{S26} for \(\mathcal{L}=\Delta\) and an absolutely continuous signed measure \(\mu\). In this paper, we generalize these results and provide a rigorous framework for the relationship between subcriticality and wave equations.

The organization of this paper is as follows. In Section \ref{sec:Takeda}, we review previous work \cite{T14, TU23} on probabilistic characterizations of criticality for Schr\"{o}dinger operators using Dirichlet form theory, which provides a general framework for Markov processes. In Section \ref{sec:sub_shr}, we introduce definitions of subcriticality and criticality for subordinated Schr\"{o}dinger operators and examine whether subordination preserves these properties. Section \ref{sec:wave} presents applications to the relationship between subcriticality and the uniform boundedness of solutions to wave equations. In Section \ref{sec:Example}, we provide examples, including classical Hardy inequalities, trace Hardy inequalities on Euclidean spaces, spaces with varying dimension, and fractal spaces. Appendix \ref{Appendix} contains preliminaries on Dirichlet form theory and Markov processes.

Throughout this paper, we use the notation \(a\wedge b := \min\{a,b\}\) and \(a\vee b := \max\{a,b\}\) for \(a,b\in \mathbb{R}\).

\section{Criticality and subcriticality of a Schr\"{o}dinger form}\label{sec:Takeda}
Throughout this paper, we assume that \((\mathcal{E}, \mathcal{D}(\mathcal{E}))\) is an irreducible regular Dirichlet form on \(L^2(E;m)\). More precisely, \(E\) is a locally compact separable metric space, and \(m\) is a positive Radon measure with full support. Moreover \((\mathcal{E}, \mathcal{D}(\mathcal{E}))\) is a non-negative symmetric closed bilinear form satisfying the Markov property, and  \(\mathcal{D}(\mathcal{E}) \cap C_c(E)\) is dense in \(\mathcal{D}(\mathcal{E})\) with respect to \(\sqrt{\mathcal{E}_1}\) and dense in \(C_c(E)\) with respect to \(\|\cdot\|_\infty\). Here and throughout this paper, denote by \(C_c(E)\) the space of continuous functions with compact support, which is equipped with a sup norm \(||\cdot||_{\infty}\), where \(\mathcal{E}_1(f,g):=\mathcal{E}(f,g)+\langle f,g\rangle_m\). Then, there exists a strong Markov process on \(E\), called a Hunt process associated with \((\mathcal{E}, \mathcal{D}(\mathcal{E}))\). Denote by \(\{T_t\}_{t>0}\) the associated strongly continuous contraction semigroup on \(L^2(E;m)\) and \(-\mathcal{L}\) the non-negative definite self-adjoint operator. See Appendix \ref{Appendix} for details on Dirichlet form theory.

We will define subcriticality, criticality and supercriticality of subordinations of Schr\"{o}dinger operators such as \((-\mathcal{L}+\mu)^\alpha\) for a signed measure \(\mu\) from a perspective of probability theory in Section \ref{sec:sub_shr}. We will characterize them similarly to the characterization of Schr\"{o}dinger operators, such as \(-\mathcal{L}+\mu\). Therefore, in this section, we summarize previous works, mainly those of Takeda and Uemura \cite{T14, TU23}.

We consider Schr\"{o}dinger operators perturbed by the following type of singular measures. See Appendix \ref{Appendix} for the definition of a nest and an \(\mathcal{E}\)-polar set.
\begin{definition}[{\cite[Definition 2.3.13]{CF12}}]
A positive Borel measure \(\mu\) on \(E\) is a \textit{smooth measure} if \(\mu\) charges no \(\mathcal{E}\)-polar set and there exists a nest \(\{F_k\}_k\) such that \(\mu(F_k)<\infty\) for every \(k\geq 1.\) Denote by \(\mathcal{S}\) the family of all smooth measures. For subclasses \(\mathcal{T}_1\) and \(\mathcal{T}_2\) of \(\mathcal{S}\), we denote by \(\mathcal{T}_1-\mathcal{T}_2\) the class of all signed smooth measures \(\mu=\mu^+-\mu^-\) for \(\mu^+ \in \mathcal{T}_1\) and \(\mu^- \in \mathcal{T}_2\). 
\end{definition}
For example, \(|f|dm\) is a smooth measure whenever \(f\in L^1_{loc}(E;m)\). We remark that there exist both singular smooth measures and smooth measures that are not Radon. See Appendix \ref{Appendix_PCAF} for details. See also \cite{AM92, OTU25+} for nowhere Radon measures.

Let \(\mathcal{S}_R\) be the set of all smooth Radon measures. For a signed smooth Radon measure \(\mu:=\mu^+-\mu^-\in \mathcal{S}_R-\mathcal{S}_R\), we define a symmetric form \((\mathcal{E}^{\mu}, \mathcal{D}(\mathcal{E})\cap C_c(E))\) on \(L^2(E;m)\) by
\begin{eqnarray*}
\mathcal{E}^{\mu}(f,g)&:=& \mathcal{E}(f,g)+ \int_E f\,g\,d\mu\  := \ \mathcal{E}(f,g)+ \int_E f\,g\,d\mu^+ -  \int_E f\,g\,d\mu^-,\ \ f,g\in \mathcal{D}(\mathcal{E}) \cap C_c(E).
\end{eqnarray*}

We assume that \((\mathcal{E}^{\mu}, \mathcal{D}(\mathcal{E})\cap C_c(E))\) is non-negative definite, that is, \(\mathcal{E}^\mu(f,f) \geq 0\) for any \(f \in \mathcal{D}(\mathcal{E})\cap C_c(E)\), and closable, that is, if \(f_n \in \mathcal{D}(\mathcal{E})\cap C_c(E)\) satisfies \(\mathcal{E}^\mu_1(f_n-f_m, f_n-f_m) \to 0\) and \(\langle f_n, f_n\rangle_m \to 0\) then \(\mathcal{E}^\mu(f_n, f_n)\to 0.\) Here and throughout this paper, we denote by \(\langle \cdot, \cdot \rangle_m\) an inner product of \(L^2(E;m)\) and \(\|\cdot \|_m := \|\cdot \|_{L^2(E;m)}\) is an \(L^2(E;m)\)-norm. We set \(\mathcal{E}^{\mu}_\alpha(f,g):=\mathcal{E}^{\mu}(f,g) + \alpha\langle f,g\rangle_m\).

Denote by \((\mathcal{E}^{\mu}, \mathcal{D}(\mathcal{E}^\mu))\) the closure of \((\mathcal{E}^{\mu}, \mathcal{D}(\mathcal{E})\cap C_c(E))\) and we call this closure a Schr\"{o}dinger form. By the closedness, \(\mathcal{E}^\mu(f,f) \geq 0\) holds for any \(f \in \mathcal{D}(\mathcal{E}^\mu)\). By \cite[Lemma 1.3.4]{D89}, for any \(f\in \mathcal{D}(\mathcal{E})\cap C_c(E)\), it holds that \(|f| \in \mathcal{D}(\mathcal{E}^\mu)\) and \(\mathcal{E}^\mu(|f|,|f|)\leq \mathcal{E}^\mu(f,f)\). Since \((\mathcal{E}^{\mu}, \mathcal{D}(\mathcal{E}^\mu))\) is closed non-negative definite and symmetric, by \cite[Theorem 1.5]{O13}, there exists a strongly continuous contraction semigroup \(\{T_t^\mu\}_t\) on \(L^2(E;m)\) such that \(\mathcal{E}_\alpha^\mu (G_\alpha^\mu f,g)=\langle f, g\rangle_m\) for any \(f\in L^2(E; m)\) and \(g\in \mathcal{D}(\mathcal{E}^\mu)\), where \(G_\alpha^\mu f:=\int_0^\infty e^{-\alpha t} T_t^\mu f\,dt\). Moreover, \(\mathcal{L}^{\mu}= \mathcal{L}-\mu\) is a (non-positive) self-adjoint operator satisfying \(\mathcal{E}^\mu(f,f)=\|\sqrt{-\mathcal{L}^\mu}f\|_m^2\), \(\mathcal{D}(\mathcal{E}^\mu)=\mathcal{D}(\sqrt{-\mathcal{L}^\mu})\) and \(T_t^\mu =e^{\mathcal{L}^\mu t}\). Let \(X\) be an \(m\)-symmetric Hunt process on \(E\)  associated with a regular Dirichlet form \((\mathcal{E}, \mathcal{D}(\mathcal{E}))\) and let \(A^{\mu^+}\) (resp. \(A^{\mu^-}\)) be a PCAF corresponding to \(\mu^+\) (resp. \(\mu^-\)). See Appendix \ref{Appendix} for the definition of PCAFs and the relation between smooth measures and PCAFs. For \(A_t^\mu:=A^{\mu^+}_t-A^{\mu^-}_t\), we set \(P_t^{\mu}f(x):=\mathbb{E}_x[e^{A^\mu_t}f(X_t)]\) for \(f\in L^2(E;m)\) and \(t>0\). Then \(P^\mu_t f = T^\mu_t f\) \(m\)-almost everywhere. See \cite[Proposition 3.1.9]{CF12} for example.

We note that \((\mathcal{E}^{\mu^+}, \mathcal{D}(\mathcal{E}^{\mu^+}))\) is a regular Dirichlet form called a perturbed Dirichlet form, where \(\mathcal{D}(\mathcal{E}^{\mu^+}):=\mathcal{D}(\mathcal{E}) \cap L^2(E;\mu^+)\) and \(\mathcal{E}^{\mu^+}(f,g):=\mathcal{E}(f,g)+ \int fg\,d\mu^+\). See \cite[Section 5.1]{CF12} for details. Without loss of generality, we may assume that
 \((\mathcal{E}^{\mu^+}, \mathcal{D}(\mathcal{E}^{\mu^+}))\) is transient. Indeed, if \((\mathcal{E}^{\mu^+}, \mathcal{D}(\mathcal{E}^{\mu^+}))\) is recurrent, then there exists \(f_n \in \mathcal{D}(\mathcal{E}^{\mu^+})\cap C_c(E) =\mathcal{D}(\mathcal{E})\cap C_c(E)\) such that \(\mathcal{E}^{\mu^+}(f_n,f_n) \to 0\) and \(f_n \to 1\) \(m\)-almost everywhere, so we have \(\mu^- =0\) by the non-negativity of \(\mathcal{E}^{\mu}\). In the case of \(\mu^-\), a Schr\"{o}dinger form \((\mathcal{E}^{\mu^+}, \mathcal{D}(\mathcal{E}^{\mu^+}))\) corresponds to a Hunt process, so we can directly characterize the criticality for \(\mathcal{L}-\mu^+\).

Since \((\mathcal{E}^{\mu^+}, \mathcal{D}(\mathcal{E}^{\mu^+}))\) is a regular Dirichlet form, for any \(f\in \mathcal{D}(\mathcal{E}^{\mu^+})\), there exists \(f_n \in \mathcal{D}(\mathcal{E}) \cap C_c(E)\) such that \(\mathcal{E}_1^{\mu^+}(f_n-f, f_n-f) \to 0\), which implies that 
\[\int_E |f_n-f|^2\,d\mu^- \leq \mathcal{E}_1^{\mu^+}(f_n-f,f_n-f) \to 0\]
and so \(\mathcal{D}(\mathcal{E}^{\mu^+}) \subset \mathcal{D}(\mathcal{E}^{\mu}).\)

We define an extended space \(\mathcal{D}_e(\mathcal{E}^\mu)\) as the set of all \(m\)-measurable functions \(f\) satisfying \(|f|<\infty\) \(m\)-almost everywhere and that admit an approximating sequence \(\{f_n\}_n \subset \mathcal{D}(\mathcal{E}^\mu)\) such that \(\mathcal{E}^\mu(f_n-f_m, f_n-f_m)\to 0\) as \(m,n \to \infty\) and \(f_n \to f\) \(m\)-almost everywhere, and we define \(\mathcal{E}^\mu(f,f):=\lim_{n\to \infty} \mathcal{E}^\mu(f_n,f_n)\). This limit is independent of the choice of approximating sequence. As in \cite[Theorem 2.3.4]{CF12}, any function \( f \in \mathcal{D}_e(\mathcal{E}^\mu)\) has a quasi-continuous version \(\tilde{f}\), that is, \(f=\tilde{f}\) \(m\)-almost everywhere and there exists a nest \(\{F_k\}_k\) such that a restriction of \(\tilde{f}\) to each \(F_k\) is continuous. Hence, without loss of generality, we assume that any function belonging to \(\mathcal{D}_e(\mathcal{E}^\mu)\) is a quasi-continuous function.

\begin{definition}[\cite{TU23}] Let \((\mathcal{E}, \mathcal{D}(\mathcal{E}))\) be an irreducible regular Dirichlet form on \(L^2(E;m)\) and \(\mu\) be a signed smooth Radon measure making \((\mathcal{E}^{\mu}, \mathcal{D}(\mathcal{E}) \cap C_c(E))\) non-negative definite closable. We define criticalities for Schr\"{o}dinger form as follows.\\
\((1)\) A Schr\"{o}dinger form \((\mathcal{E}^{\mu}, \mathcal{D}(\mathcal{E}^{\mu}))\) is critical if there exists a strictly positive function \(h\in  \mathcal{D}_e(\mathcal{E}^{\mu})\) satisfying \(\mathcal{E}^\mu(h,h)=0.\)\\
\((2)\) A Schr\"{o}dinger form \((\mathcal{E}^{\mu}, \mathcal{D}(\mathcal{E}^{\mu}))\) is subcritical if there exists a strictly positive bounded function \(g \in L^1(E;m)\) satisfying, 
for \(f\in \mathcal{D}_e(\mathcal{E}^{\mu})\),
\[\int_E |f|g \,dm \leq \sqrt{\mathcal{E}^\mu(f,f)}.\]
\((3)\) A Schr\"{o}dinger form \((\mathcal{E}^{\mu}, \mathcal{D}(\mathcal{E}^{\mu}))\) is supercritical if neither \((1)\) nor \((2)\) is satisfied. 
\end{definition}

We also say that the Schr\"{o}dinger operator \(\mathcal{L}^\mu\) is subcritical (resp. critical, supercritical) if \((\mathcal{E}^{\mu}, \mathcal{D}(\mathcal{E}^{\mu}))\) is subcritical (resp. critical, supercritical).

We set
\[\mathcal{H}_+^\mu:=\{h : 0<h<\infty,\  T_t^{\mu}h \leq h \text{\ for\ any\ }t>0, m\text{-almost everywhere}\}.\]
If \(\mathcal{H}_+^\mu\) is not empty, for \(h \in \mathcal{H}_+^\mu\), we define a Doob's \(h\)-transform \((\mathcal{E}^{\mu, h}, \mathcal{D}(\mathcal{E}^{\mu, h}))\) of the Schr\"{o}dinger form by
\begin{eqnarray*}
\mathcal{E}^{\mu,h}(f,g)&:=& \mathcal{E}^{\mu}(fh,gh),\\
\mathcal{D}(\mathcal{E}^{\mu,h})&:=& \{f\in L^2(E;h^2m) : fh\in \mathcal{D}(\mathcal{E}^{\mu})\}.
\end{eqnarray*}

An \(h\)-transform \((\mathcal{E}^{\mu,h}, \mathcal{D}(\mathcal{E}^{\mu,h}))\) is an irreducible regular Dirichlet form on \(L^2(E; h^2dm)\), and the following probabilistic characterizations of criticalities for Schr\"{o}dinger operators are known.
\begin{theorem}[{\cite[Theorem 2.13, Lemma 2.11]{TU23}}] \label{defsubcri2}
Let \((\mathcal{E}, \mathcal{D}(\mathcal{E}))\) be an irreducible regular Dirichlet form on \(L^2(E;m)\) and \(\mu\) be a signed smooth Radon measure making \((\mathcal{E}^{\mu}, \mathcal{D}(\mathcal{E}) \cap C_c(E))\) non-negative definite closable. Then the following are equivalent.
\begin{enumerate}
\item A Schr\"{o}dinger form \((\mathcal{E}^{\mu}, \mathcal{D}(\mathcal{E}^{\mu}))\) is subcritical.
\item \(\mathcal{H}_+^\mu\) is not empty and \((\mathcal{E}^{\mu,h}, \mathcal{D}(\mathcal{E}^{\mu,h}))\) on \(L^2(E;h^2m)\) is transient for some \(h\in \mathcal{H}_+^\mu\).
\item \(\mathcal{H}_+^\mu\) is not empty and \((\mathcal{E}^{\mu,h}, \mathcal{D}(\mathcal{E}^{\mu,h}))\) on \(L^2(E;h^2m)\) is transient for any \(h\in \mathcal{H}_+^\mu\).
\end{enumerate}
\end{theorem}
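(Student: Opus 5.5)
The argument rests on the isometry \(f\mapsto fh\) between the \(h\)-transform and the Schr\"{o}dinger form. For \(h\in\mathcal{H}_+^\mu\), the map \(U_h f:=fh\) is a unitary operator \(L^2(E;h^2m)\to L^2(E;m)\). It carries \(\mathcal{D}(\mathcal{E}^{\mu,h})\) onto \(\mathcal{D}(\mathcal{E}^\mu)\) with \(\mathcal{E}^{\mu,h}(f,f)=\mathcal{E}^\mu(fh,fh)\). Since \(h\) is strictly positive and finite, a.e. convergence is preserved under multiplication by \(h\), and so are \(\mathcal{E}\)-Cauchy approximating sequences. Hence \(U_h\) extends to a bijection \(\mathcal{D}_e(\mathcal{E}^{\mu,h})\to\mathcal{D}_e(\mathcal{E}^\mu)\) preserving the form. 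I take as given, as stated before the theorem, that \((\mathcal{E}^{\mu,h},\mathcal{D}(\mathcal{E}^{\mu,h}))\) is an irreducible regular Dirichlet form. I will use the standard characterization of transience for such forms: there is a strictly positive \(g_0\in L^1(E;h^2m)\) with \(\int|f|g_0\,h^2dm\le\sqrt{\mathcal{E}^{\mu,h}(f,f)}\) for all \(f\in\mathcal{D}_e(\mathcal{E}^{\mu,h})\). Moreover, by irreducibility, non-transience is equivalent to recurrence, i.e. \(1\in\mathcal{D}_e(\mathcal{E}^{\mu,h})\) with \(\mathcal{E}^{\mu,h}(1,1)=0\).

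\emph{(3)\(\Rightarrow\)(2)} is trivial. For \emph{(2)\(\Rightarrow\)(1)}, fix \(h\) for which the transform is transient and take \(g_0\) as above. Put \(g:=g_0h\), so that for \(u=fh\in\mathcal{D}_e(\mathcal{E}^\mu)\) we get \(\int|u|g\,dm=\int|f|g_0h^2\,dm\le\sqrt{\mathcal{E}^\mu(u,u)}\). It remains to arrange that \(g\) is bounded and in \(L^1(E;m)\). Replace \(g\) by \(\tilde g:=g\wedge\varphi\) for a fixed strictly positive bounded \(\varphi\in L^1(E;m)\); such a \(\varphi\) exists because \(m\) is Radon on a \(\sigma\)-compact space. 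The inequality survives since \(\tilde g\le g\), and \(\tilde g>0\) because \(g_0,h>0\).

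For \emph{(1)\(\Rightarrow\)(3)} there are two steps. First, show \(\mathcal{H}_+^\mu\neq\emptyset\) by constructing \(h:=\int_0^\infty T_t^\mu g\,dt=\lim_{\alpha\downarrow0}G^\mu_\alpha g\), with \(g\) the function from subcriticality. The subcriticality inequality gives \(\langle G_\alpha^\mu g,f\rangle_m\le\|f\|\) in the extended norm, uniformly in \(\alpha\). Hence \(G^\mu_\alpha g\) is bounded in \(\mathcal{D}_e(\mathcal{E}^\mu)\) and increases (by positivity of \(T^\mu_t\), via the Feynman--Kac representation \(P^\mu_t\)) to a finite limit \(h\) satisfying \(\mathcal{E}^\mu(h,f)=\int fg\,dm\). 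Then \(T_t^\mu h=\int_t^\infty T_s^\mu g\,ds\le h\). Strict positivity of \(h\) follows from irreducibility of \(\mathcal{E}\) together with \(g>0\) and \(e^{A_t^\mu}>0\). Second, for an arbitrary \(h'\in\mathcal{H}_+^\mu\), the transform \(\mathcal{E}^{\mu,h'}\) is an irreducible Dirichlet form. Transport the inequality back: \(\int|f|(g/h')\,h'^2dm\le\sqrt{\mathcal{E}^{\mu,h'}(f,f)}\), where \(g/h'>0\). The only remaining point is that \(g/h'\) is integrable against \(h'^2m\), i.e. \(gh'\in L^1(m)\), which may fail. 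I will sidestep this by arguing by contradiction. If \(\mathcal{E}^{\mu,h'}\) were not transient, irreducibility would make it recurrent, so there would exist \(f_n\in\mathcal{D}(\mathcal{E}^{\mu,h'})\) with \(f_n\to1\) a.e. and \(\mathcal{E}^{\mu,h'}(f_n,f_n)\to0\). By Fatou's lemma, \(\int h'g\,dm\le\liminf\int|f_n|h'g\,dm\le\liminf\sqrt{\mathcal{E}^\mu(f_nh',f_nh')}=0\), contradicting \(h'g>0\).

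The main obstacle is the construction of \(h\) in the first step of (1)\(\Rightarrow\)(3). Showing finiteness and the excessive property of the Green potential \(\int_0^\infty T^\mu_t g\,dt\) for a signed perturbation requires care: the Feynman--Kac semigroup is not sub-Markovian, so the monotone limit must be controlled through the extended-space bound rather than by a maximum principle.
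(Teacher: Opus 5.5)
Your proposal is correct, but it is organized differently from the argument the paper relies on. The paper only cites Takeda--Uemura for this statement. Its own proof of the subordinated analogue (Theorem \ref{subodinated_subcri}), which follows the same scheme, runs (1)\(\Rightarrow\)(2)\(\Leftrightarrow\)(3)\(\Rightarrow\)(1):
\begin{itemize}
\item For (1)\(\Rightarrow\)(2), subcriticality makes \((\mathcal{D}_e(\mathcal{E}^\mu),\mathcal{E}^\mu)\) a Hilbert space. The Riesz representation theorem then gives \(Gg\), which is identified with \(\int_0^\infty T^\mu_t g\,dt\) to get \(Gg\in\mathcal{H}^\mu_+\). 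Transience of the \(Gg\)-transform follows from the isometry of extended spaces.
\item For (2)\(\Leftrightarrow\)(3), two transforms are compared symmetrically: if \(h\) gives a transient form and \(h'\) a recurrent one, then \(h'/h\) is a nonzero zero-energy element of \(\mathcal{D}_e(\mathcal{E}^{\mu,h})\).
\item (2)\(\Rightarrow\)(1) is exactly your argument.
\end{itemize}

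You instead prove (1)\(\Rightarrow\)(3) directly. The Green potential is used only to show \(\mathcal{H}^\mu_+\neq\emptyset\), and transience for every \(h'\) comes from pulling the subcriticality inequality back along a recurrence sequence via Fatou. This avoids the Riesz step and the completeness of \(\mathcal{D}_e(\mathcal{E}^\mu)\) altogether. For nonemptiness you only need \(0<h<\infty\) a.e. and \(T^\mu_t h=\int_t^\infty T^\mu_s g\,ds\le h\). Finiteness follows from \(\int g\,G^\mu_\alpha g\,dm=\mathcal{E}^\mu_\alpha(G^\mu_\alpha g,G^\mu_\alpha g)\le 1\) (subcriticality applied to \(G^\mu_\alpha g\)) together with monotone convergence. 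So the identity \(\mathcal{E}^\mu(h,f)=\int fg\,dm\) that you assert is superfluous. That is just as well, since justifying it would require completeness of \(\mathcal{D}_e(\mathcal{E}^\mu)\) and \(\alpha G^\mu_\alpha g\to 0\), neither of which you supply.

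The paper's organization buys reuse in return: its (2)\(\Leftrightarrow\)(3) step never mentions (1), so the same argument serves verbatim for the criticality theorem.

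Finally, your worry that \(gh'\) might not be in \(L^1(E;m)\) is removable by the same truncation you use in (2)\(\Rightarrow\)(1). Take \((g/h')\wedge\psi\) with \(\psi\in L^1(E;h'^2m)\) strictly positive and bounded; this gives transience directly from the \(L^1\) characterization. The recurrence contradiction is valid but is a detour.
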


\begin{theorem}[{\cite[Theorem 2.13, Remark 2.14]{TU23}}] \label{defsubcri3}
Let \((\mathcal{E}, \mathcal{D}(\mathcal{E}))\) be an irreducible regular Dirichlet form on \(L^2(E;m)\) and \(\mu\) be a signed smooth Radon measure making \((\mathcal{E}^{\mu}, \mathcal{D}(\mathcal{E}) \cap C_c(E))\) non-negative definite closable. Then the following are equivalent.
\begin{enumerate}
\item A Schr\"{o}dinger form \((\mathcal{E}^{\mu}, \mathcal{D}(\mathcal{E}^{\mu}))\) is critical.
\item \(\mathcal{H}_+^\mu\) is not empty and \((\mathcal{E}^{\mu,h}, \mathcal{D}(\mathcal{E}^{\mu,h}))\) on \(L^2(E;h^2m)\) is recurrent for some \(h\in \mathcal{H}_+^\mu\).
\item \(\mathcal{H}_+^\mu\) is not empty and \((\mathcal{E}^{\mu,h}, \mathcal{D}(\mathcal{E}^{\mu,h}))\) on \(L^2(E;h^2m)\) is recurrent for any \(h\in \mathcal{H}_+^\mu\).
\end{enumerate}
\end{theorem}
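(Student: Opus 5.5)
We argue through the ground-state transform and the recurrence/transience dichotomy for irreducible Dirichlet forms. The key algebraic identity is that for \(h\in\mathcal{H}^\mu_+\) the map \(U_h f:=fh\) is an isometry from \(L^2(E;h^2m)\) onto \(L^2(E;m)\). It intertwines \((\mathcal{E}^{\mu,h},\mathcal{D}(\mathcal{E}^{\mu,h}))\) with \((\mathcal{E}^\mu,\mathcal{D}(\mathcal{E}^\mu))\), and it carries extended spaces to extended spaces: \(f\in\mathcal{D}_e(\mathcal{E}^{\mu,h})\) if and only if \(fh\in\mathcal{D}_e(\mathcal{E}^\mu)\), with \(\mathcal{E}^{\mu,h}(f,f)=\mathcal{E}^\mu(fh,fh)\). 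To check this, we pass approximating sequences through multiplication by \(h\); a.e. convergence is preserved since \(0<h<\infty\). We also use that \(\mathcal{E}^{\mu,h}\) is an irreducible regular Dirichlet form, as stated in the paper, so that it is either recurrent or transient.

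\emph{(2)\(\Rightarrow\)(1).} Suppose \(\mathcal{E}^{\mu,h}\) is recurrent. Then \(1\in\mathcal{D}_e(\mathcal{E}^{\mu,h})\) and \(\mathcal{E}^{\mu,h}(1,1)=0\). By the identity above, \(h\in\mathcal{D}_e(\mathcal{E}^\mu)\) is strictly positive and satisfies \(\mathcal{E}^\mu(h,h)=0\), so \(\mathcal{E}^\mu\) is critical.

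\emph{(1)\(\Rightarrow\)(3).} Suppose \(\mathcal{E}^\mu\) is critical with ground state \(\phi>0\), and fix any \(h\in\mathcal{H}^\mu_+\). We show that \(\mathcal{E}^{\mu,h}\) cannot be transient. Set \(f=\phi/h\in\mathcal{D}_e(\mathcal{E}^{\mu,h})\); then \(\mathcal{E}^{\mu,h}(f,f)=0\) and \(f>0\). If \(\mathcal{E}^{\mu,h}\) were transient, then \(\mathcal{E}^{\mu,h}\) would be a norm on \(\mathcal{D}_e(\mathcal{E}^{\mu,h})\), which forces \(f=0\), a contradiction. Since an irreducible Dirichlet form is either recurrent or transient, \(\mathcal{E}^{\mu,h}\) is recurrent. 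Nonemptiness of \(\mathcal{H}^\mu_+\) in the critical case also has to be established. For this we show \(\phi\in\mathcal{H}^\mu_+\), that is, \(T^\mu_t\phi\le\phi\). Since \(\phi\) minimizes the closed quadratic form on the extended space, it is \(\mathcal{E}^\mu\)-harmonic: \(\mathcal{E}^\mu(\phi,g)=0\) for all \(g\). Harmonicity combined with a standard resolvent argument, comparing \(\alpha G^\mu_\alpha\phi\) with \(\phi\), gives excessiveness.

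\emph{(3)\(\Rightarrow\)(2).} This step is trivial.

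The main obstacle is the extended-space correspondence together with the existence of an element of \(\mathcal{H}^\mu_+\). We do not prove these from scratch. Instead we invoke the cited results of Takeda--Uemura \cite{TU23} (and \cite{T14}), which is what the theorem's attribution indicates. There, subcriticality is shown to be equivalent to transience of some or any \(h\)-transform (Theorem \ref{defsubcri2}). Criticality is also shown to exclude subcriticality: a strictly positive \(h\) with \(\mathcal{E}^\mu(h,h)=0\) violates the Hardy-type inequality with \(f=h\). Combining these with the recurrence/transience dichotomy, recurrence of one \(h\)-transform rules out transience of all of them, hence forces recurrence of all of them, and this yields the stated equivalence.
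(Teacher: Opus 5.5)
Your proposal is correct and follows essentially the same route as the paper. The statement itself is only quoted from \cite{TU23}, but the paper's proof of its subordinated analogue, Theorem \ref{subodinated_cri}, uses exactly your ingredients: the transfer of extended spaces under $f\mapsto fh$ (Lemma \ref{Fe_lemma}), recurrence of an $h$-transform yielding $h$ itself as a strictly positive zero-energy element of the extended space, and the incompatibility of $\mathcal{E}^{\mu,h}(\phi/h,\phi/h)=0$ with transience combined with the transient/recurrent dichotomy. The only minor difference is how the ground state is placed in $\mathcal{H}_+$: the paper uses the spectral bound $\frac{1}{t}\|h-T^\mu_t h\|_m^2\le\mathcal{E}^\mu(h,h)=0$, which in fact gives $T^\mu_t h=h$ (cf.\ \cite[Corollary 2.7]{TU23}), rather than your harmonicity and resolvent comparison.
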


By Theorem \ref{defsubcri2}, \ref{defsubcri3}, a Schr\"{o}dinger form \((\mathcal{E}^{\mu}, \mathcal{D}(\mathcal{E}^{\mu}))\) is supercritical if and only if \(\mathcal{H}_+^\mu\) is empty.

In general, it is not easy to check whether \(\mathcal{H}_+^\mu\) is empty or not. However, analytic criteria were obtained in \cite{T14, TU23}. We define the bottom of the spectrum \(\lambda(\mu)\) for the time-changed process of a process associated with \((\mathcal{E}^{\mu^+}, \mathcal{D}(\mathcal{E}^{\mu^+}))\) by \(\mu^-\) as follows.
\[\lambda(\mu):= \inf\{\mathcal{E}(f,f)+\int |f|^2 \, d\mu^+ : f\in \mathcal{D}(\mathcal{E})\cap C_c(E), \int_E |f|^2\, d\mu^- = 1\}.\]

Note that \(\mathcal{D}(\mathcal{E}^{\mu^+}) = \mathcal{D}(\mathcal{E}) \cap L^2(E;\mu^+)\) by \cite[Section 5.1]{CF12}.
\begin{theorem}[{\cite[Theorem 3.5]{TU23}}]\label{Theorem3.5_TU23}
Let \((\mathcal{E}, \mathcal{D}(\mathcal{E}))\) be an irreducible regular Dirichlet form on \(L^2(E;m)\) and \(\mu\) be a signed smooth Radon measure making \((\mathcal{E}^{\mu}, \mathcal{D}(\mathcal{E}) \cap C_c(E))\) non-negative definite closable. We assume that, for each compact set \(K\),
\[\sup_{f\in \mathcal{D}(\mathcal{E}^{\mu^+})} \frac{\int_K |f|\,dm}{\sqrt{\mathcal{E}^{\mu^+}(f,f)}} < \infty. \]
If \(\lambda(\mu)>1\), then \((\mathcal{E}^\mu, \mathcal{D}(\mathcal{E}^\mu))\) is subcritical.
\end{theorem}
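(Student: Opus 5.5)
The plan is to verify the definition of subcriticality directly. We need a strictly positive, bounded \(g\in L^1(E;m)\) with \(\int_E |f|g\,dm\le \sqrt{\mathcal{E}^\mu(f,f)}\) for all \(f\in\mathcal{D}_e(\mathcal{E}^\mu)\). The first step compares \(\mathcal{E}^\mu\) with \(\mathcal{E}^{\mu^+}\). For \(f\in\mathcal{D}(\mathcal{E})\cap C_c(E)\), the definition of \(\lambda(\mu)\) together with scaling gives \(\int_E f^2\,d\mu^-\le \lambda(\mu)^{-1}\mathcal{E}^{\mu^+}(f,f)\). Therefore
\[
\mathcal{E}^\mu(f,f)=\mathcal{E}^{\mu^+}(f,f)-\int_E f^2\,d\mu^-\ \ge\ \Bigl(1-\frac{1}{\lambda(\mu)}\Bigr)\mathcal{E}^{\mu^+}(f,f),
\]
and the constant \(c:=1-\lambda(\mu)^{-1}\) is strictly positive because \(\lambda(\mu)>1\).

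Next, the assumption on compact sets gives constants \(C_K<\infty\) with \(\int_K|f|\,dm\le C_K\sqrt{\mathcal{E}^{\mu^+}(f,f)}\) for \(f\in\mathcal{D}(\mathcal{E}^{\mu^+})\). Take an exhaustion \(K_1\subset K_2\subset\cdots\) of \(E\) by compact sets, which exists since \(E\) is locally compact and separable. Define
\[
g:=\sum_{n} 2^{-n}\bigl(1+C_{K_n}\bigr)^{-1}\bigl(1+m(K_n)\bigr)^{-1}\mathbf 1_{K_n}.
\]
This \(g\) is strictly positive everywhere, bounded by \(1\), and lies in \(L^1(E;m)\). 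Moreover \(\int_E|f|g\,dm\le \sqrt{\mathcal{E}^{\mu^+}(f,f)}\le c^{-1/2}\sqrt{\mathcal{E}^\mu(f,f)}\) for all \(f\in\mathcal{D}(\mathcal{E})\cap C_c(E)\). Replacing \(g\) by \(c^{1/2}g\) yields the desired inequality on \(\mathcal{D}(\mathcal{E})\cap C_c(E)\).

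The third step, which I expect to be the main obstacle, is extending the inequality from the core to all of \(\mathcal{D}_e(\mathcal{E}^\mu)\). First, \(\mathcal{D}(\mathcal{E})\cap C_c(E)\) is dense in \(\mathcal{D}(\mathcal{E}^\mu)\) in the \(\mathcal{E}^\mu_1\)-norm, because \(\mathcal{D}(\mathcal{E}^\mu)\) is defined as a closure. Along an approximating sequence the \(L^2\) convergence gives an a.e. convergent subsequence. Fatou's lemma on the left-hand side then transfers the inequality to \(\mathcal{D}(\mathcal{E}^\mu)\). For \(f\in\mathcal{D}_e(\mathcal{E}^\mu)\), take an approximating sequence \(f_n\in\mathcal{D}(\mathcal{E}^\mu)\) that is \(\mathcal{E}^\mu\)-Cauchy with \(f_n\to f\) a.e. Fatou again gives
\[
\int_E|f|g\,dm\le\liminf_n\int_E |f_n|g\,dm\le \lim_n\sqrt{\mathcal{E}^\mu(f_n,f_n)}=\sqrt{\mathcal{E}^\mu(f,f)}.
\]
The delicate point is only that the \(\mathcal{E}^\mu\)-value of \(f\) is defined as this limit, which holds by the definition of the extended space. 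Thus no comparison between \(\mathcal{D}_e(\mathcal{E}^\mu)\) and \(\mathcal{D}_e(\mathcal{E}^{\mu^+})\) is needed, since we work with \(\mathcal{E}^\mu\) throughout once the core inequality is established.
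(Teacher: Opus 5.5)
Your proof is correct and complete. The paper does not prove this statement itself; it quotes it from \cite[Theorem 3.5]{TU23}, so there is no in-paper argument to compare with.

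The argument runs as follows:
\begin{itemize}
\item On the core $\mathcal{D}(\mathcal{E})\cap C_c(E)$, the definition of $\lambda(\mu)$ and scaling give $\mathcal{E}^\mu\ge(1-\lambda(\mu)^{-1})\,\mathcal{E}^{\mu^+}$. If $\mu^-=0$, then $\lambda(\mu)=+\infty$ and $c=1$, which is harmless.
\item The compact-set bounds, summed over a compact exhaustion, produce an admissible weight $g$ for $\mathcal{E}^{\mu^+}$ on the core.
\item Fatou's lemma transfers the inequality first to $\mathcal{D}(\mathcal{E}^\mu)$, along $\mathcal{E}^\mu_1$-approximations from the core, and then to $\mathcal{D}_e(\mathcal{E}^\mu)$, along $\mathcal{E}^\mu$-Cauchy approximating sequences.
\end{itemize}

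You compare the two forms only on the core and then extend inside $\mathcal{E}^\mu$ itself. This lets you avoid identifying $\mathcal{D}(\mathcal{E}^\mu)$ or $\mathcal{D}_e(\mathcal{E}^\mu)$ with the corresponding spaces for $\mathcal{E}^{\mu^+}$. It also avoids $\mathcal{H}^\mu_+$, $h$-transforms and irreducibility entirely. Your final step does not depend on the independence of $\mathcal{E}^\mu(f,f)$ from the approximating sequence, since the inequality holds along every such sequence.

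Your route also shows that the compact-set hypothesis is used only to manufacture a strictly positive, bounded $g\in L^1(E;m)$ with $\int_E|f|g\,dm\le\sqrt{\mathcal{E}^{\mu^+}(f,f)}$ on $\mathcal{D}(\mathcal{E}^{\mu^+})$. By \cite[Theorem 2.1.5]{CF12}, such a $g$ exists whenever $(\mathcal{E}^{\mu^+},\mathcal{D}(\mathcal{E}^{\mu^+}))$ is transient. So your argument gives the conclusion assuming only that transience, which is the standing assumption adopted in Section~\ref{sec:Takeda}. Your construction of $g$ shows that the compact-set condition is one sufficient condition for transience.
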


For the bottom of the spectrum \(\gamma(\mu)\) where
\[\gamma(\mu):= \inf\{\mathcal{E}^\mu(f,f) : f\in \mathcal{D}(\mathcal{E})\cap C_c(E), \int_E |f|^2\, dm = 1\},\]
it is known that \(\lambda(\mu)\geq 1\) is equivalent to \(\gamma(\mu)\geq 0\). See \cite[Remark 3.6]{TU23} for details.

Takeda \cite{T14} proved the equivalence between probabilistic and analytic characterizations of criticalities for Schr\"{o}dinger operators perturbed by certain types of signed smooth measures under the following additional conditions.
\begin{enumerate}
\item[(SF)] Strong Feller property: For each \(t>0\), \(P_t (\mathcal{B}_b(E)) \subset C_b(E)\), where \(\mathcal{B}_b(E)\) is the set of all bounded Borel measurable functions on \(E\) and \(C_b(E)\) is the set of all bounded continuous functions on \(E\).
\end{enumerate}
By \((\)SF\()\) and the symmetry of the Dirichlet form, the following condition holds.

\begin{enumerate}
\item[(AC)] Absolute continuity condition: There exists a jointly measurable function \(p(t,x,y)\) on \((0,\infty)\times E \times E\) such that \(p(t,x,y)=p(t,y,x)\) and \(P_tf(x)=\int_E p(t,x,y)f(y)dm(y)\).
\end{enumerate}

The above \(p(t,x,y)\) is called a transition density of a stochastic process \(X\). In probability theory, a transition density also tends to be called a heat kernel even when \(\mathcal{L}\) is not necessarily the Laplace operator. In this case, we define the \(\alpha\)-order resolvent \(R_{\alpha}\) for \(\alpha>0\) by \(R_{\alpha}f(x)=\int_E r_{\alpha}(x,y)f(y)dm(y)\), where
\[r_{\alpha}(x,y) := \int_0^{\infty} e^{-\alpha t} p(t,x,y)\,dt.\]
Since \((\mathcal{E}, \mathcal{D}(\mathcal{E}))\) is transient, we can define \(r(x,y):=r_0(x,y):= \int_0^{\infty} p(t,x,y)\,dt.\) We note that \(T_tf=P_tf\) and \(G_{\alpha}f=R_{\alpha}f\) hold \(m\)-almost everywhere for \(f\in L^2(E;m)\).

\begin{definition}\label{defKato} Let \((\mathcal{E}, \mathcal{D}(\mathcal{E}))\) be a regular Dirichlet form on \(L^2(E;m)\) satisfying the condition (AC). We set \(R_{\alpha}\mu := \int_E r_{\alpha}(x,y)\, d\mu(y) \leq \infty\) for \(\mu \in \mathcal{S}\), and then we define the following classes of smooth measures.
\begin{enumerate}
\item A smooth measure \(\mu \in \mathcal{S}\) is called a {\it Kato class measure} if
\[\lim_{\alpha \to \infty} \|R_{\alpha}\mu\|_{\infty}=0.\]
Denote by \(\mathcal{K}:=\mathcal{K}(\mathcal{E})\) the set of all Kato class measures.

\item A smooth measure \(\mu \in \mathcal{S}\) is called a {\it local Kato class measure} if, for any compact set \(K\), the restriction \(1_{K}\, \mu\) of \(\mu\) to \(K\) is a Kato class measure. Denote by \(\mathcal{K}_{loc}:=\mathcal{K}_{loc}(\mathcal{E})\) the set of all local Kato class measures.

\item Suppose that \((\mathcal{E}, \mathcal{D}(\mathcal{E}))\) is transient. A smooth measure \(\mu \in \mathcal{K}\) is called a {\it Green-tight measure} with respect to \((\mathcal{E}, \mathcal{D}(\mathcal{E}))\), if for any \(\varepsilon>0,\) there exists a compact set \(K\) such that \(\|R(1_{K^c}\mu)\|_\infty <\varepsilon\) holds. Denote by \(\mathcal{K}_{\infty}(\mathcal{E})\) the set of all Green-tight measures with respect to \((\mathcal{E}, \mathcal{D}(\mathcal{E}))\).
\end{enumerate}
\end{definition}

\begin{remark}\, 
\begin{enumerate}
\item There are other types of definitions of a Kato class, including the original definition by Kato \cite{K72}. However most definitions are equivalent in the case of \(\Delta\). See \cite{AS82, KT07} for details.

\item The above \(\|\cdot \|_{\infty}\) in Definition \ref{defKato} is the essential supremum with respect to \(m\). However, \(\|R_\alpha \mu \|_{\infty}\) coincides with the essential supremum of \(R_\alpha \mu\) for \(\mu\) and also the capacity of the Dirichlet form. See \cite{ABM91, OTU26+} for details. 

\item It holds that \(\mathcal{K} \subset \mathcal{K}_{loc}.\) By the following Stollmann--Voigt's inequality, \(\mu \in \mathcal{K}_\infty\) is a Radon measure.

\item The above Green-tight measure is also called a Green-tight measure in the sense of Zhao \cite{Z92}. A smooth measure \(\mu \in \mathcal{S}\) is called a Green-tight measure in the sense of Chen \cite{C02}, if for any \(\varepsilon >0,\) there exist a Borel set \(K\) with \(\mu(K) <\infty\) and a constant \(\delta >0\) such that, for any Borel subset \(B \subset K\) satisfying \(\nu(B) \leq \varepsilon\) and \(\|R(1_{K^c \cup B}\mu)\|_{\infty}<\varepsilon.\) Under the assumption (SF), both classes of Green-tight measures coincide (\cite[Lemma 4.1]{KK17}).
\end{enumerate}
\end{remark}

The following inequality is called Stollmann--Voigt's inequality. This inequality is well-known under the condition (AC). However, we provide the following original proof without the condition (AC). 
\begin{theorem}[Stollmann--Voigt's inequality \cite{SV96}]
Let \((\mathcal{E}, \mathcal{D}(\mathcal{E}))\) be a regular Dirichlet form. For any \(\mu \in \mathcal{S}\) and \(f\in \mathcal{D}(\mathcal{E})\), it holds that
\begin{equation}
\int_E |f|^2\,d\mu \leq \|R_\alpha\mu\|_\infty \mathcal{E}_\alpha(f,f),\label{eq:SVineq}
\end{equation}
where \(R_\alpha\mu\) is defined by \(R_\alpha\mu(x) := \mathbb{E}_x[\int_0^\infty e^{-\alpha t}\,dA_t^\mu]\) for a PCAF \(A^\mu\) corresponding to \(\mu\).
\end{theorem}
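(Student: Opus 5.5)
The plan is to prove \eqref{eq:SVineq} by first treating measures of finite energy integral and then passing to general smooth measures by approximation. It uses only the potential theory of regular Dirichlet forms, not (AC).

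\emph{Reductions.} Assume \(\|R_\alpha\mu\|_\infty<\infty\), since otherwise there is nothing to prove. Since \(|f|\in\mathcal{D}(\mathcal{E})\) with \(\mathcal{E}_\alpha(|f|,|f|)\le\mathcal{E}_\alpha(f,f)\), we may also assume \(f\ge 0\) and that \(f\) is quasi-continuous.

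\emph{Step 1 (energy case).} Suppose \(\mu\) is of finite \(\alpha\)-energy integral, so that \(U_\alpha\mu\in\mathcal{D}(\mathcal{E})\) and \(\mathcal{E}_\alpha(U_\alpha\mu,g)=\int\tilde g\,d\mu\) for every \(g\in\mathcal{D}(\mathcal{E})\). By the Revuz correspondence (\cite[Theorem 4.1.1, 4.1.3]{CF12}), \(R_\alpha\mu\) is a quasi-continuous version of \(U_\alpha\mu\). Put \(C:=\|R_\alpha\mu\|_\infty\). Apply the defining identity with \(g=f^2\) for bounded \(f\), or with \(g=(f\wedge n)^2\), which lies in \(\mathcal{D}(\mathcal{E})\) because it is a normal contraction of a bounded function in \(\mathcal{D}(\mathcal{E})\). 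The key estimate is
\[
\int f^2\,d\mu=\mathcal{E}_\alpha(U_\alpha\mu,f^2)\le C\,\mathcal{E}_\alpha(f,f)\qquad (0\le U_\alpha\mu\le C).
\]
I would prove it with the Beurling--Deny representation. For the Dirichlet form part, write \(\mathcal{E}_\alpha(u,f^2)\) for \(u=U_\alpha\mu\). Then compare it with \(\mathcal{E}_\alpha(f,f)\) via the pointwise inequality
\[
(u(x)-u(y))(f(x)^2-f(y)^2)\le \tfrac{u(x)+u(y)}{1}\cdot\ldots,
\]
or, more cleanly, argue as follows.

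\emph{Cleaner route for Step 1.} I would use the semigroup instead. Set \(e_\beta:=\beta\langle f^2-f\,\beta G_{\beta+\alpha}\ldots\rangle\), i.e. the approximating form \(\mathcal{E}^{(\beta)}_\alpha(u,v)=\beta\langle u-\beta G_{\beta+\alpha}u,v\rangle\), which increases to \(\mathcal{E}_\alpha\). In kernel form,
\[
\mathcal{E}^{(\beta)}_\alpha(u,v)=\tfrac12\iint (u(x)-u(y))(v(x)-v(y))\,J_\beta(dx\,dy)+\int uv\,k_\beta\,dm.
\]
With this representation, the elementary inequality
\[
(u(x)-u(y))(f(x)^2-f(y)^2)\le C\,(f(x)-f(y))^2\quad\text{for }0\le u\le C,\ \text{together with } u f^2 k_\beta\le C f^2 k_\beta,
\]
gives \(\mathcal{E}^{(\beta)}_\alpha(u,f^2)\le C\,\mathcal{E}^{(\beta)}_\alpha(f,f)\). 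Letting \(\beta\to\infty\) yields the key estimate. Checking this elementary inequality carefully is the step I expect to be the main obstacle. It is false in this naive form, and the honest version needs the \(\alpha\)-excessiveness of \(u\), i.e. \(\beta G_{\beta+\alpha}u\le u\), which is what produces a nonnegative killing density.

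\emph{Fallback for Step 1.} Should the kernel computation fail, I would use the standard trick instead. By Cauchy--Schwarz,
\[
\int f^2\,d\mu=\mathcal{E}_\alpha(u,f^2)\le\ldots,
\]
or, more simply, I would exploit the known identity \(\int f^2\,d\mu\le\|U_\alpha\mu\|_\infty\,\mathcal{E}_\alpha(f,f)\) that follows from the \(\alpha\)-excessiveness of \(u\) via \(\langle f^2, \beta(u-\beta G_{\beta+\alpha}u)\rangle\le C\,\mathcal{E}^{(\beta)}_\alpha(f,f)\). This last inequality is obtained by writing the left side as \(\langle f^2,\beta(I-\beta G_{\beta+\alpha})u\rangle\) and using the sub-Markov symmetric kernel \(\beta G_{\beta+\alpha}(x,dy)\). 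Symmetrizing, one gets \(\tfrac\beta2\iint(f(x)^2-f(y)^2)(u(x)-u(y))\ldots\) terms, and these are controlled by \(2\sqrt{f^2}\)-type AM--GM bounds using \(0\le u\le C\).

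\emph{Step 2 (general smooth \(\mu\)).} By \cite[Theorem 2.3.15]{CF12} (or \cite[Lemma 2.2.3]{FOT}) there is a nest \(\{F_k\}\) such that each \(\mu_k:=1_{F_k}\mu\) is of finite energy integral, with \(\mu_k\uparrow\mu\). Since \(A^{\mu_k}\le A^\mu\), we have \(R_\alpha\mu_k\le R_\alpha\mu\le C\). Step 1 then gives \(\int f^2\,d\mu_k\le C\,\mathcal{E}_\alpha(f,f)\), and monotone convergence finishes the proof.
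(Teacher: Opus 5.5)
Your reductions are fine: to $f\ge 0$ bounded, and from general $\mu\in\mathcal{S}$ to finite-energy $\mu$ via a nest and monotone convergence. The approximating-form framework is also fine. With $K:=\beta G_{\beta+\alpha}$ and $u:=U_\alpha\mu$ one has $\mathcal{E}^{(\beta)}_\alpha(u,f^2)=\beta\langle f^2,u-Ku\rangle_m\to\int f^2\,d\mu$ and $\mathcal{E}^{(\beta)}_\alpha(f,f)\uparrow\mathcal{E}_\alpha(f,f)$.

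The gap is that the ``key estimate'' of Step 1 \emph{is} the theorem for finite-energy $\mu$, and neither of your routes proves it. The pointwise inequality of the ``cleaner route'' is false, as you yourself note. The fallback has two problems. First, it appeals to ``the known identity'' $\int f^2\,d\mu\le\|U_\alpha\mu\|_\infty\mathcal{E}_\alpha(f,f)$, which is the statement being proved. Second, it claims that the symmetrized terms $(f(x)^2-f(y)^2)(u(x)-u(y))$, together with the killing term $\int f^2u(1-K1)\,dm$, are controlled by AM--GM bounds using $0\le u\le C$. No argument of that kind can work, because $\langle f^2,u-Ku\rangle_m\le C\langle f-Kf,f\rangle_m$ is false for general $0\le u\le C$. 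Take two points with counting measure, $K(x,\{y\})=K(y,\{x\})=1-\varepsilon^2$, $u=(C,0)$ and $f=(1,1-\varepsilon)$: the left side is about $2C\varepsilon$ and the right side about $3C\varepsilon^2$. So the excessiveness $u-Ku=U_{\alpha+\beta}\mu\ge 0$ must enter in an essential, non-pointwise way, and your sketch never says how.

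The paper supplies the missing idea by duality and absorption. For $\mu\in\mathcal{S}_{00}$ and bounded $f$,
$\int|f|^2\,d\mu=\mathcal{E}_1(|f|,R_1(|f|\mu))\le\mathcal{E}_1(f,f)^{1/2}\,\mathcal{E}_1(R_1(|f|\mu),R_1(|f|\mu))^{1/2}$.
A probabilistic representation of the energy of a potential, plus H\"older along $\int_0^\infty e^{-s}|f|(X_s)\,dA^\mu_s$, gives $\mathcal{E}_1(R_1(|f|\mu),R_1(|f|\mu))\le\int|f|^2R_1\mu\,d\mu\le\|R_1\mu\|_\infty\int|f|^2\,d\mu$. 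Squaring and dividing by $\int|f|^2\,d\mu$ finishes the proof.

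Alternatively, your own route can be repaired by testing against $f^2/v$, with $v:=u+\varepsilon$, instead of $f^2$. For any $v>0$ the ground-state inequality $(f(x)^2/v(x)-f(y)^2/v(y))(v(x)-v(y))\le(f(x)-f(y))^2$ holds; this is where AM--GM genuinely applies. Together with the exact matching of the killing terms, it gives $\langle f^2/v,v-Kv\rangle_m\le\langle f-Kf,f\rangle_m$. Since $K1\le 1$, one has $v-Kv\ge u-Ku\ge 0$, hence
$\langle f^2,u-Ku\rangle_m\le\langle f^2,v-Kv\rangle_m\le(C+\varepsilon)\langle f^2/v,v-Kv\rangle_m\le(C+\varepsilon)\langle f-Kf,f\rangle_m$.
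Multiply by $\beta$, let $\beta\to\infty$, then let $\varepsilon\to 0$.
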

\begin{proof}
Without loss of generality, we may assume \(\alpha =1\). First, we assume that \(\mu \in \mathcal{S}_{00}\) and \(f\in  \mathcal{D}(\mathcal{E})\) is bounded, where \(\mathcal{S}_{00}\) is the set of all smooth measures \(\mu\) satisfying \(R_1\mu \in \mathcal{D}(\mathcal{E}) \cap L^\infty(E;m)\) and \(\mu(E)<\infty\). See Appendix \ref{Appendix_PCAF} for the equivalent definition. We remark that \(\mathcal{E}_1(R_1\mu,u)=\int_E u\,d\mu\) holds for any \(u \in \mathcal{D}(\mathcal{E})\). See \cite[Section 2.3]{CF12} for details. Then we have
\begin{equation}
\int_E |f|^2\,d\mu = \mathcal{E}_1(|f|, R_1(|f|\mu)) \leq \sqrt{\mathcal{E}_1(f,f)} \sqrt{\mathcal{E}_1(R_1(|f|\mu),R_1(|f|\mu))}.
\label{eq:SVineq_1}
\end{equation}
By \cite[Proposition 1.2]{O26} and H\"{o}lder's inequality, we have
\begin{eqnarray}
\nonumber \mathcal{E}_1(R_1(|f|\mu),R_1(|f|\mu)) &=& \mathbb{E}_{m+\frac{\kappa}{2}+\frac{\nu_0}{2}}\left[\left( \int_0^\infty e^{-s} |f|(X_s)\,dA_s^{\mu} \right)^2\right]\\
\nonumber &\leq & \mathbb{E}_{m+\frac{\kappa}{2}+\frac{\nu_0}{2}}\left[\int_0^\infty e^{-s} |f|^2(X_s)\,dA_s^{\mu}\cdot\, \int_0^\infty e^{-s} \,dA_s^{\mu}\right] \\
\nonumber &=& \mathcal{E}_1(R_1(|f|^2\mu), R_1\mu)\\
\nonumber &=& \int_E |f|^2  R_1\mu \,d\mu \\
 &\leq & \|R_1\mu\|_\infty \, \int_E |f|^2 \,d\mu.
\label{eq:SVineq_2}
\end{eqnarray}
Here \(\kappa\) is a smooth measure called the killing measure, and \(\nu_0\) is the extended energy functional for a function \(\mathbb{E}_x[e^{-2s}1_{\partial}(X_{\zeta-})]\). We note that \(\nu_0\) is not a measure, but the measure-like notation is used in \cite{O26} since the subadditivity and the monotonicity hold. See \cite[Section 4]{O26} for details. 
Hence, by (\ref{eq:SVineq_1}) and (\ref{eq:SVineq_2}), we have
\[
\int_E |f|^2\,d\mu \leq \sqrt{\mathcal{E}_1(f,f)} \, \sqrt{\|R_1\mu\|_\infty} \,  \sqrt{\int_E |f|^2\,d\mu}\]
and so (\ref{eq:SVineq}) holds for \(\mu \in \mathcal{S}_{00}\) and a bounded function \(f\in \mathcal{D}(\mathcal{E})\). For any \(\mu \in \mathcal{S}\) and \(f\in   \mathcal{D}(\mathcal{E})\), by \cite[Theorem 2.3.15]{CF12}, there exists a nest \(\{F_k\}_k\) such that \(1_{F_k}\mu \in \mathcal{S}_{00}\), and we take \((-n)\vee f \wedge n\). Then, by using the monotonicity, we obtain (\ref{eq:SVineq}) for \(\mu \in \mathcal{S}\) and \(f\in   \mathcal{D}(\mathcal{E})\).
\end{proof}

We note that, for \(\mu^- \in \mathcal{K}(\mathcal{E}^{\mu^+})\), the closability of \((\mathcal{E}^\mu, \mathcal{D} \cap C_c(E))\) and \(\mathcal{D}(\mathcal{E}^\mu) =\mathcal{D}(\mathcal{E}^{\mu^+})\) follow from the Stollmann-Voigt inequality. Indeed, for a large \(\alpha\) satisfying \(\|R^{\mu^+}_\alpha \mu^-\|_\infty <1/2\), by the Stollmann--Voigt's inequality, we have
\begin{eqnarray*}
\mathcal{E}^\mu_\alpha(f,f) \geq \mathcal{E}^{\mu^+}_\alpha(f,f) -\|R^{\mu^+}_\alpha \mu^-\|_\infty \mathcal{E}^{\mu^+}_\alpha(f,f)= \frac{1}{2}\mathcal{E}^{\mu^+}_\alpha(f,f),
\end{eqnarray*}
and so \(\mathcal{E}^\mu_\alpha\) and \(\mathcal{E}^{\mu^+}_\alpha\) are comparable. We also note that, for \(\mu^- \in \mathcal{K}(\mathcal{E}^{\mu^+})\), an \(h\)-transform \((\mathcal{E}^{\mu,h}, \mathcal{D}(\mathcal{E}^{\mu,h}))\) is an irreducible regular Dirichlet form on \(L^2(E; h^2dm)\) (\cite[Lemma 2.6]{T14}). 

\begin{theorem}[{\cite[Theorem 5.19]{T14}}] \label{Takedaeigen}
For an irreducible regular Dirichlet form \((\mathcal{E}, \mathcal{D}(\mathcal{E}))\) on \(L^2(E;m)\) and a signed smooth Radon measure \(\mu \in \mathcal{K}_{loc}(\mathcal{E})-\mathcal{K}_{\infty}(\mathcal{E}^{\mu^+})\), the following holds.
\begin{enumerate}
\item \((\mathcal{E}^{\mu}, \mathcal{D}(\mathcal{E}^{\mu}))\) is subcritical if and only if \(\lambda(\mu)>1\)
\item \((\mathcal{E}^{\mu}, \mathcal{D}(\mathcal{E}^{\mu}))\) is critical if and only if \(\lambda(\mu)=1\)
\item \((\mathcal{E}^{\mu}, \mathcal{D}(\mathcal{E}^{\mu}))\) is supercritical if and only if \(\lambda(\mu)<1\)
\end{enumerate}
\end{theorem}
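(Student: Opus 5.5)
The three assertions are mutually exclusive and exhaustive on both sides, so it suffices to prove the three implications ``$\lambda(\mu)>1\Rightarrow$ subcritical'', ``$\lambda(\mu)=1\Rightarrow$ critical'' and ``$\lambda(\mu)<1\Rightarrow$ supercritical''; the converses then follow by exclusion. For the first, I will invoke Theorem \ref{Theorem3.5_TU23}. Its extra hypothesis, $\sup_f \int_K|f|\,dm/\sqrt{\mathcal{E}^{\mu^+}(f,f)}<\infty$, will be checked using transience of $(\mathcal{E}^{\mu^+},\mathcal{D}(\mathcal{E}^{\mu^+}))$, which we may assume as explained above. Transience gives a strictly positive $g\in L^1$ with $\int|f|g\,dm\le\sqrt{\mathcal{E}^{\mu^+}(f,f)}$. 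Under (SF) and irreducibility, $g$ can be chosen bounded below on each compact $K$; if needed, I will take $g=R^{\mu^+}\varphi$-type functions, which are continuous and positive. This yields the required local bound.

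For $\lambda(\mu)=1$, I will use the Green-tightness of $\mu^-$ with respect to $\mathcal{E}^{\mu^+}$, which makes the time-changed process by $\mu^-$ have compact resolvent. Concretely, the embedding of $\mathcal{D}_e(\mathcal{E}^{\mu^+})$ into $L^2(E;\mu^-)$ is compact: split $\mu^-=1_K\mu^-+1_{K^c}\mu^-$, make the tail small by Stollmann--Voigt with $\|R^{\mu^+}(1_{K^c}\mu^-)\|_\infty<\varepsilon$, and get compactness on $K$ from the Kato property and (SF). Hence the infimum defining $\lambda(\mu)$ is attained by a ground state $h\in\mathcal{D}_e(\mathcal{E}^{\mu^+})$ with $\int h^2\,d\mu^-=1$ and $\mathcal{E}^{\mu^+}(h,h)=\int h^2\,d\mu^-$, that is, $\mathcal{E}^\mu(h,h)=0$. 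Replacing $h$ by $|h|$ keeps it a minimizer. The Euler--Lagrange equation gives $h=R^{\mu^+}(h\mu^-)$, and irreducibility then yields $h>0$ quasi-everywhere. This $h$ is exactly the function required in the definition of criticality, so $(\mathcal{E}^\mu,\mathcal{D}(\mathcal{E}^\mu))$ is critical. I expect this step to be the main obstacle, specifically the compactness argument and showing $h\in\mathcal{D}_e(\mathcal{E}^\mu)$ is approximable appropriately. The comparability $\mathcal{D}_e(\mathcal{E}^{\mu^+})\subset\mathcal{D}_e(\mathcal{E}^\mu)$ should follow as in the inclusion $\mathcal{D}(\mathcal{E}^{\mu^+})\subset\mathcal{D}(\mathcal{E}^\mu)$ shown above.

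For $\lambda(\mu)<1$, I will show $\mathcal{H}_+^\mu=\emptyset$, which by Theorems \ref{defsubcri2} and \ref{defsubcri3} is equivalent to supercriticality. Suppose $h\in\mathcal{H}_+^\mu$. Then the $h$-transform is a Dirichlet form, so by the ground state representation $\mathcal{E}^\mu(fh,fh)\ge0$, and more strongly $\mathcal{E}^\mu(u,u)\ge0$ for all $u$. This does not contradict $\lambda<1$ directly, since $\lambda<1$ only gives some $f$ with $\mathcal{E}^\mu(f,f)<0$, which contradicts non-negativity of $\mathcal{E}^\mu$ itself. So the case is in fact excluded by the standing assumption: non-negativity of $\mathcal{E}^\mu$ on $\mathcal{D}(\mathcal{E})\cap C_c(E)$ is exactly $\lambda(\mu)\ge1$. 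Hence (3) reduces to showing that supercriticality forces $\lambda(\mu)<1$, which is equivalent to showing that $\lambda(\mu)\ge1$ rules out supercriticality. That has already been established by the first two parts.
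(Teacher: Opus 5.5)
Your outline is correct. The paper gives no proof of its own here; it only cites \cite[Theorem 5.19]{T14}. What you describe is the standard argument in this setting. Green-tightness of $\mu^-$ gives a compact (in particular continuous) embedding $\mathcal{D}_e(\mathcal{E}^{\mu^+})\hookrightarrow L^2(E;\mu^-)$. Hence the infimum over the core equals the infimum over $\mathcal{D}_e(\mathcal{E}^{\mu^+})$, and when $\lambda(\mu)=1$ it is attained by a strictly positive $h$ with $\mathcal{E}^\mu(h,h)=0$. The converses then follow because subcriticality and criticality exclude each other: the inequality $\int_E hg\,dm\le 0$ would force $h=0$. The compactness step is indeed the heart of the matter, and it is where (SF) is used.

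Two remarks.

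First, for $\lambda(\mu)>1$ the detour through Theorem \ref{Theorem3.5_TU23} is unnecessary. The definition of $\lambda(\mu)$ gives $\mathcal{E}^\mu(f,f)\ge(1-\lambda(\mu)^{-1})\,\mathcal{E}^{\mu^+}(f,f)$ on $\mathcal{D}(\mathcal{E})\cap C_c(E)$, hence on $\mathcal{D}(\mathcal{E}^\mu)=\mathcal{D}(\mathcal{E}^{\mu^+})$. So $\mathcal{E}^\mu$-Cauchy sequences are $\mathcal{E}^{\mu^+}$-Cauchy, and $\mathcal{D}_e(\mathcal{E}^\mu)\subset\mathcal{D}_e(\mathcal{E}^{\mu^+})$. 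Then $(1-\lambda(\mu)^{-1})^{1/2}$ times a bounded reference function of the transient form $\mathcal{E}^{\mu^+}$ already satisfies the subcriticality inequality, with no use of (SF).

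If you keep your route, note that the $0$-order potential $R^{\mu^+}\varphi$ is not admissible. For Brownian motion on $\mathbb{R}^3$ it behaves like $|x|^{-1}$ at infinity and is not even integrable. The function $R^{\mu^+}_1\varphi$, with $\varphi$ a reference function, does satisfy $\int_E|f|\,R^{\mu^+}_1\varphi\,dm\le\sqrt{\mathcal{E}^{\mu^+}(f,f)}$. However, its lower bound on compacts needs strong Feller regularity of the killed resolvent, and for $\mu^+\in\mathcal{K}_{loc}$ that requires an additional localization argument.

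Second, in (3) your sentence saying that $h\in\mathcal{H}_+^\mu$ ``does not contradict $\lambda(\mu)<1$ directly'' is backwards. The nonnegativity of $\mathcal{E}^\mu$ forced by the Dirichlet property of the $h$-transform does contradict the existence of $f$ with $\mathcal{E}^\mu(f,f)<0$. That is precisely the argument needed if $\mathcal{E}^\mu$ is only assumed lower semibounded, as is automatic for Kato $\mu^-$. Under the paper's standing nonnegativity assumption, your final conclusion is right: the case $\lambda(\mu)<1$ is vacuous, and (3) follows from (1) and (2).
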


At the end of this section, we discuss the relationship between the notion of subcriticality used in \cite{S26, M86} and subcriticality used in this paper.
\begin{proposition}\label{defsubcri} 
Let \((\mathcal{E}, \mathcal{D}(\mathcal{E}))\) be an irreducible regular Dirichlet form on \(L^2(E;m)\) and \(\mu\) be a signed smooth Radon measure making \((\mathcal{E}^{\mu}, \mathcal{D}(\mathcal{E}) \cap C_c(E))\) non-negative definite closable. Then the Schr\"{o}dinger form \((\mathcal{E}^{\mu}, \mathcal{D}(\mathcal{E}^{\mu}))\) is subcritical if, for any non-negative \(W \in C_c(E)\), there exists \(\delta >0\) such that
\begin{equation}
\delta \int_E |f|^2 W\, dm \leq \mathcal{E}^\mu(f,f) \label{eq:defsubcri1}
\end{equation}
holds for any \(f\in  \mathcal{D}_e(\mathcal{E}^{\mu}).\)

In addition, if we assume the condition \((SF)\) and \(\mu \in \mathcal{K}_{loc}-\mathcal{K}\), then \((\ref{eq:defsubcri1})\) is equivalent to the subcriticality. 
\end{proposition}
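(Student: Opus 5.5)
The first implication is constructive. I would build the weight $g$ required in the definition of subcriticality directly from the weighted inequalities \eqref{eq:defsubcri1}. Since $E$ is locally compact and separable, it is $\sigma$-compact. So I choose relatively compact open sets $U_k$ with $\bigcup_k U_k = E$, and functions $W_k\in C_c(E)$ with $0\le W_k\le 1$, $W_k>0$ on $U_k$, and $\operatorname{supp}W_k\subset K_k$ compact. Let $\delta_k>0$ be the constant given by the hypothesis for $W=W_k$. For $f\in\mathcal{D}_e(\mathcal{E}^\mu)$, Cauchy--Schwarz and \eqref{eq:defsubcri1} give
\[
\int_E |f|W_k\,dm \le \Big(\int_E W_k\,dm\Big)^{1/2}\Big(\int_E |f|^2W_k\,dm\Big)^{1/2}\le \big(m(K_k)/\delta_k\big)^{1/2}\sqrt{\mathcal{E}^\mu(f,f)} .
\]
I then put $g:=\sum_k c_k W_k$ with
\[
c_k:=2^{-k}\min\{1,\ (\delta_k/m(K_k))^{1/2},\ 1/m(K_k)\}.
\]
This $g$ is strictly positive everywhere because the $U_k$ cover $E$, it is bounded by $1$, and $\|g\|_{L^1}\le \sum_k 2^{-k}\le 1$. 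Summing the displayed bounds gives $\int_E|f|g\,dm\le\sqrt{\mathcal{E}^\mu(f,f)}$, so the form is subcritical.

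For the converse I assume (SF), $\mu\in\mathcal{K}_{loc}-\mathcal{K}$ and subcriticality, and fix a non-negative $W\in C_c(E)$ supported in a compact set $K$. By Theorem \ref{defsubcri2}, $\mathcal{H}^\mu_+\neq\emptyset$ and the $h$-transform $(\mathcal{E}^{\mu,h},\mathcal{D}(\mathcal{E}^{\mu,h}))$ is a transient irreducible regular Dirichlet form on $L^2(E;h^2m)$ for any $h\in\mathcal{H}^\mu_+$. For $f\in\mathcal{D}_e(\mathcal{E}^\mu)$ I set $u=f/h$, which lies in the extended space of the $h$-transform with $\mathcal{E}^{\mu,h}(u,u)=\mathcal{E}^\mu(f,f)$. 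Then
\[
\int_E |f|^2W\,dm=\int_E u^2\,d\nu,\qquad \nu:=Wh^2m .
\]
I would next apply the Stollmann--Voigt inequality with $\alpha=0$ to the transient form $\mathcal{E}^{\mu,h}$. The proof given above carries over to $\alpha=0$ on the extended space when the form is transient, using $\mathcal{E}(R\mu,u)=\int u\,d\mu$. This yields
\[
\int_E u^2\,d\nu\le \|R^{h}\nu\|_\infty\,\mathcal{E}^{\mu,h}(u,u),
\]
where $R^h$ is the $0$-order potential of the $h$-transformed process. Hence \eqref{eq:defsubcri1} holds with $\delta=\|R^h\nu\|_\infty^{-1}$, provided this quantity is finite.

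The main obstacle is showing that $\|R^h\nu\|_\infty<\infty$. I would first choose $h$ well: under (SF) and $\mu\in\mathcal{K}_{loc}-\mathcal{K}$, Takeda's construction \cite{T14} gives an $h\in\mathcal{H}^\mu_+$ that is continuous and strictly positive, so $h$ is bounded above and below on $K$. Consequently $\nu\le C\,1_K\,h^2m$, that is, $\nu$ is dominated by a compactly supported multiple of the reference measure of the transformed form. Next I would use that the $h$-transformed process inherits the strong Feller property from the Feynman--Kac semigroup $P^\mu_t$, as in \cite{T14}, which relies on the Kato assumption on $\mu$. For a transient strong Feller process, the measure $1_K\,h^2m$ is Green-tight, i.e. in $\mathcal{K}_\infty$, and in particular its $0$-potential is bounded. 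To justify this I would split $R^h=R^h_1+R^h(R^h_1\cdot)$ by the resolvent equation: the $1$-resolvent part is bounded because $1_K h^2m$ is Kato. The remaining part should be bounded by a maximum principle argument: the potential is harmonic off $K$, where it is dominated by its supremum on $K$, and it is continuous and finite on $K$ by strong Feller and transience. This last step is where I expect most of the technical work to lie.
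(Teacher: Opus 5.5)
Your first half is correct, and it takes a different route from the paper. The paper fixes one $W$ and shows $\langle W,G^\mu W\rangle_m<\infty$. It deduces $G^\mu W\in\mathcal{D}_e(\mathcal{E}^\mu)\cap\mathcal{H}^\mu_+$, using irreducibility for strict positivity. It then uses the Takeda--Uemura dichotomy (Theorems \ref{defsubcri2}, \ref{defsubcri3}) to get ``critical or subcritical'', and finally excludes criticality with \eqref{eq:defsubcri1}. Your exhaustion $g=\sum_k c_kW_k$ instead verifies the definition of subcriticality directly, using the hypothesis for countably many $W_k$. It needs neither irreducibility nor the $h$-transform characterization, and the bookkeeping is sound: $\int_E|f|g\,dm\le\sum_k2^{-k}\sqrt{\mathcal{E}^\mu(f,f)}$, $0<g\le 1$, $\|g\|_{L^1}\le 1$, and $m(K_k)>0$ by full support.

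The converse has a genuine gap exactly where you located it: nothing you write establishes $\|R^h\nu\|_\infty<\infty$.
\begin{itemize}
\item Calling $1_Kh^2m$ ``Green-tight'' buys nothing. With Definition \ref{defKato}, any Kato measure carried by a compact set is trivially in $\mathcal{K}_\infty$, since $1_{K^c}\nu=0$. Boundedness of its $0$-potential is the whole content, and it is not a formal consequence; compare the Zhao versus Chen distinction in the Remark after Definition \ref{defKato}.
\item Your justification does not work. (SF) makes $P^h_tf$ and $R^h_\alpha f$ ($\alpha>0$) continuous for bounded $f$. But $R^h1_K=\int_0^\infty P^h_t1_K\,dt$ is only an increasing limit of continuous functions, hence merely lower semicontinuous. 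Transience gives finiteness only quasi-everywhere. So neither continuity nor finiteness on $K$, let alone $\sup_K R^h1_K<\infty$, follows.
\item The resolvent split does not help either. $R^h(R^h_11_K)$ is the potential of a function that is not compactly supported, so the ``harmonic off $K$'' reasoning does not apply to it.
\item You also assume, without proof, a continuous strictly positive $h\in\mathcal{H}^\mu_+$ and the strong Feller property of $P^h_tf=h^{-1}P^\mu_t(hf)$. The latter is not automatic when $h$ is unbounded.
\end{itemize}

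One way to close the gap, granted (SF) for the $h$-process, is as follows. Let $e_K(x)=\mathbb{P}^h_x(\sigma_K<\infty)$. Right-continuity and the strong Markov property give $R^h1_K\le e_K\,\sup_K R^h1_K$. Also $R^h1_K\le t+P^h_tR^h1_K$; use truncations $\int_0^T$ to keep the suprema finite. Together these give $\sup_KR^h1_K\le t/(1-\theta)$ with $\theta=\sup_KP^h_te_K$. To see that $\theta<1$ for large $t$, use four facts:
\begin{itemize}
\item $P^h_te_K$ is continuous, by (SF);
\item $P^h_te_K$ is decreasing in $t$;
\item $P^h_te_K\to0$ pointwise, by transience together with absolute continuity;
\item $K$ is compact.
\end{itemize}

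The paper avoids all of this by invoking \cite[Theorem 3.7]{T14}. Under (SF) and $\mu\in\mathcal{K}_{loc}-\mathcal{K}$, that result supplies a strictly positive \emph{continuous} $g$ with $\int_E|f|^2g\,dm\le\mathcal{E}^\mu(f,f)$ for all $f\in\mathcal{D}_e(\mathcal{E}^\mu)$. Then $\delta:=1/\|W/g\|_\infty$ works, because $g$ is bounded below on ${\rm supp}\,W$.
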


\begin{proof}
We assume (\ref{eq:defsubcri1}) for any \(W \in C_c(E)\). Set \(G^\mu W := \int_0^\infty T_t^\mu W\,dt \leq \infty\). By the same argument as \cite[Lemma 2.1.4 (ii)]{CF12}, we have
\[\sup_{f\in \mathcal{D}(\mathcal{E}^\mu)} \frac{\int_E |f| W\,dm}{\sqrt{\mathcal{E}^\mu(f,f)}} = \sqrt{\int_E W G^\mu W \,dm }\leq \infty.\]
By (\ref{eq:defsubcri1}), we have
\begin{eqnarray*}
0\leq \sup_{f\in \mathcal{D}(\mathcal{E}^\mu)} \frac{\int_E |f| W\,dm}{\sqrt{\mathcal{E}^\mu(f,f)}} \leq \sup_{f\in \mathcal{D}(\mathcal{E}^\mu)} \frac{\sqrt{\int_E |f|^2 W\,dm}\,\sqrt{\int_E W\,dm}}{\sqrt{\mathcal{E}^\mu(f,f)}} \leq \frac{\sqrt{\int_E W\,dm}}{\sqrt{\delta}} <\infty,
\end{eqnarray*}
so \(0 < \langle W, G^\mu W\rangle_m <\infty\) holds. Similarly to \cite[Lemma 2.1.4 (i)]{CF12}, we have \(S_t^\mu W :=\int_0^t T_s^\mu W ds \in \mathcal{D}(\mathcal{E}^\mu)\) and \(\{S_n^\mu W\}_n \subset \mathcal{D}(\mathcal{E}^\mu)\) is an \(\mathcal{E}^\mu\)-Cauchy sequence. Since \(S^\mu_n W\) converges to \(G^\mu W\) \(m\)-almost everywhere, we obtain \(G^\mu W \in \mathcal{D}_e(\mathcal{E}^\mu)\) and so \(G^\mu W <\infty\) \(m\)-almost everywhere. Moreover it holds that \(0<G^\mu W <\infty\) \(m\)-almost everywhere. Indeed, if \(m(A)>0\) for \(A:=\{G^\mu W = 0\}\), then \(T_t^\mu W =0 \) for almost every \(t\) and \(G^\mu_1 W = 0\) on \(A\). Hence, for any strictly positive function \(f\in \mathcal{D}(\mathcal{E}^\mu)\) whose support is included in \(A\), we have \(0=\langle G_1^\mu W, f \rangle = \langle W, G_1^\mu f \rangle \) and this contradicts to \(G_1^\mu f >0\). Since \(T_t^\mu G^\mu W = \int_t^\infty T_s^\mu W\,ds \leq G^\mu W \), we have \(G^\mu W \in \mathcal{H}_+^\mu \). Hence \((\mathcal{E}^{\mu}, \mathcal{D}(\mathcal{E}^{\mu}))\) is critical or subcritical. If \((\mathcal{E}^{\mu}, \mathcal{D}(\mathcal{E}^{\mu}))\) is critical, then there exists a strictly positive continuous function \(h\in \mathcal{D}_e(\mathcal{E}^{\mu})\) such that \(\mathcal{E}^\mu(h,h)=0\) and so \(h=0\) holds by (\ref{eq:defsubcri1}). This is a contradiction.

We assume the condition \((SF)\) and \(\mu \in \mathcal{K}_{loc}-\mathcal{K}\) and the subcriticality for the Schr\"{o}dinger form \((\mathcal{E}^{\mu}, \mathcal{D}(\mathcal{E}^{\mu}))\). Then, by \cite[Theorem 3.7]{T14}, there exists a strictly positive continuous function \(g\) such that
\begin{equation*}
\int_E |f|^2 g\, dm \leq \mathcal{E}^\mu(f,f)
\end{equation*}
holds for any \(f\in  \mathcal{D}_e(\mathcal{E}^{\mu}).\) For any \(W\in C_c(E)\), setting \(\delta := 1/\|\frac{W}{g}\|_\infty \), \((\ref{eq:defsubcri1})\) holds.
\end{proof}

\section{Criticality and subcriticality of a subordinated Schr\"{o}dinger form}\label{sec:sub_shr}
Continuing from the previous section, assume that \((\mathcal{E}, \mathcal{D}(\mathcal{E}))\) is an irreducible regular Dirichlet form and that \(\mu\) is a signed smooth Radon measure such that \((\mathcal{E}^{\mu}, \mathcal{D}(\mathcal{E})\cap C_c(E))\) is non-negative definite closable symmetric form. Denote by \((\mathcal{E}^{\mu}, \mathcal{D}(\mathcal{E}^\mu))\) its Schr\"{o}dinger form 
, \(\mathcal{L}^{\mu}=\mathcal{L}-\mu\) its associated non-positive self-adjoint operator, and \(\{T_t^\mu\}_t\) its strongly continuous contraction semigroup on \(L^2(E;m)\).

In this section, we introduce a subordinated Schr\"{o}dinger form and its criticality, subcriticality, and supercriticality in order to consider a subordinated Schr\"{o}dinger operator such as \(-(-\mathcal{L}+\mu)^{\beta}\).

\begin{definition}[{c.f.\cite[Definition 21.4]{S99}}]
A subordinator is a \([0, \infty)\)-valued L\'{e}vy process. More precisely, a \([0,\infty]\)-valued stochastic process \(\{S_t\}_t\) is a subordinator if, almost surely, it is right continuous and has left limits, \(S_0=0\), \(S_t \geq 0\), the distribution of \(S_{t+s}-S_t\) does not depend on \(s\) for any \(t\geq 0\), and \(S_{t_{i+1}}-S_{t_i}\) are independent of each other for \(0\leq t_1 <t_2 < \cdots <t_n\).
\end{definition}

Let \(\{\eta_t\}_t\) be a strongly continuous contraction semigroup of a probability measure of a subordinator \(S\), that is, \(\mathbb{P}(S_t \in ds)=\eta_t(ds)\). Then, by the L\'{e}vy-Khintchine formula (See, for example, \cite[Theorem 8.1]{S99}), \(\{\eta_t\}_t\) is characterized by
\begin{eqnarray*}
\int_0^\infty e^{-\lambda s}\eta_t(ds) = e^{-t \Phi(\lambda)}\\
\Phi(\lambda)= b\lambda + \int_0^\infty (1-e^{-\lambda s})\, \nu(ds),
\end{eqnarray*}
where \(b \geq 0\) is a constant called the drift coefficient and \(\nu\) is a Radon measure on \((0, \infty)\) satisfying \(\int_0^\infty (1\wedge s)\,d\nu(s) < \infty\), called the  L\'{e}vy  measure. \(\Phi\) is called a {\it Bernstein function.}

For a subordinator with a Bernstein function \(\Phi\) and semigroup \(\{\eta_t\}_t\), we define a subordinated semigroup \(\{T_t^{\mu, \Phi}\}_t\) by
\[T_t^{\mu, \Phi}f :=\int_0^\infty T_s^{\mu}f\, \eta_t(ds) \]
for \(f\in L^2(E;m).\) Then \(\{T_t^{\mu, \Phi}\}_t\) is also a strongly continuous contraction semigroup and its generator \(\mathcal{L}^{\mu, \Phi}\) is given by 
\[\mathcal{L}^{\mu, \Phi} = -\Phi(-\mathcal{L}^{\mu}) = b\mathcal{L}^{\mu} - \int_0^\infty (I-T_s^{\mu})\, \nu(ds)\]
and \(\mathcal{D}(\mathcal{L}^{\mu, \Phi}) \supset \mathcal{D}(\mathcal{L}^{\mu}).\) We call \(\mathcal{L}^{\mu, \Phi}\) a subordinated Schr\"{o}dinger operator. Hence, there exists a closed form \((\mathcal{E}^{\mu, \Phi}, \mathcal{D}(\mathcal{E}^{\mu, \Phi}))\) on \(L^2(E;m)\) by \(\mathcal{E}^{\mu, \Phi}(f,g):=\langle -\mathcal{L}^{\mu, \Phi}f,g \rangle_m \) and \(\mathcal{D}(\mathcal{E}^{\mu, \Phi}) = \mathcal{D}(\sqrt{-\mathcal{L}^{\mu, \Phi}})\). Since this is a closed form on \(L^2(E;m)\), as in the case of Dirichlet forms, we define \(\mathcal{E}_\alpha ^{\mu, \Phi}(f,g):= \mathcal{E}^{\mu, \Phi}(f,g) +\alpha \langle f,g \rangle_m \).

We call \((\mathcal{E}^{\mu, \Phi}, \mathcal{D}(\mathcal{E}^{\mu, \Phi}))\) on \(L^2(E;m)\) {\it a subordinated Schr\"{o}dinger form}. See \cite{S98, P52} for details on a subordination of a closed form.

\^Okura obtained the representation of Dirichlet forms of subordinated Markov processes. He used only spectral analysis to obtain the representation without using the Markov property for an original Dirichlet form, and hence we obtain the same representation for a subordinated Schr\"{o}dinger form \((\mathcal{E}^{\mu, \Phi}, \mathcal{D}(\mathcal{E}^{\mu, \Phi}))\). See \cite{O02, AR05} for details on a subordination of a Dirichlet form.

\begin{theorem}[{cf. \cite[Theorem 2.1]{O02}}]\label{Okura}
Let \((\mathcal{E}, \mathcal{D}(\mathcal{E}))\) be an irreducible regular Dirichlet form  on \(L^2(E;m)\), \(\mu\) be a signed smooth Radon measure making \((\mathcal{E}^{\mu}, \mathcal{D}(\mathcal{E}) \cap C_c(E))\) non-negative definite closable, and \(\Phi\) be a Bernstein function. Then it holds that \(\mathcal{D}(\mathcal{E}^{\mu}) \subset \mathcal{D}(\mathcal{E}^{\mu, \Phi})\) and, for \(f,g \in \mathcal{D}(\mathcal{E}^{\mu})\),
\[\mathcal{E}^{\mu, \Phi}(f,g)=b\, \mathcal{E}^{\mu}(f,g) + \int_0^\infty \langle f-T_s^\mu f, g \rangle_m d\nu(s).\]
Moreover, for any \(\mathcal{E}^{\mu}_1\)-dense subspace \(\mathcal{C}^\mu\) of \(\mathcal{D}(\mathcal{E}^{\mu})\), \(\mathcal{C}^\mu\) is also \(\mathcal{E}^{\mu, \Phi}_1\)-dense in \(\mathcal{D}(\mathcal{E}^{\mu, \Phi})\).
\end{theorem}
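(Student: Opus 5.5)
The plan is to argue purely spectrally, in the spirit of \^Okura's original proof, using only that \(-\mathcal{L}^\mu\) is a non-negative self-adjoint operator on \(L^2(E;m)\).

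\textbf{Spectral formula for the form.} Let \(E_\lambda\) be the spectral resolution of \(-\mathcal{L}^\mu\), so that \(T_s^\mu=\int_{[0,\infty)} e^{-s\lambda}\,dE_\lambda\) and \(-\mathcal{L}^{\mu,\Phi}=\Phi(-\mathcal{L}^\mu)\) by the functional calculus. The subordinated semigroup is \(T^{\mu,\Phi}_t=\int e^{-t\Phi(\lambda)}dE_\lambda\), via Fubini and the Laplace transform identity for \(\eta_t\). Hence
\[
\mathcal{D}(\mathcal{E}^{\mu,\Phi})=\Bigl\{f:\int\Phi(\lambda)\,d\langle E_\lambda f,f\rangle_m<\infty\Bigr\},\qquad \mathcal{E}^{\mu,\Phi}(f,g)=\int\Phi(\lambda)\,d\langle E_\lambda f,g\rangle_m .
\]

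\textbf{Domain inclusion and the formula.} Since \(1-e^{-\lambda s}\le 1\wedge(\lambda s)\le (1+\lambda)(1\wedge s)\), we get \(\Phi(\lambda)\le C(1+\lambda)\) with \(C=b+\int(1\wedge s)\,d\nu\). Therefore \(\int\Phi\,d\langle E f,f\rangle\le C\,\mathcal{E}^\mu_1(f,f)\), which gives \(\mathcal{D}(\mathcal{E}^\mu)=\mathcal{D}(\sqrt{-\mathcal{L}^\mu})\subset\mathcal{D}(\mathcal{E}^{\mu,\Phi})\). For \(f,g\in\mathcal{D}(\mathcal{E}^\mu)\), insert \(\Phi(\lambda)=b\lambda+\int(1-e^{-\lambda s})\,\nu(ds)\) into the spectral integral and exchange the order of integration. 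The drift term gives \(b\,\mathcal{E}^\mu(f,g)\). The jump term becomes \(\int_0^\infty\int(1-e^{-\lambda s})\,d\langle E_\lambda f,g\rangle\,\nu(ds)=\int_0^\infty\langle f-T_s^\mu f,g\rangle_m\,\nu(ds)\).

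Fubini must be justified, since \(\langle E_\lambda f,g\rangle\) is a signed (complex) measure. Polarization reduces this to \(f=g\), where the measure is positive and Tonelli applies. The integrand is non-negative with total integral bounded by \(C\,\mathcal{E}^\mu_1(f,f)<\infty\), so the exchange is legitimate, and by polarization the formula then holds for all \(f,g\).

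\textbf{Density.} The same estimate gives \(\mathcal{E}^{\mu,\Phi}_1\le (C+1)\,\mathcal{E}^\mu_1\) on \(\mathcal{D}(\mathcal{E}^\mu)\). It therefore suffices to show that \(\mathcal{D}(\mathcal{E}^\mu)\) is \(\mathcal{E}^{\mu,\Phi}_1\)-dense in \(\mathcal{D}(\mathcal{E}^{\mu,\Phi})\); then an \(\mathcal{E}^\mu_1\)-dense \(\mathcal{C}^\mu\) is also \(\mathcal{E}^{\mu,\Phi}_1\)-dense by the comparison. For \(f\in\mathcal{D}(\mathcal{E}^{\mu,\Phi})\), set \(f_n:=E_{[0,n]}f\in\mathcal{D}(\mathcal{L}^\mu)\subset\mathcal{D}(\mathcal{E}^\mu)\). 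Then
\[
\mathcal{E}^{\mu,\Phi}_1(f-f_n,f-f_n)=\int_{(n,\infty)}(1+\Phi)\,d\langle E_\lambda f,f\rangle\to 0
\]
by dominated convergence. (Alternatively one can use \(G^\mu_1\)-approximations \(nG^\mu_n f\).)

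The main obstacle is the Fubini step with spectral measures, handled as above by polarization and the bound \(\Phi(\lambda)\le C(1+\lambda)\). Everything else is routine functional calculus, and at no point is the Markov property needed.
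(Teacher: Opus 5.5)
Your proposal is correct and follows essentially the route the paper takes. The paper gives no separate argument but invokes \^{O}kura's Theorem 2.1, observing that its proof is purely spectral and never uses the Markov property; that is exactly the functional-calculus argument you spell out (the bound $\Phi(\lambda)\le C(1+\lambda)$, Tonelli on the diagonal followed by polarization, and the spectral truncation $E_{[0,n]}f$ together with the comparison $\mathcal{E}^{\mu,\Phi}_1\le (C+1)\mathcal{E}^{\mu}_1$ for the density statement), and your write-up is more self-contained in checking that only non-negativity and self-adjointness of $-\mathcal{L}^\mu$ are needed.
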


\begin{example}
We consider a Bernstein function \(\Phi\) with \(b=0\) and \(d\nu(s) = c_{\alpha} s^{-\alpha/2 -1}\, ds\) for \(0< \alpha <2\), then this is called an \(\alpha/2\)-stable subordinator and it holds that \(\Phi(\lambda)= \lambda^{\alpha/2}\).
For example, if \(\mathcal{L} = \Delta\) on \(\mathbb{R}^d\), \(d\mu = V dx \in \mathcal{S}_R-\mathcal{S}_R\) satisfying \(\mathcal{H}_+^\mu \not = \emptyset\), then we have \(\mathcal{L}^{\mu, \Phi} = -(-\Delta + V)^{\alpha/2}\) and
\[\mathcal{E}^{\mu, \Phi}(f,g)= \int_0^\infty \int_{\mathbb{R}^d} (f(x)-\mathbb{E}_x[e^{\int_0^tV(X_s)\,ds} f(X_t)])g(x)\, dx\, d\nu(s),\]
where \(X\) is a \(d\)-dimensional Brownian motion.
\end{example}

By Theorem \ref{Okura}, \(\mathcal{D}(\mathcal{E}^\mu)\) is \(\mathcal{E}^{\mu, \Phi}_1\)-dense in \(\mathcal{D}(\mathcal{E}^{\mu, \Phi})\). For any \(f\in \mathcal{D}(\mathcal{E}^\mu)\), it holds that \(|f| \in \mathcal{D}(\mathcal{E}^\mu)\) and by Theorem \ref{Okura} again, we have
\begin{eqnarray*}
\mathcal{E}^{\mu, \Phi}(|f|, |f|) &=& b\, \mathcal{E}^{\mu}(|f|,|f|) + \int_0^\infty \langle |f|-T_s^\mu |f|, |f| \rangle_m d\nu(s) \\
&\leq & b\, \mathcal{E}^{\mu}(f,f) + \int_0^\infty \langle f-T_s^\mu f, f \rangle_m d\nu(s)\\
&=& \mathcal{E}^{\mu, \Phi}(f, f).
\end{eqnarray*}
Hence by \cite[Lemma 1.3.4]{D89}, for any \(f \in \mathcal{D}(\mathcal{E}^{\mu, \Phi})\), it holds that \(|f|\in \mathcal{D}(\mathcal{E}^{\mu, \Phi})\) and \(\mathcal{E}^{\mu, \Phi}(|f|,|f|) \leq \mathcal{E}^{\mu, \Phi}(f,f)\). By \cite[Proposition 2]{Sc99}, 
\((\mathcal{E}^{\mu, \Phi}, \mathcal{D}(\mathcal{E}^{\mu, \Phi}))\) satisfies the Fatou property, that is, for any \(f, f_n \in \mathcal{D}(\mathcal{E}^{\mu, \Phi})\) satisfying \(\sup_n \mathcal{E}^{\mu, \Phi}(f_n,f_n) <\infty\) and \(f_n\) converges to \(f\) \(m\)-almost everywhere, then \(\mathcal{E}^{\mu, \Phi}(f,f) \leq \varliminf_n \mathcal{E}^{\mu, \Phi}(f_n,f_n) \). Hence we can define the extended space \(\mathcal{D}_e(\mathcal{E}^{\mu, \Phi})\) by the set of all \(m\)-measurable functions \(f\) satisfying \(|f|<\infty\) \(m\)-almost everywhere and possessing an approximating sequence \(\{f_n\}_n \subset \mathcal{D}(\mathcal{E}^{\mu, \Phi})\) such that \(\mathcal{E}^{\mu, \Phi}(f_n-f_m, f_n-f_m)\to 0\) as \(m,n \to \infty\) and \(f_n \to f\) \(m\)-almost everywhere, and we can define \(\mathcal{E}^{\mu, \Phi}(f,f):=\lim_{n\to \infty} \mathcal{E}^{\mu, \Phi}(f_n,f_n)\). Indeed, for an \(\mathcal{E}^{\mu, \Phi}\)-Cauchy sequence \(\{f_n\}_n \subset \mathcal{D}(\mathcal{E}^{\mu, \Phi})\) converging to \(f\in \mathcal{D}(\mathcal{E}^{\mu, \Phi})\) \(m\)-almost everywhere, then \(\{f_n-f_m\}_m \subset \mathcal{D}( \mathcal{E}^{\mu, \Phi})\) is also an \(\mathcal{E}^{\mu, \Phi}\)-Cauchy sequence for each \(n\) and so we have 
\[\sqrt{\mathcal{E}^{\mu, \Phi}(f_n,f_n)} \leq \sqrt{\mathcal{E}^{\mu, \Phi}(f_n-f,f_n-f)} + \sqrt{\mathcal{E}^{\mu, \Phi}(f,f)} \leq \varliminf_m \sqrt{\mathcal{E}^{\mu, \Phi}(f_n-f_m,f_n-f_m)} + \sqrt{\mathcal{E}^{\mu, \Phi}(f,f)}\]
and by letting \(n\) tend to infinity, \(\varlimsup_n \mathcal{E}^{\mu, \Phi}(f_n,f_n) \leq \mathcal{E}^{\mu, \Phi}(f,f)\). Combining this with the Fatou property, we obtain \(\lim_n \mathcal{E}^{\mu, \Phi}(f_n,f_n) = \mathcal{E}^{\mu, \Phi}(f,f)\) and so the definition of the extended space \(\mathcal{D}_e(\mathcal{E}^{\mu, \Phi})\) is well-defined. Similarly, the limit  \(\mathcal{E}^{\mu, \Phi}(f,f)\) for \(f\in \mathcal{D}(\mathcal{E}^{\mu, \Phi})\) is independent of the choice of an approximating sequence. We define the notions of subcriticality and criticality in a similar way to \cite{TU23} as follows.
\begin{definition}
Let \((\mathcal{E}, \mathcal{D}(\mathcal{E}))\) be an irreducible regular Dirichlet form  on \(L^2(E;m)\), \(\mu\) be a signed smooth Radon measure making \((\mathcal{E}^{\mu}, \mathcal{D}(\mathcal{E}) \cap C_c(E))\) non-negative definite closable, and \(\Phi\) be a Bernstein function. We define the notions of criticality for subordinated Schr\"{o}dinger forms as follows.
\begin{enumerate}
\item  A subordinated Schr\"{o}dinger form \((\mathcal{E}^{\mu, \Phi}, \mathcal{D}(\mathcal{E}^{\mu, \Phi}))\) is subcritical if there exists a strictly positive bounded function \(g \in L^1(E;m)\) satisfying that for any \(f\in \mathcal{D}_e(\mathcal{E}^{\mu, \Phi})\),
\begin{equation} \int_E |f|g \,dm \leq \sqrt{\mathcal{E}^{\mu, \Phi}(f,f)}. \label{eq:sub_subcri} \end{equation}
\item A subordinated Schr\"{o}dinger form \((\mathcal{E}^{\mu, \Phi}, \mathcal{D}(\mathcal{E}^{\mu, \Phi}))\) is critical if there exists a strictly positive function \(h\in  \mathcal{D}_e(\mathcal{E}^{\mu, \Phi})\) satisfying \(\mathcal{E}^{\mu, \Phi}(h,h)=0.\)
\item  A subordinated Schr\"{o}dinger form \((\mathcal{E}^{\mu, \Phi}, \mathcal{D}(\mathcal{E}^{\mu, \Phi}))\) is supercritical if neither \((1)\) nor \((2)\) is satisfied.
\end{enumerate}
\end{definition}

We say that \(\mathcal{L}^{\mu, \Phi}\) is subcritical (resp. critical, supercritical) if \((\mathcal{E}^{\mu, \Phi}, \mathcal{D}(\mathcal{E}^{\mu, \Phi}))\) is subcritical (resp. critical, supercritical).

To characterize criticalities for a subordinated Schr\"{o}dinger form from a perspective of probability theory, we set
\[\mathcal{H}^{\mu, \Phi}_+:=\{h : 0<h<\infty,\  T_t^{\mu, \Phi}h \leq h \text{\ for\ any\ }t>0, m\text{-almost everywhere}\}.\]

Since, for \(h \in \mathcal{H}_+^\mu\),
\[T_t^{\mu, \Phi}h =\int_0^\infty T_s^{\mu}h \, \eta_t(ds) \leq h\, \int_0^\infty \eta_t(ds) =h\]
 it holds that \(\mathcal{H}_+^\mu \subset \mathcal{H}^{\mu, \Phi}_+ \).

For any \(h \in \mathcal{H}^{\mu, \Phi}_+\), we define an \(h\)-transform \((\mathcal{E}^{\mu,\Phi, h}, \mathcal{D}(\mathcal{E}^{\mu,\Phi,h}))\) by
\begin{eqnarray*}
\mathcal{E}^{\mu,\Phi,h}(f,g)&:=& \mathcal{E}^{\mu, \Phi}(fh,gh)\\
\mathcal{D}(\mathcal{E}^{\mu,\Phi,h})&:=& \{f\in L^2(E;h^2m) : fh\in \mathcal{D}(\mathcal{E}^{\mu, \Phi})\}.
\end{eqnarray*}

Then the corresponding self-adjoint operator \(\mathcal{L}^{\mu, \Phi, h}\) is represented by \(\mathcal{L}^{\mu, \Phi, h}f = \frac{1}{h}\mathcal{L}^{\mu, \Phi}(fh)\) for \(f \in \mathcal{D}(\mathcal{L}^{\mu, \Phi, h})= \{f \in L^2(E;h^2m) : fh \in \mathcal{D}(\mathcal{L}^{\mu, \Phi})\}\), and the corresponding semigroup \(\{T_t^{\mu, \Phi, h}\}_t\) is \(T_t^{\mu, \Phi, h}f = \frac{1}{h}T_t^{\mu, \Phi}(fh)\) for \(f\in L^2(E;h^2 m)\).

An \(h\)-transform of a subordinated Schr\"{o}dinger form \((\mathcal{E}^{\mu,\Phi, h}, \mathcal{D}(\mathcal{E}^{\mu,\Phi,h}))\) is a Dirichlet form on \(L^2(E;h^2m)\). Indeed, for any \(f \in \mathcal{D}(\mathcal{E}^{\mu,\Phi,h})\) with \(0\leq f \leq 1\), we have \(fh \in \mathcal{D}(\mathcal{E}^{\mu,\Phi})\) and \[0\leq T_t^{\mu, \Phi, h}f =\frac{1}{h} T_t^{\mu, \Phi}(hf) \leq \frac{1}{h} T_t^{\mu, \Phi}h \leq 1.\]

We consider the following condition.\\
(IB): A Bernstein function \(\Phi\) satisfies  either \(b>0\) or \(\inf{{\rm supp}(\nu)}=0\).
\begin{proposition}\label{irr}
Let \((\mathcal{E}, \mathcal{D}(\mathcal{E}))\) be an irreducible regular Dirichlet form on \(L^2(E;m)\) and \(\mu\) be a signed smooth Radon measure making \((\mathcal{E}^{\mu}, \mathcal{D}(\mathcal{E}) \cap C_c(E))\) non-negative definite closable. Then, under the condition \rm{(IB)}, \((\mathcal{E}^{\mu,\Phi}, \mathcal{D}(\mathcal{E}^{\mu,\Phi}))\) is irreducible. Moreover \((\mathcal{E}^{\mu,\Phi, h}, \mathcal{D}(\mathcal{E}^{\mu, \Phi,h}))\) is an irreducible Dirichlet form on \(L^2(E;h^2m)\) for any \(h\in \mathcal{H}_+^{\mu, \Phi}\).
\end{proposition}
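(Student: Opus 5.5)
We will use the semigroup characterization of irreducibility: a closed form with semigroup \(\{T_t\}_t\) is irreducible if every invariant set \(A\) is \(m\)-trivial, i.e.\ if \(T_t(1_A f)=1_A T_t f\) for all \(f\in L^2\) and \(t>0\), then \(m(A)=0\) or \(m(A^c)=0\). Irreducibility of \((\mathcal{E},\mathcal{D}(\mathcal{E}))\) transfers to \(\{T^\mu_t\}_t\) through the Feynman--Kac representation \(P_t^\mu f(x)=\mathbb{E}_x[e^{A^\mu_t}f(X_t)]\). The weight \(e^{A_t^\mu}\) is strictly positive, so \(T_t^\mu\) and \(T_t\) have the same positivity-improving behaviour. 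We therefore first check that \(\{T_t^\mu\}_t\) is irreducible: if \(A\) is \(T^\mu\)-invariant, then \(\langle T_t^\mu 1_{A\cap K},1_{A^c\cap K'}\rangle_m=0\) for compacts \(K,K'\). Since \(e^{A^\mu_t}>0\) a.s., this forces \(\mathbb{P}_x(X_t\in A^c\cap K')=0\) for a.e.\ \(x\in A\cap K\), so \(A\) is \(T_t\)-invariant up to null sets and hence trivial.

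Now let \(A\) be invariant for \(\{T_t^{\mu,\Phi}\}_t\). The plan is to show that \(A\) is invariant for \(T^\mu_s\) for all \(s\) in a set accumulating at \(0\), and then use the semigroup property and strong continuity. Take \(f=1_{A\cap K}\) and \(g=1_{A^c\cap K'}\) with \(f,g\ge0\). Invariance gives
\[0=\langle T_t^{\mu,\Phi}f,g\rangle_m=\int_0^\infty \langle T_s^\mu f,g\rangle_m\,\eta_t(ds).\]
Since \(T^\mu_s\) is positivity preserving, the integrand is nonnegative and continuous in \(s\). Hence \(\langle T_s^\mu f,g\rangle_m=0\) for every \(s\in{\rm supp}(\eta_t)\), for every \(t>0\).

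Under (IB) we claim \(\bigcup_{t>0}{\rm supp}(\eta_t)\) contains points arbitrarily close to \(0\). If \(b>0\), then \(S_t\ge bt\) and \(S_t\to0\) in probability, so \(\inf{\rm supp}\,\eta_t\to0\). If \(b=0\) and \(\inf{\rm supp}\,\nu=0\), then for small \(t\) the law \(\eta_t\) charges neighbourhoods of every point of \({\rm supp}\,\nu\), since the event of exactly one jump has positive probability and the remaining small jumps are small. This gives \(s_n\downarrow0\) with \(\langle T^\mu_{s_n}f,g\rangle=0\), and we would pin this down using \(\eta_t\ge e^{-t\nu([\varepsilon,\infty))}\)-weighted single-jump contributions. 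We expect this support argument to be the main obstacle.

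The remaining step is to pass from vanishing at \(s_n\) to vanishing for all \(s\), which is the delicate part. With \(\langle T^\mu_{s}f,g\rangle=0\) only on a sequence, we use analyticity: \(s\mapsto\langle T_s^\mu f,g\rangle_m=\int e^{-s\lambda}\,d\langle E_\lambda f,g\rangle\) is real-analytic on \((0,\infty)\). Its zeros therefore must be isolated unless it vanishes identically. To get a non-isolated zero set, we instead use all points of \({\rm supp}\,\eta_t\): in the drift case this is an interval \([bt,\infty)\), and in the jump case the support of \(\eta_t\) contains \(s+s'\) sums, hence an accumulation point in \((0,\infty)\). Therefore \(\langle T^\mu_s f,g\rangle=0\) for all \(s>0\), so \(A\) is \(T^\mu\)-invariant and thus trivial. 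Exhausting \(E\) by compacts finishes the proof for \(\mathcal{E}^{\mu,\Phi}\).

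For the \(h\)-transform, note that \(T_t^{\mu,\Phi,h}f=h^{-1}T_t^{\mu,\Phi}(fh)\) and that multiplication by \(1_A\) commutes with multiplication by \(h\) or \(h^{-1}\). Hence \(A\) is invariant for \(\{T_t^{\mu,\Phi,h}\}_t\) on \(L^2(E;h^2m)\) iff it is invariant for \(\{T_t^{\mu,\Phi}\}_t\) on \(L^2(E;m)\). Since \(h^2m\) and \(m\) have the same null sets because \(0<h<\infty\), irreducibility follows. The Dirichlet property was shown just before the statement.
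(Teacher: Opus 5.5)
Your proof is correct in substance, but it takes a different route from the paper's.

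The paper argues at the level of forms. For an invariant set \(A\) and non-negative \(f,g\in\mathcal{D}(\mathcal{E}^\mu)\), it uses \^{O}kura's representation to write \(0=\mathcal{E}^{\mu,\Phi}(1_Af,1_{A^c}g)=b\,\mathcal{E}^{\mu}(1_Af,1_{A^c}g)-\int_0^\infty\langle T^\mu_s(1_Af),1_{A^c}g\rangle_m\,d\nu(s)\) and notes that both terms are non-positive. It then extracts \(\mathcal{E}^\mu(1_Af,1_{A^c}g)=0\), either from the drift term or as the limit of \(\frac1s\langle 1_Af-T^\mu_s(1_Af),1_{A^c}g\rangle_m\) along \(s\in\mathrm{supp}(\nu)\), \(s\to0\). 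Since \(\int 1_Af\,1_{A^c}g\,d\mu=0\), this reads \(\mathcal{E}(1_Af,1_{A^c}g)=0\), and irreducibility of \(\mathcal{E}\) is invoked directly.

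You never touch the forms. You get \(\langle T^\mu_sf,g\rangle_m=0\) on \(\mathrm{supp}(\eta_t)\) from the subordination integral and positivity. You upgrade this to \(T^\mu\)-invariance, and you transfer irreducibility from \(\{T_t\}\) to \(\{T^\mu_t\}\) through the strictly positive Feynman--Kac weight.

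The trade-offs are these. The paper's route is shorter, and it shows exactly where (IB) enters: it is what lets one pass from \(\mathcal{E}^{\mu,\Phi}(1_Af,1_{A^c}g)=0\) to \(\mathcal{E}^\mu(1_Af,1_{A^c}g)=0\). Your route avoids domain questions the paper leaves implicit, for instance that \(1_Af\in\mathcal{D}(\mathcal{E}^\mu)\), which is needed to apply \^{O}kura's formula. The price is that you rely on the identification \(P^\mu_tf=T^\mu_tf\) asserted in Section 2.

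One claim needs correcting. In the drift case, \(\mathrm{supp}(\eta_t)\) is not \([bt,\infty)\) in general. For \(\Phi(\lambda)=b\lambda\) one has \(\eta_t=\delta_{bt}\), and adding unit Poisson jumps gives \(\{bt+n:n\ge0\}\). What is true, and enough, is that \(bt\in\mathrm{supp}(\eta_t)\) for every \(t>0\). This holds because the pure-jump part charges \([0,\varepsilon)\) for each \(\varepsilon>0\); split the jumps at a small level \(\delta\), exactly as in your single-jump argument. The zero set then contains all of \((0,\infty)\).

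Alternatively, your first plan needs no analyticity. Vanishing for all compacts \(K,K'\) at a fixed \(s\) gives \(T^\mu_s\)-invariance of \(A\), by positivity and symmetry. Invariance at \(s_n\downarrow0\) propagates by the semigroup law to the additive semigroup generated by the \(s_n\), which is dense in \((0,\infty)\). Strong continuity does the rest.

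Your viewpoint in fact gives more. \(T^\mu_{s_0}\)-invariance for a single \(s_0>0\) already forces \(T^\mu_s\)-invariance for every \(s>0\). The reason is that multiplication by \(1_A\) then commutes with every continuous function of the bounded self-adjoint operator \(T^\mu_{s_0}\), and in particular with \(T^\mu_s=(T^\mu_{s_0})^{s/s_0}\). Since \(\mathrm{supp}(\eta_t)\) contains a positive point whenever \(\Phi\not\equiv0\), this gives irreducibility of \(\{T^{\mu,\Phi}_t\}_t\) without using (IB) at all.
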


\begin{proof}
We take a \(\{T_t^{\mu, \Phi}\}_t\)-invariant set \(A\), then, as in \cite[Theorem 1.6.1]{FOT11}, this is equivalent to \(1_A f, 1_{A^c}g \in \mathcal{D}(\mathcal{E}^{\mu, \Phi})\) and \(\mathcal{E}^{\mu, \Phi}(1_A f, 1_{A^c}g)=0\) for any \(f,g\in \mathcal{D}(\mathcal{E}^{\mu, \Phi})\). For any non-negative functions \(f, g\in \mathcal{D}(\mathcal{E}^{\mu})\), we have
 \[\mathcal{E}^{\mu}(1_Af, 1_{A^c}g)= \lim_{t\searrow 0}\frac{1}{t}\langle1_Af - T_t^\mu(1_Af), 1_{A^c}g \rangle_m =  -\lim_{t\searrow 0}\frac{1}{t}\langle T_t^\mu(1_Af), 1_{A^c}g \rangle_m \leq 0.\]
Similarly, we have \(\mathcal{E}^{\mu, \Phi}(1_Af, 1_{A^c}g) \leq 0\). By Theorem \ref{Okura}, for any non-negative functions \(f, g\in \mathcal{D}(\mathcal{E}^{\mu})\), we have
\begin{eqnarray}
\nonumber 0=\mathcal{E}^{\mu, \Phi}(1_Af, 1_{A^c}g) &=& b\, \mathcal{E}^{\mu}(1_Af, 1_{A^c}g) + \int_0^\infty \langle 1_Af-T_t^{\mu}(1_Af), 1_{A^c} g \rangle_m \,d\nu(t)\\
&=&  b\, \mathcal{E}(1_Af, 1_{A^c}g) - \int_0^\infty \langle T_t^{\mu}(1_Af), 1_{A^c} g \rangle_m\,d\nu(t). \label{eq:irr_1}
\end{eqnarray}
Since each term in \((\ref{eq:irr_1})\) are non positive, if \(b>0\), it holds that \(\mathcal{E}(1_Af, 1_{A^c}g) = 0\), and if \(\inf{{\rm supp}(\nu)}=0\), it holds that
\[\mathcal{E}(1_Af, 1_{A^c}g)  = \lim_{\substack{t\searrow 0,\\ t \in  \inf{{\rm supp}(\nu)}}}\frac{1}{t}\langle1_Af - T_t^\mu(1_Af), 1_{A^c}g \rangle_m = 0.\]
For \(f\in \mathcal{D}(\mathcal{E}^{\mu})\), by considering \(f=f_+-f_-,\) it holds that \(\mathcal{E}(1_Af, 1_{A^c}f)=0\). By the irreducibility of \((\mathcal{E}, \mathcal{D}(\mathcal{E}))\), \(m(A)=0\) or \(m(A^c)=0\) holds. Hence \((\mathcal{E}^{\mu,\Phi}, \mathcal{D}(\mathcal{E}^{\mu,\Phi}))\) is irreducible.

An \(h\)-transform preserves irreducibility, so \((\mathcal{E}^{\mu,\Phi, h}, \mathcal{D}(\mathcal{E}^{\mu, \Phi,h}))\) is an irreducible Dirichlet form on \(L^2(E;h^2m)\) for any \(h\in \mathcal{H}_+^{\mu, \Phi}\).
\end{proof}

Similarly to \cite{TU23}, we provide probabilistic characterizations for criticalities as follows.

\begin{theorem}\label{subodinated_subcri}
Let \((\mathcal{E}, \mathcal{D}(\mathcal{E}))\) be an irreducible regular Dirichlet form  on \(L^2(E;m)\), \(\mu\) be a signed smooth Radon measure making \((\mathcal{E}^{\mu}, \mathcal{D}(\mathcal{E}) \cap C_c(E))\) non-negative definite closable, and \(\Phi\) be a Bernstein function satisfying the condition \rm{(IB)}. Then the following are equivalent.
\begin{enumerate}
\item A subordinated Schr\"{o}dinger form \((\mathcal{E}^{\mu, \Phi}, \mathcal{D}(\mathcal{E}^{\mu, \Phi}))\) is  subcritical.
\item \(\mathcal{H}_+^{\mu, \Phi}\) is not empty and \((\mathcal{E}^{\mu,\Phi, h}, \mathcal{D}(\mathcal{E}^{\mu, \Phi,h}))\) on \(L^2(E;h^2m)\) is transient for some \(h\in \mathcal{H}_+^{\mu, \Phi}\).
\item \(\mathcal{H}_+^{\mu, \Phi}\) is not empty and \((\mathcal{E}^{\mu,\Phi, h}, \mathcal{D}(\mathcal{E}^{\mu, \Phi,h}))\) on \(L^2(E;h^2m)\) is transient for any \(h\in \mathcal{H}_+^{\mu, \Phi}\).
\end{enumerate}
\end{theorem}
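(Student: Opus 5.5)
We will follow the scheme of \cite[Theorem 2.13]{TU23}, using that $(\mathcal{E}^{\mu,\Phi},\mathcal{D}(\mathcal{E}^{\mu,\Phi}))$ is a closed non-negative form with $\mathcal{E}^{\mu,\Phi}(|f|,|f|)\le \mathcal{E}^{\mu,\Phi}(f,f)$, the Fatou property, and a well-defined extended space. By Proposition \ref{irr}, under (IB) every $h$-transform with $h\in\mathcal{H}_+^{\mu,\Phi}$ is an irreducible Dirichlet form on $L^2(E;h^2m)$. It is therefore either transient or recurrent, and the usual Dirichlet form characterizations (e.g.\ \cite[Theorem 1.6.2, 1.6.3]{FOT11}) are available for it. The implication (3)$\Rightarrow$(2) is trivial, so we prove (1)$\Rightarrow$(3) and (2)$\Rightarrow$(1).

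\emph{(1)$\Rightarrow$(3).} Let $g$ be the function from \eqref{eq:sub_subcri}. We first show $\mathcal{H}_+^{\mu,\Phi}\neq\emptyset$ by mimicking the proof of Proposition \ref{defsubcri}. Set $G^{\mu,\Phi}g:=\int_0^\infty T_t^{\mu,\Phi}g\,dt$. The variational identity
\[
\sup_{f}\frac{\int|f|g\,dm}{\sqrt{\mathcal{E}^{\mu,\Phi}(f,f)}}=\sqrt{\langle g,G^{\mu,\Phi}g\rangle_m}
\]
(as in \cite[Lemma 2.1.4]{CF12}) gives $\langle g,G^{\mu,\Phi}g\rangle_m\le 1$. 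Next, $S_n g:=\int_0^n T^{\mu,\Phi}_s g\,ds$ is $\mathcal{E}^{\mu,\Phi}$-Cauchy, so $G^{\mu,\Phi}g\in\mathcal{D}_e(\mathcal{E}^{\mu,\Phi})$ is finite a.e. Positivity a.e.\ follows from irreducibility (Proposition \ref{irr}) exactly as in Proposition \ref{defsubcri}, and $T_t^{\mu,\Phi}G^{\mu,\Phi}g\le G^{\mu,\Phi}g$. Hence $G^{\mu,\Phi}g\in\mathcal{H}_+^{\mu,\Phi}$.

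Now fix any $h\in\mathcal{H}_+^{\mu,\Phi}$. The map $f\mapsto fh$ identifies $\mathcal{D}_e(\mathcal{E}^{\mu,\Phi,h})$ with $\mathcal{D}_e(\mathcal{E}^{\mu,\Phi})$ isometrically. Checking this requires only that a.e.\ convergence is preserved under multiplication by $0<h<\infty$, together with well-definedness of both extended spaces. Consequently, for $f\in\mathcal{D}_e(\mathcal{E}^{\mu,\Phi,h})$,
\[
\int_E|f|\,\frac{g}{h}\,h^2\,dm=\int|fh|g\,dm\le\sqrt{\mathcal{E}^{\mu,\Phi,h}(f,f)}.
\]
The function $g/h>0$ lies in $L^1(h^2m)$, but it need not be bounded. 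We therefore replace it by $g':=(g/h)\wedge 1\cdot\min(1,1/h)$, or a similar truncation, which is strictly positive, bounded, and in $L^1(h^2m)$. This is exactly the transience criterion \cite[Theorem 1.6.2]{FOT11}.

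\emph{(2)$\Rightarrow$(1).} Let the $h$-transform be transient. By \cite[Theorem 1.6.2]{FOT11} there is a bounded, strictly positive $\tilde g\in L^1(h^2m)$ with $\int|f|\tilde g\,h^2dm\le\sqrt{\mathcal{E}^{\mu,\Phi,h}(f,f)}$ on $\mathcal{D}_e(\mathcal{E}^{\mu,\Phi,h})$. Via the isometry above, this reads $\int|u|\tilde g h\,dm\le\sqrt{\mathcal{E}^{\mu,\Phi}(u,u)}$ for $u\in\mathcal{D}_e(\mathcal{E}^{\mu,\Phi})$. Setting $g:=\tilde g h\wedge \tilde g h^2\wedge 1$ (positive, bounded, and in $L^1(m)$ since $\tilde g h^2\in L^1(m)$) and using $g\le \tilde g h$ gives (1).

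We expect the main obstacle to be the identification $\mathcal{D}_e(\mathcal{E}^{\mu,\Phi,h})=\{f: fh\in\mathcal{D}_e(\mathcal{E}^{\mu,\Phi})\}$ together with the positivity step $G^{\mu,\Phi}g>0$. The latter needs the semigroup $T^{\mu,\Phi}_t$ to be positivity improving in a weak sense. Our first approach is to derive this from irreducibility of $\{T_t^{\mu,\Phi}\}$ (Proposition \ref{irr}), noting that the set $\{G^{\mu,\Phi}g=0\}$ is invariant.
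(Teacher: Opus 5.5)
Your proof is correct, but it is organized differently from the paper's.

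\textbf{The paper's route.} The paper proves (1)$\Rightarrow$(2) only for the particular $h=G^{\mu,\Phi}g$. It obtains $G^{\mu,\Phi}g$ from the Riesz representation in the Hilbert space $(\mathcal{D}_e(\mathcal{E}^{\mu,\Phi}),\mathcal{E}^{\mu,\Phi})$ and identifies it with $\int_0^\infty T_t^{\mu,\Phi}g\,dt$. It then proves (2)$\Leftrightarrow$(3) by contradiction, using that every $h$-transform is irreducible and hence either transient or recurrent. If the $h'$-transform were recurrent while the $h$-transform is transient, $h'/h$ would be a nonzero element of zero energy in $\mathcal{D}_e(\mathcal{E}^{\mu,\Phi,h})$.

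\textbf{Your route.} You close the cycle (3)$\Rightarrow$(2)$\Rightarrow$(1)$\Rightarrow$(3). You transport \eqref{eq:sub_subcri} through the isometry $f\mapsto fh$ of Lemma \ref{Fe_lemma} to every $h\in\mathcal{H}_+^{\mu,\Phi}$ at once. Your (2)$\Rightarrow$(1) coincides with the paper's.

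\textbf{What your route buys.} You never use the transient/recurrent dichotomy. Irreducibility, and hence (IB), enters only through the strict positivity of $G^{\mu,\Phi}g$, that is, through $\mathcal{H}_+^{\mu,\Phi}\neq\emptyset$. You also address that positivity explicitly: $\{G^{\mu,\Phi}g=0\}$ is invariant, and Proposition \ref{irr} applies. The alternative $G^{\mu,\Phi}g=0$ a.e.\ is ruled out because $\frac1t\int_0^t T_s^{\mu,\Phi}g\,ds\to g\neq 0$. The paper's proof asserts $G^{\mu,\Phi}g\in\mathcal{H}_+^{\mu,\Phi}$ without checking $G^{\mu,\Phi}g>0$.

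\textbf{What the paper's route buys.} It sets up the dichotomy argument that is genuinely needed for (2)$\Leftrightarrow$(3) in Theorem \ref{subodinated_cri}. Its Riesz construction also gives $\mathcal{E}^{\mu,\Phi}(G^{\mu,\Phi}g,f)=\langle g,f\rangle_m$ on all of $\mathcal{D}_e(\mathcal{E}^{\mu,\Phi})$.

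\textbf{Two small points.}
First, $g/h$ need not lie in $L^1(E;h^2m)$, since $\int_E gh\,dm$ may diverge. Your truncation still satisfies $g'h^2\le g$, so it is integrable and nothing breaks.
Second, read off the finiteness of $G^{\mu,\Phi}g$ from $\langle g,G^{\mu,\Phi}g\rangle_m\le 1$ and $g>0$ before asserting $G^{\mu,\Phi}g\in\mathcal{D}_e(\mathcal{E}^{\mu,\Phi})$. A.e.\ finiteness is part of the definition of the extended space.
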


\begin{theorem}\label{subodinated_cri}
Let \((\mathcal{E}, \mathcal{D}(\mathcal{E}))\) be an irreducible regular Dirichlet form  on \(L^2(E;m)\), \(\mu\) be a signed smooth Radon measure making \((\mathcal{E}^{\mu}, \mathcal{D}(\mathcal{E}) \cap C_c(E))\) non-negative definite closable, and \(\Phi\) be a Bernstein function satisfying the condition \rm{(IB)}. Then the following are equivalent.
\begin{enumerate}
\item A subordinated Schr\"{o}dinger form \((\mathcal{E}^{\mu, \Phi}, \mathcal{D}(\mathcal{E}^{\mu, \Phi}))\) is  critical.
\item \(\mathcal{H}_+^{\mu, \Phi}\) is not empty and \((\mathcal{E}^{\mu,\Phi, h}, \mathcal{D}(\mathcal{E}^{\mu, \Phi,h}))\) on \(L^2(E;h^2m)\) is recurrent for some \(h\in \mathcal{H}_+^{\mu, \Phi}\).
\item \(\mathcal{H}_+^{\mu, \Phi}\) is not empty and \((\mathcal{E}^{\mu,\Phi, h}, \mathcal{D}(\mathcal{E}^{\mu, \Phi,h}))\) on \(L^2(E;h^2m)\) is recurrent for any \(h\in \mathcal{H}_+^{\mu, \Phi}\).
\end{enumerate}
\end{theorem}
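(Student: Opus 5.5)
The plan is to follow the structure of the Schr\"{o}dinger-form case \cite{TU23}, with $(\mathcal{E}^{\mu,\Phi},\mathcal{D}(\mathcal{E}^{\mu,\Phi}))$ in place of $(\mathcal{E}^\mu,\mathcal{D}(\mathcal{E}^\mu))$. The basic tool is the map $f\mapsto fh$ for $h\in\mathcal{H}_+^{\mu,\Phi}$.
\begin{itemize}
\item It is an isometry from $L^2(E;h^2m)$ onto $L^2(E;m)$.
\item It satisfies $\mathcal{E}^{\mu,\Phi,h}(f,f)=\mathcal{E}^{\mu,\Phi}(fh,fh)$.
\item It preserves $m$-a.e.\ convergence, since $0<h<\infty$.
\end{itemize}
Hence it maps $\mathcal{D}_e(\mathcal{E}^{\mu,\Phi,h})$ bijectively onto $\mathcal{D}_e(\mathcal{E}^{\mu,\Phi})$ and preserves extended energies. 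In particular, $1\in\mathcal{D}_e(\mathcal{E}^{\mu,\Phi,h})$ with $\mathcal{E}^{\mu,\Phi,h}(1,1)=0$ if and only if $h\in\mathcal{D}_e(\mathcal{E}^{\mu,\Phi})$ with $\mathcal{E}^{\mu,\Phi}(h,h)=0$.

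I will combine this with two standard facts for an irreducible Dirichlet form; irreducibility of each $h$-transform is provided by Proposition \ref{irr} under (IB). First, the form is recurrent if and only if $1\in\mathcal{D}_e$ with zero energy. Second, the form is either transient or recurrent, and in the transient case a function of $\mathcal{D}_e$ with zero extended energy vanishes.

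\textbf{(3)$\Rightarrow$(2)} is trivial. \textbf{(2)$\Rightarrow$(1):} if the $h$-transform is recurrent, then $1\in\mathcal{D}_e(\mathcal{E}^{\mu,\Phi,h})$ with zero energy. By the transfer above, $h$ is a strictly positive element of $\mathcal{D}_e(\mathcal{E}^{\mu,\Phi})$ with $\mathcal{E}^{\mu,\Phi}(h,h)=0$, so the form is critical.

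\textbf{(1)$\Rightarrow$(3):} let $h_0$ be the function given by criticality. Since $\mathcal{E}^{\mu,\Phi}(|f|,|f|)\le\mathcal{E}^{\mu,\Phi}(f,f)$, I may take it strictly positive and nonnegative approximants. The first task is to show $\mathcal{H}_+^{\mu,\Phi}\neq\emptyset$, by proving $T_t^{\mu,\Phi}h_0=h_0$. Take approximants $h_n\in\mathcal{D}(\mathcal{E}^{\mu,\Phi})$, truncated so that $0\le h_n\le h_0\wedge n$, converging a.e.\ with $\mathcal{E}^{\mu,\Phi}(h_n,h_n)\to0$. For nonnegative $g\in L^2(E;m)$ with compact support,
\[
\bigl|\langle h_n-T_t^{\mu,\Phi}h_n,g\rangle_m\bigr|=\Bigl|\int_0^t\mathcal{E}^{\mu,\Phi}(h_n,T^{\mu,\Phi}_sg)\,ds\Bigr|\le\sqrt{\mathcal{E}^{\mu,\Phi}(h_n,h_n)}\int_0^t\sqrt{\mathcal{E}^{\mu,\Phi}(T_s^{\mu,\Phi}g,T^{\mu,\Phi}_sg)}\,ds .
\]
Spectral calculus gives $\mathcal{E}^{\mu,\Phi}(T_sg,T_sg)\le (2es)^{-1}\|g\|_m^2$, so the last integral is finite and the right-hand side tends to $0$. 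Passing to the limit on the left uses monotone or dominated convergence, which is why the truncation $h_n\le h_0$ is imposed. This yields $\langle h_0-T_t^{\mu,\Phi}h_0,g\rangle_m=0$, hence $h_0\in\mathcal{H}_+^{\mu,\Phi}$.

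Now fix an arbitrary $h\in\mathcal{H}_+^{\mu,\Phi}$. By the transfer, $h_0/h\in\mathcal{D}_e(\mathcal{E}^{\mu,\Phi,h})$ is strictly positive with zero extended energy. Since the $h$-transform is irreducible, it is transient or recurrent. Transience would force $h_0/h=0$, which is impossible, so the $h$-transform is recurrent for every $h$.

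The main obstacle is the harmonicity step, specifically the limit $\langle T_t^{\mu,\Phi}h_n,g\rangle_m\to\langle T_t^{\mu,\Phi}h_0,g\rangle_m$. The kernel of $T^{\mu,\Phi}_t$ is positivity preserving but not sub-Markovian, so I would rely on the monotone truncation $h_n\le h_0$. If needed I would instead choose an increasing approximating sequence, e.g.\ $h_n$ replaced by $\max_{k\le n}h_k\wedge h_0$, using the lattice property and the Fatou property (\cite[Proposition 2]{Sc99}) to keep the energy control.
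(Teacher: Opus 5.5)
Your architecture is the paper's. The transfer $f\mapsto fh$ is Lemma~\ref{Fe_lemma}, your (2)$\Rightarrow$(1) is the paper's (3)$\Rightarrow$(1), and deducing recurrence of every $h$-transform from Proposition~\ref{irr} and the transient/recurrent dichotomy is how the paper proves (2)$\Leftrightarrow$(3).

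The gap is in the one substantive step, $h_0\in\mathcal{H}_+^{\mu,\Phi}$. You pass to the limit in $\langle h_n-T_t^{\mu,\Phi}h_n,g\rangle_m$ after truncating the approximants to $0\le h_n\le h_0\wedge n$ with $\mathcal{E}^{\mu,\Phi}(h_n,h_n)\to0$, and nothing justifies that truncation. The form $\mathcal{E}^{\mu,\Phi}$ is not Markovian: $T_t^{\mu,\Phi}$ is positivity preserving but not sub-Markovian, which is why one passes to $h$-transforms at all. The only lattice property available is $\mathcal{E}^{\mu,\Phi}(|f|,|f|)\le\mathcal{E}^{\mu,\Phi}(f,f)$ for $f\in\mathcal{D}(\mathcal{E}^{\mu,\Phi})$.

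Cutting at a level can genuinely destroy vanishing energy. Take $\Phi(\lambda)=\lambda$ and the critical Hardy form of Example~\ref{ex.Hardy} with $\alpha=2$, whose ground state is $h_0=|x|^{-(d-2)/2}$. The ground state representation $\mathcal{E}^{\mu_{\lambda_*}}(fh_0,fh_0)=\frac12\int|\nabla f|^2h_0^2\,dx$ together with scaling shows that $\mathcal{E}^{\mu_{\lambda_*}}(h_0\wedge n,h_0\wedge n)$ is the same positive constant for every $n$, although $\mathcal{E}^{\mu_{\lambda_*}}(h_0,h_0)=0$.

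The cap by $h_0$ has its own problem. It would require the modulus inequality on $\mathcal{D}_e(\mathcal{E}^{\mu,\Phi})$ together with $\mathcal{D}_e(\mathcal{E}^{\mu,\Phi})\cap L^2(E;m)=\mathcal{D}(\mathcal{E}^{\mu,\Phi})$, and you establish neither. Your fallback $\max_{k\le n}h_k\wedge h_0$ contains the same $\wedge\,h_0$. Dominated convergence for $\langle h_n,g\rangle_m$ would also need $h_0g\in L^1(E;m)$.

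The remedy is to aim lower. Membership in $\mathcal{H}_+^{\mu,\Phi}$ only requires $T_t^{\mu,\Phi}h_0\le h_0$, not equality, and that needs no truncation.
\begin{enumerate}
\item Replace $h_n$ by $|h_n|$. These still converge to $h_0$ $m$-a.e., and $\mathcal{E}^{\mu,\Phi}(|h_n|,|h_n|)\le\mathcal{E}^{\mu,\Phi}(h_n,h_n)\to0$.
\item Since $(1-e^{-t\lambda})^2\le t\lambda$ for $\lambda\ge0$, spectral calculus gives $\bigl\||h_n|-T_t^{\mu,\Phi}|h_n|\bigr\|_m^2\le t\,\mathcal{E}^{\mu,\Phi}(|h_n|,|h_n|)\to0$.
\item Hence, along a subsequence, $T_t^{\mu,\Phi}|h_n|\to h_0$ $m$-a.e.
\item Fatou's lemma for the positivity preserving operator $T_t^{\mu,\Phi}$ (via its positive kernel, or its monotone extension to nonnegative functions) gives $T_t^{\mu,\Phi}h_0\le\liminf_n T_t^{\mu,\Phi}|h_n|=h_0$.
\end{enumerate}
This is a precise form of the paper's one-line step $\frac1t\|h-T_t^{\mu,\Phi}h\|_m^2\le\liminf_n\frac1t\|h_n-T_t^{\mu,\Phi}h_n\|_m^2\le\liminf_n\mathcal{E}^{\mu,\Phi}(h_n,h_n)=0$. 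With it, the rest of your proof goes through unchanged.
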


\begin{theorem}\label{subodinated_supercri}
Let \((\mathcal{E}, \mathcal{D}(\mathcal{E}))\) be an irreducible regular Dirichlet form  on \(L^2(E;m)\), \(\mu\) be a signed smooth Radon measure making \((\mathcal{E}^{\mu}, \mathcal{D}(\mathcal{E}) \cap C_c(E))\) non-negative definite closable, and \(\Phi\) be a Bernstein function satisfying the condition \rm{(IB)}. Then the following are equivalent.
\begin{enumerate}
\item A subordinated Schr\"{o}dinger form \((\mathcal{E}^{\mu, \Phi}, \mathcal{D}(\mathcal{E}^{\mu, \Phi}))\) is  supercritical.
\item \(\mathcal{H}_+^{\mu, \Phi}\) is empty.
\end{enumerate}
\end{theorem}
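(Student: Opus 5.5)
The statement to prove is Theorem \ref{subodinated_supercri}. I would derive it from Theorems \ref{subodinated_subcri} and \ref{subodinated_cri}. The plan mirrors the remark after Theorem \ref{defsubcri3} for the unsubordinated case: the three notions are mutually exclusive and exhaust all possibilities once \(\mathcal{H}_+^{\mu,\Phi}\neq\emptyset\).

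First, assume (2), i.e. \(\mathcal{H}_+^{\mu,\Phi}=\emptyset\). By Theorem \ref{subodinated_subcri}, subcriticality forces \(\mathcal{H}_+^{\mu,\Phi}\neq\emptyset\). By Theorem \ref{subodinated_cri}, so does criticality. Hence neither (1) nor (2) of the definition holds, so the form is supercritical.

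Conversely, assume the form is supercritical and suppose, for contradiction, that some \(h\in\mathcal{H}_+^{\mu,\Phi}\) exists. By Proposition \ref{irr}, using (IB), the \(h\)-transform \((\mathcal{E}^{\mu,\Phi,h},\mathcal{D}(\mathcal{E}^{\mu,\Phi,h}))\) is an irreducible Dirichlet form on \(L^2(E;h^2m)\). An irreducible Dirichlet form is either transient or recurrent; this is the standard dichotomy (e.g. \cite[Lemma 1.6.4]{FOT11}). That reference is stated for Dirichlet forms on a general \(\sigma\)-finite measure space, and regularity is not needed, so it applies here.

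If the \(h\)-transform is transient, Theorem \ref{subodinated_subcri} (2)\(\Rightarrow\)(1) gives subcriticality. If it is recurrent, Theorem \ref{subodinated_cri} (2)\(\Rightarrow\)(1) gives criticality. Either case contradicts supercriticality, so \(\mathcal{H}_+^{\mu,\Phi}=\emptyset\).

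The only delicate point is the transience/recurrence dichotomy for the \(h\)-transformed form, which is not known to be regular. I would check that the proof of the dichotomy uses only the Markov property, irreducibility, and the extended space on \(L^2(E;h^2m)\), all of which are available here. The extended space is justified by the Fatou property discussion above for \(\mathcal{E}^{\mu,\Phi}\), which transfers to the \(h\)-transform via \(f\mapsto fh\). Everything else is bookkeeping from the two earlier theorems.
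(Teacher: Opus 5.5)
Your proposal is correct and takes the same route as the paper, which simply says that Theorem \ref{subodinated_supercri} follows from Theorems \ref{subodinated_subcri} and \ref{subodinated_cri}; you fill in the implicit steps: both subcriticality and criticality force \(\mathcal{H}_+^{\mu,\Phi}\neq\emptyset\), and conversely any \(h\in\mathcal{H}_+^{\mu,\Phi}\) gives an irreducible \(h\)-transform that is either transient or recurrent, hence the form is subcritical or critical. Your remark that the transience/recurrence dichotomy needs only irreducibility and the Markov property, not regularity, is the same fact the paper itself uses in the proof of Theorem \ref{subodinated_subcri}.
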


To prove these three theorems, we need the following lemma.
\begin{lemma}\label{Fe_lemma}
Let \((\mathcal{E}, \mathcal{D}(\mathcal{E}))\) be an irreducible regular Dirichlet form  on \(L^2(E;m)\), \(\mu\) be a signed smooth Radon measure making \((\mathcal{E}^{\mu}, \mathcal{D}(\mathcal{E}) \cap C_c(E))\) non-negative definite closable, and \(\Phi\) be a Bernstein function satisfying the condition \rm{(IB)}. For \(h \in \mathcal{H}_+^{\mu, \Phi}\), it holds that \(\mathcal{D}_e(\mathcal{E}^{\mu,\Phi, h}) =\{f : fh \in \mathcal{D}_e(\mathcal{E}^{\mu,\Phi})\} \), and \(\mathcal{E}^{\mu, \Phi, h}(f,f) = \mathcal{E}^{\mu, \Phi}(fh, fh)\) for \(f\in \mathcal{D}_e(\mathcal{E}^{\mu,\Phi, h})\).
\end{lemma}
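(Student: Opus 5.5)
The argument is a direct comparison of approximating sequences in the two extended spaces, using that multiplication by \(h\) is a bijection \(f\mapsto fh\) between \(m\)-measurable functions, that \(h^2m\) and \(m\) have the same null sets (because \(0<h<\infty\) \(m\)-a.e.), and that by definition \(\mathcal{E}^{\mu,\Phi,h}(u,u)=\mathcal{E}^{\mu,\Phi}(uh,uh)\) for \(u\in\mathcal{D}(\mathcal{E}^{\mu,\Phi,h})\).

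We first check that \(\mathcal{D}_e(\mathcal{E}^{\mu,\Phi,h})\) is well defined, i.e. that the \(h\)-transformed form has the Fatou property. This is automatic, since \((\mathcal{E}^{\mu,\Phi,h},\mathcal{D}(\mathcal{E}^{\mu,\Phi,h}))\) is a closed symmetric Dirichlet form on \(L^2(E;h^2m)\), as shown before Proposition \ref{irr}, so the standard theory \cite[Section 1.1]{CF12} applies. Alternatively one transfers the Fatou property of \(\mathcal{E}^{\mu,\Phi}\) from \cite[Proposition 2]{Sc99} through multiplication by \(h\).

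For the inclusion \(\subset\), take \(f\in\mathcal{D}_e(\mathcal{E}^{\mu,\Phi,h})\) with approximating sequence \(f_n\in\mathcal{D}(\mathcal{E}^{\mu,\Phi,h})\). Then \(f_nh\in\mathcal{D}(\mathcal{E}^{\mu,\Phi})\), and
\[\mathcal{E}^{\mu,\Phi}(f_nh-f_kh,f_nh-f_kh)=\mathcal{E}^{\mu,\Phi,h}(f_n-f_k,f_n-f_k)\to0 .\]
Moreover \(f_nh\to fh\) \(m\)-a.e., and \(|fh|<\infty\) a.e. Hence \(fh\in\mathcal{D}_e(\mathcal{E}^{\mu,\Phi})\), and the energies agree by passing to the limit.

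For the inclusion \(\supset\), take \(u=fh\in\mathcal{D}_e(\mathcal{E}^{\mu,\Phi})\) with approximants \(u_n\in\mathcal{D}(\mathcal{E}^{\mu,\Phi})\). Setting \(f_n:=u_n/h\) gives \(f_nh\in\mathcal{D}(\mathcal{E}^{\mu,\Phi})\), but membership in \(\mathcal{D}(\mathcal{E}^{\mu,\Phi,h})\) also requires \(f_n\in L^2(E;h^2m)\), which holds because \(\int|f_n|^2h^2\,dm=\|u_n\|_m^2<\infty\). So \(f_n\in\mathcal{D}(\mathcal{E}^{\mu,\Phi,h})\), the sequence is \(\mathcal{E}^{\mu,\Phi,h}\)-Cauchy, and \(f_n\to f\) \(h^2m\)-a.e. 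Therefore \(f\in\mathcal{D}_e(\mathcal{E}^{\mu,\Phi,h})\), with \(\mathcal{E}^{\mu,\Phi,h}(f,f)=\lim\mathcal{E}^{\mu,\Phi}(u_n,u_n)=\mathcal{E}^{\mu,\Phi}(fh,fh)\).

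The identity of the energies relies on independence from the approximating sequence, which was established above for \(\mathcal{E}^{\mu,\Phi}\) and holds for the Dirichlet form \(\mathcal{E}^{\mu,\Phi,h}\) by the same argument.

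The only delicate point is the \(L^2(E;h^2m)\) membership in the \(\supset\) direction, together with the well-definedness just mentioned. The former is immediate from the identity \(\|u_n/h\|_{L^2(h^2m)}=\|u_n\|_m\). Condition (IB) is not really needed here beyond ensuring that the objects are the ones considered in the surrounding theorems.
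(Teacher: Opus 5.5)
Your proposal is correct and is essentially the paper's proof. Both directions transfer approximating sequences through \(f\mapsto fh\), using \(\mathcal{E}^{\mu,\Phi,h}(f_n-f_k,f_n-f_k)=\mathcal{E}^{\mu,\Phi}(f_nh-f_kh,f_nh-f_kh)\) for the forward inclusion and the sequence \(u_n/h\) for the converse. The points you make explicit are accurate and are left implicit in the paper: the well-definedness of \(\mathcal{D}_e(\mathcal{E}^{\mu,\Phi,h})\), the membership \(u_n/h\in L^2(E;h^2m)\) via \(\|u_n/h\|_{L^2(E;h^2m)}=\|u_n\|_m\), and the fact that (IB) is never used.
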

\begin{proof}
For \(f \in \mathcal{D}_e(\mathcal{E}^{\mu,\Phi, h})\), we take an \(\mathcal{E}^{\mu,\Phi, h}\)-Cauchy sequence \(\{f_n\}_n \subset \mathcal{D}(\mathcal{E}^{\mu,\Phi, h})\) converging to \(f\) \(m\)-almost everywhere. Then \(f_n h \in \mathcal{D}(\mathcal{E}^{\mu,\Phi})\), \(\mathcal{E}^{\mu, \Phi}(f_nh-f_m h, f_nh-f_m h) = \mathcal{E}^{\mu, \Phi, h}(f_n-f_m, f_n-f_m)\) converges to \(0\) as \(n,m\) go to \(\infty\), and \(f_n h \) converges to \(fh\) \(m\)-almost everywhere. Hence \(fh \in \mathcal{D}_e(\mathcal{E}^{\mu,\Phi})\) and 
\[\mathcal{E}^{\mu,\Phi, h}(f, f) = \lim_{n \to \infty}\mathcal{E}^{\mu,\Phi, h}(f_n, f_n)=  \lim_{n \to \infty} \mathcal{E}^{\mu,\Phi}(f_nh, f_nh)= \mathcal{E}^{\mu,\Phi}(fh, fh).\]
Conversely, for \(f\) satisfying \(fh \in \mathcal{D}_e(\mathcal{E}^{\mu,\Phi})\), we take an \(\mathcal{E}^{\mu,\Phi}\)-Cauchy sequence \(\{u_n\}_n \subset \mathcal{D}(\mathcal{E}^{\mu,\Phi})\) converging to \(fh\) \(m\)-almost everywhere. Then \(\{\frac{u_n}{h}\}_n \subset \mathcal{D}(\mathcal{E}^{\mu,\Phi,h})\) is an \(\mathcal{E}^{\mu,\Phi,h}\)-Cauchy sequence and \(\frac{u_n}{h}\) converges to \(f\), so \(f\in \mathcal{D}_e(\mathcal{E}^{\mu,\Phi, h}).\)
\end{proof}

\begin{proof}[Proof of Theorem \ref{subodinated_subcri}]
We assume \((1)\) and take a strictly positive bounded function \(g \in L^1(E;m)\) satisfying \((\ref{eq:sub_subcri})\). Then \((\mathcal{D}_e(\mathcal{E}^{\mu, \Phi}), \mathcal{E}^{\mu, \Phi})\) is a Hilbert space compactly embedded in \(L^1(E;gm)\). By the Riesz representation theorem, there exists a unique function \(G^{\mu,\Phi} g \in \mathcal{D}_e(\mathcal{E}^{\mu,\Phi})\) such that \(\mathcal{E}^{\mu, \Phi}(G^{\mu,\Phi} g, f) = \langle f, g \rangle_m\) for any \(f\in \mathcal{D}_e(\mathcal{E}^{\mu,\Phi})\). For \(\tilde{G}^{\mu, \Phi}g := \int_0^\infty T_t^{\mu, \Phi}g\,dt \leq \infty\), it holds that \(T_t^{\mu, \Phi} \tilde{G}^{\mu,\Phi} g = \int_t^\infty T_s^{\mu, \Phi}g\,ds \leq \tilde{G}^{\mu,\Phi} g\) and so,for any \(f\in \mathcal{D}(\mathcal{E}^{\mu,\Phi})\) we have
\begin{equation}
\mathcal{E}^{\mu, \Phi}(\tilde{G}^{\mu, \Phi}g, f) = \lim_{t\searrow 0} \frac{1}{t} \left\langle \tilde{G}^{\mu,\Phi} g-T_t^{\mu, \Phi} \tilde{G}^{\mu,\Phi} g, f \right\rangle_m =  \lim_{t\searrow 0} \frac{1}{t} \left\langle \int_0^t T_s^{\mu, \Phi}g\,ds, f \right\rangle_m = \langle g,f \rangle_m .
\label{eq:sub_subcri_proof1}
\end{equation}
By taking an approximating sequence, \((\ref{eq:sub_subcri_proof1})\) holds for any \(f\in \mathcal{D}_e(\mathcal{E}^{\mu,\Phi})\), so \(G^{\mu, \Phi}g = \tilde{G}^{\mu, \Phi}g\) and \(G^{\mu, \Phi}g \in \mathcal{H}_+^{\mu, \Phi}\). Since \((\mathcal{D}_e(\mathcal{E}^{\mu, \Phi}), \mathcal{E}^{\mu, \Phi})\)  is a Hilbert space, by Lemma \ref{Fe_lemma}, \((\mathcal{D}_e(\mathcal{E}^{\mu, \Phi, G^{\mu, \Phi}g}), \mathcal{E}^{\mu, \Phi, G^{\mu, \Phi}g})\) is also a Hilbert space, and \((2)\) holds.

We prove the equivalence of \((2)\) and \((3)\). Note that an irreducible Dirichlet form is either transient or recurrent. We assume that there exists \(h,g \in \mathcal{H}_+^{\mu, \Phi}\) such that \((\mathcal{E}^{\mu,\Phi,h}, \mathcal{D}(\mathcal{E}^{\mu,\Phi, h}))\) is transient but \((\mathcal{E}^{\mu,\Phi,g}, \mathcal{D}(\mathcal{E}^{\mu,\Phi, g}))\) is recurrent. Since \(1 \in \mathcal{D}_e(\mathcal{E}^{\mu, \Phi, g})\), by Lemma \ref{Fe_lemma}, we have \(g \in \mathcal{D}_e(\mathcal{E}^{\mu, \Phi})\) and so \(g/h \in \mathcal{D}_e(\mathcal{E}^{\mu, \Phi, h})\). It holds that
\[\mathcal{E}^{\mu, \Phi, h}\left(\frac{g}{h}, \frac{g}{h}\right)= \mathcal{E}^{\mu, \Phi}(g,g) = \mathcal{E}^{\mu, \Phi, g}(1,1)= 0\]
and \(g/h \not =0\), so this is a contradiction to the transience of  \((\mathcal{E}^{\mu,\Phi,h}, \mathcal{D}(\mathcal{E}^{\mu,\Phi, h}))\) and (2) and (3) are equivalent.

We assume \((2)\). For \(h\in \mathcal{H}_+^{\mu, \Phi}\), by \cite[Theorem 2.1.5]{CF12}, there exists a strictly positive bounded function \(v \in L^1(E;h^2m)\) such that \(\langle |u|, v\rangle_{h^2m} \leq \sqrt{\mathcal{E}^{\mu, \Phi, h}(u,u)}\) for any \(u \in \mathcal{D}_e(\mathcal{E}^{\mu, \Phi, h})\). For \(\eta \in L^1(E;m)\) satisfying \(0< \eta \leq 1\), the function \(g:= vh\eta \wedge \eta \in L^1(E;m)\) is strictly positive, bounded, and \((\ref{eq:sub_subcri})\) holds for any \(f\in \mathcal{D}_e(\mathcal{E}^{\mu, \Phi})\), so \((1)\) holds.
\end{proof}

\begin{proof}[Proof of Theorem \ref{subodinated_cri}]
Suppose that \((1)\) holds, that is, there exists a strictly positive function \(h \in \mathcal{D}_e(\mathcal{E}^{\mu, \Phi})\) such that \(\mathcal{E}^{\mu, \Phi}(h,h)=0\). We take an \(\mathcal{E}^{\mu, \Phi}\)-Cauchy  sequence \(\{h_n\}_n\subset \mathcal{
D}(\mathcal{E}^{\mu, \Phi})\) converging to \(h\) \(m\)-almost everywhere. By the spectral decomposition, for any \(t>0\), we have 
\[\frac{1}{t} \|h-T_t ^{\mu, \Phi} h\|_{L^2(E;m)}^2 \leq \varliminf_n \frac{1}{t}\|h_n-T_t^{\mu, \Phi} h_n\|_{L^2(E;m)}^2 \leq  \varliminf_n  \mathcal{E}^{\mu, \Phi}(h_n, h_n) = \mathcal{E}^{\mu, \Phi}(h,h) = 0,\]
Hence \(h \in \mathcal{H}_+^{\mu, \Phi}\) and \((2)\) holds.

The equivalence between \((2)\) and \((3)\) follows similarly to the proof of Theorem \ref{subodinated_subcri} by using the condition (IB).

Since, for a recurrent Dirichlet form, \(1\) belongs to the extended Dirichlet space and its value for the Dirichlet form is \(0\), \((3)\) implies \((1)\).
\end{proof}

\begin{proof}[Proof of Theorem \ref{subodinated_subcri}]
This follows from Theorem \ref{subodinated_cri} and \ref{subodinated_subcri}.
\end{proof}

Under the situation of Lemma \ref{irr}, \((\mathcal{E}^{\mu,\Phi, h}, \mathcal{D}(\mathcal{E}^{\mu, \Phi,h}))\) is not regular in general. The following is a sufficient condition for the regularity.
\begin{proposition}\label{regularDF}
Let \((\mathcal{E}, \mathcal{D}(\mathcal{E}))\) be an irreducible regular Dirichlet form  on \(L^2(E;m)\), \(\mu\) be a signed smooth Radon measure making \((\mathcal{E}^{\mu}, \mathcal{D}(\mathcal{E}) \cap C_c(E))\) non-negative definite closable, and \(\Phi\) be a Bernstein function satisfying the condition \rm{(IB)}. Suppose that \(\mu^- \in \mathcal{K}(\mathcal{E}^{\mu^+})\). Then an \(h\)-transform \((\mathcal{E}^{\mu,\Phi, h}, \mathcal{D}(\mathcal{E}^{\mu,\Phi,h}))\) is a regular Dirichlet form on \(L^2(E;h^2m)\).
\end{proposition}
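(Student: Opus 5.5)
We want to show that the \(h\)-transform \((\mathcal{E}^{\mu,\Phi,h},\mathcal{D}(\mathcal{E}^{\mu,\Phi,h}))\) has a core \(\mathcal{C}\subset \mathcal{D}(\mathcal{E}^{\mu,\Phi,h})\cap C_c(E)\) that is dense in \(\mathcal{D}(\mathcal{E}^{\mu,\Phi,h})\) for \(\mathcal{E}^{\mu,\Phi,h}_1\) and dense in \(C_c(E)\) for the sup norm. The Dirichlet property is already established before Proposition \ref{irr}, so only regularity remains. The plan reduces the statement to the regularity of the unsubordinated \(h\)-transform, which is available under the Kato assumption \(\mu^-\in\mathcal{K}(\mathcal{E}^{\mu^+})\), and then transfers density through subordination.

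\emph{Step 1: properties of \(h\).} Since \(\mu^-\in\mathcal{K}(\mathcal{E}^{\mu^+})\), the Stollmann--Voigt inequality makes \(\mathcal{E}^\mu_\alpha\) and \(\mathcal{E}^{\mu^+}_\alpha\) comparable, so \(\mathcal{D}(\mathcal{E}^\mu)=\mathcal{D}(\mathcal{E}^{\mu^+})\). Moreover \(T^\mu_t\) is given by the Feynman--Kac semigroup \(P^\mu_t\) with a Kato class potential. For \(h\in\mathcal{H}^{\mu,\Phi}_+\) we have \(\int T^\mu_s h\,\eta_t(ds)\le h\). From this I want \(h\) and \(1/h\) to be locally bounded, at least after choosing a good version. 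The approach is to replace \(h\) by the \(\alpha\)-excessive regularization
\[
h=\lim_{t\to0}T^{\mu,\Phi}_t h,
\]
and to use the Harnack-type local boundedness that Kato-class Feynman--Kac semigroups provide (as in \cite[Lemma 2.6]{T14}), together with the irreducibility from Proposition \ref{irr}, to get \(0<\inf_K h\le\sup_K h<\infty\) on compact sets \(K\). This step is the main obstacle. Condition (IB) does not immediately give regularity of \(h\) for the subordinated semigroup. If local two-sided bounds cannot be shown in general, I would fall back on quasi-continuity of \(h\) together with the fact that \(\{h\ge 1/k\}\) forms a nest. The core would then be built from truncations and products \(\varphi\cdot(h\wedge k)\) restricted to compact parts of this nest.

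\emph{Step 2: choose the core.} Take
\[
\mathcal{C}:=\{f/h : f\in\mathcal{D}(\mathcal{E})\cap C_c(E)\},
\]
or, if \(h\) is not continuous, \(\mathcal{C}:=\{f\in C_c(E) : fh\in\mathcal{D}(\mathcal{E}^\mu)\}\). Following \cite[Lemma 2.6]{T14}, the \(h\)-transform \((\mathcal{E}^{\mu,h},\mathcal{D}(\mathcal{E}^{\mu,h}))\) is regular for \(h\in\mathcal{H}^\mu_+\). For our \(h\in\mathcal{H}^{\mu,\Phi}_+\), I would rerun that argument: multiplication by \(h\) is a unitary map \(L^2(E;h^2m)\to L^2(E;m)\), it carries \(\mathcal{D}(\mathcal{E}^{\mu,\Phi,h})\) onto \(\mathcal{D}(\mathcal{E}^{\mu,\Phi})\), and it converts \(\mathcal{E}^{\mu,\Phi,h}_1\) into \(\mathcal{E}^{\mu,\Phi}_1\). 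Hence \(\mathcal{E}^{\mu,\Phi,h}_1\)-density of \(\mathcal{C}\) is equivalent to \(\mathcal{E}^{\mu,\Phi}_1\)-density of \(h\mathcal{C}\) in \(\mathcal{D}(\mathcal{E}^{\mu,\Phi})\).

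\emph{Step 3: density in the form norm.} By Theorem \ref{Okura}, any \(\mathcal{E}^\mu_1\)-dense subspace of \(\mathcal{D}(\mathcal{E}^\mu)\) is \(\mathcal{E}^{\mu,\Phi}_1\)-dense in \(\mathcal{D}(\mathcal{E}^{\mu,\Phi})\). By Step 1, \(\mathcal{D}(\mathcal{E}^\mu)=\mathcal{D}(\mathcal{E}^{\mu^+})\) with equivalent norms, and \(\mathcal{D}(\mathcal{E})\cap C_c(E)\) is a core for the regular Dirichlet form \(\mathcal{E}^{\mu^+}\). It therefore suffices that \(h\mathcal{C}\supset\mathcal{D}(\mathcal{E})\cap C_c(E)\), which holds by the choice of \(\mathcal{C}\). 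To ensure \(\mathcal{C}\subset L^2(E;h^2m)\) and \(\mathcal{C}\subset C_c(E)\), I would use the local boundedness and positivity of \(h\) from Step 1.

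\emph{Step 4: uniform density.} The sup-norm density of \(\mathcal{C}\) in \(C_c(E)\) follows from that of \(\mathcal{D}(\mathcal{E})\cap C_c(E)\), since dividing by \(h\), which is bounded above and away from \(0\) on compact sets, preserves uniform approximation on a fixed compact support. This gives a Stone--Weierstrass-type density argument.
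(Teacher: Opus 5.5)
There is a genuine gap, and it is exactly the step you flag as the main obstacle.

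Your Steps 2--3 are correct, and simpler than the paper's argument. Since $\mathcal{E}^{\mu,\Phi,h}_1(f,f)=\mathcal{E}^{\mu,\Phi}_1(fh,fh)$, and $\mathcal{D}(\mathcal{E})\cap C_c(E)$ is an $\mathcal{E}^{\mu}_1$-core by the very definition of $\mathcal{D}(\mathcal{E}^{\mu})$, Theorem~\ref{Okura} makes $\mathcal{C}=\{u/h : u\in \mathcal{D}(\mathcal{E})\cap C_c(E)\}$ an $\mathcal{E}^{\mu,\Phi,h}_1$-dense subset of $\mathcal{D}(\mathcal{E}^{\mu,\Phi,h})$ for every $h$. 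This needs no Kato hypothesis. The paper instead approximates $fh$ in $\mathcal{E}^{\mu^+,\Phi}_1$ and transfers the convergence through \eqref{eq:reg1} and the Markov property of $\mathcal{E}^{\mu^+,\Phi,h}$.

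Regularity, however, also needs $\mathcal{C}\subset C_c(E)$ and uniform density in $C_c(E)$. Both require $h$ to have a finite, strictly positive, continuous version. Your Step~1 does not deliver this:
\begin{enumerate}
\item Local two-sided bounds, which is all Step~1 aims for, would not make $u/h$ continuous.
\item The nest/truncation fallback does not produce elements of $C_c(E)$.
\item Elements of $\mathcal{H}^{\mu,\Phi}_+$ are merely supermedian, and Harnack-type arguments control harmonic, not superharmonic, functions. Local boundedness and continuity already fail for Brownian motion.
\end{enumerate}

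Here is a concrete counterexample. Let $E=\mathbb{R}^3$, $\mathcal{E}(f,f)=\frac12\int|\nabla f|^2\,dx$ on $H^1(\mathbb{R}^3)$, $\mu=0$, $\Phi(\lambda)=\lambda$ (so (IB) holds with $b=1$), and $h=1+|x|^{-1}$. Since $|x|^{-1}=2\pi\int_0^\infty p(s,x,0)\,ds$, we have $T_t|x|^{-1}=2\pi\int_t^\infty p(s,x,0)\,ds\le |x|^{-1}$. Hence $h\in\mathcal{H}^{\mu,\Phi}_+$, and $h^2dx$ is Radon with full support. Suppose $f\in C_c(\mathbb{R}^3)$ and $fh\in H^1(\mathbb{R}^3)$. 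Hardy's inequality gives $\int|fh|^2|x|^{-2}\,dx<\infty$. This forces $f(0)=0$, since otherwise the integrand is bounded below by a multiple of $|x|^{-4}$ near $0$. So $\mathcal{D}(\mathcal{E}^{\mu,\Phi,h})\cap C_c(\mathbb{R}^3)$ is not uniformly dense in $C_c(\mathbb{R}^3)$, and this $h$-transform is not regular. Step~1 therefore cannot be proved for arbitrary $h\in\mathcal{H}^{\mu,\Phi}_+$.

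The paper gets past this point only by citing \cite[Lemma 2.4]{T14} for $u_k/h\in\mathcal{D}(\mathcal{E})\cap C_c(E)$ and by asserting that $\mathcal{E}^{\mu^+,\Phi,h}$ is regular; in this example that form coincides with $\mathcal{E}^{\mu,\Phi,h}$. Neither argument closes unless you restrict to $h$ admitting a finite, strictly positive, continuous version. For such $h$, your Steps 2--4 complete the proof, with one adjustment in Step~4. Multiply an approximant $u$ of $\phi h$ by a cutoff $\psi\in\mathcal{D}(\mathcal{E})\cap C_c(E)$ with $0\le\psi\le 1$ and $\psi=1$ on $\mathrm{supp}\,\phi$. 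Then all approximants $\psi u/h$ live on the fixed compact set $\mathrm{supp}\,\psi$, where $h$ is bounded below.
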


\begin{proof}
We have already seen  \(\mathcal{D}(\mathcal{E}^{\mu}) = \mathcal{D}(\mathcal{E}^{\mu^+}) =  \mathcal{D}(\mathcal{E}) \cap L^2(E;\mu^+)\) for \(\mu^- \in \mathcal{K}(\mathcal{E}^{\mu^+})\) in Section \ref{sec:Takeda}. Since \(\mu^+\) is a Radon measure, it holds that \(\mathcal{D}(\mathcal{E}^{\mu}) \cap C_c = \mathcal{D}(\mathcal{E}) \cap L^2(E;\mu^+) \cap C_c = \mathcal{D}(\mathcal{E}) \cap C_c\) and, by \cite[Theorem 5.1.6]{CF12}, \(\mathcal{D}(\mathcal{E}) \cap C_c\) is dense in \(\mathcal{D}(\mathcal{E}^{\mu^+})=\mathcal{D}(\mathcal{E}^{\mu})\) with respect to \(\mathcal{E}_1^{\mu^+}\). By Theorem \ref{Okura}, \(\mathcal{D}(\mathcal{E}) \cap C_c\) is also dense in \(\mathcal{D}(\mathcal{E}^{\mu,\Phi})\) with respect to \(\mathcal{E}_1^{\mu^+, \Phi}.\)

We take \(h \in \mathcal{H}_+^{\mu, \Phi}\). Since \((T_s^{\mu^+} -T_s^\mu)u(x) = \mathbb{E}_x[e^{-A_s^{\mu^+}}(1-e^{A_s^{\mu^-}})u(X_s)] \leq 0\) for any \(u \geq 0\), where \(A^{\mu^+}\) (resp. \(A^{\mu^-}\)) is a PCAF corresponding to \(\mu^+\) (resp. \(\mu^-\)) with respect to a Hunt process \(X\) associated with a Dirichlet form \((\mathcal{E}, \mathcal{D}(\mathcal{E}))\). Then we have
\begin{eqnarray}
\nonumber \mathcal{E}^{\mu, \Phi, h}_1(f,f) &=& b\, \mathcal{E}^{\mu}(fh,fh) + \int_0^\infty \langle fh-T_s^\mu (fh), fh \rangle_m d\nu(s) + \int |fh|^2\, dm\\
\nonumber &=& \mathcal{E}^{\mu^+, \Phi}_1(fh,fh) - b \int |fh|^2\, d\mu^+ + \int_0^\infty \langle (T_s^{\mu^+} -T_s^\mu )(fh), fh \rangle_m d\nu(s)\\
&\leq & \mathcal{E}^{\mu^+, \Phi}_1(fh,fh) \label{eq:reg1}
\end{eqnarray}
for \(f \in \mathcal{D}(\mathcal{E}^{\mu, \Phi, h})\) with \(f\geq 0\).

We note that \((\mathcal{E}^{\mu^+}, \mathcal{D}(\mathcal{E}^{\mu^+}))\) is a regular Dirichlet form by \cite[Theorem 5.1.6]{CF12}, and so is \((\mathcal{E}^{\mu^+, \Phi}, \mathcal{D}(\mathcal{E}^{\mu^+, \Phi}))\) by \cite[Theorem 2.1]{O02}. Since \(T_t^{\mu^+, \Phi}h \leq T_t^{\mu, \Phi}h \leq h\), \((\mathcal{E}^{\mu^+,\Phi,h}, \mathcal{D}(\mathcal{E}^{\mu^+,\Phi,h}))\) is also a regular Dirichlet form. For any \(f\in \mathcal{D}(\mathcal{E}^{\mu, \Phi, h})=\mathcal{D}(\mathcal{E}^{{\mu^+}, \Phi, h})\), we take \(u_k \in \mathcal{D}(\mathcal{E})\cap C_c\) such that \(u_k\) converges to \(fh\) in \(\mathcal{E}_1^{\mu^+, \Phi}\). This is equivalent to the convergence of \(\frac{u_k}{h}\) to \(f\) in \(\mathcal{E}_1^{\mu^+, \Phi,h}\). By (\ref{eq:reg1}) and the Markov property of \((\mathcal{E}^{\mu^+, \Phi, h}, \mathcal{D}(\mathcal{E}^{\mu^+, \Phi, h}))\), we have

\begin{eqnarray*}
\mathcal{E}_1^{\mu, \Phi, h}(f-\frac{u_k}{h}, f-\frac{u_k}{h}) & \leq & 2\, \mathcal{E}_1^{\mu, \Phi, h}\left((f-\frac{u_k}{h})_+, (f-\frac{u_k}{h})_+\right) + 2\, \mathcal{E}_1^{\mu, \Phi, h}\left((f-\frac{u_k}{h})_-, (f-\frac{u_k}{h})_-\right)\\
& \leq & 2\, \mathcal{E}_1^{\mu^+, \Phi}\left((fh-u_k)_+, (fh-u_k)_+\right) + 2\, \mathcal{E}_1^{\mu^+, \Phi}\left((fh-u_k)_-, (fh-u_k)_-\right)\\
&=&  2\, \mathcal{E}_1^{\mu^+, \Phi, h}\left((f-\frac{u_k}{h})_+, (f-\frac{u_k}{h})_+\right) + 2\, \mathcal{E}_1^{\mu^+, \Phi, h}\left((f-\frac{u_k}{h})_-, (f-\frac{u_k}{h})_-\right)\\
&\leq & 4\, \mathcal{E}_1^{\mu^+, \Phi, h}\left(f-\frac{u_k}{h}, f-\frac{u_k}{h}\right).
\end{eqnarray*}
Combining this with the non-negativity of \(\mathcal{E}_1^{\mu, \Phi, h}\), \(\frac{u_k}{h}\) converges to \(f\) in \(\mathcal{E}_1^{\mu, \Phi, h}\).
In the same way as \cite[Lemma 2.4]{T14}, \(\frac{u_k}{h} \in \mathcal{D}(\mathcal{E})\cap C_c,\) so \(\mathcal{D}(\mathcal{E})\cap C_c\) is \(\mathcal{E}_1^{\mu,\Phi,h}\)-dense in \(\mathcal{D}(\mathcal{E}^{\mu, \Phi, h})\). Hence \((\mathcal{E}^{\mu,\Phi, h}, \mathcal{D}(\mathcal{E}^{\mu,\Phi,h}))\) is a regular Dirichlet form on \(L^2(E;h^2m)\).
\end{proof}

\begin{remark}
In general, neither \((\mathcal{E}^{\mu}, \mathcal{D}(\mathcal{E}^{\mu}))\) nor \((\mathcal{E}^{\mu,\Phi}, \mathcal{D}(\mathcal{E}^{\mu,\Phi}))\) is a Dirichlet form, so the corresponding stochastic processes for these closed forms do not exist. However, by Fukushima's theorem, there exists an \(h^2m\)-symmetric Hunt process on \(E\) associated with a regular Dirichlet form \((\mathcal{E}^{\mu,\Phi, h}, \mathcal{D}(\mathcal{E}^{\mu,\Phi,h}))\) for \(\mu^- \in \mathcal{K}(\mathcal{E}^{\mu^+})\). Therefore, we characterise the (sub)criticality for \(\mathcal{L}^{\mu, \Phi}\) through the stochastic processes associated with \((\mathcal{E}^{\mu}, \mathcal{D}(\mathcal{E}^{\mu}))\).
\end{remark}

Next we consider the subordinated regular Dirichlet form \((\mathcal{E}^{\mu,h, \Phi}, \mathcal{D}(\mathcal{E}^{\mu,h, \Phi}))\) of an \(h\)-transform of a Schr\"{o}dinger form \((\mathcal{E}^{\mu,h}, \mathcal{D}(\mathcal{E}^{\mu,h}))\). The following ensures that taking an \(h\)-transform and subordination commute for \(h \in \mathcal{H}_+^\mu\).

\begin{lemma}\label{corelemm}
Let \((\mathcal{E}, \mathcal{D}(\mathcal{E}))\) be an irreducible regular Dirichlet form  on \(L^2(E;m)\), \(\mu\) be a signed smooth Radon measure making \((\mathcal{E}^{\mu}, \mathcal{D}(\mathcal{E}) \cap C_c(E))\) non-negative definite closable, and \(\Phi\) be a Bernstein function. If \(\mathcal{H}_+^{\mu}\) is not empty, then \((\mathcal{E}^{\mu, h, \Phi}, \mathcal{D}(\mathcal{E}^{\mu, h, \Phi})) = (\mathcal{E}^{\mu, \Phi, h}, \mathcal{D}(\mathcal{E}^{\mu, \Phi, h}))\) for \(h\in \mathcal{H}_+^\mu\).
\end{lemma}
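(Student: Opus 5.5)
The plan is to identify both closed forms through their semigroups on the common Hilbert space \(L^2(E;h^2m)\). A closed symmetric form is uniquely determined by its associated strongly continuous semigroup, so it suffices to show that both semigroups coincide.

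First, for \(h\in\mathcal{H}_+^\mu\), the \(h\)-transform \((\mathcal{E}^{\mu,h},\mathcal{D}(\mathcal{E}^{\mu,h}))\) is a closed form on \(L^2(E;h^2m)\). Its semigroup is
\[T_t^{\mu,h}f=\tfrac1h T_t^\mu(fh).\]
To see this, note that the map \(U:L^2(E;h^2m)\to L^2(E;m)\), \(Uf=fh\), is unitary. By definition, \(\mathcal{E}^{\mu,h}(f,g)=\mathcal{E}^\mu(Uf,Ug)\) with \(\mathcal{D}(\mathcal{E}^{\mu,h})=U^{-1}\mathcal{D}(\mathcal{E}^\mu)\). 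Hence the generator of \(\mathcal{E}^{\mu,h}\) is \(\mathcal{L}^{\mu,h}=U^{-1}\mathcal{L}^\mu U\), and its semigroup is \(T_t^{\mu,h}=U^{-1}T_t^\mu U\).

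Second, I subordinate this semigroup with the same convolution semigroup \(\{\eta_t\}_t\). The subordinated semigroup is
\[T_t^{\mu,h,\Phi}f=\int_0^\infty T_s^{\mu,h}f\,\eta_t(ds)=\int_0^\infty U^{-1}T_s^\mu Uf\,\eta_t(ds).\]
Since \(U^{-1}\) is a bounded linear operator, it commutes with the Bochner integral. This gives
\[T_t^{\mu,h,\Phi}f=U^{-1}\int_0^\infty T_s^\mu(fh)\,\eta_t(ds)=\tfrac1h T_t^{\mu,\Phi}(fh).\]
This is precisely the semigroup of \((\mathcal{E}^{\mu,\Phi,h},\mathcal{D}(\mathcal{E}^{\mu,\Phi,h}))\) recorded earlier in this section. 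Equivalently, at the level of generators, functional calculus commutes with unitary conjugation: \(\Phi(-U^{-1}\mathcal{L}^\mu U)=U^{-1}\Phi(-\mathcal{L}^\mu)U\).

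Third, I conclude that the two forms coincide, domains included. Both forms are of the type \(\mathcal{D}=\mathcal{D}(\sqrt{-A})\), \(\mathcal{E}(f,f)=\|\sqrt{-A}f\|^2\), for the same self-adjoint operator \(A=U^{-1}\mathcal{L}^{\mu,\Phi}U\). Concretely, \(\mathcal{D}(\mathcal{E}^{\mu,h,\Phi})=\mathcal{D}(\sqrt{-U^{-1}\mathcal{L}^{\mu,\Phi}U})=U^{-1}\mathcal{D}(\mathcal{E}^{\mu,\Phi})=\mathcal{D}(\mathcal{E}^{\mu,\Phi,h})\), and the values of the two forms agree.

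The main point needing care is that \(U\) really is unitary between \(L^2(E;h^2m)\) and \(L^2(E;m)\). This uses \(0<h<\infty\) \(m\)-a.e., which holds since \(h\in\mathcal{H}_+^\mu\). No Markov property or irreducibility is needed.

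I would also remark where \(h\in\mathcal{H}_+^\mu\), rather than merely \(h\in\mathcal{H}_+^{\mu,\Phi}\), enters. It is needed so that the intermediate object \(\mathcal{E}^{\mu,h}\) is a Dirichlet form, which makes \(\mathcal{E}^{\mu,h,\Phi}\) a subordination of a Dirichlet form in the sense of \cite{O02}. Theorem \ref{Okura} then gives the representation for \(\mathcal{E}^{\mu,h,\Phi}\), which serves as a consistency check on the identification above.
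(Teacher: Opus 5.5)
Your proof is correct, but it takes a different route from the paper's.

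\textbf{The paper's route.} The paper never identifies the semigroups directly. It argues through a common core:
\begin{enumerate}
\item By Theorem \ref{Okura}, \(\mathcal{D}(\mathcal{E}^{\mu,h})\) is \(\mathcal{E}_1^{\mu,h,\Phi}\)-dense in \(\mathcal{D}(\mathcal{E}^{\mu,h,\Phi})\).
\item The \(\mathcal{E}_1^{\mu,\Phi}\)-density of \(\mathcal{D}(\mathcal{E}^{\mu})\) in \(\mathcal{D}(\mathcal{E}^{\mu,\Phi})\) is transferred through \(u\mapsto u/h\). This shows \(\mathcal{D}(\mathcal{E}^{\mu,h})\) is also \(\mathcal{E}_1^{\mu,\Phi,h}\)-dense in \(\mathcal{D}(\mathcal{E}^{\mu,\Phi,h})\).
\item On this common core, the representation \(b\,\mathcal{E}^{\mu,h}(f,g)+\int_0^\infty\langle f-T_s^{\mu,h}f,g\rangle_{h^2m}\,d\nu(s)\) shows the two forms agree.
\item Uniqueness of the closed extension then gives equality.
\end{enumerate}

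\textbf{Your route.} You observe that \(f\mapsto fh\) is unitary from \(L^2(E;h^2m)\) onto \(L^2(E;m)\). Hence both \(h\)-transforms are unitary pull-backs, and subordination (equivalently, the functional calculus for \(\Phi\)) commutes with unitary conjugation. This gives equality of semigroups and generators, hence of the forms together with their domains. No core argument and no representation formula are needed.

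\textbf{What your route buys.}
\begin{enumerate}
\item It is shorter.
\item It shows the identity holds for every measurable \(h\) with \(0<h<\infty\) \(m\)-a.e. The hypothesis \(h\in\mathcal{H}_+^\mu\) matters only for the Markov property of \(\mathcal{E}^{\mu,h}\). That property is exactly what Corollary \ref{subcrinomama} needs in order to invoke \cite[Theorem 3.2]{O02}.
\item It makes explicit the identity \(T_s^{\mu,h}f=\frac{1}{h}T_s^{\mu}(fh)\). The paper uses this without comment when it rewrites \(\langle fh-T_s^\mu(fh),gh\rangle_m\) as \(\langle f-T_s^{\mu,h}f,g\rangle_{h^2m}\).
\item It avoids applying Theorem \ref{Okura}, which is stated for Schr\"{o}dinger forms, to the transformed form \(\mathcal{E}^{\mu,h}\).
\end{enumerate}

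\textbf{What the paper's route buys.} It yields the explicit L\'evy-type representation of \(\mathcal{E}^{\mu,h,\Phi}\) on the core \(\mathcal{D}(\mathcal{E}^{\mu,h})\). This ties the lemma directly to the subordinated Dirichlet form framework of \cite{O02}.
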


\begin{proof}
We take \(h\in \mathcal{H}_+^\mu\). By Theorem \ref{Okura}, \(\mathcal{D}(\mathcal{E}^{\mu, h})\) is \(\mathcal{E}_1^{\mu, h, \Phi}\)-dense in \(\mathcal{D}(\mathcal{E}^{\mu, h, \Phi})\) and \(\mathcal{D}(\mathcal{E}^{\mu })\) is \(\mathcal{E}_1^{\mu, \Phi}\)-dense in \(\mathcal{D}(\mathcal{E}^{\mu, \Phi})\). For any \(f\in \mathcal{D}(\mathcal{E}^{\mu, \Phi, h})\), \(fh \in \mathcal{D}(\mathcal{E}^{\mu, \Phi})\) and so there exists \(\{u_n\}_n \subset \mathcal{D}(\mathcal{E}^{\mu})\) such that \(\mathcal{E}_1^{\mu, \Phi}(u_n-fh, u_n-fh)\) converges to \(0\). Then we have \(\{\frac{u_n}{h}\}_n \subset \mathcal{D}(\mathcal{E}^{\mu,h})\) and  \(\mathcal{E}_1^{\mu, \Phi, h}(\frac{u_n}{h}-f, \frac{u_n}{h}-f)\) converges to \(0\), so \(\mathcal{D}(\mathcal{E}^{\mu, h})\) is also \(\mathcal{E}_1^{\mu, \Phi, h}\)-dense in \(\mathcal{D}(\mathcal{E}^{\mu, \Phi, h})\).

For \(f,g \in \mathcal{D}(\mathcal{E}^{\mu, h})\), by Theorem \ref{Okura}, we have
\begin{eqnarray*}
\mathcal{E}^{\mu, \Phi, h}(f, g) &=& \mathcal{E}^{\mu, \Phi}(fh, gh) \\
&=& b\, \mathcal{E}^{\mu}(fh,gh) + \int_0^\infty \langle fh-T_s^\mu (fh), gh \rangle_m \, d\nu(s)\\
&=& b\, \mathcal{E}^{\mu, h}(f,g) + \int_0^\infty \langle f-T_s^{\mu, h} f, g \rangle_{h^2m} \, d\nu(s)\\
&=& \mathcal{E}^{\mu, h, \Phi}(f, g).
\end{eqnarray*}
Since \(\mathcal{D}(\mathcal{E}^{\mu, h})\) is both an \(\mathcal{E}_1^{\mu, h, \Phi}\)-dense subset of \(\mathcal{D}(\mathcal{E}^{\mu, h, \Phi})\) and an \(\mathcal{E}_1^{\mu, \Phi, h}\)-dense subset of \(\mathcal{D}(\mathcal{E}^{\mu, \Phi, h})\), \(\mathcal{E}^{\mu, h, \Phi}\) coincides with \(\mathcal{E}^{\mu, \Phi, h}\) on \(\mathcal{D}(\mathcal{E}^{\mu, h, \Phi})=\mathcal{D}(\mathcal{E}^{\mu, \Phi, h})\).
\end{proof}

\begin{corollary}\label{subcrinomama}
Let \((\mathcal{E}, \mathcal{D}(\mathcal{E}))\) be an irreducible regular Dirichlet form  on \(L^2(E;m)\), \(\mu\) be a signed smooth Radon measure making \((\mathcal{E}^{\mu}, \mathcal{D}(\mathcal{E}) \cap C_c(E))\) non-negative definite closable, and \(\Phi\) be a Bernstein function satisfying the condition \rm{(IB)}. If \((\mathcal{E}^{\mu}, \mathcal{D}(\mathcal{E}^{\mu}))\) is subcritical, then so is \((\mathcal{E}^{\mu, \Phi}, \mathcal{D}(\mathcal{E}^{\mu, \Phi}))\).
\end{corollary}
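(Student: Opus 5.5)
The plan is to go through the probabilistic characterizations. By Theorem \ref{defsubcri2}, subcriticality of \((\mathcal{E}^{\mu}, \mathcal{D}(\mathcal{E}^{\mu}))\) gives \(\mathcal{H}_+^\mu\neq\emptyset\) and a transient \(h\)-transform \((\mathcal{E}^{\mu,h}, \mathcal{D}(\mathcal{E}^{\mu,h}))\) for some \(h\in\mathcal{H}_+^\mu\). Fix this \(h\). We have already observed that \(\mathcal{H}_+^\mu\subset\mathcal{H}_+^{\mu,\Phi}\), so \(h\in\mathcal{H}_+^{\mu,\Phi}\). By Lemma \ref{corelemm}, \((\mathcal{E}^{\mu,\Phi,h}, \mathcal{D}(\mathcal{E}^{\mu,\Phi,h}))\) coincides with the subordination \((\mathcal{E}^{\mu,h,\Phi}, \mathcal{D}(\mathcal{E}^{\mu,h,\Phi}))\) of the Dirichlet form \((\mathcal{E}^{\mu,h}, \mathcal{D}(\mathcal{E}^{\mu,h}))\) on \(L^2(E;h^2m)\). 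By Theorem \ref{subodinated_subcri} ((2)\(\Rightarrow\)(1)), it then suffices to show that the subordinate of a transient Dirichlet form is transient.

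For this key step I would use Green operators. Transience of \(\mathcal{E}^{\mu,h}\) gives a strictly positive \(g\in L^1(E;h^2m)\) with \(\int_0^\infty T^{\mu,h}_t g\,dt<\infty\) a.e. For the subordinated semigroup, Tonelli gives
\[
\int_0^\infty T^{\mu,h,\Phi}_t g\,dt=\int_0^\infty T^{\mu,h}_s g\,U(ds),\qquad U(ds):=\int_0^\infty\eta_t(ds)\,dt,
\]
where \(U\) is the potential measure of the subordinator. The measure \(U\) is finite on bounded sets, since \(\int e^{-s}U(ds)=1/\Phi(1)<\infty\). However, \(U\) need not be comparable to \(ds\) near infinity, so pointwise finiteness is not immediate.

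Instead I would argue through the energy inequality. Irreducibility (Proposition \ref{irr}, using (IB)) reduces the question to ruling out recurrence. Suppose \(\mathcal{E}^{\mu,h,\Phi}\) were recurrent. Then there exist \(u_n\in\mathcal{D}(\mathcal{E}^{\mu,h,\Phi})\), \(u_n\to1\) a.e., with \(\mathcal{E}^{\mu,h,\Phi}(u_n,u_n)\to0\). Spectrally, \(\mathcal{E}^{\mu,h,\Phi}(u,u)=\int\Phi(\lambda)\,d\langle E_\lambda u,u\rangle\). The aim is to transfer this to \(\mathcal{E}^{\mu,h}\). Since \(\Phi\) is concave with \(\Phi(0)=0\), we have \(\Phi(\lambda)\ge\Phi(1)(\lambda\wedge1)\). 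Hence \(\mathcal{E}^{\mu,h,\Phi}\) dominates \(\Phi(1)\) times the form of the operator \((-\mathcal{L}^{\mu,h})\wedge 1\).

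One could equivalently use the well-known criterion (e.g. Okura's results in \cite{O02}) that a subordinate of a transient Dirichlet form is transient. If I prefer to avoid citing it, I would work with the reference-function inequality \(\int|f|v\,h^2dm\le\sqrt{\mathcal{E}^{\mu,h}(f,f)}\) and replace \(\mathcal{E}^{\mu,h}\) by the bounded-spectrum form, which is comparable on low frequencies.

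The main obstacle is exactly this comparison. Recurrence of the form \((\lambda\wedge1)\) must be shown to imply recurrence of \(\mathcal{E}^{\mu,h}\). My plan is to apply the equivalent resolvent characterization of transience through \(\int_0^\infty T_t g\,dt\), or through \(\lim_{\alpha\to0}\langle G_\alpha g,g\rangle\). For the form \(\psi(-\mathcal{L})\) with \(\psi(\lambda)=\lambda\wedge1\), compute \(\langle(\psi(-\mathcal{L})+\alpha)^{-1}g,g\rangle\le\langle(-\mathcal{L}+\alpha)^{-1}g,g\rangle+\|g\|^2\). Taking \(g\) bounded in \(L^1\cap L^2\) with finite \(0\)-potential then yields transience of \(\mathcal{E}^{\mu,h,\Phi}\) via the \(\Phi(1)(\lambda\wedge1)\) lower bound. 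Finally, Theorem \ref{subodinated_subcri} gives subcriticality of \((\mathcal{E}^{\mu,\Phi},\mathcal{D}(\mathcal{E}^{\mu,\Phi}))\).
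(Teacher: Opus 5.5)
Your proposal is correct and follows the paper's proof. Theorem \ref{defsubcri2} gives a transient $h$-transform for some $h\in\mathcal{H}_+^\mu\subset\mathcal{H}_+^{\mu,\Phi}$, Lemma \ref{corelemm} identifies $\mathcal{E}^{\mu,\Phi,h}$ with the subordinate $\mathcal{E}^{\mu,h,\Phi}$, and transience of that subordinate together with Theorem \ref{subodinated_subcri} concludes. The only difference is that the paper cites \cite[Theorem 3.2]{O02} for ``subordination preserves transience''. Your spectral bound $1/\Phi(\lambda)\le \Phi(1)^{-1}(1/\lambda+1)$ gives $\langle g, G^{\mu,h,\Phi}g\rangle_{h^2m}\le \Phi(1)^{-1}\bigl(\langle g,G^{\mu,h}g\rangle_{h^2m}+\|g\|^2_{L^2(E;h^2m)}\bigr)<\infty$ for a strictly positive bounded $g\in L^1(E;h^2m)$, and this is a valid self-contained proof of that step. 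The abandoned ``suppose recurrent'' detour can therefore be dropped.
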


\begin{proof}
Suppose that  \((\mathcal{E}^{\mu}, \mathcal{D}(\mathcal{E}^{\mu}))\) is subcritical. For \(h\in \mathcal{H}_+^\mu\), \((\mathcal{E}^{\mu,h}, \mathcal{D}(\mathcal{E}^{\mu,h}))\) is transient and, by \cite[Theorem 3.2]{O02}, the subordinated regular Dirichlet form \((\mathcal{E}^{\mu,h, \Phi}, \mathcal{D}(\mathcal{E}^{\mu,h, \Phi}))\) is also transient. Hence, by Lemma \ref{corelemm},  \((\mathcal{E}^{\mu, \Phi, h}, \mathcal{D}(\mathcal{E}^{\mu, \Phi, h}))\) is also transient.
\end{proof}

\begin{example}
A Bernstein function \(\Phi(\lambda)= \lambda^{\alpha/2}\) with \(0<\alpha <2\) satisfies the condition \rm{(IB)}. Hence, if \(\mathcal{L}\) is irreducible, then so is \(-(-\mathcal{L}^{\mu})^{\alpha/2}\). Moreover if \(\mathcal{L}^{\mu}\) is subcritical, then so is \(-(-\mathcal{L}^{\mu})^{\alpha/2}\).
\end{example}

In \cite[Lemma 2.5]{TU23}, for \(f \in \mathcal{D}_e(\mathcal{E}^{\mu})\) and its approximating sequence \(\{f_n\}_n \subset \mathcal{D}_e(\mathcal{E}^{\mu})\), \(f-T_t^\mu f \in \mathcal{D}(\mathcal{E}^{\mu})\) is realized as a limit of \(f_n-T_t^\mu f_n\) in \(L^2(E;m\)), and so \(T_t^\mu f := f-(f-T_t^\mu f) \in \mathcal{D}(\mathcal{E}^{\mu})\) is defined. Moreover, by using the spectral decomposition, we can see that
\begin{equation}
\frac{1}{t} \langle f-T_t^\mu f, f\rangle_m \leq \mathcal{E}^\mu(f,f)
\label{eq:spectral}
\end{equation}
for \(f \in \mathcal{D}(\mathcal{E}^\mu)\) and \(t>0\).

Unlike the case of subcriticality, it is not necessarily true that criticality is preserved by a subordination. The following is a sufficient condition for preserving a criticality.
\begin{proposition}\label{cri_inv}
Let \((\mathcal{E}, \mathcal{D}(\mathcal{E}))\) be an irreducible regular Dirichlet form  on \(L^2(E;m)\), \(\mu\) be a signed smooth Radon measure making \((\mathcal{E}^{\mu}, \mathcal{D}(\mathcal{E}) \cap C_c(E))\) non-negative definite closable, and \(\Phi\) be a Bernstein function satisfying the condition \rm{(IB)}. If \((\mathcal{E}^{\mu}, \mathcal{D}(\mathcal{E}^{\mu}))\) is critical and suppose that
\[\int_0^\infty s\, d\nu(s) < \infty\]
then \((\mathcal{E}^{\mu, \Phi}, \mathcal{D}(\mathcal{E}^{\mu, \Phi}))\) is also critical.
\end{proposition}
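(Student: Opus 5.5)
Since $(\mathcal{E}^{\mu}, \mathcal{D}(\mathcal{E}^{\mu}))$ is critical, there is a strictly positive $h\in\mathcal{D}_e(\mathcal{E}^\mu)$ with $\mathcal{E}^\mu(h,h)=0$. I would show that this same $h$ witnesses criticality of $\mathcal{E}^{\mu,\Phi}$. That is, I would prove $h\in\mathcal{D}_e(\mathcal{E}^{\mu,\Phi})$ and $\mathcal{E}^{\mu,\Phi}(h,h)=0$; the definition of criticality then gives the claim. Alternatively, by Theorem \ref{subodinated_cri} it is enough to find some $h\in\mathcal{H}_+^{\mu,\Phi}$ whose $h$-transform is recurrent. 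I will keep both routes available, but the direct route seems cleanest.

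\textbf{Step 1 (comparison of forms).} First I would show that for $f\in\mathcal{D}(\mathcal{E}^\mu)$,
\[
\mathcal{E}^{\mu,\Phi}(f,f)\le \Bigl(b+\int_0^\infty s\,d\nu(s)\Bigr)\mathcal{E}^\mu(f,f).
\]
This follows from Theorem \ref{Okura} together with the spectral inequality (\ref{eq:spectral}), which gives $\langle f-T_s^\mu f,f\rangle_m\le s\,\mathcal{E}^\mu(f,f)$. Integrating in $\nu$ and using the hypothesis $\int_0^\infty s\,d\nu(s)<\infty$ yields the bound. Equivalently, $\Phi(\lambda)\le C\lambda$ with $C:=b+\int s\,d\nu$, and the inequality is then just functional calculus. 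In particular $\mathcal{D}(\mathcal{E}^\mu)\subset\mathcal{D}(\mathcal{E}^{\mu,\Phi})$, and the inclusion map is continuous from the $\mathcal{E}^\mu$-seminorm to the $\mathcal{E}^{\mu,\Phi}$-seminorm.

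\textbf{Step 2 (passing to extended spaces).} Take an $\mathcal{E}^\mu$-Cauchy sequence $\{h_n\}_n\subset\mathcal{D}(\mathcal{E}^\mu)$ with $h_n\to h$ $m$-almost everywhere. By Step 1, $\{h_n\}_n$ is also $\mathcal{E}^{\mu,\Phi}$-Cauchy, since $\mathcal{E}^{\mu,\Phi}(h_n-h_k,h_n-h_k)\le C\,\mathcal{E}^\mu(h_n-h_k,h_n-h_k)$. Hence $h\in\mathcal{D}_e(\mathcal{E}^{\mu,\Phi})$, and
\[
\mathcal{E}^{\mu,\Phi}(h,h)=\lim_n\mathcal{E}^{\mu,\Phi}(h_n,h_n)\le C\lim_n\mathcal{E}^\mu(h_n,h_n)=C\,\mathcal{E}^\mu(h,h)=0.
\]
The limit is well defined by the discussion preceding the definition of criticality for subordinated forms. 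Since $h>0$, the subordinated form is critical.

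\textbf{Main obstacle.} The only delicate point is the rigor of Step 1 and the well-definedness of the extended-space limits. Concretely, one needs that (\ref{eq:spectral}) holds for all $f\in\mathcal{D}(\mathcal{E}^\mu)$ and all $s>0$, and that Theorem \ref{Okura}'s representation can be used with the $\nu$-integral interchanged. Both follow from the spectral theorem applied to $-\mathcal{L}^\mu$: with spectral measure $E_\lambda$, one has $\langle f-T_s^\mu f,f\rangle_m=\int(1-e^{-s\lambda})\,d\langle E_\lambda f,f\rangle$, together with the pointwise bound $1-e^{-s\lambda}\le s\lambda$ and Tonelli's theorem.

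I would also remark that (IB) is used only to make the notion of criticality consistent with Theorem \ref{subodinated_cri}. The finiteness of $\int s\,d\nu$ is exactly what rules out the large-jump behaviour that could otherwise destroy criticality.
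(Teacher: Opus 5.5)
Your proposal is correct and essentially matches the paper's proof. The comparison $\mathcal{E}^{\mu,\Phi}(f,f)\le \bigl(b+\int_0^\infty s\,d\nu(s)\bigr)\,\mathcal{E}^\mu(f,f)$ on $\mathcal{D}(\mathcal{E}^\mu)$, obtained from Theorem \ref{Okura} and (\ref{eq:spectral}) and applied to an $\mathcal{E}^\mu$-Cauchy approximating sequence of the positive ground state $h$, gives $h\in\mathcal{D}_e(\mathcal{E}^{\mu,\Phi})$ with $\mathcal{E}^{\mu,\Phi}(h,h)=0$. The only difference is cosmetic: the paper chooses $h$ via \cite[Corollary 2.7]{TU23} so that also $T_t^\mu h=h$, but its argument never uses this, so taking $h$ directly from the definition of criticality, as you do, suffices.
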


\begin{proof}
By \cite[Corollary 2.7]{TU23}, we can take \(h \in \mathcal{D}_e(\mathcal{E}^{\mu})\cap \mathcal{H}_+^\mu\) satisfying \(\mathcal{E}^{\mu}(h,h)=0\) and \(T_t^\mu h = h\) \(m\)-almost everywhere. We assume that the Bernstein function \(\Phi\) satisfies \(\int_0^\infty s\, d\nu(s) < \infty. \) For an \(\mathcal{E}^\mu\)-Cauchy sequence \(\{h_n\}_n \subset \mathcal{D}(\mathcal{E}^{\mu})\) converging to \(h\) \(m\)-almost everywhere, by Theorem \ref{Okura}, we have
\begin{eqnarray*}
\lefteqn{\varlimsup_{n, m\to \infty} \mathcal{E}^{\mu, \Phi}(h_n-h_m,h_n-h_m)}\\
 &\leq  &\varlimsup_{n,m \to \infty} b\, \mathcal{E}^{\mu}(h_n-h_m ,h_n-h_m) + \varlimsup_{n,m\to \infty} \left| \int_0^\infty \langle h_n-h_m-T_s^\mu (h_n-h_m), h_n-h_m \rangle_m d\nu(s) \right|\\
& \leq & 0 + \varlimsup_{n,m\to \infty} \mathcal{E}^\mu(h_n-h_m, h_n-h_m) \, \int_0^\infty s \, d\nu(s)\\
&=& 0,
\end{eqnarray*}
where we used \((\ref{eq:spectral})\) in the second equality. Hence \(\{h_n\}_n \subset \mathcal{D}(\mathcal{E}^\mu) \subset \mathcal{D}(\mathcal{E}^{\mu, \Phi})\) is also an \(\mathcal{E}^{\mu, \Phi}\)-Cauchy sequence and so \(h\in \mathcal{D}_e(\mathcal{E}^{\mu, \Phi})\). Moreover, similarly to the above, we have
\begin{eqnarray*}
\lefteqn{\mathcal{E}^{\mu, \Phi}(h,h) =\lim_{n \to \infty} \mathcal{E}^{\mu, \Phi}(h_n,h_n) }\\
&\leq &\varlimsup_{n \to \infty} b \mathcal{E}^\mu(h_n,h_n) + \varlimsup_{n \to \infty} \mathcal{E}^\mu (h_n, h_n) \, \int_0^\infty s \, d\nu(s) =\left( b+\int_0^\infty s \, d\nu(s) \right) \mathcal{E}^\mu(h,h) = 0,
\end{eqnarray*}
so \((\mathcal{E}^{\mu, \Phi}, \mathcal{D}(\mathcal{E}^{\mu, \Phi}))\) is critical.
\end{proof}

There are examples in which \(\mathcal{L}^\mu\) is critical, but its subordination \(\Phi(\mathcal{L}^\mu)\) becomes subcritical. See Section \ref{sec:Example} for details.

We set 
\[\mathcal{D}^*(\sqrt{-\mathcal{L}^{\mu, \Phi}}):=\{f : |\langle f,u \rangle_m | \leq C \sqrt{\mathcal{E}^{\mu, \Phi}(u,u)} \text{\ for\ }u \in \mathcal{D}(\sqrt{-\mathcal{L}^{\mu, \Phi}})\}\]
and
\[\mathcal{D}_e^*(\sqrt{-\mathcal{L}^{\mu, \Phi}}):=\{f : |\langle f,u \rangle_m | \leq C \sqrt{\mathcal{E}^{\mu, \Phi}(u,u)} \text{\ for\ }u \in \mathcal{D}_e(\sqrt{-\mathcal{L}^{\mu, \Phi}})\}.\]
Here we defined \(\langle f,u \rangle_m\) for \(u\in \mathcal{D}_e(\sqrt{-\mathcal{L}^{\mu, \Phi}})\}\) by the limit of \(\langle f,u_n \rangle_m\) for an approximate sequence \(\{u_n\}_n \subset \mathcal{D}(\sqrt{-\mathcal{L}^{\mu, \Phi}})\). The space \(\mathcal{D}^*_e(\sqrt{-\mathcal{L}^{\mu, \Phi}})\) is well-defined in a similar way to the proof of \cite[Proposition 2.1.5]{CF12}.

For \(v\in L^2(E;m)\), we put
\[G^{\mu, \Phi}v := \int_0^\infty T_t^{\mu, \Phi}v\,dt \leq \infty.\]
Note that for any \(\alpha>0\) and \(v \in L^2(E;m)\), by the Riesz representation theorem, there exists \(G_\alpha^{\mu, \Phi} v \in \mathcal{D}(\mathcal{E}^{\mu, \Phi})\) such that \(\mathcal{E}_\alpha^{\mu, \Phi}(G_\alpha^{\mu, \Phi} v, f)=\langle v, f \rangle_m \) for any \(f\in \mathcal{D}(\mathcal{E}^{\mu, \Phi})\), and \(G_\alpha^{\mu, \Phi} v=(\alpha-\mathcal{L}^{\mu, \Phi})^{-1}v = \int_0 ^\infty e^{-\alpha t} T_t^{\mu, \Phi} v \,dt\).

We consider equivalent conditions for the subcriticality from a perspective of an analysis of operators.
For an operator \(A\), we define the range of \(A\) by \(R(A):=\{Af : f \in \mathcal{D}(A)\}\). 

\begin{theorem}\label{thm_range}
Let \((\mathcal{E}, \mathcal{D}(\mathcal{E}))\) be a regular Dirichlet form on \(L^2(E;m)\), \(\mu\) be a signed smooth Radon measure making \((\mathcal{E}^{\mu}, \mathcal{D}(\mathcal{E}) \cap C_c(E))\) non-negative definite closable, and \(\Phi\) be a Bernstein function. Then it holds that
\begin{eqnarray*}
R(\sqrt{-\mathcal{L}^{\mu, \Phi}}) &=& L^2(E;m) \cap \mathcal{D}_e^*(\sqrt{-\mathcal{L}^{\mu, \Phi}})\\
&=& L^2(E;m) \cap \mathcal{D}^*(\sqrt{-\mathcal{L}^{\mu, \Phi}})\\
&=& \{f \in L^2(E;m) : \langle f, G^{\mu,\Phi}f \rangle_m  < \infty\}.
\end{eqnarray*}
\end{theorem}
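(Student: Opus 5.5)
Write \(A:=\sqrt{-\mathcal{L}^{\mu,\Phi}}\). This is a non-negative self-adjoint operator on \(L^2(E;m)\) with spectral resolution \(\{E_\lambda\}_{\lambda\ge0}\), and we have \(\mathcal{E}^{\mu,\Phi}(u,u)=\|Au\|_m^2\) and \(\mathcal{D}(\mathcal{E}^{\mu,\Phi})=\mathcal{D}(A)\). The plan is to reduce all three descriptions to a single spectral condition:
\[
f\in R(A)\iff \int_{[0,\infty)}\lambda^{-2}\,d\|E_\lambda f\|_m^2<\infty .
\]
This includes the requirement that \(f\) has no component in \(\ker A\) (the atom at \(\lambda=0\) must vanish).

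\textbf{Range and the spectral condition.} If \(f=Au\), then \(\int\lambda^{-2}\,d\|E_\lambda f\|^2=\|u-P_0u\|^2<\infty\), where \(P_0\) is the projection onto \(\ker A\). Conversely, if the integral is finite, set \(u:=\int\lambda^{-1}\,dE_\lambda f\). Then \(u\in L^2\), \(u\in\mathcal{D}(A)\), and \(Au=f\).

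\textbf{The resolvent-type condition.} By the spectral calculus, \(T_t^{\mu,\Phi}=e^{-tA^2}\). Hence
\[
\langle f,G^{\mu,\Phi}f\rangle_m=\int_0^\infty\langle T_t^{\mu,\Phi}f,f\rangle_m\,dt=\int\!\!\int_0^\infty e^{-t\lambda^2}\,dt\,d\|E_\lambda f\|^2=\int\lambda^{-2}\,d\|E_\lambda f\|^2 .
\]
The interchange is justified by Tonelli, since the integrand is non-negative. One point needs care: \(G^{\mu,\Phi}f\) is defined pointwise (possibly \(+\infty\)), so \(\langle f,G^{\mu,\Phi}f\rangle_m\) must be interpreted. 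For general signed \(f\) I would define it as \(\lim_{T\to\infty}\int_0^T\langle T_tf,f\rangle\,dt\), which is monotone because each \(\langle T_tf,f\rangle\ge0\). It agrees with the pointwise expression whenever the latter is absolutely integrable. This gives the equivalence of the first and last descriptions.

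\textbf{The dual descriptions.} For \(f\in L^2\) I will show
\[
\text{(spectral condition)}\;\Rightarrow\; f\in\mathcal{D}_e^*\;\Rightarrow\; f\in\mathcal{D}^*\;\Rightarrow\;\text{(spectral condition)}.
\]

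\emph{Step 1.} If \(f=Av\), then for \(u\in\mathcal{D}(A)\) we have \(|\langle f,u\rangle|=|\langle v,Au\rangle|\le\|v\|\sqrt{\mathcal{E}^{\mu,\Phi}(u,u)}\). To extend this to \(u\in\mathcal{D}_e\), take an approximating sequence \(u_n\). Then \(\langle f,u_n\rangle=\langle v,Au_n\rangle\) is Cauchy, since \(Au_n\) is Cauchy in \(L^2\). So the limit defining \(\langle f,u\rangle_m\) exists and the bound persists. This is exactly the well-definedness argument the paper attributes to CF12 Proposition 2.1.5.

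\emph{Step 2.} The inclusion \(\mathcal{D}_e^*\subset\mathcal{D}^*\) is trivial, because \(\mathcal{D}(A)\subset\mathcal{D}_e\).

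\emph{Step 3.} Assume \(|\langle f,u\rangle|\le C\|Au\|\) for all \(u\in\mathcal{D}(A)\). Test with \(u=\int_{[\varepsilon,N]}\lambda^{-2}\,dE_\lambda f\), which lies in \(\mathcal{D}(A)\) since the spectral cutoff is bounded. Put \(I_{\varepsilon,N}:=\int_{[\varepsilon,N]}\lambda^{-2}\,d\|E_\lambda f\|^2\). The test gives
\[
I_{\varepsilon,N}\le C\,I_{\varepsilon,N}^{1/2},\qquad\text{so}\qquad I_{\varepsilon,N}\le C^2 .
\]
Letting \(\varepsilon\to0\) and \(N\to\infty\) bounds the integral over \((0,\infty)\).

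The atom at \(0\) also has to be excluded. Take \(u=P_0f\in\ker A\subset\mathcal{D}(A)\). Then \(\|P_0f\|^2=|\langle f,u\rangle|\le C\|Au\|=0\), so \(P_0f=0\).

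\textbf{Main obstacle.} The only genuinely delicate point is the interpretation of \(\langle f,G^{\mu,\Phi}f\rangle_m\) for signed \(f\) when \(G^{\mu,\Phi}\) is defined pointwise. I would resolve it by the monotone limit definition above, and remark that for \(f\ge0\) it coincides with the pointwise formula by monotone convergence. Everything else is routine spectral calculus, and no Markov property or irreducibility is used.
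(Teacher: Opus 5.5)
Your proof is correct. It uses the same spectral ingredients as the paper but organizes them differently.

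The paper's route has four pieces:
\begin{itemize}
\item it shows $L^2(E;m)\cap\mathcal{D}^*\subset L^2(E;m)\cap\mathcal{D}_e^*$ by an approximation argument;
\item it gets $R(\sqrt{-\mathcal{L}^{\mu,\Phi}})\subset\mathcal{D}^*$ by duality;
\item it proves $L^2(E;m)\cap\mathcal{D}_e^*\subset R(\sqrt{-\mathcal{L}^{\mu,\Phi}})$ by testing against $G_\alpha^{\mu,\Phi}f$, which gives $\sup_\alpha\langle f,G_\alpha^{\mu,\Phi}f\rangle_m\le C^2$, and then letting $\alpha\to0$;
\item it obtains the last description from the variational identity \eqref{eq:gGg}.
\end{itemize}
You instead reduce everything to the single quantity $\int\lambda^{-2}\,d\|E_\lambda f\|_m^2$ and close the cycle $R\subset\mathcal{D}_e^*\subset\mathcal{D}^*\subset R$.

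Your organization buys three things.
\begin{itemize}
\item The last equality becomes a self-contained Tonelli computation. The paper relies on \eqref{eq:gGg}, which pairs $f$ with $|u|$ and so really concerns only $f\ge0$.
\item You give $\langle f,G^{\mu,\Phi}f\rangle_m$ an explicit meaning when $f$ is signed, which the paper leaves implicit.
\item Your cycle avoids the paper's use of a preimage $f_*$ in the $\mathcal{D}^*\Rightarrow\mathcal{D}_e^*$ step before $f\in R(\sqrt{-\mathcal{L}^{\mu,\Phi}})$ is known. The direct bound $|\langle f,u_n-u_m\rangle_m|\le C\sqrt{\mathcal{E}^{\mu,\Phi}(u_n-u_m,u_n-u_m)}$ would have sufficed there.
\end{itemize}

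The paper's choices have their own advantages. Its resolvent test disposes of $\ker A$ automatically, since the atom at $0$ contributes $\alpha^{-1}\|P_0f\|_m^2$ to $\langle f,G_\alpha^{\mu,\Phi}f\rangle_m$; you need the separate test $u=P_0f$. Its identity \eqref{eq:gGg} is also reused directly in Theorem \ref{extSoba}.

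One minor caveat on your closing remark. Matching your monotone-limit definition with the pointwise $\int_0^\infty T_t^{\mu,\Phi}f\,dt$ for $f\ge0$ uses that $T_t^{\mu,\Phi}$ is positivity preserving. This holds here via Feynman--Kac and subordination, but it means that step is not purely spectral.
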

\begin{proof}
It is clear that \(L^2(E;m) \cap \mathcal{D}_e^*(\sqrt{-\mathcal{L}^{\mu, \Phi}})\subset L^2(E;m) \cap \mathcal{D}^*(\sqrt{-\mathcal{L}^{\mu, \Phi}}).\) For any \(f \in L^2(E;m) \cap \mathcal{D}^*(\sqrt{-\mathcal{L}^{\mu, \Phi}})\) and any \(u \in \mathcal{D}_e(\sqrt{-\mathcal{L}^{\mu, \Phi}})\), we take an approximate sequence \(\{u_n\}_n \subset \mathcal{D}(\sqrt{-\mathcal{L}^{\mu, \Phi}})\) of \(u\). Then \(\{\langle f, u_n \rangle_m \}_n\) is a Cauchy sequence since
\begin{eqnarray*}
\varlimsup_{n,m\to \infty}|\langle f, u_n-u_m \rangle_m| &=& \varlimsup_{n,m\to \infty}|\langle f_*, \sqrt{-\mathcal{L}^{\mu, \Phi}}(u_n-u_m) \rangle_m|\\
&\leq &\varlimsup_{n,m\to \infty} \|f_*\| \sqrt{\mathcal{E}^{\mu, \Phi}(u_n-u_m,u_n-u_m)}\\
& =& 0.
\end{eqnarray*}
Similarly, \(\langle f, u \rangle_m\) is independent of the choice of an approximating sequence of \(u\). We have
\begin{eqnarray*}
|\langle f, u \rangle_m | := \lim_{n\to \infty}|\langle f, u_n \rangle_m | \leq  \lim_{n\to \infty}C \, \sqrt{\mathcal{E}^{\mu, \Phi}(u_n,u_n)} = C\, \sqrt{\mathcal{E}^{\mu, \Phi}(u,u)}.
\end{eqnarray*}
Hence \(f \in L^2(E;m) \cap \mathcal{D}_e^*(\sqrt{-\mathcal{L}^{\mu, \Phi}})\), and so  \(L^2(E;m) \cap \mathcal{D}_e^*(\sqrt{-\mathcal{L}^{\mu, \Phi}})= L^2(E;m) \cap \mathcal{D}^*(\sqrt{-\mathcal{L}^{\mu, \Phi}}).\)

We take \(f \in R(\sqrt{-\mathcal{L}^{\mu, \Phi}})\) and \(f_* \in \mathcal{D}(\sqrt{-\mathcal{L}^{\mu, \Phi}})\) with \(f= \sqrt{-\mathcal{L}^{\mu, \Phi}}f_*.\) For any \(u \in \mathcal{D}(\sqrt{-\mathcal{L}^{\mu, \Phi}})\), we have
\begin{eqnarray*}
|\langle f, u \rangle_m |&= & |\langle f_*, \sqrt{-\mathcal{L}^{\mu, \Phi}}u \rangle_m| \leq \| f_*\|_{L^2(E;m)} \, \| \sqrt{-\mathcal{L}^{\mu, \Phi}}u \|_{L^2(E;m)} = \| f_*\|_{L^2(E;m)} \, \sqrt{\mathcal{E}^{\mu, \Phi}(u,u)}
\end{eqnarray*}
and so \(f \in L^2(E;m) \cap \mathcal{D}^*(\sqrt{-\mathcal{L}^{\mu, \Phi}}).\) On the other hand, we take \(f \in L^2(E;m) \cap \mathcal{D}_e^*(\sqrt{-\mathcal{L}^{\mu, \Phi}})\). Then there exists \(C\) such that, for  \(G_\alpha^{\mu, \Phi}f = (\alpha-\mathcal{L}^{\mu, \Phi})^{-1}f\),
\begin{eqnarray*}
\langle f, G_\alpha^{\mu, \Phi}f \rangle_m \leq  C \sqrt{\mathcal{E}^{\mu, \Phi}( G_\alpha^{\mu, \Phi}f,  G_\alpha^{\mu, \Phi}f)} \leq C \sqrt{\mathcal{E}^{\mu, \Phi}_\alpha( G_\alpha^{\mu, \Phi}f,  G_\alpha^{\mu, \Phi}f)} =  C \sqrt{\langle f, G_\alpha^{\mu, \Phi}f \rangle_m}
\end{eqnarray*}
and so we have  \(\sup_{\alpha}\langle f, G_\alpha^{\mu, \Phi}f \rangle_m \leq C^2.\) By the spectral decomposition and letting \(\alpha\) tend to \(0\), we have \(f \in \mathcal{D}((-\mathcal{L}^{\mu, \Phi})^{-1/2})\). Since \(f\in L^2(E;m)\), it holds that \((-\mathcal{L}^{\mu, \Phi})^{-1/2}f \in \mathcal{D}(\sqrt{-\mathcal{L}^{\mu, \Phi}})\) and so \(f=(-\mathcal{L}^{\mu, \Phi})^{1/2} (-\mathcal{L}^{\mu, \Phi})^{-1/2}f \in R(\sqrt{-\mathcal{L}^{\mu, \Phi}})\).

As in \cite[Lemma 2.1.4 (ii)]{CF12}, it holds that
\begin{equation}
\sup_{u \in  \mathcal{D}(\sqrt{-\mathcal{L}^{\mu, \Phi}})} \frac{\langle f,|u|\rangle_m}{\sqrt{\mathcal{E}^{\mu, \Phi}(u,u)}} = \langle f, G^{\mu,\Phi}f \rangle_m. \label{eq:gGg}
\end{equation}
Hence we have \(L^2(E;m) \cap \mathcal{D}^*(\sqrt{-\mathcal{L}^{\mu, \Phi}})= \{f \in L^2(E;m) : \langle f, G^{\mu,\Phi}f \rangle_m  < \infty\}\).
\end{proof}

\begin{theorem}\label{extSoba}
Let \((\mathcal{E}, \mathcal{D}(\mathcal{E}))\) be an irreducible regular Dirichlet form  on \(L^2(E;m)\), \(\mu\) be a signed smooth Radon measure making \((\mathcal{E}^{\mu}, \mathcal{D}(\mathcal{E}) \cap C_c(E))\) non-negative definite closable, and \(\Phi\) be a Bernstein function satisfying the condition \rm{(IB)}. Then the following are equivalent.
\begin{enumerate}
\item There exists a strictly positive function \(g \in R(\sqrt{-\mathcal{L}^{\mu, \Phi}})\),
\item  There exists a non-negative function \(g \in R(\sqrt{-\mathcal{L}^{\mu, \Phi}}) \) such that \(g\) is not \(0\),
\item There exists a non-negative function \(g \in  L^2(E;m)\) such that \(  \langle g, G^{\mu,\Phi}g \rangle_m  < \infty\) and \(g\) is not \(0\),
\item \(\mathcal{L}^{\mu, \Phi} \) is subcritical.
\end{enumerate}
\end{theorem}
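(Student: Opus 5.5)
The plan is to prove the cycle (1)$\Rightarrow$(2)$\Rightarrow$(3)$\Rightarrow$(4)$\Rightarrow$(1), using Theorem \ref{thm_range} to move between the range of $\sqrt{-\mathcal{L}^{\mu,\Phi}}$ and the condition $\langle g,G^{\mu,\Phi}g\rangle_m<\infty$, and Theorem \ref{subodinated_subcri} together with Proposition \ref{irr} for the probabilistic side.

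\textbf{The easy implications.} (1)$\Rightarrow$(2) is trivial. (2)$\Rightarrow$(3) follows directly from the identity $R(\sqrt{-\mathcal{L}^{\mu,\Phi}})=\{f\in L^2(E;m):\langle f,G^{\mu,\Phi}f\rangle_m<\infty\}$ in Theorem \ref{thm_range}.

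\textbf{(3)$\Rightarrow$(4).} Take $g\ge0$, $g\neq0$, $g\in L^2$, with $\langle g,G^{\mu,\Phi}g\rangle_m<\infty$.
\begin{itemize}
\item First show that $G^{\mu,\Phi}g\in\mathcal{D}_e(\mathcal{E}^{\mu,\Phi})$. The functions $S_n g:=\int_0^n T_t^{\mu,\Phi}g\,dt$ form an $\mathcal{E}^{\mu,\Phi}$-Cauchy sequence, since $\mathcal{E}^{\mu,\Phi}(S_ng-S_mg,S_ng-S_mg)$ is controlled by $\langle g,(S_n-S_m)g\rangle_m$ via spectral calculus. They converge monotonically to $G^{\mu,\Phi}g$, exactly as in the proof of Proposition \ref{defsubcri}. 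In particular $G^{\mu,\Phi}g<\infty$ a.e.
\item Next show that $G^{\mu,\Phi}g>0$ a.e., so that $h:=G^{\mu,\Phi}g\in\mathcal{H}_+^{\mu,\Phi}$, using $T_t^{\mu,\Phi}h=\int_t^\infty T_s^{\mu,\Phi}g\,ds\le h$. Positivity is where irreducibility enters: the set $\{h=0\}$ should be $\{T_t^{\mu,\Phi}\}$-invariant, because $h$ is excessive. Proposition \ref{irr} (which uses (IB)) then forces $m(\{h=0\})=0$, given $g\neq0$.
\item Then prove that the $h$-transform is transient, which by Theorem \ref{subodinated_subcri} gives (4). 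By Theorem \ref{defsubcri3}'s analogue for subordinated forms (Theorem \ref{subodinated_cri}), it suffices to exclude recurrence. If the $h$-transform were recurrent, then $1\in\mathcal{D}_e(\mathcal{E}^{\mu,\Phi,h})$ with zero energy, so $h\in\mathcal{D}_e(\mathcal{E}^{\mu,\Phi})$ with $\mathcal{E}^{\mu,\Phi}(h,h)=0$ by Lemma \ref{Fe_lemma}. But (\ref{eq:sub_subcri_proof1}) extended to the extended space gives $\mathcal{E}^{\mu,\Phi}(h,h)=\langle g,h\rangle_m>0$, a contradiction.
\end{itemize}

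\textbf{(4)$\Rightarrow$(1).} Take the strictly positive bounded $g\in L^1(E;m)$ from (\ref{eq:sub_subcri}). Since $g\in L^1\cap L^\infty$, we have $g\in L^2$, and $\int|f|g\,dm\le\sqrt{\mathcal{E}^{\mu,\Phi}(f,f)}$ shows $g\in\mathcal{D}^*(\sqrt{-\mathcal{L}^{\mu,\Phi}})$ with $C=1$. Theorem \ref{thm_range} then gives $g\in R(\sqrt{-\mathcal{L}^{\mu,\Phi}})$, which is strictly positive as required.

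\textbf{Main obstacle.} The delicate step is the strict positivity of $G^{\mu,\Phi}g$ in (3)$\Rightarrow$(4). The plan is to mimic the argument in Proposition \ref{defsubcri}. If $A=\{G^{\mu,\Phi}g=0\}$ has positive measure, then $T_t^{\mu,\Phi}g=0$ on $A$ for a.e. $t$, so $G_1^{\mu,\Phi}g=0$ on $A$. By symmetry, $\langle g,G^{\mu,\Phi}_1 1_{A'}\rangle_m=0$ for compact $A'\subset A$ of positive measure. This requires $G_1^{\mu,\Phi}1_{A'}>0$ a.e., i.e. positivity improving of the resolvent. That property should follow from irreducibility (Proposition \ref{irr}) applied to an $h$-transform, or directly via the standard fact that an irreducible positivity-preserving semigroup has positivity-improving resolvent.
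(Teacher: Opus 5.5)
Your argument is sound in substance, but the nontrivial direction takes a different and heavier route than the paper.

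\textbf{The paper's route.} It proves (4)$\Rightarrow$(3)$\Leftrightarrow$(2)$\Rightarrow$(1)$\Rightarrow$(4) and never constructs an element of $\mathcal{H}_+^{\mu,\Phi}$. For (2)$\Rightarrow$(1) it replaces $g_0=\sqrt{-\mathcal{L}^{\mu,\Phi}}u_0$ by $G_1^{\mu,\Phi}g_0=\sqrt{-\mathcal{L}^{\mu,\Phi}}\,G_1^{\mu,\Phi}u_0$. This stays in the range and is strictly positive by the invariant-set/irreducibility argument you sketch in your last paragraph. Then (1)$\Rightarrow$(4) is one Cauchy--Schwarz estimate, $\langle g\wedge\varphi,|f|\rangle_m\le\langle u_0,\sqrt{-\mathcal{L}^{\mu,\Phi}}|f|\rangle_m\le\|u_0\|_{L^2(E;m)}\sqrt{\mathcal{E}^{\mu,\Phi}(f,f)}$. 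It uses $\mathcal{E}^{\mu,\Phi}(|f|,|f|)\le\mathcal{E}^{\mu,\Phi}(f,f)$ and a bounded integrable $\varphi>0$, and passes to $\mathcal{D}_e(\mathcal{E}^{\mu,\Phi})$ by Fatou.

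\textbf{Comparison.} Your (4)$\Rightarrow$(1), via the $\mathcal{D}^*$-description in Theorem \ref{thm_range}, is a neat shortcut in place of the paper's (4)$\Rightarrow$(3). Your (3)$\Rightarrow$(4) instead goes through $h=G^{\mu,\Phi}g$, Proposition \ref{irr}, the transient/recurrent dichotomy, Lemma \ref{Fe_lemma} and Theorem \ref{subodinated_subcri}. It buys an explicit $h\in\mathcal{H}_+^{\mu,\Phi}$ with transient $h$-transform. The cost is handling non-$L^2$ objects such as $G^{\mu,\Phi}g$ and $T_t^{\mu,\Phi}G^{\mu,\Phi}g$. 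The paper's route stays in $L^2(E;m)$ and invokes irreducibility only once.

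\textbf{Points to repair in your (3)$\Rightarrow$(4).}

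First, $G^{\mu,\Phi}g<\infty$ a.e.\ does not follow ``in particular'' from $S_ng$ being $\mathcal{E}^{\mu,\Phi}$-Cauchy. A.e.\ finiteness is part of the definition of $\mathcal{D}_e(\mathcal{E}^{\mu,\Phi})$, so the inference is circular. Monotone $\mathcal{E}$-Cauchy sequences can in fact diverge: for a recurrent form, take partial sums of $\sum_k\varphi_k$ with $0\le\varphi_k\to1$ and $\mathcal{E}(\varphi_k,\varphi_k)\le 4^{-k}$. The repair parallels the positivity step. Since $\langle g,G^{\mu,\Phi}g\rangle_m<\infty$, you have finiteness on $\{g>0\}$. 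Excessiveness of $G^{\mu,\Phi}g$ plus symmetry makes $\{G^{\mu,\Phi}g=\infty\}$ a $\{T_t^{\mu,\Phi}\}$-invariant set, and Proposition \ref{irr} forces it to be null.

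Second, your first idea for positivity, that $\{h=0\}$ is invariant because $h$ is excessive, works directly. Use $T_t^{\mu,\Phi}(nh\wedge1)\le nT_t^{\mu,\Phi}h=0$ on $\{h=0\}$ together with $\langle g,h\rangle_m>0$. No $h$-transform is needed there, since Proposition \ref{irr} already gives irreducibility of $\{T_t^{\mu,\Phi}\}$ itself.

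Third, the reduction to ``exclude recurrence'' comes from Proposition \ref{irr} together with the dichotomy for irreducible Dirichlet forms, not from Theorem \ref{subodinated_cri}. Your final contradiction $\mathcal{E}^{\mu,\Phi}(h,h)=\lim_n\mathcal{E}^{\mu,\Phi}(S_ng,S_ng)=\langle g,G^{\mu,\Phi}g\rangle_m>0$ is then correct. It uses spectral calculus and the independence of $\mathcal{E}^{\mu,\Phi}(h,h)$ from the approximating sequence.
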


\begin{proof}
Recall that \(\mathcal{D}(\mathcal{E}^{\mu, \Phi})= \mathcal{D}(\sqrt{-\mathcal{L}^{\mu, \Phi}})\) and \(\sqrt{-\mathcal{L}^{\mu, \Phi}}\, \mathcal{D}(\mathcal{E}^{\mu, \Phi})= R(\sqrt{-\mathcal{L}^{\mu, \Phi}})\).

Suppose (4). We take a strictly positive bounded function \(g \in L^1(E;m)\) satisfying (\ref{eq:sub_subcri}). Then (3) follows from (\ref{eq:gGg}).

The equivalence between (2) and (3) follows from Theorem \ref{thm_range}.

Suppose (2).  We take \(g_0 \geq 0\) with \(g_0 \not \equiv 0\) and \(g_0 \in L^2(E;m)\), and \(u_0 \in \mathcal{D}(\mathcal{E}^{\mu, \Phi})\) such that \(g_0 = \sqrt{-\mathcal{L}^{\mu, \Phi}} u_0\). We set \(g:=G_1^{\mu, \Phi} g_0\), then \(g = \sqrt{-\mathcal{L}^{\mu, \Phi}} (1-\mathcal{L}^{\mu, \Phi})^{-1} u_0 = \sqrt{-\mathcal{L}^{\mu, \Phi}} G_1^{\mu, \Phi}u_0 \in R(\sqrt{-\mathcal{L}^{\mu, \Phi}})\) and \(g\) is strictly positive. Indeed, we set \(B:=\{x \in E : G_1^{\mu, \Phi}g_0=0 \}\) and, for any \(s>0\), we have
\[G_1^{\mu, \Phi}g_0= \int_0 ^s e^{-t} T_t^{\mu, \Phi} g_0 \,dt + \int_s ^\infty e^{-t} T_t^{\mu, \Phi} g_0 \,dt = \int_0 ^s e^{-t} T_t^{\mu, \Phi} g_0 \,dt + e^{-s}T_s^{\mu, \Phi} G_1^{\mu, \Phi} g_0.\]
For \(m\)-almost every \(x\in B\), we have \(0\leq  e^{-s}T_s^{\mu, \Phi} G_1^{\mu, \Phi} g_0(x) \,dt  \leq G_1^{\mu, \Phi}g_0(x)=0 \) and so, 
\[G_1^{\mu, \Phi} g(x) = G_1^{\mu, \Phi} G_1^{\mu, \Phi} g_0 (x)=\int_0^\infty e^{-s}T_s^{\mu, \Phi}  G_1^{\mu, \Phi} g_0(x) \,ds =0 \]
for \(m\)-almost every \(x\in B\). For any non-negative function \(f\in L^2(E;m)\), we have
\[\langle \, G_1^{\mu, \Phi} ( 1_{B} f), g \rangle_m = \langle  \,f, 1_B G_1^{\mu, \Phi} g \rangle_m  =0\]
and so \( 1_{B^c}G_1^{\mu, \Phi} ( 1_{B} f) =  0.\) For \(f \in L^2(E;m)\), we take non-negative functions \(f_+, f_-\) with \(f=f+-f_-\), we have  \( 1_{B^c}G_1^{\mu, \Phi} ( 1_{B} f) =  0.\) By the same argument as \cite[Proposition 2.1.6]{CF12}, \(B\) is \(\{T_t^{\mu, \Phi} \}_t\)-invariant set.  By the irreducibility (Proposition \ref{irr}), \(m(B)=0\) or \(m(B^c)=0\). Since \(g_0 \not \equiv 0\) and \(G_1^{\mu, \Phi}\) is injective, we have \(g\geq 0\) and so \(m(B)=0\). Hence we get (1).

Suppose (1). We take a strictly positive function \(g\in L^2(E;m)\) and \(u_0 \in \mathcal{D}(\mathcal{E}^{\mu, \Phi})\) such that \(g = \sqrt{-\mathcal{L}^{\mu, \Phi}} u_0\). We also take a strictly positive bounded function \(\varphi \in L^1(E;m)\). For any \(f\in \mathcal{D}(\mathcal{E}^{\mu, \Phi})\), we have
\begin{eqnarray*}
\langle g \wedge \varphi,|f| \rangle_m & \leq &\langle g,|f| \rangle_m \ = \ \langle u_0, \sqrt{-\mathcal{L}^{\mu, \Phi}} |f| \rangle_m \\
&\leq & \| u_0 \|_{L^2(E;m)}\, \| \sqrt{-\mathcal{L}^{\mu, \Phi}} |f|  \|_{L^2(E;m)}\\ 
&=& \| u_0 \|_{L^2(E;m)}\, \sqrt{\mathcal{E}^{\mu, \Phi}(|f|,|f|)}\\ 
&\leq &  \| u_0 \|_{L^2(E;m)}\, \sqrt{\mathcal{E}^{\mu, \Phi}(f,f)}\  <\  \infty.
\end{eqnarray*}
By using an approximating sequence and the Fatou lemma, the above inequality also holds for \(f \in \mathcal{D}_e(\mathcal{E}^{\mu, \Phi})\), so we get (4).
\end{proof}

The following is an extension of \cite[Theorem 1.1]{S26}.
\begin{theorem}\label{extSoba2}
Let \((\mathcal{E}, \mathcal{D}(\mathcal{E}))\) be an irreducible regular Dirichlet form  on \(L^2(E;m)\), \(\mu\) be a signed smooth Radon measure making \((\mathcal{E}^{\mu}, \mathcal{D}(\mathcal{E}) \cap C_c(E))\) non-negative definite closable, and \(\Phi\) be a Bernstein function satisfying the condition \rm{(IB)}. Assume that \(\inf_{x\in K} G_1^{\mu, \Phi} g(x)>0\) for any compact set \(K\) and a strictly positive bounded function \(g \in L^1(E;m)\), then the following are equivalent.
\begin{enumerate}
\item There exists a strictly positive function \(g \in R(\sqrt{-\mathcal{L}^{\mu, \Phi}})\),
\item  There exists a non-negative function \(g \in R(\sqrt{-\mathcal{L}^{\mu, \Phi}}) \) such that \(g\) is not \(0\),
\item There exists a non-negative function \(g \in  L^2(E;m)\) such that \(  \langle g, G^{\mu,\Phi}g \rangle_m  < \infty\) and \(g\) is not \(0\),
\item \(\mathcal{L}^{\mu, \Phi} \) is subcritical.
\item Any bounded function with compact support belongs to \(R(\sqrt{-\mathcal{L}^{\mu, \Phi}})\),
\item \(C_c(E) \subset R(\sqrt{-\mathcal{L}^{\mu, \Phi}})\).
\end{enumerate}
\end{theorem}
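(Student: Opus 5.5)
Items (1)--(4) are already equivalent by Theorem \ref{extSoba}, so it remains to attach (5) and (6). The implications (5)$\Rightarrow$(6) and (6)$\Rightarrow$(2) are immediate: every $C_c(E)$ function is bounded with compact support, and since $m$ has full support, $C_c(E)$ contains a non-negative function that is not $0$. The real work is (4)$\Rightarrow$(5).

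Assume (4) and fix a bounded function $w$ with compact support $K$. By Theorem \ref{thm_range} it suffices to show that $w\in L^2(E;m)\cap\mathcal{D}^*(\sqrt{-\mathcal{L}^{\mu,\Phi}})$, i.e.
\[
|\langle w,u\rangle_m|\le C\sqrt{\mathcal{E}^{\mu,\Phi}(u,u)}\qquad (u\in\mathcal{D}(\mathcal{E}^{\mu,\Phi})).
\]
Membership in $L^2$ is clear. Since $|\langle w,u\rangle_m|\le \|w\|_\infty\int_K|u|\,dm$, it is enough to dominate $1_K$ by a multiple of a function in the range.

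For this, take the strictly positive bounded $g\in L^1(E;m)$ from (\ref{eq:sub_subcri}); being bounded and integrable, it lies in $L^2$. By the proof of Theorem \ref{extSoba} ((4)$\Rightarrow$(2)), $g\in R(\sqrt{-\mathcal{L}^{\mu,\Phi}})$. Exactly as in the step (2)$\Rightarrow$(1) there, put $\tilde g:=G_1^{\mu,\Phi}g$. Then
\[
\tilde g=\sqrt{-\mathcal{L}^{\mu,\Phi}}\,G_1^{\mu,\Phi}u_0\in R(\sqrt{-\mathcal{L}^{\mu,\Phi}}),
\]
and $\tilde g\ge 0$ because the subordinated semigroup is positivity preserving. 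The standing hypothesis gives $c_K:=\inf_K\tilde g>0$, so $1_K\le c_K^{-1}\tilde g$. Hence, using $\tilde g=\sqrt{-\mathcal{L}^{\mu,\Phi}}v_0$ with $v_0\in\mathcal{D}(\mathcal{E}^{\mu,\Phi})$ and $\mathcal{E}^{\mu,\Phi}(|u|,|u|)\le\mathcal{E}^{\mu,\Phi}(u,u)$ (shown before the definition of $\mathcal{D}_e(\mathcal{E}^{\mu,\Phi})$),
\[
\int_K|u|\,dm\le c_K^{-1}\langle\tilde g,|u|\rangle_m=c_K^{-1}\langle v_0,\sqrt{-\mathcal{L}^{\mu,\Phi}}|u|\rangle_m\le c_K^{-1}\|v_0\|_m\sqrt{\mathcal{E}^{\mu,\Phi}(u,u)}.
\]
This is the required bound with $C=\|w\|_\infty c_K^{-1}\|v_0\|_m$, so Theorem \ref{thm_range} gives $w\in R(\sqrt{-\mathcal{L}^{\mu,\Phi}})$.

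The main obstacle is showing $\tilde g\in R(\sqrt{-\mathcal{L}^{\mu,\Phi}})$ with a bound that holds uniformly on compact sets. The approach reuses the resolvent commutation from Theorem \ref{extSoba} together with the hypothesis on $\inf_K G_1^{\mu,\Phi}g$. One point needs care: that hypothesis must apply to the particular $g$ coming from subcriticality. I read the assumption as holding for every strictly positive bounded $g\in L^1$, so it does.
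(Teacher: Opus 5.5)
Your proof is correct and follows essentially the same route as the paper. You take the $g$ from subcriticality, note $G_1^{\mu,\Phi}g\in R(\sqrt{-\mathcal{L}^{\mu,\Phi}})$ as in the (2)$\Rightarrow$(1) step of Theorem \ref{extSoba}, dominate $|w|$ by $\|w\|_\infty(\inf_K G_1^{\mu,\Phi}g)^{-1}G_1^{\mu,\Phi}g$, and conclude via Theorem \ref{thm_range}. Your closing implication (6)$\Rightarrow$(2), which uses the full support of $m$, and your explicit use of the positivity of $G_1^{\mu,\Phi}$ and of $\mathcal{E}^{\mu,\Phi}(|u|,|u|)\le\mathcal{E}^{\mu,\Phi}(u,u)$ make precise steps the paper leaves implicit.
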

\begin{proof}
The equivalence of (1) to (4) are shown in Theorem \ref{extSoba}.

Assume that (1) to (4). Then we have a strictly positive bounded function \(g\in L^1(E;m)\) satisfying (\ref{eq:sub_subcri}). In the proof of Theorem \ref{extSoba} that (2) implies (1), we have seen \(G_1^{\mu, \Phi} g \in R(\sqrt{-\mathcal{L}^{\mu, \Phi}})\).
For any bounded function \(f\) with compact support \(K\), it holds that \(|f| \leq C_f G_1^{\mu,\Phi} g\) where \(C_f \leq \|f\|_\infty(\inf_{x\in K} G_1^{\mu, \Phi} g )^{-1}<\infty.\) By Theorem \ref{thm_range}, we have
\[|\langle f, u\rangle_m| \leq \langle |f|, |u|\rangle_m \leq C_f \langle  G_1^{\mu,\Phi} g, |u|\rangle_m \leq C_f C \sqrt{\mathcal{E}^{\mu,\Phi}(u,u)}\]
and so \(f \in R(\sqrt{-\mathcal{L}^{\mu, \Phi}}) = L^2(E;m) \cap \mathcal{D}_e^*(\sqrt{-\mathcal{L}^{\mu, \Phi}}).\) Hence (7) holds.

Clearly (5) implies (6), and (6) implies (1).
\end{proof}

We provide two criteria to ensure \(\inf_{x\in K} G_1^{\mu, \Phi} g(x)\). The first one is the following condition on the lower boundedness of a semigroup.
\begin{enumerate}
\item[(LB) :]Suppose that for any compact set \(K\) and a strictly positive bounded function \(g \in L^1(E;m)\), there exists an open interval \(I\subset [0,\infty)\) such that \(\inf_{s\in I} \inf_{x\in K} T_s^{\mu^+}g(x) >0\).
\end{enumerate}

To prove that (LB) is a sufficient criterion, we describe a property of subordinations. Let \(U_\alpha(ds):=\int_0^\infty e^{-\alpha t}\,d\eta_t(s)\,dt\) for \(\alpha \geq 0\). This is called a \(\alpha\)-potential measure (\cite[Definition 30.9]{S99}).
\begin{lemma}\label{lem_IBGreen}
Under the condition {\rm (IB)}, it holds that \({\rm supp}(U_1)=[0,\infty).\)
\end{lemma}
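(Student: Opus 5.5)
The plan is to compute the Laplace transform of \(U_1\) and then analyse its support through the convolution structure of the subordinator. By Fubini and the Lévy--Khintchine formula,
\[
\int_0^\infty e^{-\lambda s}\,U_1(ds)=\int_0^\infty e^{-t}e^{-t\Phi(\lambda)}\,dt=\frac{1}{1+\Phi(\lambda)} .
\]
So \(U_1\) is a finite measure of total mass \(1\) on \([0,\infty)\). Since \(U_1\ge \int_0^\infty e^{-t}\eta_t\,dt\), it suffices to show that for every \(s_0\ge 0\) and \(\varepsilon>0\) the set
\[
\{t>0:\ \eta_t((s_0-\varepsilon,s_0+\varepsilon))>0\}
\]
has positive Lebesgue measure.

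The argument splits according to the two alternatives in (IB).

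\emph{Case \(b>0\).} Here \(S_t\ge bt\), and \(S_t\) is close to \(bt\) with positive probability: the jump part of \(S_t\) tends to \(0\) in probability as \(t\to0\). By independence of increments, one can also arrange small jump contribution over a short time.

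The cleanest route, which handles both cases, is the standard fact (Sato, Theorem 24.10) on the support of infinitely divisible laws on \([0,\infty)\):
\[
\operatorname{supp}\eta_t=\bigl\{bt+\text{finite sums of points of }\operatorname{supp}\nu\bigr\}^{-}.
\]
I would invoke this directly.

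\emph{Case \(b>0\), continued.} The points \(bt\), \(t>0\), already fill \((0,\infty)\). For \(s_0>0\) choose \(t\) in a small interval around \(s_0/b\). Then \(s_0\in\operatorname{supp}\eta_t\) up to \(\varepsilon\), so \(\eta_t\) charges the \(\varepsilon\)-neighbourhood of \(s_0\) for all \(t\) in that interval. For \(s_0=0\), use \(t\) small, since \(\eta_t\to\delta_0\) weakly.

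\emph{Case \(\inf\operatorname{supp}\nu=0\).} There are points \(a_k\in\operatorname{supp}\nu\) with \(a_k\downarrow 0\). Given \(s_0>0\) and \(\varepsilon>0\), pick \(a\in\operatorname{supp}\nu\) with \(a<\varepsilon\), and let \(n\) be such that \(na\) lies within \(\varepsilon\) of \(s_0\). Then \(na\in\operatorname{supp}\eta_t\) for every \(t>0\), which requires \(b=0\) or a small adjustment \(bt<\varepsilon\) by taking \(t\) small. Hence \(\eta_t((s_0-2\varepsilon,s_0+2\varepsilon))>0\) for all small \(t\), an interval of positive measure. Integrating against \(e^{-t}dt\) gives \(U_1((s_0-2\varepsilon,s_0+2\varepsilon))>0\). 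Since \(s_0\) and \(\varepsilon\) were arbitrary, and \(0\) is in the support as a limit point, this proves \(\operatorname{supp}U_1=[0,\infty)\).

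The main obstacle is justifying the support description of \(\eta_t\), in particular the lower semicontinuity of \(t\mapsto\eta_t(O)\) for open \(O\) that gives positivity on a whole \(t\)-interval rather than at a single \(t\). I would cite Sato's theorem for the support. For positivity on an interval I would use the portmanteau inequality \(\liminf_{t'\to t}\eta_{t'}(O)\ge\eta_t(O)\), which follows from weak continuity of \(t\mapsto\eta_t\). This shows that the set of such \(t\) is open and nonempty, hence of positive Lebesgue measure.
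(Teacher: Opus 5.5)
Your proof is correct and takes the same route as the paper: both rest on Sato's description ${\rm supp}(\eta_t)=bt+\overline{\{\text{finite sums of points of }{\rm supp}(\nu)\}}$ and then take the union over $t>0$, splitting into the two alternatives of (IB). Your version is slightly more careful in two places. First, you explicitly include the empty sum, so that $bt\in{\rm supp}(\eta_t)$; this is needed in the case $b>0$, $\inf{\rm supp}(\nu)>0$. Second, you justify passing from the supports of $\eta_t$ to that of $U_1$ by showing positivity on a whole $t$-interval, whereas the paper simply asserts ${\rm supp}(U_1)=\overline{\bigcup_{t>0}{\rm supp}(\eta_t)}$.
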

\begin{proof}
Let \(S\) be a subordinator with \(\Phi\), then it holds that \({\rm supp}(\eta_t) = {\rm supp}(\mathbb{P}(S_t \in ds))\). By \cite[Proposition 1.3]{B99}, we have \(S_t = bt + \sum_{0\leq s\leq t}N_s\) where \(N\) is a Poisson point process with values in \([0,\infty)\) and characteristic measure \(\nu\). Hence, by \cite[Lemma 24.1, Theorem 24.5]{S99}, we have \({\rm supp}(\eta_t) = bt + \overline{\{\sum_{j=1}^n x_j : x_j \in {\rm supp}(\nu), n\in \mathbb{N}\}}\) and so \({\rm supp}(U_1)= \overline{\bigcup_{t>0} {\rm supp}(\eta_t)}=[0,\infty)\).
\end{proof}

\begin{corollary}\label{cor_extSoba1}
Under the condition {\rm (LB)}, it holds that \(\inf_{x\in K} G_1^{\mu, \Phi} g(x)\) for any compact set \(K\) and a strictly positive bounded function \(g \in L^1(E;m)\). Hence the equivalence between (1) to (6) in Theorem \ref{extSoba2} holds.
\end{corollary}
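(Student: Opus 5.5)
The goal is $\inf_{x\in K} G_1^{\mu,\Phi} g(x)>0$ for every compact $K$ and every strictly positive bounded $g\in L^1(E;m)$. The plan is to write $G_1^{\mu,\Phi}$ as a mixture of the Schr\"{o}dinger semigroup against the $1$-potential measure $U_1$, compare $T_s^\mu$ with $T_s^{\mu^+}$, and then use (LB) together with Lemma \ref{lem_IBGreen}.

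\emph{Step 1 (subordination representation).} By the definition of $T_t^{\mu,\Phi}$ and Fubini's theorem,
\[
G_1^{\mu,\Phi} g=\int_0^\infty e^{-t}\int_0^\infty T_s^\mu g\,\eta_t(ds)\,dt=\int_{[0,\infty)} T_s^\mu g\,U_1(ds).
\]
Fubini applies because everything is non-negative: $T_s^\mu$ is positivity preserving, since $P_s^\mu g(x)=\mathbb{E}_x[e^{A_s^\mu}g(X_s)]$.

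\emph{Step 2 (comparison with $\mu^+$).} As in the proof of Proposition \ref{regularDF},
\[
T_s^\mu g(x)=\mathbb{E}_x\bigl[e^{-A_s^{\mu^+}}e^{A_s^{\mu^-}}g(X_s)\bigr]\geq \mathbb{E}_x\bigl[e^{-A_s^{\mu^+}}g(X_s)\bigr]=T_s^{\mu^+}g(x)
\]
for $g\geq 0$, $m$-almost everywhere. Hence
\[
G_1^{\mu,\Phi}g\geq \int_{I} T_s^{\mu^+}g\,U_1(ds)\geq U_1(I)\,\inf_{s\in I}\inf_{x\in K}T_s^{\mu^+}g(x)\qquad\text{on }K,
\]
where $I$ is the open interval supplied by (LB) for $K$ and $g$.

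\emph{Step 3 (positivity of $U_1(I)$).} By Lemma \ref{lem_IBGreen}, under (IB) we have ${\rm supp}(U_1)=[0,\infty)$. A nonempty open interval $I\subset[0,\infty)$ is relatively open in $[0,\infty)$ and meets the support, so $U_1(I)>0$. Combined with (LB), this gives $\inf_{x\in K}G_1^{\mu,\Phi}g(x)>0$. The equivalence of (1)--(6) in Theorem \ref{extSoba2} then follows directly from that theorem.

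\emph{Main obstacle.} The delicate point is making the infimum statements precise, since all the functions are defined only $m$-almost everywhere and the infimum must be read as an essential infimum over $K$. One also needs a measurable-in-$s$ version of $T_s^{\mu^+}g(x)$ to justify the Fubini step and the lower bound on $I$. I would handle this by working with the pointwise versions $P_s^{\mu}g$ and $P_s^{\mu^+}g$ given through the Hunt process and PCAFs. These are jointly measurable in $(s,x)$, so both the integral representation and the pointwise comparison hold for quasi-every $x$, and hence $m$-almost everywhere. A secondary check is that (IB) is implicitly assumed in the corollary, as it is in Theorem \ref{extSoba2}, so that Lemma \ref{lem_IBGreen} is available.
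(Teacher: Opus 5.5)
Your proposal is correct and follows essentially the same route as the paper. You write $G_1^{\mu,\Phi}g=\int_{[0,\infty)}T_s^{\mu}g\,dU_1(s)$, restrict to the interval $I$ from (LB), bound $T_s^{\mu}g\ge T_s^{\mu^+}g$, and get $U_1(I)>0$ from Lemma \ref{lem_IBGreen}; your remarks that the infimum should be read as an essential infimum and that (IB) is tacitly assumed are sound refinements the paper leaves implicit.
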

\begin{proof}
For any compact set \(K\) a strictly positive bounded function \(g \in L^1(E;m)\), b Lemma \ref{lem_IBGreen}, we have
\begin{eqnarray*}
\inf_{x\in K} G_1^{\mu, \Phi} g(x) &=& \inf_{x\in K} \int_0^\infty\int_0^\infty e^{-t}T_s^{\mu}g(x)\,d\eta_t(s)\,dt\\
& =& \inf_{x\in K} \int_{[0,\infty)} T_s^{\mu}g(x)\,dU_1(s)\\
&\geq &  \int_{I} \inf_{x\in K}\inf_{s\in I} T_s^{\mu} g(x)\,dU_1(s)\\
&\geq & C \inf_{x\in K} \inf_{s\in I} T_s^{\mu^+} g(x)\\
&>&0.
\end{eqnarray*}
\end{proof}

The second criterion below is a condition for deriving \(\inf_{x\in K} G_1^{\mu, \Phi} g(x)\) that depends only on the original operator \(\mathcal{L}\).
\begin{corollary}\label{cor_extSoba2}
Under the condition {\rm (SF)} for \((\mathcal{E}, \mathcal{D}(\mathcal{E}))\), it holds that \(\inf_{x\in K} G_1^{\mu, \Phi} g(x)\) for any compact set \(K\), a strictly positive bounded function \(g \in L^1(E;m)\) and \(\mu^+ \in \mathcal{K}\). Hence the equivalence between (1) to (6) in Theorem \ref{extSoba2} holds.
\end{corollary}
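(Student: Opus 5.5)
The plan is to reduce to the situation of Corollary \ref{cor_extSoba1}. We show that, under (SF) and \(\mu^+\in\mathcal{K}\), the semigroup \(T_s^{\mu^+}g\) is bounded below on compact sets uniformly for \(s\) in some interval. The representation of \(G_1^{\mu,\Phi}\) through the potential measure \(U_1\) (Lemma \ref{lem_IBGreen}) then gives the positivity of \(\inf_K G_1^{\mu,\Phi}g\).

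\textbf{Step 1: reduce to \(\mu^+\).} As in the proof of Proposition \ref{regularDF}, the comparison \(T_s^{\mu}u \ge T_s^{\mu^+}u\) holds for \(u\ge 0\), since \(e^{A^{\mu^-}_s}\ge 1\). Hence
\[G_1^{\mu,\Phi}g(x)=\int_{[0,\infty)} T_s^{\mu}g(x)\,dU_1(s)\ \ge\ \int_{[0,\infty)} P_s^{\mu^+}g(x)\,dU_1(s),\]
where \(P_s^{\mu^+}g(x)=\mathbb{E}_x[e^{-A_s^{\mu^+}}g(X_s)]\) is the pointwise version.

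\textbf{Step 2: continuity and strict positivity of \(P_s^{\mu^+}g\).} Since \(\mu^+\in\mathcal{K}\) and (SF) holds, the Feynman--Kac semigroup \(P_s^{\mu^+}\) is strong Feller. This is the standard result for Kato class measures (Chung--Zhao type; cf.\ the setting of \cite{T14}, where strong Feller of Feynman--Kac semigroups for \(\mathcal{K}\) under (SF) is used). So \(x\mapsto P_s^{\mu^+}g(x)\) is continuous for each \(s>0\).

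For strict positivity, use irreducibility together with (AC). For fixed \(s>0\), the quantity \(P_s g(x)=\int p(s,x,y)g(y)\,dm(y)\) is positive for \(x\) outside an exceptional set, and (SF) upgrades this to every \(x\). For the killed semigroup, Jensen's inequality gives
\[P_s^{\mu^+}g(x)\ge \mathbb{E}_x[g(X_s)]\,\exp\!\bigl(-\mathbb{E}_x[A_s^{\mu^+}g(X_s)]/\mathbb{E}_x[g(X_s)]\bigr).\]
A cleaner route is the Cauchy--Schwarz bound
\[P_s g(x)^2\le P_s^{\mu^+}g(x)\,\mathbb{E}_x\bigl[e^{A_s^{\mu^+}}g(X_s)\bigr]\le P_s^{\mu^+}g(x)\,\|g\|_\infty\,\sup_x\mathbb{E}_x\bigl[e^{A^{\mu^+}_s}\bigr],\]
where the last supremum is finite by Khas'minskii's lemma since \(\mu^+\in\mathcal{K}\). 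Thus \(P_s^{\mu^+}g>0\) wherever \(P_sg>0\).

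\textbf{Step 3: uniform bound on \(K\times I\).} Continuity in \(x\) and positivity give \(\inf_K P_s^{\mu^+}g>0\) for each fixed \(s\), by compactness. To get a uniform bound over an interval \(I=[s_0,s_1]\), apply the Cauchy--Schwarz bound again. It reduces the problem to a uniform lower bound of \(P_sg\) on \(K\times I\), and \(P_sg=P_{s-s_0/2}(P_{s_0/2}g)\). Joint lower semicontinuity in \((s,x)\) follows from the strong Feller property and right continuity of paths.

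This step is the main obstacle: uniformity in \(s\). I would first try the lower semicontinuity argument via Fatou's lemma along paths. If that fails, I would instead integrate over \(s\) directly, using \(\int_I P_s g\,dU_1(s)=\int r^{U}(x,y)g(y)\,dm(y)\) with a resolvent-type kernel, which is continuous by (SF).

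\textbf{Step 4: conclude.} Since \(\mathrm{supp}(U_1)=[0,\infty)\) by Lemma \ref{lem_IBGreen}, we have \(U_1(I)>0\). Therefore
\[\inf_K G_1^{\mu,\Phi}g\ \ge\ U_1(I)\inf_{s\in I}\inf_K P_s^{\mu^+}g\ >\ 0.\]
In effect this verifies (LB), so the equivalences in Theorem \ref{extSoba2} follow as in Corollary \ref{cor_extSoba1}.
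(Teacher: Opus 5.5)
Your fallback at the end of Step 3 is, in substance, the paper's proof, and it is the version you should write up. The paper drops a possible atom of \(U_1\) at \(0\) and bounds \(G_1^{\mu,\Phi}g(x)\ge\int_{(0,\infty)}T_s^{\mu^+}g(x)\,dU_1(s)\), using the same comparison as your Step 1. It then observes that the right-hand side is continuous in \(x\). Each \(T_s^{\mu^+}g\) with \(s>0\) is continuous by the strong Feller property of the Feynman--Kac semigroup for \(\mu^+\in\mathcal{K}\), it is bounded by \(\|g\|_\infty\), and \(U_1\) is finite, so dominated convergence applies. No resolvent-type kernel is needed. Strict positivity at every point plus compactness of \(K\) then gives \(\inf_K G_1^{\mu,\Phi}g>0\). (LB) is never verified, and the uniformity-in-\(s\) problem never comes up.

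Your primary route through (LB) has a genuine gap, exactly at the step you flagged. (SF) gives continuity in \(x\) for each fixed \(s\). Right continuity and quasi-left-continuity of paths give continuity in \(s\) for each fixed starting point. Separate continuity does not give joint lower semicontinuity in \((s,x)\). Fatou's lemma along paths cannot handle a moving starting point \(x_n\), because the measures \(\mathbb{P}_{x_n}\) change with \(n\). You would need an extra uniformity ingredient, such as continuity of \(x\mapsto P_t(x,\cdot)\) in total variation (obtainable from \(P_t=P_{t/2}P_{t/2}\) with \(P_{t/2}\) strong Feller).

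A related soft spot is in Step 2. The claim that (SF) upgrades positivity to every \(x\) does not follow from continuity, because a continuous function that is positive \(m\)-a.e.\ is only nonnegative everywhere. What works is \(P_sg(x)=\int p(s/2,x,y)\,P_{s/2}g(y)\,dm(y)\). Once \(P_{s/2}g>0\) a.e.\ is known, this is positive iff \(\mathbb{P}_x(\zeta>s/2)>0\). Both facts take extra work: positivity improvement of the irreducible symmetric semigroup at a fixed time, and a short-time argument for \(\mathbb{P}_x(\zeta>s/2)>0\) at every \(x\).

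Integrating from \(0\) avoids all of this. Since \(U_1\) charges every neighbourhood of \(0\) (Lemma \ref{lem_IBGreen}), you only need \(T_s^{\mu^+}g(x)>0\) for small \(s\). That follows from \(\mathbb{P}_x(\zeta>s)\to 1\) as \(s\to 0\) and the finiteness of \(A^{\mu^+}_s\). So in your fallback take \(I=(0,s_1]\) rather than \([s_0,s_1]\).

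Your Cauchy--Schwarz/Khas'minskii comparison is correct. Combined with Cauchy--Schwarz in \(s\), namely \(\int_I P_s^{\mu^+}g\,dU_1\ge c\,U_1(I)^{-1}\bigl(\int_I P_sg\,dU_1\bigr)^2\) with \(c^{-1}=\|g\|_\infty\sup_{s\in I}\sup_x\mathbb{E}_x[e^{A_s^{\mu^+}}]\), it lets you use (SF) only for the unperturbed semigroup. You would then not need to cite strong Feller for the Feynman--Kac semigroup at all.
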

\begin{proof}
Let \(A^{\mu^+}\) be a PCAF associated with \(\mu^+\). Since \(\mu^+ \) is a Kato class measure, it holds that  \(\lim_{t\to 0} \sup_{x\in K} \mathbb{E}_x[1-e^{-A_t^{\mu^+}}]=0\) for any compact set \(K\), and so \(\{T_t^{\mu^+}\}_t\) also enjoys (SF) by \cite[Theorem 1.1]{CK09} (See also \cite{C85}). Here \(T_t^{\mu^+}f(x):=\mathbb{E}_x[e^{-A_t^{\mu^+}}f(X_t)]\) is the semigroup of the process of \(X\) killed by \(\mu^+\).

For any compact set \(K\), a strictly positive bounded function \(g \in L^1(E;m)\), we have
\[G_1^{\mu, \Phi} g(x) = \int_{[0, \infty)} T_s^{\mu}g(x)\,dU_1(s) \geq \int_{(0, \infty)} T_s^{\mu}g(x)\,dU_1(s) \geq \int_{(0, \infty)} T_s^{\mu^+}g(x)\,dU_1(s). \]
By the dominated convergence theorem and (SF) for \(T_s^{\mu^+}\), a function \(\int_{(0, \infty)} T_s^{\mu^+}g(x)\,dU_1(s)\) is continuous. Since \(g\) is strictly positive and \(\{T_s^{\mu^+}\}_s\) corresponds to a process, \(\int_{(0, \infty)} T_s^{\mu^+}g(x)\,dU_1(s)\) is also strictly positive. Hence we have
\[\inf_{x\in K} G_1^{\mu, \Phi} g(x) \geq \inf_{x\in K} \int_{(0, \infty)} T_s^{\mu^+}g(x)\,dU_1(s) >0.\]
\end{proof}

\begin{example}
Let \(X\) be a Brownian motion on \(\mathbb{R}^d\) and \(\mu \in \mathcal{K}-\mathcal{S}_R\). Brownian motion corresponds to the Laplace operator \(\Delta\) and, since a Dirichlet form associated with Brownian motion is irreducible regular, and Brownian motion satisfies (SF), the equivalence between (1) to (6) in Theorem \ref{extSoba2} holds for a Bernstein function with (IB).

Similarly, the equivalence between (1) to (6) in Theorem \ref{extSoba2} holds for a L\'{e}vy process \(X\). For example, this equivalence holds for the case that the original operator \(-\mathcal{L}\) is \((-\Delta)^{\alpha/2}\) with \(0<\alpha \leq 2\). 
\end{example}

\section{Application to wave equations}\label{sec:wave}
In this section, we discuss the application of subcriticality to wave equations, which was examined in \cite{S26} for Laplace operators on subsets of \(\mathbb{R}^d\), and we extend it to a broader class of subordinated operators.

As in the previous section, let \((\mathcal{E}, \mathcal{D}(\mathcal{E}))\) be an irreducible regular Dirichlet form  on \(L^2(E;m)\), \(\mu\) be a signed smooth Radon measure making \((\mathcal{E}^{\mu}, \mathcal{D}(\mathcal{E}) \cap C_c(E))\) non-negative definite closable, and \(\Phi\) be a Bernstein function satisfying the condition \rm{(IB)}. Recall that \(\mathcal{L}\) is a self-adjoint operator associated with \((\mathcal{E}, \mathcal{D}(\mathcal{E}))\), \(\mathcal{L}^\mu:=\mathcal{L}-\mu\), and \(\mathcal{L}^{\mu,\Phi}:=-\Phi(-\mathcal{L}^\mu)\).

\begin{definition}
We define the norm on \(R(\sqrt{-\mathcal{L}^{\mu, \Phi}})\) by
\[\|f\|_{R(\sqrt{-\mathcal{L}^{\mu, \Phi}})} := \inf\left\{\sqrt{\mathcal{E}_1^{\mu, \Phi}(u,u)} : u\in \mathcal{D}(\sqrt{-\mathcal{L}^{\mu, \Phi}}), \sqrt{-\mathcal{L}^{\mu, \Phi}}u=f\right\}.\]
\end{definition}

\begin{lemma}\label{quolem1}
The norm \(\|f\|_{R(\sqrt{-\mathcal{L}^{\mu, \Phi}})}\) is isometric to the quotient  norm on \(\mathcal{D}(\mathcal{E}^{\mu, \Phi})/\rm{Ker}(\sqrt{-\mathcal{L}^{\mu, \Phi}})\) induced by the \(\mathcal{E}_1^{\mu, \Phi}\)-norm. In particular, \(R(\sqrt{-\mathcal{L}^{\mu, \Phi}})\) is a Hilbert space.
\end{lemma}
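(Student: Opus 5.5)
Write $A:=\sqrt{-\mathcal{L}^{\mu,\Phi}}$ and $H:=\mathcal{D}(\mathcal{E}^{\mu,\Phi})=\mathcal{D}(A)$, equipped with the inner product $\mathcal{E}_1^{\mu,\Phi}$. The idea is to realise $R(A)$ as the image of a bounded surjection out of the Hilbert space $H$, and then identify the range norm with the quotient norm.

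First, $(H,\mathcal{E}_1^{\mu,\Phi})$ is a Hilbert space because the subordinated Schr\"{o}dinger form is closed. The map $A:H\to L^2(E;m)$ is linear and bounded, since
\[
\|Au\|_m^2=\mathcal{E}^{\mu,\Phi}(u,u)\le \mathcal{E}_1^{\mu,\Phi}(u,u).
\]
Hence $N:=\mathrm{Ker}(A)$ is a closed subspace of $H$. The quotient $H/N$, with norm $\|[u]\|:=\inf_{n\in N}\sqrt{\mathcal{E}_1^{\mu,\Phi}(u+n,u+n)}$, is therefore a Hilbert space. Concretely, it is isometric to the orthogonal complement $N^{\perp}$ taken in $H$ with respect to $\mathcal{E}_1^{\mu,\Phi}$, and the infimum is attained at the orthogonal projection of $u$ onto $N^\perp$.

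Next, define $\widetilde{A}:H/N\to R(A)$ by $\widetilde{A}[u]:=Au$. This is well defined and linear, it is surjective by the definition of $R(A)$, and it is injective because $Au=0$ forces $u\in N$. For $f=Au_0$, the set $\{u\in H: Au=f\}$ is exactly the coset $u_0+N$. So by the definition of $\|f\|_{R(\sqrt{-\mathcal{L}^{\mu,\Phi}})}$ we get
\[
\|f\|_{R(\sqrt{-\mathcal{L}^{\mu,\Phi}})}=\inf_{n\in N}\sqrt{\mathcal{E}_1^{\mu,\Phi}(u_0+n,u_0+n)}=\|[u_0]\|_{H/N}.
\]
Thus $\widetilde{A}$ is a linear isometric bijection. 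In particular the range norm is a genuine norm on $R(A)$: if $\|f\|=0$ then $[u_0]=0$, so $u_0\in N$ and $f=0$.

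Finally, the range norm is transported from the quotient inner product, so it satisfies the parallelogram law and comes from the inner product $\langle f,g\rangle:=\mathcal{E}_1^{\mu,\Phi}(Pu_f,Pu_g)$, where $P$ is the projection onto $N^\perp$ and $u_f,u_g$ are any preimages. Completeness follows because a Cauchy sequence in $R(A)$ pulls back under $\widetilde{A}^{-1}$ to a Cauchy sequence in the complete space $H/N$, and its limit maps back to a limit in $R(A)$.

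The only point that needs care is the closedness of $N$, since the quotient is Hausdorff and complete only when $N$ is closed. Closedness follows from the boundedness of $A$ with respect to $\mathcal{E}_1^{\mu,\Phi}$ noted above, and the rest of the argument is routine.
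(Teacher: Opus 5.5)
Your proof is correct and follows the same route as the paper, which simply identifies the range norm with the quotient norm on $\mathcal{D}(\mathcal{E}^{\mu,\Phi})/\mathrm{Ker}(\sqrt{-\mathcal{L}^{\mu,\Phi}})$ via the fundamental theorem on homomorphisms. You also spell out the details the paper leaves implicit. In particular, you verify that the kernel is $\mathcal{E}_1^{\mu,\Phi}$-closed because $\sqrt{-\mathcal{L}^{\mu,\Phi}}$ is bounded from $(\mathcal{D}(\mathcal{E}^{\mu,\Phi}),\mathcal{E}_1^{\mu,\Phi})$ into $L^2(E;m)$, and you check that the preimage set of $f$ is exactly one coset.
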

\begin{proof}
Note that \(\mathcal{D}(\mathcal{E}^{\mu, \Phi}) = \mathcal{D}(\sqrt{-\mathcal{L}^{\mu, \Phi}})\) and this is a Hilbert space equipped with the \(\mathcal{E}_1^{\mu, \Phi}\)-norm. We consider the quotient norm \(\|\cdot\|_Q\) on \(\mathcal{D}(\mathcal{E}^{\mu, \Phi})/\text{Ker}(\sqrt{-\mathcal{L}^{\mu, \Phi}})\) by
\[\|u\|_Q := \inf\left\{ \sqrt{\mathcal{E}^{\mu, \Phi}_1(u+v,u+v)} : \sqrt{-\mathcal{L}^{\mu, \Phi}}v=0 \right\}.\]
By the fundamental theorem on homomorphisms, the proof is completed.
\end{proof}

If \(\sqrt{-\mathcal{L}^{\mu, \Phi}}\) is injective, then the norm \(\|\cdot \|_{R(\sqrt{-\mathcal{L}^{\mu, \Phi}})}\) on \(R(\sqrt{-\mathcal{L}^{\mu, \Phi}})\) coincides with the norm introduced in \cite{S26}. In particular, if \(\mathcal{L}^{\mu, \Phi}\) is subcritical, then \(\sqrt{-\mathcal{L}^{\mu, \Phi}}\) is injective.

\begin{lemma}
Suppose that \(\mathcal{L}^{\mu, \Phi}\) is subcritical. Then an \(\mathcal{E}^{\mu, \Phi}_1\)-dense subspace \(\mathcal{C}\) of \(\mathcal{D}(\mathcal{E}^{\mu, \Phi})\) is also \(\|\cdot\|_{R(\sqrt{-\mathcal{L}^{\mu, \Phi}})}\)-dense in \(R(\sqrt{-\mathcal{L}^{\mu, \Phi}})\). In particular, for \(\mu^- \in \mathcal{K}_\infty(G^{\mu^+})\), \(\mathcal{D}(\mathcal{E}) \cap C_c\) is \(\|\cdot\|_{R(\sqrt{-\mathcal{L}^{\mu, \Phi}})}\)-dense in \(R(\sqrt{-\mathcal{L}^{\mu, \Phi}})\).
\end{lemma}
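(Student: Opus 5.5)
Throughout, write $A:=\sqrt{-\mathcal{L}^{\mu,\Phi}}$. The plan is to identify the norm $\|\cdot\|_{R(A)}$ with a graph norm of $A^{-1}$, and then approximate in two stages: first inside $R(A)$ by spectral cutoffs, then by elements of $\mathcal{C}$.

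\emph{Step 1 (the norm).} By subcriticality $A$ is injective: if $Au=0$ then $\mathcal{E}^{\mu,\Phi}(u,u)=0$, and (\ref{eq:sub_subcri}) gives $u=0$. Hence for $f\in R(A)$ the infimum in the definition of $\|f\|_{R(A)}$ is attained at the unique $u=A^{-1}f$. This gives
\[
\|f\|_{R(A)}^2=\|A^{-1}f\|_m^2+\|f\|_m^2 ,
\]
so $R(A)=\mathcal{D}(A^{-1})$ equipped with the graph norm of the self-adjoint operator $A^{-1}$ (Lemma~\ref{quolem1}). Consequently $\|\cdot\|_{R(A)}$-convergence means $L^2$-convergence of $f_n$ together with $L^2$-convergence of $A^{-1}f_n$.

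\emph{Step 2 (spectral cutoff).} For $f=Au\in R(A)$, let $P_n$ be the spectral projection of $A$ onto $[1/n,n]$ and set $f_n:=P_nf=AP_nu$. Spectral calculus gives $f_n\to f$ and $A^{-1}f_n=P_nu\to u$ in $L^2$, so $f_n\to f$ in $\|\cdot\|_{R(A)}$. Each $f_n$ lies in $\mathcal{D}(A)\cap\mathcal{D}(A^{-1})$, and on $P_nL^2$ the operator $A^{-1}$ is bounded.

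\emph{Step 3 (approximation by $\mathcal{C}$; the main obstacle).} Since $\mathcal{C}$ is $\mathcal{E}^{\mu,\Phi}_1$-dense, for each $n$ we can pick $c_k\in\mathcal{C}$ with $c_k\to f_n$ in $\mathcal{E}^{\mu,\Phi}_1$, in particular in $L^2$. The difficulty is that $A^{-1}$ is unbounded near the bottom of the spectrum. We therefore still need $\|A^{-1}(c_k-f_n)\|_m\to 0$, together with the fact that $c_k\in R(A)$ at all.

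\emph{Step 3, particular case $\mathcal{C}=\mathcal{D}(\mathcal{E})\cap C_c$.} Here I would control $A^{-1}$ on bounded compactly supported functions via Theorem~\ref{thm_range} and the argument of Theorem~\ref{extSoba2}. For $h$ bounded with support in a compact $K$, one has
\[
|\langle h,v\rangle_m|\le C_K\|h\|_\infty\sqrt{\mathcal{E}^{\mu,\Phi}(v,v)} ,
\]
which gives $h\in R(A)$ and $\|A^{-1}h\|_m\le C_K\|h\|_\infty$. The positivity of $\inf_K G^{\mu,\Phi}_1g$ needed for this comes from Corollary~\ref{cor_extSoba2} or~\ref{cor_extSoba1}, using that $\mu^-\in\mathcal{K}_\infty$ gives Kato-type control. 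I would then approximate $f_n$ by functions in $\mathcal{D}(\mathcal{E})\cap C_c$ in two moves. First truncate: set $f_n^{(j)}:=((-j)\vee f_n\wedge j)\varphi_j$ with cutoffs $\varphi_j$, handling the tail via $A^{-1}$-boundedness on $P_nL^2$ and Fatou. Then approximate each truncated compactly supported function uniformly with a common compact support, using the regularity of $\mathcal{D}(\mathcal{E})\cap C_c$ in $C_c$ and in $\mathcal{D}(\mathcal{E}^{\mu,\Phi})$ from the proof of Proposition~\ref{regularDF}. The estimate $\|A^{-1}h\|_m\le C_K\|h\|_\infty$ turns uniform convergence with common support into convergence in $\|\cdot\|_{R(A)}$.

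\emph{Step 3, general $\mathcal{C}$.} I would run the same argument with $\mathcal{C}\cap R(A)$ in place of $\mathcal{C}$, using Step 1 to reduce to checking $L^2$-control of $A^{-1}$ along the approximating sequence.

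\emph{Conclusion.} A diagonal argument over $n$ and $k$ (or over $n$ and $j$) yields the claimed density. I expect the tail control in Step 3, namely keeping $\|A^{-1}(\cdot)\|_m$ small while passing from $f_n$ to compactly supported bounded functions, to be the main obstacle.
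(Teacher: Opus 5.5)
Your Steps 1 and 2 are correct, and Step 3 isolates the right difficulty. The gap is that for a general $\mathcal{C}$ the line ``run the same argument with $\mathcal{C}\cap R(A)$'' is not an argument, and no argument can work. $\mathcal{E}^{\mu,\Phi}_1$-density carries no information about $A^{-1}$ near $0\in\sigma(A)$, where $A=\sqrt{-\mathcal{L}^{\mu,\Phi}}$.

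Take any subcritical case with $0\in\sigma(A)$, e.g.\ Brownian motion on $\mathbb{R}^3$ with $\mu=0$ and $\Phi(\lambda)=\lambda$. Choose $\psi\in L^2(E;m)\setminus\mathcal{D}(A^{-1})$ and set $\ell(f):=\langle A^{-1}f,\psi\rangle_m$ on $V:=\mathcal{D}(A)\cap R(A)$. Since $|\ell(f)|\le\|\psi\|_m\|f\|_{R(\sqrt{-\mathcal{L}^{\mu,\Phi}})}$ and $\ell\not\equiv0$, the subspace $\mathcal{C}:=\ker\ell\subset R(A)$ lies in a closed proper subspace of $R(A)$. Yet $\ell$ is not $\mathcal{E}^{\mu,\Phi}_1$-continuous on $V$, since by the spectral theorem that would force $\psi\in\mathcal{D}(A^{-1})$. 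Hence $\ker\ell$ is $\mathcal{E}^{\mu,\Phi}_1$-dense in $V$, and therefore in $\mathcal{D}(\mathcal{E}^{\mu,\Phi})$ (your spectral cutoffs lie in $V$). So the general assertion fails even when $\mathcal{C}\subset R(A)$.

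The paper argues by orthogonality rather than approximation. For $g\in R(A)$ orthogonal to $\mathcal{C}$, it writes $0=\mathcal{E}^{\mu,\Phi}_1(f,G^{\mu,\Phi}g)$. It then asserts $G^{\mu,\Phi}g=hG^{\mu,\Phi,h}(g/h)\in\mathcal{D}(\mathcal{E}^{\mu,\Phi})$ from transience of the $h$-transform, and concludes by density. But that membership means $A^{-2}g\in L^2(E;m)$. This fails for the representer of $\ell$ above: in a spectral representation where $A$ is multiplication by $\lambda$, that representer is $\lambda\hat\psi/(1+\lambda^2)$. The paper's opening claim $\mathcal{C}\subset R(A)$ is also unavailable for arbitrary $\mathcal{C}$. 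So your diagnosis of the obstruction is correct, and the statement itself has to be restricted.

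For $\mathcal{C}=\mathcal{D}(\mathcal{E})\cap C_c$, two steps of your plan do not go through.

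\emph{The local bound.} The estimate $\|A^{-1}h\|_m\le C_K\|h\|_\infty$ rests on $\inf_K G^{\mu,\Phi}_1 g>0$, and $\mu^-\in\mathcal{K}_\infty$ does not give this. Corollaries~\ref{cor_extSoba1} and~\ref{cor_extSoba2} need (LB), or (SF) together with $\mu^+\in\mathcal{K}$, so this must be added as a hypothesis. The paper uses $\mu^-\in\mathcal{K}_\infty$ only to get $\mathcal{E}^{\mu,\Phi}_1$-density of $\mathcal{D}(\mathcal{E})\cap C_c$, as in Proposition~\ref{regularDF}.

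\emph{The tail.} Since $f_n-f_n^{(j)}\notin P_nL^2$, boundedness of $A^{-1}$ on $P_nL^2$ gives no control of $\|A^{-1}(f_n-f_n^{(j)})\|_m$. Fatou's lemma gives only lower semicontinuity, not smallness. Dominating by $\langle |h|,G^{\mu,\Phi}|f_n|\rangle_m$ would need $|f_n|\in R(A)$, which can fail because membership in $R(A)$ may rest on cancellation. Separately, a discontinuous truncation cannot be approximated uniformly; you would need bounded a.e.\ convergence with common support plus dominated convergence against $G^{\mu,\Phi}1_K$.

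Under the extra hypothesis you can avoid tails entirely by duality:
\begin{itemize}
\item Let $g\in R(A)$ be orthogonal to $\mathcal{C}$, and let $w\in\mathcal{D}_e(\mathcal{E}^{\mu,\Phi})$ represent $u\mapsto\langle g,u\rangle_m$.
\item The hypothesis gives $\mathcal{D}_e(\mathcal{E}^{\mu,\Phi})\hookrightarrow L^1_{loc}$, so $\langle A^{-1}f,A^{-1}g\rangle_m=\int_E fw\,dm$ for $f\in\mathcal{C}$.
\item Orthogonality then gives $\int_E f(g+w)\,dm=0$ for all $f\in\mathcal{C}$, hence $w=-g\in L^2(E;m)$.
\item Passing to the $h$-transform, where $\mathcal{D}_e\cap L^2=\mathcal{D}$, gives $w\in\mathcal{D}(\mathcal{E}^{\mu,\Phi})$.
\item Finally $\|g\|^2_{R(\sqrt{-\mathcal{L}^{\mu,\Phi}})}=\langle g,g+w\rangle_m=0$.
\end{itemize}
What makes this work is that $C_c$ separates $L^1_{loc}$; bare $\mathcal{E}^{\mu,\Phi}_1$-density does not provide that.
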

\begin{proof}
Suppose that \(\mathcal{L}^{\mu, \Phi}\) is subcritical. Then, by Theorem \ref{extSoba}, \(\mathcal{C} \subset R(\sqrt{-\mathcal{L}^{\mu, \Phi}})\) holds. Suppose that \(g\in R(\sqrt{-\mathcal{L}^{\mu, \Phi}})\) satisfies \(\langle f, g\rangle_{R(\sqrt{-\mathcal{L}^{\mu, \Phi}})} = 0\) for any \(f\in \mathcal{C}\). We take \(f_*, g_* \in \mathcal{D}(\mathcal{E}^{\mu, \Phi})\) satisfying \(f= \sqrt{-\mathcal{L}^{\mu, \Phi}} f_*\) and \(g=\sqrt{-\mathcal{L}^{\mu, \Phi}}g_*\), and we have
\[
0=\langle f, g\rangle_{R(\sqrt{-\mathcal{L}^{\mu, \Phi}})}=\mathcal{E}_1^{\mu, \Phi}(f_*, g_*)=\mathcal{E}_1^{\mu, \Phi}(f, (-\mathcal{L}^{\mu, \Phi})^{-1/2}g_*)= \mathcal{E}_1^{\mu, \Phi}(f, G^{\mu, \Phi} g),
\]
where \(G^{\mu, \Phi} g := (-\mathcal{L}^{\mu, \Phi})^{-1}g.\) We note that, for any \(h\in \mathcal{H}_+^{\mu, \Phi}\), \((\mathcal{E}^{\mu, \Phi, h}, \mathcal{D}(\mathcal{E}^{\mu, \Phi, h}))\) is transient and so there exists a \(0\)-order resolvent \(G^{\mu, \Phi, h} (\frac{g}{h}) \in \mathcal{D}(\mathcal{E}^{\mu, \Phi, h})\) for \(\frac{g}{h} \in L^2(E;h^2dm)\), and it holds that \(G^{\mu, \Phi} g=  hG^{\mu, \Phi, h} (\frac{g}{h}) \in \mathcal{D}(\mathcal{E}^{\mu, \Phi}).\) Since \(\mathcal{C}\) is \(\mathcal{E}_1^{\mu, \Phi}\)-dense in \(\mathcal{D}(\mathcal{E}^{\mu, \Phi})\), we obtain \(G^{\mu, \Phi}g=0\), and so \(g=0\). Thus \(\mathcal{C}\) is \(\|\cdot\|_{R(\sqrt{-\mathcal{L}^{\mu, \Phi}})}\)-dense in \(R(\sqrt{-\mathcal{L}^{\mu, \Phi}})\).

According to the proof of Proposition \ref{regularDF}, for \(\mu^- \in \mathcal{K}_\infty(G^{\mu^+})\), \(\mathcal{D}(\mathcal{E})\cap C_c\) is \(\mathcal{E}_1^{\mu, \Phi}\)-dense in \(\mathcal{D}(\mathcal{E}^{\mu, \Phi})\), so \(\mathcal{D}(\mathcal{E}) \cap C_c\) is \(\|\cdot\|_{R(\sqrt{-\mathcal{L}^{\mu, \Phi}})}\)-dense in \(R(\sqrt{-\mathcal{L}^{\mu, \Phi}})\).
\end{proof}

We consider another type of norm on  \(R(\sqrt{-\mathcal{L}^{\mu, \Phi}})\) as follows.
\begin{definition}
We define a semi-norm on \(R(\sqrt{-\mathcal{L}^{\mu, \Phi}})\) by
\[\llbracket f \rrbracket_{R(\sqrt{-\mathcal{L}^{\mu, \Phi}})} := \inf\left\{\|u\|_{L^2(E;m)} : u\in \mathcal{D}(\sqrt{-\mathcal{L}^{\mu, \Phi}}), \sqrt{-\mathcal{L}^{\mu, \Phi}}u=f\right\}.\]
\end{definition}

\begin{lemma}
\(\llbracket \cdot \rrbracket_{R(\sqrt{-\mathcal{L}^{\mu, \Phi}})}\) is isometric to the quotient norm on \(\mathcal{D}(\sqrt{-\mathcal{L}^{\mu, \Phi}})/\rm{Ker}(\sqrt{-\mathcal{L}^{\mu, \Phi}})\) induced by the \(L^2(E;m)\)-norm. In particular, \((R(\sqrt{-\mathcal{L}^{\mu, \Phi}}), \llbracket \cdot \rrbracket_{R(\sqrt{-\mathcal{L}^{\mu, \Phi}})})\) is a normed space.
\end{lemma}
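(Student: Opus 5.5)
We will mirror the proof of Lemma \ref{quolem1}, replacing the \(\mathcal{E}_1^{\mu, \Phi}\)-norm by the \(L^2(E;m)\)-norm. Write \(A:=\sqrt{-\mathcal{L}^{\mu, \Phi}}\) for brevity. The kernel \(N:=\mathrm{Ker}(A)\) is a linear subspace of \(\mathcal{D}(A)\), and \(A\) is linear. Consider the quotient seminorm
\[\|[u]\|_{Q'} := \inf\{\|u+v\|_{L^2(E;m)} : v\in N\}\]
on \(\mathcal{D}(A)/N\). The map \(\tilde A : \mathcal{D}(A)/N \to R(A)\), \([u]\mapsto Au\), is well defined and bijective by the fundamental theorem on homomorphisms. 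The preimages of a given \(f\in R(A)\) under \(A\) are exactly the coset \(u+N\) for any \(u\) with \(Au=f\). Hence \(\llbracket f\rrbracket_{R(A)} = \|\tilde A^{-1} f\|_{Q'}\), which is the asserted isometry. Seminorm properties (homogeneity and the triangle inequality) transfer through this linear bijection.

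It remains to check that \(\|\cdot\|_{Q'}\) is an honest norm, that is, definite. The quotient seminorm of a normed space modulo a subspace is a norm exactly when the subspace is closed. Here the relevant topology is the \(L^2(E;m)\) one, so we need \(N\) to be closed in \(\mathcal{D}(A)\) for \(\|\cdot\|_{L^2(E;m)}\).

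This is where self-adjointness enters. Since \(A\) is a non-negative self-adjoint operator, it is closed, so its kernel is a closed subspace of \(L^2(E;m)\). Concretely, if \(v_n\in N\) and \(v_n\to v\) in \(L^2\), then \(Av_n=0\to 0\), so closedness gives \(v\in\mathcal{D}(A)\) and \(Av=0\). Alternatively, by the spectral theorem \(N=\mathrm{Ker}(-\mathcal{L}^{\mu,\Phi})\) is the range of the spectral projection onto \(\{0\}\).

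Therefore, if \(\llbracket f\rrbracket_{R(A)}=0\), choose \(v_n\in N\) with \(u+v_n\to 0\) in \(L^2\). Then \(-v_n\to u\), so \(u\in N\) and \(f=Au=0\). This gives definiteness, and hence \((R(A),\llbracket\cdot\rrbracket_{R(A)})\) is a normed space.

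The only substantive step is this closedness of the kernel in the \(L^2\) topology, and it follows immediately from closedness of the self-adjoint operator \(A\). Unlike Lemma \ref{quolem1}, we do not claim completeness, since \(\mathcal{D}(A)\) is not complete under the \(L^2\)-norm; this is why the statement only asserts a normed space.
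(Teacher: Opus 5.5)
Your proof is correct and follows essentially the same route as the paper: you identify \(\llbracket\cdot\rrbracket_{R(\sqrt{-\mathcal{L}^{\mu, \Phi}})}\) with the \(L^2(E;m)\)-quotient norm via the homomorphism theorem, and you reduce definiteness to closedness of \(\mathrm{Ker}(\sqrt{-\mathcal{L}^{\mu, \Phi}})\) in \(L^2(E;m)\). The only difference is cosmetic: the paper derives this closedness from the closedness of the form \((\mathcal{E}^{\mu, \Phi}, \mathcal{D}(\mathcal{E}^{\mu, \Phi}))\) via an \(\mathcal{E}_1^{\mu, \Phi}\)-Cauchy argument, whereas you invoke closedness of the self-adjoint operator directly, which is equivalent.
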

\begin{proof}
To prove that the semi-norm \(\llbracket f \rrbracket_{R(\sqrt{-\mathcal{L}^{\mu, \Phi}})}\) is actually a norm, it is enough to show that Ker\((\sqrt{-\mathcal{L}^{\mu, \Phi}})\) is closed in \(L^2(E;m)\). We take a sequence \(\{g_n\}_n \subset \text{Ker}(\sqrt{-\mathcal{L}^{\mu, \Phi}})\) converging to \(g\) in \(L^2(E;m)\). Then \(\{g_n\}_n\) is an \(\mathcal{E}_1^{\mu, \Phi}\)-Cauchy sequence since \(\mathcal{E}^{\mu, \Phi}(g_n,g_n)= \|\sqrt{-\mathcal{L}^{\mu, \Phi}} g_n\|_{L^2(E;m)}^2=0\), so, by the closedness of \((\mathcal{E}^{\mu, \Phi}, \mathcal{D}(\mathcal{E}^{\mu, \Phi}))\), there exists \(v \in \mathcal{D}(\mathcal{E}^{\mu, \Phi})\) such that \(g_n\) converges to \(v\) in \(\mathcal{E}^{\mu, \Phi}_1\). Hence \(v=g\) holds and
\[\|\sqrt{-\mathcal{L}^{\mu, \Phi}} g\|_{L^2(E;m)}^2 = \mathcal{E}^{\mu, \Phi}(g,g)=\lim_n  \mathcal{E}^{\mu, \Phi}(g_n,g_n)=0,\]
and so \({\rm Ker}(\sqrt{-\mathcal{L}^{\mu, \Phi}})\) is a closed space of \(L^2(E;m)\).

The rest follow in the same way as the proof of Lemma \ref{quolem1}.
\end{proof}
Even when \(\mathcal{L}^{\mu, \Phi}\) is subcritical, \(\mathcal{D}(\sqrt{-\mathcal{L}^{\mu, \Phi}})\) is not closed in \(L^2(E;m)\), so \((R(\sqrt{-\mathcal{L}^{\mu, \Phi}}), \llbracket \cdot \rrbracket_{R(\sqrt{-\mathcal{L}^{\mu, \Phi}})})\) is not a Hilbert space in general. We also remark that if \(\sqrt{-\mathcal{L}^{\mu, \Phi}}\) is injective, then the norm \(\llbracket \cdot \rrbracket_{R(\sqrt{-\mathcal{L}^{\mu, \Phi}})}\) on \(R(\sqrt{-\mathcal{L}^{\mu, \Phi}})\) coincides with the norm \(\llbracket \cdot \rrbracket_{R(\sqrt{-\mathcal{L}^{\mu, \Phi}})}\) introduced in \cite{S26}. In particular, if \(\mathcal{L}^{\mu, \Phi}\) is subcritical, then \(\sqrt{-\mathcal{L}^{\mu, \Phi}}\) is injective.

We consider the following wave equation.
\begin{eqnarray}
\begin{cases}
\frac{\partial^2}{\partial t^2} w(x,t) = 
\mathcal{L}^{\mu, \Phi} w(x,t)\hspace{5mm}\text{for\ }(x,t)\in E\times (0,\infty)\\
\frac{\partial}{\partial t} w(x,0) = g(x)\hspace{5mm}\text{for\ }x\in E\\
w(x,0) = 0\hspace{5mm}\text{for\ }x\in E,
\end{cases}\label{eq:wave}
\end{eqnarray}
where \(g\in L^2(E;m)\). By the standard strategy for the solvability 
of abstract evolution equations of second-order 
(see, for example, Reed--Simon \cite[Section X.13]{RS75}), 
the existence and uniqueness of solutions to \eqref{eq:wave} are verified with 
the energy conservation law
\begin{eqnarray*}
\|\partial_t w(\cdot,t)\|_{L^2(E;m)}^2 +\|\sqrt{-\mathcal{L}^{\mu,\Phi}} w(\cdot,t)\|_{L^2(E;m)}^2 &=& \|\partial_t w(\cdot,0)\|_{L^2(E;m)}^2 +\|\sqrt{-\mathcal{L}^{\mu,\Phi}} w(\cdot,0)\|_{L^2(E;m)}^2\\
&=&\|g\|_{L^2(E;m)}^2
\end{eqnarray*}
(see also Engel--Nagel \cite[\S VI.3.c]{EN00}).
Here we consider the case where $g\in C_c$.

\begin{theorem}\label{wave_bdd}
Assume that \(\inf_{x\in K} G_1^{\mu, \Phi} v(x)>0\) for any compact set \(K\) and a strictly positive bounded function \(v \in L^1(E;m)\). If \(\mathcal{L}^{\mu, \Phi}\) is subcritical, then the solution 
$w$ to \((\ref{eq:wave})\) with $g\in C_c(E)$ is always uniformly bounded in $L^2(E;m)$.
\end{theorem}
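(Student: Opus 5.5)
The plan is to write the solution by spectral calculus, $w(t)=\frac{\sin(t\sqrt{-\mathcal{L}^{\mu,\Phi}})}{\sqrt{-\mathcal{L}^{\mu,\Phi}}}\,g$, and reduce uniform boundedness to showing that $g$ lies in the range of $\sqrt{-\mathcal{L}^{\mu,\Phi}}$. Under the stated hypothesis on $G_1^{\mu,\Phi}v$ and subcriticality, Theorem \ref{extSoba2} gives $C_c(E)\subset R(\sqrt{-\mathcal{L}^{\mu,\Phi}})$, which is the implication (4)$\Rightarrow$(6). So for $g\in C_c(E)$ we may choose $g_*\in\mathcal{D}(\sqrt{-\mathcal{L}^{\mu,\Phi}})$ with $g=\sqrt{-\mathcal{L}^{\mu,\Phi}}\,g_*$.

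Next, write $A:=\sqrt{-\mathcal{L}^{\mu,\Phi}}$, a non-negative self-adjoint operator with spectral resolution $\{E_\lambda\}$. The solution given by the Reed--Simon/Engel--Nagel construction is $w(t)=\int_{[0,\infty)}\frac{\sin(t\lambda)}{\lambda}\,dE_\lambda g$, where the integrand is interpreted as $t$ at $\lambda=0$. Since $A$ is injective in the subcritical case (remarked after Lemma \ref{quolem1}), $E$ has no atom at $0$, so this interpretation is harmless. Substituting $g=Ag_*$ gives
\[
w(t)=\int_{(0,\infty)}\frac{\sin(t\lambda)}{\lambda}\,\lambda\,dE_\lambda g_*=\sin(tA)\,g_*.
\]
Because $|\sin(t\lambda)|\le 1$, the functional calculus yields $\|w(t)\|_{L^2(E;m)}\le\|g_*\|_{L^2(E;m)}$ for every $t\ge0$. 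This is the uniform bound.

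The one step needing care is identifying the abstract solution with this spectral formula and justifying the commutation $\frac{\sin(tA)}{A}Ag_*=\sin(tA)g_*$. The second is immediate from the functional calculus, since $g_*\in\mathcal{D}(A)$ and the product of the two Borel functions is bounded. For the first, uniqueness of solutions in the energy class identifies them: $\sin(tA)g_*$ is itself a solution, with initial value $0$ and initial velocity $Ag_*=g$. The bound can also be quantified: taking the infimum over admissible $g_*$ gives
\[
\sup_t\|w(t)\|_{L^2(E;m)}\le\llbracket g\rrbracket_{R(\sqrt{-\mathcal{L}^{\mu,\Phi}})}\le\|g\|_{R(\sqrt{-\mathcal{L}^{\mu,\Phi}})}.
\]
The substantive input is therefore the range inclusion from Theorem \ref{extSoba2}, which rests on the hypothesis $\inf_K G_1^{\mu,\Phi}v>0$; the rest is routine functional calculus.
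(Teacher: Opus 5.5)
Your proposal is correct and follows essentially the same route as the paper: both obtain $g=\sqrt{-\mathcal{L}^{\mu,\Phi}}\,g_*$ from Theorem \ref{extSoba2} and then bound $\|w(t)\|_{L^2(E;m)}$ by $\|g_*\|_{L^2(E;m)}$. The paper phrases the second step through the auxiliary solution $w_*$ with initial velocity $g_*$. It identifies $w=\sqrt{-\mathcal{L}^{\mu,\Phi}}\,w_*$ by uniqueness and then invokes energy conservation, which is your identity $w(t)=\sin(tA)g_*$ stated without the functional calculus.
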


\begin{proof}
Suppose that \(\mathcal{L}^{\mu, \Phi}\) is subcritical 
and $g\in C_c(E)$. 
Then in view of Theorem \ref{extSoba2}, 
there exists $g_*\in \mathcal{D}(\mathcal{E}^{\mu, \Phi})$ such that 
$g=\sqrt{-\mathcal{L}^{\mu, \Phi}}g_*$. 
Let $w_*$ be the unique solution of the problem
\begin{equation*}
\begin{cases}
\frac{\partial^2}{\partial t^2} w_*(x,t) = 
\mathcal{L}^{\mu, \Phi} w_*(x,t)\hspace{5mm}\text{for\ }(x,t)\in E\times (0,\infty)\\
\frac{\partial}{\partial t} w_*(x,0) = g_*(x)\hspace{5mm}\text{for\ }x\in E\\
w_*(x,0) = 0\hspace{5mm}\text{for\ }x\in E. 
\end{cases}
\end{equation*}
Then from the uniqueness of solutions to \eqref{eq:wave}, we can see that 
$w=\sqrt{-\mathcal{L}^{\mu, \Phi}}w_*$. 
Therefore the energy conservation law for $w_*$ implies the uniform upper bound for $w$
in the following way:
\begin{align*}
\llbracket g \rrbracket_{R(\sqrt{-\mathcal{L}^{\mu, \Phi}})}^2 = \|g_*\|_{L^2(E;m)}^2
&=
\|\partial_t w_*(\cdot,t)\|_{L^2(E;m)}^2 +\|\sqrt{-\mathcal{L}^{\mu,\Phi}} w_*(\cdot,t)\|_{L^2(E;m)}^2 
\\
&\geq 
\|\sqrt{-\mathcal{L}^{\mu,\Phi}} w_*(\cdot,t)\|_{L^2(E;m)}^2
\\
&\geq 
\|w(\cdot,t)\|_{L^2(E;m)}^2.
\end{align*}
The proof is completed.
\end{proof}

\begin{remark}
In \cite{S26}, Theorem \ref{wave_bdd} is proved in the case of \(\mathcal{L}=\Delta\), absolutely continuous \(\mu\) and \(\Phi(\lambda)=\lambda^\beta\) by using \(\|\cdot\|_{R(\sqrt{-\mathcal{L}^{\mu, \Phi}})}\) and \(\llbracket \cdot \rrbracket_{R(\sqrt{-\mathcal{L}^{\mu, \Phi}})}\).
\end{remark}

\begin{remark}
The boundedness of $L^2(E;m)$-norm 
is optimal in the following sense. 
In \cite{YZ06}, the lower bound for the $L^2$-norm of solutions 
to nonlinear wave equation of the form $\frac{\partial^2 t}{\partial t^2}u=\Delta u+|u|^p$ in $\mathbb{R}^N$ ($p>1$)
was obtained. As in the same way, one can also derive
the lower bound for $L^2$-norm of solutions to the linear wave equation 
$\frac{\partial^2 t}{\partial t^2}u=\Delta u$ in $\mathbb{R}^N$ 
(a precise description can be found in \cite{ISW19}). 
\end{remark}

Combining Theorem \ref{wave_bdd} with Corollary \ref{cor_extSoba2}, we obtain the following useful criterion.
\begin{theorem}\label{wave_bdd_SF}
We assume the condition {\rm (SF)} for \(\mathcal{L}\), and  \(\mu^+\in \mathcal{K}\). If \(\mathcal{L}^{\mu, \Phi}\) is subcritical, then the solution 
$w$ to \((\ref{eq:wave})\) with $g\in C_c(E)$ is always uniformly bounded in $L^2(E;m)$.
\end{theorem}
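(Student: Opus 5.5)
The plan is to reduce Theorem \ref{wave_bdd_SF} to Theorem \ref{wave_bdd}. The only hypothesis of Theorem \ref{wave_bdd} not already assumed here is the positivity
\[
\inf_{x\in K} G_1^{\mu,\Phi} v(x) > 0
\]
for every compact set \(K\) and every strictly positive bounded \(v\in L^1(E;m)\). Corollary \ref{cor_extSoba2} supplies exactly this under (SF) for \(\mathcal{L}\) and \(\mu^+\in\mathcal{K}\). So the proof consists of three steps:
\begin{enumerate}
\item invoke Corollary \ref{cor_extSoba2} to get the lower bound on \(G_1^{\mu,\Phi}v\) on compacts;
\item feed this into Theorem \ref{extSoba2}, so that subcriticality of \(\mathcal{L}^{\mu,\Phi}\) gives \(C_c(E)\subset R(\sqrt{-\mathcal{L}^{\mu,\Phi}})\);
\item apply Theorem \ref{wave_bdd} verbatim.
\end{enumerate}

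In more detail, fix \(g\in C_c(E)\). By step 2 we can write \(g=\sqrt{-\mathcal{L}^{\mu,\Phi}}g_*\) with \(g_*\in\mathcal{D}(\mathcal{E}^{\mu,\Phi})\). Let \(w_*\) solve the wave problem with initial velocity \(g_*\). Uniqueness of solutions to \eqref{eq:wave} gives \(w=\sqrt{-\mathcal{L}^{\mu,\Phi}}w_*\). Energy conservation for \(w_*\) then yields
\[
\|w(\cdot,t)\|_{L^2(E;m)} \le \|g_*\|_{L^2(E;m)}
\]
for all \(t>0\), which is the claimed uniform bound.

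The main point to verify is step 1, because Corollary \ref{cor_extSoba2} was stated somewhat loosely ("it holds that \(\inf_{x\in K}G_1^{\mu,\Phi}g(x)\)" with "\(>0\)" omitted). I would recheck its argument:
\begin{itemize}
\item \(\mu^+\in\mathcal{K}\) gives \(\sup_{x\in K}\mathbb{E}_x[1-e^{-A^{\mu^+}_t}]\to0\), so by \cite[Theorem 1.1]{CK09} the killed semigroup \(T^{\mu^+}_t\) inherits (SF).
\item \(T^\mu_s g\ge T^{\mu^+}_s g\) for \(g\ge0\), since \(e^{A^{\mu^-}}\ge1\).
\item \(G_1^{\mu,\Phi}g=\int T^\mu_s g\,dU_1(s)\), and \(U_1\) has full support on \([0,\infty)\) by Lemma \ref{lem_IBGreen} under (IB).
\end{itemize}
Together these show that \(x\mapsto\int_{(0,\infty)}T^{\mu^+}_s g(x)\,dU_1(s)\) is continuous (by dominated convergence, since \(g\) is bounded and \(U_1\) is finite) and strictly positive. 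Its infimum over a compact \(K\) is therefore positive.

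The delicate point is strict positivity pointwise, not merely \(m\)-a.e. For this I would use (SF) together with irreducibility: \(T^{\mu^+}_s g(x)>0\) for every \(x\), because \(T^{\mu^+}_s g\) is continuous and, by irreducibility of \(\mathcal{E}^{\mu^+}\), positive \(m\)-a.e. In addition, a continuous function vanishing at a point while being a nonnegative excessive-type integral is ruled out by the resolvent identity. Once this is in place, the rest is direct citation.
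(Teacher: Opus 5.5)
Your proposal is correct and follows the paper's route exactly: the paper obtains this theorem by combining Theorem~\ref{wave_bdd} with Corollary~\ref{cor_extSoba2}, and the corollary's lower bound on $G_1^{\mu,\Phi}v$ over compacts (with the omitted ``$>0$'') is the only hypothesis of Theorem~\ref{wave_bdd} not already assumed. One caution about your optional re-check of that corollary: continuity together with $m$-a.e.\ positivity does not force positivity at every point, so the pointwise positivity should instead come from the process itself. Since $\zeta>0$ $\mathbb{P}_x$-a.s.\ and the law of $X_s$ is absolutely continuous under (SF), $T^{\mu^+}_s g(x)>0$ for all small $s>0$, and $U_1$ charges every interval $(0,\varepsilon)$ by Lemma~\ref{lem_IBGreen}; because you cite the corollary anyway, this does not affect your proof.
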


Next we propose a sufficient condition for subcriticality for a subordinated Schr\"{o}dinger operator via the knowledge of solutions to the corresponding wave equation. 

\begin{theorem}\label{oppose_wave_bdd}
We assume the condition {\rm (SF)} for \(\mathcal{L}\), and  \(\mu^+\in \mathcal{K}\). If, for every $g\in C_c(E)$, 
the unique solution $w$ of the corresponding wave equation 
\begin{eqnarray}
\begin{cases}
\frac{\partial^2}{\partial t^2} w(x,t) = 
\mathcal{L}^\mu w(x,t)\hspace{5mm}\text{for\ }(x,t)\in E\times (0,\infty)\\
\frac{\partial}{\partial t} w(x,0) = g(x)\hspace{5mm}\text{for\ }x\in E\\
w(x,0) = 0\hspace{5mm}\text{for\ }x\in E,
\end{cases}\label{eq:wave2}
\end{eqnarray}
is bounded in $L^2(E,m)$. Then for every $\beta\in (0,1)$, 
$(-\mathcal{L}^\mu)^{\beta}$ is subcritical.
\end{theorem}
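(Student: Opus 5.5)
Write \(A:=-\mathcal{L}^\mu\ge 0\), with spectral measure \(dE_\lambda\). Solving \eqref{eq:wave2} by spectral calculus gives \(w(t)=\frac{\sin(t\sqrt{A})}{\sqrt{A}}\,g\). The plan has four steps: extract a spectral integrability estimate from the boundedness of \(w\), convert it into membership of \(g\) in \(R(A^{\beta/2})\), obtain subcriticality from Theorem \ref{extSoba}, and close with a Baire argument giving a uniform bound.

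\textbf{Step 1 (spectral estimate).} Uniform boundedness of \(w\) means
\[
\sup_{t>0}\int_{[0,\infty)}\frac{\sin^2(t\sqrt\lambda)}{\lambda}\,d\langle E_\lambda g,g\rangle_m=:M_g<\infty .
\]
Average in time: \(\frac1T\int_0^T\sin^2(t\sqrt\lambda)\,dt\ge c>0\) whenever \(\sqrt\lambda\, T\ge 1\). By Fatou/Tonelli this yields
\[
\int_{[1/T^2,\infty)}\frac{d\langle E_\lambda g,g\rangle}{\lambda}\le C M_g \quad\text{for every } T .
\]
Letting \(T\to\infty\) gives \(\int_{(0,\infty)}\lambda^{-1}\,d\langle E_\lambda g,g\rangle_m\le CM_g\). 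In addition \(E_{\{0\}}g=0\). This follows because \(w\) contains the term \(tE_{\{0\}}g\), which grows linearly unless it vanishes.

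\textbf{Step 2 (membership in the range).} For \(\Phi(\lambda)=\lambda^\beta\) we have \(-\mathcal{L}^{\mu,\Phi}=A^\beta\) and \(\sqrt{-\mathcal{L}^{\mu,\Phi}}=A^{\beta/2}\). By Theorem \ref{thm_range}, \(g\in R(A^{\beta/2})\) if and only if
\[
\langle g,G^{\mu,\Phi}g\rangle_m=\int\lambda^{-\beta}\,d\langle E_\lambda g,g\rangle<\infty .
\]
Split the integral at \(\lambda=1\). On \((0,1]\) use \(\lambda^{-\beta}\le\lambda^{-1}\) together with Step 1. On \((1,\infty)\) the integrand is bounded by \(\|g\|^2\). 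Hence every \(g\in C_c(E)\) lies in \(R(\sqrt{-\mathcal{L}^{\mu,\Phi}})\). Since \(\beta<1\), \(\Phi\) satisfies (IB).

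\textbf{Step 3 (subcriticality).} Pick a nonnegative \(g\in C_c(E)\) with \(g\not\equiv0\). Step 2 gives (2) of Theorem \ref{extSoba}, so \((-\mathcal{L}^\mu)^\beta\) is subcritical. The hypotheses (SF) and \(\mu^+\in\mathcal{K}\) are only needed to place us in the framework of Corollary \ref{cor_extSoba2}, which makes (5)–(6) of Theorem \ref{extSoba2} available if one wants the full range statement. For the subcriticality conclusion alone, Theorem \ref{extSoba} suffices.

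\textbf{Main obstacle.} The delicate point is Step 1: passing from pointwise-in-\(t\) boundedness to the averaged lower bound, while handling the atom at \(\lambda=0\) and the region near \(0\) uniformly. The time-averaging/Fatou argument above should work, since \(M_g\) is finite for each fixed \(g\) and nothing uniform over \(g\) is needed.

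Only if a uniform constant is required, for instance to bound \(\llbracket g\rrbracket\) in terms of \(\|g\|_\infty\), would I invoke a Baire category argument on \(C_c(K)\) for compact \(K\). That argument uses the closed-graph-type maps \(g\mapsto w(t)\). The subcriticality statement itself does not need it.
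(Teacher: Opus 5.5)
Your proof is correct, but it takes a different route from the paper. The paper never diagonalizes $w$. It uses the Hadamard transmutation formula $e^{t\mathcal{L}^\mu}g=\frac{1}{2\pi^{1/2}t^{3/2}}\int_0^\infty\sigma e^{-\sigma^2/(4t)}w(\sigma)\,d\sigma$ to turn $\sup_t\|w(t)\|\le M$ into the heat decay $\|e^{t\mathcal{L}^\mu}g\|\le M\pi^{-1/2}t^{-1/2}$ for $t\ge 1$. It inserts this into $\langle g,G^{\mu,\Phi_\beta}g\rangle_m=\Gamma(\beta)^{-1}\int_0^\infty\|e^{\frac t2\mathcal{L}^\mu}g\|^2t^{\beta-1}\,dt$, using the $0$-potential density of the stable subordinator. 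It then gets $C_c(E)\subset R(\sqrt{(-\mathcal{L}^\mu)^\beta})$ from Theorem \ref{thm_range} and concludes with Corollary \ref{cor_extSoba2}.

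Your time-averaging of $\sin^2(t\sqrt\lambda)$ is more elementary and loses less information. It gives $E_{\{0\}}g=0$ and $\int_{(0,\infty)}\lambda^{-1}\,d\langle E_\lambda g,g\rangle_m<\infty$, i.e.\ $g\in R(\sqrt{-\mathcal{L}^\mu})$. By contrast, the paper's $t^{-1}$ bound on $\|e^{\frac t2\mathcal{L}^\mu}g\|^2$ is integrable against $t^{\beta-1}$ only when $\beta<1$, which is why its statement excludes $\beta=1$. In fact, your Step 1 combined with Theorem \ref{extSoba} for $\Phi(\lambda)=\lambda$ (where $b=1$, so (IB) holds) already shows that $\mathcal{L}^\mu$ itself is subcritical. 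Hence the conclusion also holds for $\beta=1$, and by Corollary \ref{subcrinomama} for every Bernstein function satisfying (IB).

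Your remark that only (2)$\Rightarrow$(4) of Theorem \ref{extSoba} is needed, with a single nonnegative $g\in C_c(E)$, is also right. So (SF) and $\mu^+\in\mathcal{K}$ play no role here. The paper's appeal to Corollary \ref{cor_extSoba2} only uses the implication from (6) to (4) in Theorem \ref{extSoba2}, which does not require the lower bound on $G_1^{\mu,\Phi}$.

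In Step 2 you can even bypass Theorem \ref{thm_range}, whose quantity $\langle g,G^{\mu,\Phi}g\rangle_m$ is awkward for signed $g$. Simply set $u:=\int\lambda^{-\beta/2}\,dE_\lambda g$, which lies in $\mathcal{D}(A^{\beta/2})$ and satisfies $A^{\beta/2}u=g$. The Baire-category remark is not needed.
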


\begin{proof} We set a Bernstein function \(\Phi_\beta(\lambda):=\lambda^\beta\) for \(\beta \in (0,1)\). Let $g\in C_c(E)$ be arbitrary fixed.  We recall that 
\[
e^{t\mathcal{L}^\mu}g
=
\frac{1}{2\pi^{\frac{1}{2}}t^{\frac{3}{2}}}
\int_0^\infty \sigma e^{-\frac{\sigma^2}{4t}}w(\sigma)\,d\sigma, \quad t>0.
\]
This formula is derived from the Hadamard transmutation formula
(see \cite[Lemma 3.6]{S26}). Therefore if $\sup_{t\geq 0}\|w(t)\|_{L^2(E,m)}\leq M$ for some $M\geq 0$, then 
we have for every $t\geq 1$, 
\[
\|e^{t\mathcal{L}^\mu}g\|_{L^2(E,m)}
\leq 
\frac{M}{2\pi^{\frac{1}{2}}t^{\frac{3}{2}}}
\int_0^\infty \sigma e^{-\frac{\sigma^2}{4t}}\,d\sigma
=
\frac{M}{\pi^{\frac{1}{2}}t^{\frac{1}{2}}}.
\]
This shows that
\begin{eqnarray*}
\langle g, G^{\mu, \Phi_\beta}g \rangle_m &=& \left\langle g, \int_0^\infty e^{t\mathcal{L}^\mu}g\,\Gamma(\beta)^{-1}t^{\beta -1} \,dt \right\rangle_m\\
& = & \Gamma(\beta)^{-1}\int_0^\infty \langle g, e^{t\mathcal{L}^\mu}g\rangle_m\,t^{\beta -1} \,dt \\
&= &\Gamma(\beta)^{-1} \int_0^\infty \|e^{\frac{t}{2}\mathcal{L}^\mu}g\|_{L^2(E,m)}^2 t^{\beta-1}\,dt<\infty
\end{eqnarray*}
for every $\beta\in (0,1)$. Here we used a fact \cite[Example 5.8]{SV09} that a \(0\)-potential measure for \(\Phi_\beta\) is \(\Gamma(\beta)^{-1}t^{\beta-1}\,dt\). By Theorem \ref{thm_range}, 
$g\in R(\sqrt{(-\mathcal{L}^\mu)^{\beta}})$.
As a consequence, 
we deduce $C_c(E)\subset R(\sqrt{(-\mathcal{L}^\mu)^{\beta}})$. 
By Corollary \ref{cor_extSoba2}, we can conclude that 
$(-\mathcal{L}^\mu)^{\beta}$ is subcritical.  
\end{proof}

\section{Examples}\label{sec:Example}

\begin{example}
Let \(E\) be a connected domain of \(\mathbb{R}^d\),  \(m\) be the Lebesgue measure and \(\mathcal{L}:=\Delta\). We consider the Laplace operator \(\Delta\) on \(E\) with the Dirichlet boundary condition. The corresponding irreducible regular Dirichlet form \((\mathcal{E}, \mathcal{D}(\mathcal{E}))\) on \(L^2(E;m)\) is
\begin{equation}
\begin{cases}
\mathcal{E}(f,g) := \frac{1}{2} \int_E \nabla f\cdot \nabla g\,dx, \\
\mathcal{D}(\mathcal{E}) := H^1_0(E):= \overline{C_c^\infty(E)}^{\mathcal{E}_1}
\end{cases}
\end{equation}
where the derivatives are taken in the Schwartz distribution sense. In this example, we extend the function \(f\) on \(E\) to \(\mathbb{R}^d\) by setting it to \(0\) on \(\mathbb{R}^d\setminus E\). Denote by \(L^p_{{\rm unif}}(\mathbb{R}^d)\) the space of functions \(f\) satisfying \[\sup_{x\in \mathbb{R}^d} \int_{\{|x-y|\leq 1\}} |f(y)|^p\,dy < \infty\]
and we set \(d\mu := V\, dx\) for \(V_+ \in L^\infty_{loc}\) and \(V_- \in L^p_{{\rm unif}}(\mathbb{R}^d)\) with \(p>d/2\) if \(d\geq 2\) or \(p\geq 1\) if \(d=1\). Then \(\mu_+ \in \mathcal{K}_{loc}\) since 
\[\sup_{x\in E} G_{\alpha}(\mu_{+}1_K )(x) \leq \|V_+1_K\|_{\infty}\sup_{x\in E} G_{\alpha}m (x) \leq \frac{\|V_+\|_{\infty}}{\alpha} \to 0\]
as \(\alpha \to \infty\) for any compact set \(K\), and \(\mu_- \in \mathcal{K}_{loc}\) by \cite[Theorem 1.4 (iii)]{AS82} (or \cite[Theorem 1.2]{KT07}). We assume that \((\mathcal{E}^{\mu^+}, \mathcal{D}(\mathcal{E}^{\mu^+}))\) is transient. For example, if either \(d\geq 3\) or \(m(\mathbb{R}^d\setminus E)>0\) or \(V_+ \not \equiv 0\) holds, then \((\mathcal{E}^{\mu^+}, \mathcal{D}(\mathcal{E}^{\mu^+}))\) is transient.

The following cases satisfy the condition (IB), that is,  \(b>0\) or \(\inf {\rm supp} \nu =0\). See \cite{S99, B99} for examples. Hence we can characterize criticality for the following operators by Theorem \ref{subodinated_subcri}, \ref{subodinated_cri}, \ref{subodinated_supercri}, and obtain the equivalent condition for subcriticality by Theorem \ref{extSoba2}. 

For \(\Phi(\lambda)= \lambda^{\alpha/2}\) with \(0<\alpha <2\), we can consider the operator \((-\Delta+V)^{\alpha/2}\) for \(0<\alpha <2\).

For a compound Poisson subordinator \(\Phi(\lambda)= a \frac{\lambda}{\lambda + c}\) with \(0<a,c\),  we can consider the operator \(a \frac{\Delta-V}{\Delta-V -c}\).

For a Gamma subordinator \(\Phi(\lambda)= a \log{(1+\lambda/c)}\) with \( a,c>0\), we can consider the operator \(a \log{(1-(\Delta-V)/c)}\).

For an inverse Gaussian subordinator \(\Phi(\lambda)= a (\sqrt{2\lambda + c^2}-c )\) with \(0<a,c\), we can consider the operator \(a (\sqrt{-2(\Delta-V) + c^2}-c )\).

For a relativistic stable subordinator \(\Phi(\lambda)= (\lambda+ m^{2/\alpha})^{\alpha/2}-m\) with \(0<\alpha <2\) and \(m>0\), we can consider the operator \((-\Delta+V+ m^{2/\alpha})^{\alpha/2}-m\). In the case of \(V=0\), a Markov process corresponding to \((-\Delta + m^{2/\alpha})^{\alpha/2}-m\) is called a relativistic \(\alpha\)-stable process. See \cite{SV09} for details.
\end{example}

\begin{example}[The Hardy inequality for \((-\Delta)^{\alpha/2}\)]\label{ex.Hardy}
We consider the Hardy inequality for \((-\Delta)^{\alpha/2}\) with \(0<\alpha \leq 2\).  Let \(\alpha < d\) and \(X\) be an \(\alpha\)-stable process on \(\mathbb{R}^d\), which corresponds to \((-\Delta)^{\alpha/2}\). The condition \(\alpha < d\) is needed for the transience of \(X\). We remark that \(X\) is Brownian motion if \(\alpha =2\). The associated irreducible transient regular Dirichlet form \((\mathcal{E}, \mathcal{D}(\mathcal{E}))\) on \(L^2(\mathbb{R}^d ;dx)\) is, for \(\alpha=2\),
\begin{eqnarray*}
\begin{cases}
\mathcal{E}(f,g):= \frac{1}{2} \int_{\mathbb{R}^d} \nabla f\cdot \nabla g \,dx \\
\mathcal{D}(\mathcal{E}):=H^1(\mathbb{R}^d):=\{f\in L^2(\mathbb{R}^d) : \nabla f \in L^2(\mathbb{R}^d)\}
\end{cases}
\end{eqnarray*}
where the derivatives are taken in the Schwartz distribution sense, and, for \(0<\alpha <2\),
\begin{eqnarray*}
\begin{cases}
\mathcal{E}(f,f):= \frac{\mathcal{A}(d,\alpha)}{2}\iint_{\mathbb{R}^d\times \mathbb{R}^d \setminus \rm{diag}} \frac{(f(x)-f(y))^2}{|x-y|^{d+\alpha}}\,dx\,dy,\\
\mathcal{D}(\mathcal{E}) := \left\{f\in L^2(\mathbb{R}^d) : \iint_{\mathbb{R}^d\times \mathbb{R}^d \setminus \rm{diag}} \frac{(f(x)-f(y))^2}{|x-y|^{d+\alpha}}\,dx\,dy<\infty \right\},
\end{cases}
\end{eqnarray*}
where \({\rm diag}:=\{(x,y)\in \mathbb{R}^d\times \mathbb{R}^d : x=y\}\) and
\[\mathcal{A}(d,\alpha):= \frac{\alpha 2^{\alpha-1} \Gamma((d+\alpha)/2)}{\pi^{d/2} \Gamma(1-\alpha/2)}.\]
In these cases, \(X\) satisfies the strong Feller condition (SF). The following Hardy inequality is well-known. See \cite{H77, DDM08} for example. 
\begin{eqnarray}
\lambda_* \int_{\mathbb{R}^d} \frac{|f(x)|^2}{|x|^2}\,dx \leq \mathcal{E}(f,f) \label{eq:ex_Hardy_1}
\end{eqnarray}
for \(f\in \mathcal{D}(\mathcal{E})\), where 
\[\lambda_*:=2^{\alpha-1}\left(\frac{\Gamma(\frac{d+\alpha}{4})}{\Gamma(\frac{d-\alpha}{4})}\right)^2.\]

Let \(d\mu_\lambda(x) = -\lambda |x|^{-2}\,dx\). Then a Schr\"{o}dinger form \((\mathcal{E}^{\mu_{\lambda_*}}, \mathcal{D}(\mathcal{E}^{\mu_{\lambda_*}}))\) is critical. For \(\lambda <\lambda_*\) (resp. \(\lambda >\lambda_*\)), a Schr\"{o}dinger form \((\mathcal{E}^{\mu_\lambda}, \mathcal{D}(\mathcal{E}^{\mu_\lambda}))\) is subcritical (resp. supercritical).

By Corollary \ref{subcrinomama}, for \(\lambda < \lambda_*\) and any Bernstein function \(\Phi\) satisfying (IB), \(-\Phi((-\Delta)^{\alpha/2} + \mu_\lambda)\) is subcritical. By Theorem \ref{wave_bdd_SF}, the solution to the wave equation \((\ref{eq:wave})\) for \(-\Phi((-\Delta)^{\alpha/2}+ \mu_\lambda)\) is uniformly bounded in \(L^2(\mathbb{R}^d)\). In particular, these hold for \(-((-\Delta)^{\alpha/2} + \mu_\lambda)^{\beta/2}\) with \(0<\alpha <2\), \(-a \frac{(-\Delta)^{\alpha/2} + \mu_\lambda}{(-\Delta)^{\alpha/2} + \mu_\lambda + c}\), \(-a \log{(1+((-\Delta)^{\alpha/2} + \mu_\lambda)/c)}\), \( -a (\sqrt{2((-\Delta)^{\alpha/2} + \mu_\lambda) + c^2}-c )\) with \(0<a,c\), and \(-((-\Delta)^{\alpha/2}+ \mu_\lambda+ m^{2/\beta})^{\beta/2}+m\) with \(0<\beta <2\) and \(m>0\).

We consider the criticality of a subordinated Schr\"{o}dinger form \((\mathcal{E}^{\mu_{\lambda_*}, \Phi}, \mathcal{D}(\mathcal{E}^{\mu_{\lambda_*}, \Phi}))\) for a \(\beta/2\)-stable subordinatior \(\Phi(x)=x^{\beta/2}\) with \(0<\beta <2\).

For a transition density function \(p^{\mu_{\lambda_*}}(t,x,y)\) of \(T_t^{\mu_{\lambda_*}}\), according to \cite{MS04}, \cite[Section 10.4]{G06}, for \(\alpha =2 < d\), it holds that
\begin{eqnarray}
\nonumber p^{\mu_{\lambda_*}}(t,x,y) & \asymp &\frac{1}{t^{d/2}} \left( 1+\frac{\sqrt{t}}{|x|} \right)^{\delta} \left( 1+\frac{\sqrt{t}}{|y|} \right)^{\delta} \exp{\left(-c\frac{|x-y|^2}{t} \right)}\\
&\asymp & \frac{1}{t^{d/2}} \left( 1+\frac{t^{\delta/2}}{|x|^\delta} \right) \left( 1+\frac{t^{\delta/2}}{|y|^\delta} \right)\exp{\left(-c\frac{|x-y|^2}{t} \right)} \label{eq:ex_Hardy_HKE_1}
\end{eqnarray}
where \(\delta = (d-2)/2\). According to \cite{BGJP19}, for \(0<\alpha <2\wedge d\), it holds that
\begin{equation}
p^{\mu_{\lambda_*}}(t,x,y) \asymp \left(1+\frac{t^{\delta/\alpha}}{|x|^{\delta}}\right)\left(1+\frac{t^{\delta/\alpha}}{|y|^{\delta}}\right) \left(t^{-d/\alpha} \wedge \frac{t}{|x-y|^{d+\alpha}}  \right), \label{eq:ex_Hardy_HKE_2}
\end{equation}
where \(\delta = (d-\alpha)/2\). Here and throughout this section, \(f(x) \asymp g(x)\exp{(-cu(x))}\) means that there exist \(C_1, C_2, c_1, c_2 >0\) such that \(C_1 g(x)\exp{(-c_1u(x))} \leq f(x) \leq C_2 g(x)\exp{(-c_2u(x))}\) holds for any \(x\). In order to verify subcriticality, it is enough to show the existence of \(0\)-order Green's kernel \(r^{\mu_{\lambda_*}, \Phi}\) for \(T_t^{\mu_{\lambda_*}, \Phi}\). 
By \cite[Example 37.19]{S99}, we have \[r^{\mu_{\lambda_*}, \Phi}(x,y):=\int_0^\infty  \int_0^\infty p^{\mu_{\lambda_*}}(s,x,y)\,d\eta_t(s)\,dt = C \int_0^\infty p^{\mu_{\lambda_*}}(s,x,y) \, s^{\beta/2-1}\,ds.\] 
Hence, for \(0< \alpha=2 < d\), by \((\ref{eq:ex_Hardy_HKE_1})\) and \(\beta/2 -1 -d/2 +\delta = \beta/2-2 < -3/2\), we obtain
\begin{eqnarray*}
r^{\mu_{\lambda_*}, \Phi}(x,y) & \asymp & \int_0^\infty s^{\beta/2 -1} \frac{1}{s^{d/2}} \left( 1+\frac{s^{\delta/2}}{|x|^\delta} \right) \left( 1+\frac{s^{\delta/2}}{|y|^\delta} \right)\exp{\left(-c\frac{|x-y|^2}{s} \right)}\,ds \\
& \asymp & \int_0^\infty  \left( s^{\beta/2 -1-d/2}+ s^{\beta/2 -1-d/2 + \delta/2}(|x|^{-\delta}+|y|^{-\delta})\right.\\
&& \hspace{20mm}+ \left. s^{\beta/2 -1-d/2 + \delta} |x|^{-\delta}|y|^{-\delta} \right) \exp{\left(-c\frac{|x-y|^2}{s} \right)}\,ds \\
& \asymp & |x-y|^{\beta -d} + |x-y|^{\beta -d+\delta}(|x|^{-\delta}+|y|^{-\delta}) + |x-y|^{\beta -d+2\delta}|x|^{-\delta}|y|^{-\delta}\\
&\asymp & \frac{1}{|x-y|^{d-\beta}} \left(1+\frac{|x-y|^\delta}{|x|^\delta} \right)\left(1+\frac{|x-y|^\delta}{|y|^\delta} \right).
\end{eqnarray*}

For \(0< \alpha <2 \wedge d\), we set \(c:= |x-y|^\alpha \) and, by \((\ref{eq:ex_Hardy_HKE_2})\), we obtain
\begin{eqnarray*}
r^{\mu_{\lambda_*}, \Phi}(x,y) & \asymp & \int_0^c s^{\beta/2 } c^{-(d+\alpha)/\alpha} (1+s^{\delta /\alpha}|x|^{-\delta} + s^{\delta /\alpha} |y|^{-\delta} + s^{2\delta /\alpha} |x|^{-\delta} |y|^{-\delta})\,ds\\
&&   +  \int_{c} ^\infty s^{\beta/2 -1 -d/\alpha} (1+s^{\delta /\alpha}|x|^{-\delta} + s^{\delta /\alpha} |y|^{-\delta} + s^{2\delta /\alpha} |x|^{-\delta} |y|^{-\delta})\,ds\\
& \asymp & c^{-(d+\alpha)/\alpha} \left[s^{\beta/2 +1} + s^{\beta/2 + 1+\delta /\alpha}|x|^{-\delta} + s^{\beta/2+1+\delta /\alpha} |y|^{-\delta} + s^{\beta/2 +1 + 2\delta /\alpha} |x|^{-\delta} |y|^{-\delta} \right]_0^c \\
&& + \left[s^{\beta/2 -d/\alpha} + s^{\beta/2 -d/\alpha+ \delta /\alpha}|x|^{-\delta} + s^{\beta/2 -d/\alpha+\delta /\alpha} |y|^{-\delta} + s^{\beta/2 -d/\alpha+ 2\delta /\alpha} |x|^{-\delta} |y|^{-\delta} \right]_c^\infty \\
&\asymp & c^{\beta /2- d/\alpha} + c^{\beta /2- d/(2\alpha)-1/2}(|x|^{-\delta} + |y|^{-\delta}) + c^{\beta /2- d/\alpha + 2\delta/\alpha}|x|^{-\delta} |y|^{-\delta}\\
&\asymp & \frac{1}{|x-y|^{d-\frac{\alpha \beta}{2}}} \left(1+\frac{|x-y|^\delta}{|x|^\delta} \right)\left(1+\frac{|x-y|^\delta}{|y|^\delta} \right).
\end{eqnarray*}
Here we used \(\beta /2 -d/\alpha + 2\delta/\alpha = \beta/2 -1 <0\).

In both cases of \(\alpha =2\) and \(\alpha <2\), there exists \(0\)-order Green's kernel \(r^{\mu_{\lambda_*}, \Phi}\) for \(\Phi((-\Delta)^{\alpha/2}+\mu_{\lambda_*}) = ((-\Delta)^{\alpha/2} - \lambda_* |x|^{-2})^{\beta/2}\) satisfying
\begin{equation}
r^{\mu_{\lambda_*}, \Phi}(x,y) \asymp \frac{1}{|x-y|^{d-\frac{\alpha \beta}{2}}} \left(1+\frac{|x-y|^\delta}{|x|^\delta} \right)\left(1+\frac{|x-y|^\delta}{|y|^\delta} \right), \label{eq:ex_Hardy_HKE_3}
\end{equation}
and so \(((-\Delta)^{\alpha/2} - \lambda_* |x|^{-2})^{\beta/2}\) is subcritical for any \(0< \alpha \leq 2\) and \(0<\beta <2\). We note that we can also obtain the subcriticality of \(((-\Delta)^{\alpha/2} -\lambda |x|^{-2})^{\beta/2}\) for \(\lambda < \lambda_*\) directly in a similar way to the above calculations.

Since \(X\) satisfies the strong Feller condition (SF), by Theorem \ref{wave_bdd_SF}, the solution \(W^{\alpha, \beta}(x,t)\) to the following wave equation \((\ref{eq:ex_Hardy_wave})\) is uniformly bounded in \(L^2(\mathbb{R}^d)\) for \(\lambda <\lambda_*, 0< \beta \leq 2\) and \(\lambda = \lambda_*, 0< \beta < 2\).
\begin{eqnarray}
\begin{cases}
\frac{\partial^2}{\partial t^2} w(x,t) = -((-\Delta)^{\alpha/2} - \lambda |x|^{-2})^{\beta/2} w(x,t)\hspace{5mm}\text{for\ }(x,t)\in  \mathbb{R}^d\times (0,\infty)\\
\frac{\partial}{\partial t} w(x,0) = g(x)\in C_c(\mathbb{R}^d)\hspace{5mm}\text{for\ }x\in  \mathbb{R}^d\\
w(x,0) = 0\hspace{5mm}\text{for\ }x\in \mathbb{R}^d
\end{cases}\label{eq:ex_Hardy_wave}
\end{eqnarray}

We note that unbounded solutions \(W^{2, 2}(x,t)\) to  \((\ref{eq:ex_Hardy_wave})\)  for \(\lambda =\lambda_*, \alpha= \beta =2\) are constructed in \cite[Proposition 1.7]{S26}.

By (\ref{eq:ex_Hardy_HKE_1}) and (\ref{eq:ex_Hardy_HKE_2}) and checking the existence of a \(0\)-order resolvent kernel of the subordinated Dircihlet form \((\mathcal{E}^{\mu_{\lambda_*}, h, \Phi}, \mathcal{D}(\mathcal{E}^{\mu_{\lambda_*}, h, \Phi}))\), we can decide the criticality and subcriticality for certain subordinators as follows. See \cite[Section 5.2.2, Theorem 5.17, Proposition 5.22]{SV09} for details of these  subordinators. We note that a potential density is a density function of a \(0\)-order potential measure.

A Gamma subordinator \(\Phi(\lambda)= a \log{(1+\lambda/c)}\) has a potential density \(\int_0^\infty c^{at}\Gamma(at)^{-1} s^{at-1}e^{-cs}\,dt\) comparable to \(1\) as \(s\to  \infty\), so
\[a \log{\left(1+\left((-\Delta)^{\alpha/2} - \lambda_* |x|^{-2}\right)/c \right)} \text{\ \ is\  critical.}\]

A relativistic stable subordinator \(\Phi(\lambda)= (\lambda+ m^{2/\beta})^{\beta/2}-m\) with \(0<\beta <2\) and \(m>0\) has a potential density \(e^{-m^{{2s}/{\beta}}} s^{\beta/2-1} \sum_{n=0}^\infty \frac{(ms^{\beta/2})^n}{\Gamma(\beta(1+n)/2)}\) comparable to \(1\) as \(s\to  \infty\), so 
\[\left(\left((-\Delta)^{\alpha/2} - \lambda_* |x|^{-2}\right)+ m^{2/\beta}\right)^{\beta/2}-m \text{\ \ is\  critical.}\]

A subordinator \(\Phi(\lambda)=\lambda^{\delta/2} \log{(1+\lambda)}^{\beta/2}\) with \(0<\delta<2, 0<\beta<2-\delta \) has a potential density comparable to \(s^{(\delta + \beta)/2-1}\) as \(s\to \infty\), so 
\[\left((-\Delta)^{\alpha/2} - \lambda_* |x|^{-2}\right)^{\delta/2} \log{\left(1+\left((-\Delta)^{\alpha/2} - \lambda_* |x|^{-2}\right)\right)}^{\beta/2} \text{\ \ is\  subcritical.}
\]

A subordinator \(\Phi(\lambda)=\lambda^{\delta/2} \log{(1+\lambda)}^{-\beta/2}\) with \(0<\delta<2, 0<\beta<\delta \) has a potential density comparable to \(s^{(\delta - \beta)/2-1}\) as \(s\to \infty\), so
\[\left((-\Delta)^{\alpha/2} - \lambda_* |x|^{-2}\right)^{\delta/2} \log{\left(1+\left((-\Delta)^{\alpha/2} - \lambda_* |x|^{-2}\right)\right)}^{-\beta/2} \text{\ \ is\  subcritical.}
\]

Bessel subordinators \(\Phi(\lambda)=\log{((1+\lambda)+\sqrt{( 1+\lambda )^2 -1})}\) and \(\Phi(\lambda)=\left(\log{((1+\lambda)+\sqrt{( 1+\lambda )^2 -1})} \right)^2\)  have potential density comparable to \(1\) and \(s^{-1/2}\) as \(s\to \infty\), respectively, so 

\[\log{\left((1+\left((-\Delta)^{\alpha/2} - \lambda_* |x|^{-2}\right))+\sqrt{\left( 1+\left((-\Delta)^{\alpha/2} - \lambda_* |x|^{-2}\right) \right)^2 -1}\right)} \text{\ \ is\  critical.}\]
However 
\[\left(\log{\left((1+\left((-\Delta)^{\alpha/2} - \lambda_* |x|^{-2}\right))+\sqrt{\left( 1+\left((-\Delta)^{\alpha/2} - \lambda_* |x|^{-2}\right)\right)^2 -1}\right)}\right)^2 \text{\ \ is\  subcritical.}\]
We can also obtain the result on uniformly boundedness of the solutions to wave equations \((\ref{eq:wave})\) with an initial function \(g\in C_c(\mathbb{R}^d)\) for \(\mathcal{L}^{\mu, \Phi} = \Phi((-\Delta)^{\alpha/2} - \lambda_* |x|^{-2})\) for above subordinators \(\Phi\) making \(\Phi((-\Delta)^{\alpha/2} - \lambda_* |x|^{-2})\) subcritical. 
\end{example}

\begin{example}[The trace Hardy inequality for \(-\Delta\)]\label{ex_traceHardy1}
We consider the upper half space \(\mathbb{R}^d_+:=\{x=(x', x_d) : x'\in \mathbb{R}^{d-1},\, 0< x_d \in \mathbb{R}\}\) for \(d\geq 3\), and the reflecting Brownian motion on \(\overline{\mathbb{R}^d_+}:= \{x=(x', x_d) : x'\in \mathbb{R}^{d-1},\, 0\leq  x_d \in \mathbb{R}\}\). The associated irreducible transient regular Dirichlet form \((\mathcal{E}, \mathcal{D}(\mathcal{E}))\) on \(L^2(\overline{\mathbb{R}^d_+};dx)\) is
\begin{eqnarray*}
\begin{cases}
\mathcal{E}(f,g):= \frac{1}{2} \int_{\mathbb{R}^d_+} \nabla f\cdot \nabla g \,dx \\
\mathcal{D}(\mathcal{E}):=H^1(\mathbb{R}^d_+;dx):=\{f\in L^2(\overline{\mathbb{R}^d_+}) : \nabla f \in L^2(\mathbb{R}^d_+)\}
\end{cases}
\end{eqnarray*}
where the derivatives are taken in the Schwartz distribution sense. See \cite[Example 2.2.4]{CF12} for details.

The following trace hardy inequality is known. See \cite{DJL21} for example.
\begin{eqnarray}
\lambda_* \int_{\partial \mathbb{R}^d_+} \frac{|f(x',0)|^2}{|x'|}\,dx' \leq \frac{1}{2} \int_{\mathbb{R}^d_+} |\nabla f|^2 \,dx \label{eq:ex_traceHardy_a}
\end{eqnarray}
for \(f\in \mathcal{D}(\mathcal{E})\), where 
\[\lambda_*:=\left(\frac{\Gamma(d/4)}{\Gamma((d-2)/4)}\right)^2.\]

Let \(v(x):=|x|^{-\beta}\) for \(\beta \in \mathbb{R}\) and we consider \(d\rho := v dx' d\delta_0\), where \(\delta_0\) is a Dirac measure at \(0\). Since the one point \(0\) has zero capacity, we prove that a family of compact sets \(F_k:=\{\frac{1}{n} \leq |x| \leq n\}\) constitutes a nest attached to \(\rho\). We take \(\varphi \in C^\infty(\mathbb{R}^d)\) satisfying \(\varphi=0\) on \(\mathbb{R}^{d-1} \times \{1 \leq |x_d|\}\) and \(\varphi=1\) on \(\mathbb{R}^{d-1} \times \{|x_d|\leq 1/2\}\). Then, for any \(f\in \mathcal{D}(\mathcal{E})\cap C_c\), we have
\[f(x',0)=-\int_0^1 \frac{\partial}{\partial x_d}\left(\varphi(x_d) f(x,x_d) \right)\,dx_d\]
and, by the Cauchy--Schwarz inequality, 
\begin{eqnarray*}
\int_{\mathbb{R}^d \cap F_n} |f(x)|^2\,d\rho(x) &=& \int_{\{\frac{1}{n} \leq |x'| \leq n\}} \frac{|f(x',0)|^2}{|x'|^\beta}\,dx'\\
&\leq & n^{|\beta|} \int_{\mathbb{R}^{d-1}} |f(x',0)|^2\,dx'\\
&\leq & n^{|\beta|} \int_{\mathbb{R}^{d-1}} \int_0^1 \left|\frac{\partial}{\partial x_d}\left(\varphi(x_d) f(x,x_d) \right)\right|^2\,dx_d\,dx'\\
&\leq & n^{|\beta|} C \, \mathcal{E}_1(f,f)
\end{eqnarray*}
for some positive constant \(C\). Hence \(1_{F_n}\rho\) is a Radon measure of finite energy integral, and so \(\rho\) is a smooth measure. Moreover \(\rho\) is a Radon measure if and only if \(\beta <d-1\). In particular, by considering \(\mathbb{R}^{d-1}\), \(|x'|^{-1}\,dx'\) appearing in the trace Hardy inequality is a Radon measure.

Let 
\[d\mu_\lambda(x) = -\lambda |x'|^{-1}\,dx'.\]
The trace Hardy inequality \((\ref{eq:ex_traceHardy_a})\) also follows from \cite[Theorem 5.6]{TU23}. By \cite[Lemma 5.1, 5.2]{TU23}, \((\mathcal{E}^{\mu_\lambda}, \mathcal{D}(\mathcal{E}) \cap C_c(\mathbb{R}^d\setminus \{0\}))\) is non-negative definite, the closure of \( \mathcal{D}(\mathcal{E}) \cap C_c(\mathbb{R}^d\setminus \{0\})\) coincides with that of \(\mathcal{D}(\mathcal{E}) \cap C_c(\mathbb{R}^d)\), and \(\mu_\lambda \in \mathcal{K}_{\rm loc}(\mathcal{E}|_{\mathbb{R}^d\setminus \{0\}})\). Hence we can consider a Schr\"{o}dinger form \((\mathcal{E}^{\mu_{\lambda}}, \mathcal{D}(\mathcal{E}^{\mu_{\lambda}}))\) and it holds that a Schr\"{o}dinger form \((\mathcal{E}^{\mu_{\lambda_*}}, \mathcal{D}(\mathcal{E}^{\mu_{\lambda_*}}))\) is critical. This method for deriving the criticality of \((\mathcal{E}^{\mu_{\lambda_*}}, \mathcal{D}(\mathcal{E}^{\mu_{\lambda_*}}))\) using \cite[Theorem 5.6]{TU23} is similar to the next example, so refer to that as well. By the Poincar\'{e} inequality, we have
\[\sup_{f\in \mathcal{D}(\mathcal{E})} \frac{\int_K |f| \, dx}{\sqrt{\mathcal{E}(f,f)}}< \infty \]
for any compact set \(K\). For \(\lambda >\lambda_*\), since it holds that \(\inf{\{\mathcal{E}(f,f) : \int |f|^2\,d\mu_\lambda =1 \}}>1\), \((\mathcal{E}^{\mu_\lambda}, \mathcal{D}(\mathcal{E}^{\mu_\lambda}))\) is subcritical by Theorem \ref{Theorem3.5_TU23} (\cite[Theorem 3.5]{TU23}). For a \(0\)-order resolvent kernel \(r_0\), a function \(\int r_0(x,y)|y|^{-d/2} dy\) attains \((\ref{eq:ex_traceHardy_a})\) and so, \((\mathcal{E}^{\mu_\lambda}, \mathcal{D}(\mathcal{E}^{\mu_\lambda}))\) is supercritical for \(\lambda > \lambda_*\). 

By Corollary \ref{subcrinomama}, for \(\lambda < \lambda_*\) and any Bernstein function \(\Phi\) satisfying (IB), \(-\Phi(-\Delta + \mu_\lambda)\) is subcritical and so, by Theorem \ref{wave_bdd_SF}, the solution to the wave equation \((\ref{eq:wave})\) for \(-\Phi(-\Delta + \mu_\lambda)\) is uniformly bounded in \(L^2(\mathbb{R}^d_+)\). In particular, these hold for \(-(-\Delta + \mu_\lambda)^{\alpha/2}\) with \(0<\alpha <2\), \(-a \frac{-\Delta + \mu_\lambda}{-\Delta + \mu_\lambda + c}\), \(-a \log{(1+(-\Delta + \mu_\lambda)/c)}\), \( -a (\sqrt{2(-\Delta + \mu_\lambda) + c^2}-c )\) with \(0<a,c\), and \(-(-\Delta + \mu_\lambda+ m^{2/\alpha})^{\alpha/2}+m\) with \(0<\alpha <2\) and \(m>0\).

By using a function 
\begin{equation}
h(x) = \frac{\Gamma(\frac{d-2}{4})^2}{4 \sqrt{\pi} \Gamma(\frac{d }{4})} \frac{1}{|x|^{\frac{d-2}{2}}} \,_2F_1\left(\frac{d-2}{4},\frac{d-2}{4},\frac{d-1}{2};\frac{|x'|^2}{|x|^2} \right), \label{eq:ex_resolventnu2}
\end{equation}
an \(h\)-transformed Dirichlet form \((\mathcal{E}^{\mu_{\lambda_*}, h}, \mathcal{D}(\mathcal{E}^{\mu_{\lambda_*}, h}))\) on \(L^2(\mathbb{R}^d_+;h^2dx)\) is recurrent and it holds that
\[\mathcal{E}^{\mu_{\lambda_*}, h}(f,f) = \frac{1}{2} \int_{\mathbb{R}^d} |\nabla f|^2\,h^2\,dx\]
for \(f\in \mathcal{D}(\mathcal{E}^{\mu_{\lambda_*}, h})\). Hence \((\mathcal{E}^{\mu_{\lambda_*}, h}, \mathcal{D}(\mathcal{E}^{\mu_{\lambda_*}, h}))\) coincides with a Dirichlet form associated with Brownian motion on the weighted manifold \((\mathbb{R}^d, h^2\,dx)\). Since \(\frac{d-1}{2} > 2\,\frac{d-2}{4}\), it holds that \( \,_2F_1\left(\frac{d-2}{4},\frac{d-2}{4},\frac{d-1}{2};1\right)<\infty\) by the Gaussian hypergeometric theorem and so \(h(x)\asymp |x|^{-\frac{d-2}{2}}\). By \cite[Corollary 6.11]{G06}, the heat kernel \(p^{\mu_{\lambda_*},h}\) of \((\mathcal{E}^{\mu_{\lambda_*}, h}, \mathcal{D}(\mathcal{E}^{\mu_{\lambda_*}, h}))\) admits the same estimate as (\ref{eq:ex_Hardy_HKE_1}), that is, for any \(x,y\in \mathbb{R}^d\) and \(t>0\),
\begin{eqnarray}
\nonumber p^{\mu_{\lambda_*}}(t,x,y) \asymp \frac{1}{t^{d/2}} \left( 1+\frac{t^{\delta/2}}{|x|^\delta} \right) \left( 1+\frac{t^{\delta/2}}{|y|^\delta} \right)\exp{\left(-c\frac{|x-y|^2}{t} \right)}
\end{eqnarray}
with \(\delta := (d-2)/2\). In particular, for \(|x|^2<t\), it holds that \(p^{\mu_{\lambda_*},h}(t,x,y)\asymp t^{-1} |x|^{2-d} \exp{(-c\frac{|x-y|^2}{t})}\). In the same way as Example \ref{ex.Hardy}, \((-\Delta - \lambda_* |x'|^{-1})^{\beta/2}\) is subcritical for any \(0<\beta <2\).

Since the heat kernel estimate coincides with that appearing in Example \ref{ex.Hardy}, we can obtain the same results on criticality and subcriticality, and uniform boundedness of wave equations for \(\Phi(-\Delta- \lambda |x'|^{-1})\) by subordinators \(\Phi\) appearing in Example \ref{ex.Hardy}.
\end{example}

\begin{example}[The trace Hardy inequality for \((-\Delta)^{\alpha/2}\)]
Let \(d\geq 2\). We consider \((-\Delta)^{\alpha/2}\) on \(\mathbb{R}^d\) for \(1<\alpha <d \wedge 2\). The assumption \(\alpha < d\) is need for transience and \(1<\alpha\) is need for the smoothness of a trace measure \(\mu_\lambda\) below. We define
\begin{eqnarray*}
\begin{cases}
\mathcal{E}(f,f):= \frac{\mathcal{A}(d,\alpha)}{2}\iint_{\mathbb{R}^d\times \mathbb{R}^d \setminus \rm{diag}} \frac{(f(x)-f(y))^2}{|x-y|^{d+\alpha}}\,dx\,dy,\\
\mathcal{D}(\mathcal{E}) := \left\{f\in L^2(\mathbb{R}^d) : \iint_{\mathbb{R}^d\times \mathbb{R}^d \setminus \rm{diag}} \frac{(f(x)-f(y))^2}{|x-y|^{d+\alpha}}\,dx\,dy<\infty \right\},
\end{cases}
\end{eqnarray*}
where \({\rm diag}:=\{(x,y)\in \mathbb{R}^d\times \mathbb{R}^d : x=y\}\) and
\[\mathcal{A}(d,\alpha):= \frac{\alpha 2^{\alpha-1} \Gamma((d+\alpha)/2)}{\pi^{d/2} \Gamma(1-\alpha/2)}.\]
Then \((\mathcal{E}, \mathcal{D}(\mathcal{E}))\) is a regular Dirichlet form on \(L^2(\mathbb{R}^d)\) and an associated Hunt process \(X\) is a symmetric \(\alpha\)-stable process. Since \(0<\alpha <d\), \((\mathcal{E}, \mathcal{D}(\mathcal{E}))\) is transient and its \(0\)-order resolvent kernel \(r(x,y)\) can be represented by
\begin{equation}\
r(x,y) = \frac{\Gamma((d-\alpha)/2)}{2^\alpha \pi^{d/2} \Gamma(\alpha/2)}\,\frac{1}{|x-y|^{d-\alpha}}.
\label{eq:ex_resolventkenrel}
\end{equation}
See \cite[Example 1.4.1]{FOT11}, \cite[2.2.2]{CF12} for example.

Similarly to \cite[Example 5.9]{TU23}, we obtain a trace Hardy inequality as follows. Let \(d\rho(x) := |x|^{-\beta}\, dx'\,d\delta_0(x_d)\) for \(x=(x', x_d) \in \mathbb{R}^{d-1}\times \mathbb{R}\) and a Dirac's delta measure \(\delta_0\) at \(0\). Since \(1<\alpha\), the one point \(0\) has zero capacity and so \(\rho\) is a smooth measure. By the Hardy-Littlewood-Sobolev inequality, for \(B(r):=\{x : |x| \leq r\} \) and \(1<p<q<\infty\) satisfying \(\frac{1}{p}+\frac{1}{q} = \frac{d+\alpha -2}{d-1}\) and \(\frac{d-1}{q}<\beta <\frac{d-1}{p}\), we have
\begin{eqnarray*}
\iint_{\mathbb{R}^d\times \mathbb{R}^d \setminus \rm{diag}} \frac{1_{B(r)}(x) 1_{B(r)^c}(y)}{|x-y|^{d-\alpha}}\,d\rho(x)\,d\rho(y) &=& \iint_{\mathbb{R}^{d-1}\times \mathbb{R}^{d-1} \setminus \rm{diag}} \frac{1_{B(r)}(x') 1_{B(r)^c}(y')}{|x'-y'|^{d-\alpha}} \frac{1}{|x'|^\beta}\frac{1}{|y'|^\beta}\,dx'\,dy' \\
&\leq &C \|1_{B(r)} |x'|^{-\beta}\|_{L^p(\mathbb{R}^{d-1})}\,\|1_{B(r)} |y'|^{-\beta}\|_{L^q(\mathbb{R}^{d-1})}\\
&=& C r^{(d-1)/p+(d-1)/q-2\beta}.
\end{eqnarray*}
If \(\beta = \frac{d+\alpha-2}{2}\), then \((d-1)/p+(d-1)/q-2\beta=0\) and so 
\[M:=\sup_{r>0} \iint_{\mathbb{R}^d\times \mathbb{R}^d \setminus \rm{diag}} \frac{1_{B(r)}(x) 1_{B(r)^c}(y)}{|x-y|^{d-\alpha}}\,d\rho(x) < \infty.\]
Let \(D:= \mathbb{R}^d\setminus \{0\}\) and \(X^D\) be a part process of \(X\) on \(D\), that is, \(X^D_t = X_t\) for \(t <\tau_D:=\inf\{t>0 : X_t \not \in D\}\) and \(X_t = \partial\) for \(t\geq \tau_D\). Denote by an associated Dirichlet form \((\mathcal{E}^D, \mathcal{D}(\mathcal{E}^D))\) on \(L^2(D)\), that is, \(\mathcal{E}= \mathcal{E}^D\) and \(\mathcal{D}(\mathcal{E}^D)=\{f\in \mathcal{D}(\mathcal{E}) : f=0 {\rm \ on\ }D^c\}\). Then, for \(\beta = \frac{d+\alpha-2}{2}\), it holds that \(\rho \in \mathcal{K}_{\rm loc}(\mathcal{E}^D)\). By \((\ref{eq:ex_resolventkenrel})\), we obtain  
\[R\rho(x'):=R\rho((x',0)) = \int r(x,y) d\rho(y) = \frac{\Gamma(\frac{d-\alpha}{4})^2}{2^\alpha \sqrt{\pi} \Gamma(\frac{d+\alpha-2}{4})^2} \frac{\Gamma(\frac{\alpha-1}{2})}{\Gamma(\frac{\alpha}{2})} \frac{1}{|x'|^{\frac{d-\alpha}{2}}}\]
and so
\[\frac{d\rho(x)}{R\rho(x')} = \lambda_* \frac{1}{|x'|^{\alpha-1}} dx' d\delta_0(x_d),\ \lambda_*:=\frac{2^\alpha \sqrt{\pi} \Gamma(\frac{d+\alpha-2}{4})^2\Gamma(\frac{\alpha}{2})}{\Gamma(\frac{d-\alpha}{4})^2 \Gamma(\frac{\alpha-1}{2})}.\]
We define \[d\mu_\lambda(x) := -\lambda \frac{1}{|x'|^{\alpha-1}} dx' d\delta_0(x_d),\]
then \(\mu_\lambda \in \mathcal{K}_{\rm loc}(\mathcal{E}^D)\) by \cite[Lemma 5.1]{TU23} and \((\mathcal{E}^{\mu_\lambda}, \mathcal{D}(\mathcal{E}) \cap C_c(\mathbb{R}^d\setminus \{0\}) )\) is non-negative definite by \cite[Lemma 5.2]{TU23}. The capacity of \(\{0\}\) is zero since we now assume \(d \geq 2\), the closure of \( \mathcal{D}(\mathcal{E}) \cap C_c(\mathbb{R}^d\setminus \{0\})\) coincides with that of \( \mathcal{D}(\mathcal{E}) \cap C_c(\mathbb{R}^d)\). Hence we can consider the Schr\"{o}dinger form \(\mathcal{E}^{\mu_\lambda}(f,f) = \mathcal{E}(f,f)- \int |f|^2 d\mu_\lambda(x)\). By \cite[Theorem 5.6]{TU23}, a Scr\"{o}dinger form \((\mathcal{E}^{\mu_{\lambda_*}}, \mathcal{D}(\mathcal{E}^{\mu_{\lambda_*}}))\) is critical. In particular, we obtain a trace Hardy inequality for \((-\Delta )^{\alpha/2}\),
\begin{equation}
\lambda_* \int_{\mathbb{R}^{d-1}} \frac{f(x')^2}{|x'|^{\alpha-1}} \, dx' \leq  \frac{\mathcal{A}(d,\alpha)}{2}\iint_{\mathbb{R}^d\times \mathbb{R}^d \setminus \rm{diag}} \frac{(f(x)-f(y))^2}{|x-y|^{d+\alpha}}\,dx\,dy \label{eq:ex_traceHardy1}
\end{equation}
for \(f\in \mathcal{D}(\mathcal{E})\).

Since \(R\rho\) attains \((\ref{eq:ex_traceHardy1})\), \((\mathcal{E}^{\mu_\lambda}, \mathcal{D}(\mathcal{E}^{\mu_\lambda}))\) is supercritical for \(\lambda > \lambda_*\). For any compact set \(K\) and \(f \in \mathcal{D}(\mathcal{E})\), we have 
\[\int_K |f|\,dm = \mathcal{E}(R1_K, |f|) \leq \sqrt{\mathcal{E}(f,f)}\,\sqrt{\mathcal{E}(R1_K,R1_K)}\]
and
\[\mathcal{E}(R1_K,R1_K) \leq \iint_{K\times K} \frac{1}{|x-y|^{d-\alpha}}\,dxdy < \infty.\]
For \(\lambda <\lambda_*\), it holds that \(\inf{\{\mathcal{E}(f,f) : \int |f|^2\,d\mu_\lambda =1 \}}>1\), and so \((\mathcal{E}^{\mu_\lambda}, \mathcal{D}(\mathcal{E}^{\mu_\lambda}))\) is subcritical by Theorem \ref{Theorem3.5_TU23} (\cite[Theorem 3.5]{TU23}).

By Corollary \ref{subcrinomama}, for \(\lambda < \lambda_*\) and any Bernstein function \(\Phi\) satisfying (IB), \(-\Phi((-\Delta)^{\alpha/2} + \mu_\lambda)\) is subcritical and so, by Theorem \ref{wave_bdd}, the solution to the wave equation \((\ref{eq:wave})\) for \(-\Phi((-\Delta)^{\alpha/2} + \mu_\lambda)\) is uniformly bounded in \(L^2(\mathbb{R}^d)\). In particular, these hold for \(-((-\Delta)^{\alpha/2} + \mu_\lambda)^{\beta/2}\) with \(0<\beta <2\), \(-a \frac{(-\Delta)^{\alpha/2} + \mu_\lambda}{(-\Delta)^{\alpha/2} + \mu_\lambda + c}\), \(-a \log{(1+((-\Delta)^{\alpha/2} + \mu_\lambda)/c)}\), \( -a (\sqrt{2((-\Delta)^{\alpha/2} + \mu_\lambda) + c^2}-c )\) with \(0<a,c\), and \(-((-\Delta)^{\alpha/2} + \mu_\lambda+ m^{2/\beta})^{\beta/2}+m\) with \(0<\beta <2\) and \(m>0\).

It is not easy to obtain the subcriticality or criticality for a subordinated Schr\"{o}dinger operator \(\Phi((-\Delta)^{\alpha/2} - \lambda_* |x'|^{-1})\) for the critical case. In the rest of this example, to obtain the subcriticality and criticality, we consider the heat kernel estimates of \((-\Delta)^{\alpha/2} - \lambda_* |x'|^{-1}\). Although we have obtained \(R\rho(x')\), we need to determine \(R\rho(x)\). By \((\ref{eq:ex_resolventkenrel})\) and  we obtain 
\begin{equation}
R\rho(x) = \int r(x,y) d\rho(y) = \frac{\Gamma(\frac{d-\alpha}{4})^2}{2^\alpha \sqrt{\pi} \Gamma(\frac{d+\alpha-2}{4}) \Gamma(\frac{\alpha}{2})} \frac{1}{|x|^{\frac{d-\alpha}{2}}} \,_2F_1\left(\frac{d-\alpha}{4},\frac{d-\alpha}{4},\frac{d-1}{2};\frac{|x'|^2}{|x|^2} \right) \label{eq:ex_resolventnu}
\end{equation}
where \(_2F_1\) is a hypergeometric function defined by
\[_2F_1(a,b,c ;z):=\frac{\Gamma(c)}{\Gamma(b) \Gamma(c-b)}\int_0 ^1 t^{b-1} (1-t)^{c-b-1} (1-zt)^{-a}\,dt.\]
Indeed, by using a polar coordinate transformation similarly to \cite[Section 2.4 (9)]{EMOT53} and calculations, we have
\begin{eqnarray*}
\lefteqn{ \int_{\mathbb{R}^{d-1}}\frac{1}{(|x'-y'|^2+|x_d|^2)^{(d-\alpha)/2}} \frac{1}{|y'|^{(d+\alpha -2)/2}} \,dy'}\\
&= & |S^{d-3}|  \int_0^\infty r^{\frac{d-\alpha}{2}-1}(r^2+|x|^2)^{-\frac{d-\alpha}{2}} \int_0^\pi \left(1-\frac{2|x'|r}{r^2+|x|^2} \cos{\theta}\right)^{-\frac{d-\alpha}{2}} \sin^{d-3}{\theta} \,d\theta \,dr\\
&=& |S^{d-2}|  \int_0^\infty r^{\frac{d-\alpha}{2}-1}(r^2+|x|^2)^{-\frac{d-\alpha}{2}}\,_2F_1\left(\frac{d-\alpha}{4}, \frac{d-\alpha+2}{4}, \frac{d-1}{2};\left(\frac{2|x'|r}{r^2+|x|^2}\right)^2 \right)\,dr\\
&=& \frac{|S^{d-2}| \Gamma\left(\frac{d-2}{2}\right)}{\Gamma\left(\frac{d-\alpha +2}{4}\right)\Gamma\left(\frac{d+\alpha -4}{4} \right)} \int_0^1 \int_0^\infty r^{\frac{d-\alpha}{2}-1} \left((r^2+|x|^2)^2-4|x'|^2r^2s \right)^{-\frac{d-\alpha}{4}} \,dr\,s^{\frac{d-\alpha-2}{4}}\,(1-s)^{\frac{d+\alpha}{4}-2}\,ds\\
&=& \frac{|S^{d-2}| \Gamma\left(\frac{d-2}{2}\right)}{\Gamma\left(\frac{d-\alpha +2}{4}\right)\Gamma\left(\frac{d+\alpha -4}{4} \right)}  \frac{|x|^{-\frac{d-\alpha}{2}}}{2}\\
&&\times \int_0^1 \int_0^1 u^{\frac{d-\alpha}{4}-1} (1-u)^{\frac{d-\alpha}{4}-1}\left(1-\frac{4|x'|^2}{|x|^2}su(1-u) \right)^{-\frac{d-\alpha}{4}} \,du \,s^{\frac{d-\alpha-2}{4}}\,(1-s)^{\frac{d+\alpha}{4}-2}\,ds\\
&=& \frac{|S^{d-2}|\Gamma\left(\frac{d-2}{2}\right)}{\Gamma\left(\frac{d-\alpha +2}{4}\right)\Gamma\left(\frac{d+\alpha -4}{4} \right)}\frac{|x|^{-\frac{d-\alpha}{2}}}{2} \frac{\Gamma\left(\frac{d-\alpha}{4}\right)^2}{\Gamma\left(\frac{d-\alpha}{2}\right)}\\
&&\hspace{10mm}\times \int_0^1\,_2F_1\left(\frac{d-\alpha}{4},\frac{d-\alpha}{4},\frac{d-\alpha+2}{4};\frac{|x'|^2 s}{|x|^2} \right)\,s^{\frac{d-\alpha-2}{4}}\,(1-s)^{\frac{d+\alpha}{4}-2}\,ds\\
&=& |x|^{-\frac{d-\alpha}{2}}\frac{\pi^{(d-1)/2}}{\Gamma(\frac{d-1}{2})} \frac{\Gamma\left(\frac{d-\alpha}{4}\right)^2}{\Gamma\left(\frac{d-\alpha}{2}\right)} \,_2F_1\left(\frac{d-\alpha}{4},\frac{d-\alpha}{4},\frac{d-1}{2};\frac{|x'|^2}{|x|^2} \right)
\end{eqnarray*}
and so (\ref{eq:ex_resolventnu}) follows from (\ref{eq:ex_resolventkenrel}). Here \(|S^n|\) is the volume of the unit ball on \(\mathbb{R}^{n+1}\).

Since \(R\rho \in \mathcal{D}_e(\mathcal{E})\) satisfies \(\mathcal{E}(R\rho, R\rho)=0\) and \(\mu_{\lambda_*}\) is a Radon measure, for \(u \in \mathcal{D}(\mathcal{E}) \cap C_c(D)\), we have
\[
\mathcal{E}^{\mu_{\lambda_*}}(u, R\rho) = \mathcal{E}(u, R\rho) -\int u\,R\rho\,d\mu_{\lambda_*} = \int u\,d\rho -\int u\,R\rho\,\frac{d\rho}{R\rho}=0
\]
For \(f \in \mathcal{D}(\mathcal{E}) \cap C_c(D)\), since \(R\rho\) is bounded and continuous on the support of \(f\), \(f^2 R\rho \in \mathcal{D}_e(\mathcal{E})\) follows from \cite[Exercise 1.1.10]{CF12}. Moreover it holds that \(f^2 R\rho \in \mathcal{D}(\mathcal{E})\) since \(f^2 R\rho \in L^2(\mathbb{R}^d)\). By an identity 
\[\big(f(x)g(x)-f(y)g(y)\big)^2 = \big(f^2(x)g(x)-f^2(y)g(y)\big)\big(g(x)-g(y)\big) + \big(f(x)-f(y)\big)^2g(x)g(y),\]
we have
\begin{eqnarray*}
\mathcal{E}^{\mu_{\lambda_*}}(fR\rho, fR\rho) &=& \mathcal{E}^{\mu_{\lambda_*}}(f^2R\rho, R\rho) + \frac{\mathcal{A}(d,\alpha)}{2}\iint_{\mathbb{R}^d\times \mathbb{R}^d \setminus \rm{diag}} \frac{(f(x)-f(y))^2}{|x-y|^{d+\alpha}}\,R\rho(x)dx\,R\rho(y)dy\\
&=& \frac{\mathcal{A}(d,\alpha)}{2}\iint_{\mathbb{R}^d\times \mathbb{R}^d \setminus \rm{diag}} \frac{(f(x)-f(y))^2}{|x-y|^{d+\alpha}}\,R\rho(x)dx\,R\rho(y)dy\\
&=& \frac{\mathcal{A}(d,\alpha)}{2}\iint_{\mathbb{R}^d\times \mathbb{R}^d \setminus \rm{diag}} (f(x)-f(y))^2 \frac{R\rho(y)dy}{|x-y|^{d+\alpha}\,R\rho(x)}\,R\rho(x)^2dx.
\end{eqnarray*}
Setting \(h:=R\rho\), since an \(\mathcal{E}_1\)-closure of \(\mathcal{D}(\mathcal{E}) \cap C_c(D)\) is \(\mathcal{D}(\mathcal{E})\), an \(h\)-transformed Dirichlet form \((\mathcal{E}^{\mu_{\lambda_*}, h}, \mathcal{D}(\mathcal{E}^{\mu_{\lambda_*}, h}))\) coincides with a Dirichlet form of a pure jump process on a weighted manifold \((\mathbb{R}^d, h^2dx)\) with a jump kernel
\[J^{\mu_{\lambda_*}, h}(dx,dy):=\frac{h^2(x)dx\,h^2(y)dy}{|x-y|^{d+\alpha}\,h(x)h(y)}.\]
Since \(\alpha >1\), we have \(\frac{d-1}{2}>2\,\frac{d-\alpha}{4}\) and so we have \(_2F_1(\frac{d-\alpha}{4}, \frac{d-\alpha}{4}, \frac{d-1}{2}; 1)< \infty\) by the Gaussian hypergeometric theorem \cite[15.4.20]{OLBC10}, and so \(h(x) \asymp |x|^{-\frac{d-\alpha}{2}}\). In this case, \((\mathcal{E}^{\mu_{\lambda_*}, h}, \mathcal{D}(\mathcal{E}^{\mu_{\lambda_*}, h}))\) coincides with the Dirichlet form of a transformed Schr\"{o}dinger form associated with the critical Hardy inequality appearing in \cite[(5.1)]{BGJP19}, the heat kernel \(p^{\mu_{\lambda_*}}\) for Schr\"{o}dinger form \((\mathcal{E}^{\mu_{\lambda_*}}, \mathcal{D}(\mathcal{E}^{\mu_{\lambda_*}}))\) associated with  the critical trace Hardy inequality enjoys (\ref{eq:ex_Hardy_HKE_2}), that is, it holds that
\begin{eqnarray}
p^{\mu_{\lambda_*}}(t,x,y) \asymp \left(1+\frac{t^{\delta/\alpha}}{|x|^{\delta}}\right)\left(1+\frac{t^{\delta/\alpha}}{|y|^{\delta}}\right) \left(t^{-d/\alpha} \wedge \frac{t}{|x-y|^{d+\alpha}}  \right)
\end{eqnarray}
By the same way as Example \ref{ex.Hardy}, \(((-\Delta)^{\alpha/2} - \lambda_* |x'|^{-1})^{\beta/2}\) is subcritical for any \(1< \alpha < 2, 0<\beta <2\). Combining these with Example \ref{ex_traceHardy1}, \(((-\Delta)^{\alpha/2} - \lambda |x'|^{-1})^{\beta/2}\) is subcritical for \(0< \lambda < \lambda_*, 1<\alpha\leq 2, 0<\beta \leq 2\) and \(\lambda =\lambda_*, 1<\alpha\leq 2, 0<\beta <2\). Hence, by Theorem \ref{wave_bdd}, the solution \(W^{\alpha, \beta}(x,t)\) to the following wave equation with a singular potential \((\ref{eq:ex_traceHardy_wave})\) is uniformly bounded in \(L^2(\mathbb{R}^d)\) for \(\lambda <\lambda_*, 1<\alpha \leq 2, 0< \beta \leq 2\) and \(\lambda = \lambda_*, 1<\alpha\leq 2, < \beta < 2\).
\begin{eqnarray}
\begin{cases}
\frac{\partial^2}{\partial t^2} w(x,t) = -((-\Delta)^{\alpha/2} - \lambda |x'|^{-1})^{\beta/2} w(x,t)\hspace{5mm}\text{for\ }(x,t)\in  \mathbb{R}^d\times (0,\infty)\\
\frac{\partial}{\partial t} w(x,0) = g(x)\in C_c(\mathbb{R}^d)\hspace{5mm}\text{for\ }x\in  \mathbb{R}^d\\
w(x,0) = 0\hspace{5mm}\text{for\ }x\in \mathbb{R}^d
\end{cases}\label{eq:ex_traceHardy_wave}
\end{eqnarray}
Moreover, the criticality and subcriticality for \(\Phi((-\Delta)^{\alpha/2} - \lambda_* |x'|^{-1})\) coincides with those appearing in Example \ref{ex.Hardy}. We can also obtain the result on uniformly boundedness of the solutions to wave equations \((\ref{eq:wave})\) with an initial function \(g\in C_c(\mathbb{R}^d)\) for \(\mathcal{L}^{\mu, \Phi} =\Phi((-\Delta)^{\alpha/2} - \lambda_* |x'|^{-1})\) for subordinators \(\Phi\) making \(\Phi((-\Delta)^{\alpha/2} - \lambda_* |x'|^{-1})\) subcritical. 
\end{example}

\begin{example}
We consider Laplace operators on spaces with varying dimension \(\mathbb{R}^d \cup \mathbb{R}^{d'}\) in \cite{O22}. Let \(d \geq 3\) and \(d'\geq 2.\) For small numbers \(\varepsilon, \varepsilon' >0\), we set \(\mathbb{R}_\varepsilon^d := \mathbb{R}^d \setminus B^d(0;\varepsilon)\) and \(\mathbb{R}_{\varepsilon'}^{d'} := \mathbb{R}^{d'} \setminus B^{d'}(0;\varepsilon')\), where \(B^d(0;\varepsilon)\) is a closed ball in \(\mathbb{R}^d\) with centre \(0\) and radius \(\varepsilon\). We identify \(\partial \mathbb{R}_\varepsilon^d \) and \(\partial \mathbb{R}_{\varepsilon'}^{d'}\) with a one point \(\{a^*\}\) and we define \(E:=\mathbb{R}_\varepsilon^d \cup \mathbb{R}_{\varepsilon'}^{d'} \cup \{a^*\}\). We use \(E\) instead of \(\mathbb{R}^d \cup \mathbb{R}^{d'}\) because Brownian motion on \(\mathbb{R}^d\) for \(d \geq 2\) does not hit to a point \(0\) and we cannot attach \(\mathbb{R}^d\) and \(\mathbb{R}^{d'}\) at a single point when considering the Laplace operator. We consider the natural metric \(\rho\) induced from Euclidean metrics on \(\mathbb{R}^d\) and \(\mathbb{R}^{d'}\), and the Lebesgue measure. In \cite{O22}, a regular Dirichlet form \((\mathcal{E}, \mathcal{D}(\mathcal{E}))\) on \(L^2(E;m)\) is defined as follows.
\begin{eqnarray*}
\begin{cases}
\mathcal{E}(f,g):=\frac{1}{2} \int_{\mathbb{R}_\varepsilon^d} \nabla_d(f|_{\mathbb{R}_\varepsilon^d})\cdot \nabla_d (g|_{\mathbb{R}_\varepsilon^d}) \,dx + \frac{1}{2} \int_{\mathbb{R}_{\varepsilon'}^{d'}} \nabla_{d'}(f|_{\mathbb{R}_{\varepsilon'}^{d'}})\cdot \nabla_{d'} (g|_{\mathbb{R}_{\varepsilon'}^{d'}}) \,dx,\\
\mathcal{D}(\mathcal{E}) := \{f \in L^2(E) : f|_{\mathbb{R}_\varepsilon^d} \in H^1(\mathbb{R}_\varepsilon^d), \ f|_{\mathbb{R}_{\varepsilon'}^{d'}} \in H^1(\mathbb{R}_{\varepsilon'}^{d'}),\ f(x)=f(a^*) \text{\  on\ } \partial \mathbb{R}_\varepsilon^d \cup \partial \mathbb{R}_{\varepsilon'}^{d'}\}.
\end{cases}
\end{eqnarray*}
Then the associated Hunt process \(X\) is called Brownian motion on the space with varying dimension. By \cite[Theorem 1.6]{O22}, for \(d\geq 3, d'=2\), its heat kernel \(p(t,x,y)\) has an on-diagonal estimate \(p(t,x,x) \leq C t^{-1}(\log{t})^{-2} + C t^{-d/2}\) for \(t \geq 1\), so \(\int_1^\infty p(t,x,x)dt< \infty\) and \((\mathcal{E}, \mathcal{D}(\mathcal{E}))\) is transient.  By \cite[Theorem 1.7]{O22}, for \(d\geq  d' \geq 3\), its heat kernel \(p(t,x,y)\) has an on-diagonal estimate \(p(t,x,x) \leq Ct^{-d/2} + C t^{-{d'}/2}\) for \(t \geq 1\), so \(\int_1^\infty p(t,x,x)dt< \infty\) and \((\mathcal{E}, \mathcal{D}(\mathcal{E}))\) is transient. See \cite{O22} for details of sharp heat kernel estimates on \(E\). 

For \(d\geq d' \geq 3\), by combining the Hardy inequalities on \(\mathbb{R}^d\) and \(\mathbb{R}^{d'}\), we have, for \(f\in \mathcal{D}(\mathcal{E})\),
\begin{equation}
\int_E \lambda_*(x) \frac{|f(x)|^2}{|x|^2} dx \leq \mathcal{E}(f,f) \label{eq:ex_BMVD_1}
\end{equation}
where \[\lambda_*(x) := \frac{(d-2)^2}{8} 1_{\mathbb{R}^d_{\varepsilon}}(x) + \frac{(d'-2)^2}{8} 1_{\mathbb{R}^{d'}_{\varepsilon'}}(x).\]
For \(d> d' = 2\), by \cite[Proposition 6.4]{O22} (see also \cite[Lemma 6.1]{GS09}), there exists a strictly positive function \(h\) on \(E\) such that \(h \asymp 1-|x|^{2-d} \asymp 1\) on \(\mathbb{R}^d_{\varepsilon}\) and \(h \asymp 1+\log{(|x|-\varepsilon')}\) on \(\mathbb{R}^{d'}_{\varepsilon'}\). We have \(\Delta \sqrt{h} = - \sqrt{h} \nabla(\log h)/4\) and 
\begin{equation}
\frac{1}{8} \int_E \frac{ |\nabla h(x)|^2}{|h(x)|^2} |f(x)|^2 dx \leq \mathcal{E}(f,f). \label{eq:ex_BMVD_2}
\end{equation}
for \(f \in \mathcal{D}(\mathcal{E})\) by \cite[Theorem (1.9)]{F00}. From \(h \asymp 1+\log{(|x|-\varepsilon')}\) on \(\mathbb{R}^{d'}_{\varepsilon'}\), it follows that \(\frac{ |\nabla h(x)|^2}{|h(x)|^2} \asymp (|x|\log{|x|})^{-2}\) on \(\mathbb{R}^{d'}_{\varepsilon'}\), and, from \(h \asymp 1-|x|^{2-d}\) on \(\mathbb{R}^d_{\varepsilon}\), it follows that \(\frac{ |\nabla h(x)|^2}{|h(x)|^2} \asymp |x|^{2-2d}\) on \(\mathbb{R}^{d}_{\varepsilon}\). Hence we obtain
\begin{eqnarray}
C\int_{\mathbb{R}^d_\varepsilon} \frac{|f(x)|^2}{|x|^{2d-2}} \,dx + C\int_{\mathbb{R}^{d'}_{\varepsilon'}} \frac{|f(x)|^2}{|x|^{2}(\log{|x|})^2}\, dx \leq \mathcal{E}(f,f) \label{eq:ex_BMVD_3}
\end{eqnarray}
for any \(f\in \mathcal{D}(\mathcal{E})\) and some \(C>0\). We set \(\mu_{c}:=-c|\nabla h|^2/(8|h|^2)\,dx\), then \(-\Delta -\mu_1\) is critical and \(-\Delta -\mu_c\) is subcritical for \(0<c<1\). For any Bernstein function \(\Phi\) satisfying (IB), \(-\Phi(-\Delta -\mu_c)\) is subcritical.

At the last of this example,  we give the following remark. Combining the Hardy inequalities on \(\mathbb{R}^d\) and the exterior domain \(\mathbb{R}^2_{\varepsilon'}\) (\cite{ACR02}), we have
\begin{eqnarray}
C\int_{\mathbb{R}^d_\varepsilon} \frac{|f(x)|^2}{|x|^{2}} \,dx + C\int_{\mathbb{R}^{d'}_{\varepsilon'}} \frac{|f(x)-f(a^*)|^2}{|x|^{2}(\log{|x|})^2}\, dx \leq \mathcal{E}(f,f) \label{eq:ex_BMVD_4}
\end{eqnarray}
for any \(f\in \mathcal{D}(\mathcal{E})\). When viewed across the entire space \(\mathbb{R}^d_\varepsilon \cup \mathbb{R}^{d'}_{\varepsilon'}\cup\{a^*\}\), the condition \(f|_{\partial \mathbb{R}^d_\varepsilon} = f(a^*)\) affects the order of \(|x|^{-2}\), and it becomes \(|x|^{2-2d}\).
\end{example}

\begin{example}
In \cite{CGL21}, Hardy's inequalities for local and non-local regular Dirichlet forms on metric measure spaces are obtained by using the Green operator. We consider the subcriticality of Schr\"{o}dinger forms on some fractal spaces by using Hardy's inequalities in \cite[Example 5.12]{CGL21}.

Let \(E\) be the Sierpinski gasket, \(d\) be a metric and \(m\) be a positive Radon measure with full support. There exist a strongly local regular Dirichlet form \((\mathcal{E}, \mathcal{D}(\mathcal{E}))\) on \(L^2(E;m)\) corresponding to the Laplace operator \(\Delta\) and its heat kernel \(p(t,x,y)\) satisfying
\begin{equation}
\frac{C_1}{t^{\alpha/\beta}} \exp{\left(-c_1 \left(\frac{d(x,y)^\beta}{t} \right)^{\frac{1}{\beta -1}} \right)} \leq p(t,x,y) \leq \frac{C_2}{t^{\alpha/\beta}} \exp{\left(-c_2 \left(\frac{d(x,y)^\beta}{t} \right)^{\frac{1}{\beta -1}} \right)} 
\label{eq:fractal}
\end{equation}
with \(\beta> \alpha\). See \cite{BP88} for details.
We take \(0<\delta < \alpha/\beta\) and consider the subordinated Dirichlet form \((\mathcal{E}^\delta, \mathcal{D}(\mathcal{E}^\delta))\) associated with \(-(-\Delta)^\delta\). Then, by \cite[Example 5.12]{CGL21}, the following critical Hardy inequality holds.
\begin{equation}
\lambda_* \int_E \frac{|f(x)|^2}{d(x_o,x)^{\beta \delta}} \,dm(x) \leq  \mathcal{E}^\delta(f,f) \label{eq:fractal2}
\end{equation}
for any \(f \in \mathcal{D(\mathcal{E}^\delta)}\), and a fixed point \(x_o \in E\) and some constant \(\lambda_*>0\). The same type of heat kernel estimate \((\ref{eq:fractal})\) and the Hardy inequality \((\ref{eq:fractal2})\) hold for p.c.f. fractals \cite{K01} and for generalized Sierpinski carpets \cite{BB99}.

For such fractals and fixed \(0< \delta <\alpha/\beta\), let \(d\mu_\lambda(x) := -\lambda \, d(x_o,x)^{-\beta \delta} \,dm(x)\). Then \(-(-\Delta)^\delta -\mu_\lambda\) is subcritical for \(\lambda > \lambda_*\). Hence, by Corollary \ref{subcrinomama}, for \(\lambda < \lambda_*\) and any Bernstein function \(\Phi\) satisfying (IB), \(-\Phi((-\Delta)^{\delta} + \mu_\lambda)\) is subcritical and so, by Theorem \ref{wave_bdd}, the solution to the wave equation \((\ref{eq:wave})\) for \(-\Phi((-\Delta)^{\delta} + \mu_\lambda)\) is uniformly bounded in \(L^2(E)\). In particular, these hold for \(-((-\Delta)^{\delta} + \mu_\lambda)^{\gamma/2}\) with \(0<\gamma <2\), \(-a \frac{(-\Delta)^{\delta} + \mu_\lambda}{(-\Delta)^{\delta} + \mu_\lambda + c}\), \(-a \log{(1+((-\Delta)^{\delta} + \mu_\lambda)/c)}\), \( -a (\sqrt{2((-\Delta)^{\delta} + \mu_\lambda) + c^2}-c )\) with \(0<a,c\), and \(-((-\Delta)^{\delta} + \mu_\lambda+ m^{2/\gamma})^{\gamma/2}+m\) with \(0<\beta <2\) and \(m>0\).

\end{example}

\appendix
\section{Basic definitions on Markov processes in Dirichlet form theory}\label{Appendix}
We summarize definitions and basic properties of Dirichlet form theory. For more details, see \cite{CF12,FOT11,O13}.

Let \(E\) be a locally compact separable metric space and \(m\) be a positive Radon measure with \(\text{ supp}(m)=E\). The state space \(E\) is equipped with the Borel \(\sigma\)-algebra \(\mathcal{B}(E)\). We take an isolated point \(\partial \not \in E\) called the cemetery point, and set \(E_\partial:=E\cup \{\partial\}\) equipped with \(\mathcal{B}(E_\partial):=\mathcal{B}(E) \cup \{B \cup \{\partial\} : B \in \mathcal{B}(E)\}\). The inner product in \(L^2(E;m)\) is denoted by \(\langle \cdot, \cdot \rangle_{m}\) and the \(L^2\)-norm is denoted by \(||\cdot ||_{L^2(m)}\). 

\begin{definition}[closed form and Dirichlet form]
Let \(\mathcal{D}(\mathcal{E})\) be a dense linear subspace of \(L^2(E;m)\) and \(\mathcal{E}\) be a non-negative definite symmetric bilinear form on \(\mathcal{D}(\mathcal{E})\times \mathcal{D}(\mathcal{E})\). We call \(\mathcal{D}(\mathcal{E})\) the domain of \(\mathcal{E}\). A non-negative definite symmetric bilinear form \((\mathcal{E}, \mathcal{D}(\mathcal{E}))\) is called a \textit{closed form} on \(L^2(E;m)\) if \(\mathcal{E}\) is complete with respect to the norm induced by \(\mathcal{E}_1\), where \(\mathcal{E}_{\alpha}(f,g):=\mathcal{E}(f,g)+\alpha \langle f,g\rangle_m\) for \(\alpha>0\).

Moreover, a closed form \((\mathcal{E}, \mathcal{D}(\mathcal{E}))\) is called a \textit{Dirichlet form} on \(L^2(E;m)\) if it is Markovian, that is, for any \(f\in \mathcal{D}(\mathcal{E})\), it holds that \(g:=(0\vee f)\wedge 1 \in \mathcal{D}(\mathcal{E})\) and \(\mathcal{E}(g,g)\leq \mathcal{E}(f,f)\).
\end{definition}
We note that \((\mathcal{D}(\mathcal{E}), \mathcal{E}_\alpha)\) is a Hilbert space for any closed form \((\mathcal{E}, \mathcal{D}(\mathcal{E}))\) and \(\alpha>0\).

It is well known that, for a strongly continuous contraction semigroup \(\{T_t\}_{t> 0}\) on \(L^2(E;m)\), the pair
\begin{eqnarray*}
\begin{cases}
\mathcal{D}(\mathcal{E}):=\{f \in L^2(E;m)\mid \lim_{t\searrow 0} \frac{1}{t}\langle f-T_tf, f \rangle_m <\infty\}\\
\mathcal{E}(f,g):=\lim_{t\searrow 0} \frac{1}{t}\langle f-T_tf, g \rangle_m\ \ \text{for\ }f,g\in \mathcal{D}(\mathcal{E})
\end{cases}
\end{eqnarray*}
is a closed form. Conversely, for a closed form \((\mathcal{E}, \mathcal{D}(\mathcal{E}))\) on \(L^2(E;m)\), there exists a strongly continuous contraction semigroup \(\{T_t\}_{t> 0}\) on \(L^2(E;m)\) such that \(G_{\alpha}f:= \int_0^{\infty} e^{-\alpha t} T_tf(x) dt \in \mathcal{D}(\mathcal{E})\) and \(\mathcal{E}_{\alpha}(G_{\alpha}f,g)=\langle f, g\rangle_m\) for \(\alpha >0\), \(f\in L^2(E;m)\) and \(g\in \mathcal{D}(\mathcal{E})\).

Furthermore, it is known that the generator of a strongly continuous contraction semigroup is a non-positive definite self-adjoint operator, and, for a non-negative definite self-adjoint operator \(-\mathcal{L}\) on \(L^2(E;m)\), \(\{T_t:=e^{\mathcal{L}t}\}_t\) is a strongly continuous contraction semigroup on \(L^2(E;m)\). In this case, the corresponding closed form is represented by \(\mathcal{D}(\mathcal{E})=\mathcal{D}(\sqrt{-\mathcal{L}})\) and \(\mathcal{E}(f,g)=\langle \sqrt{-\mathcal{L}}f, \sqrt{-\mathcal{L}}g \rangle_m = \langle -\mathcal{L}f, g \rangle_m\).

\begin{definition}[Markov process]
A quadruplet \(X=(\Omega, \mathcal{M}, \{X_t\}_{t\geq 0}, \{\mathbb{P}_x\}_{x\in E_\partial})\) is a Markov process on \(E\) if the following conditions hold.
\begin{enumerate}
\item[(M1)] For each \(x\in E_\partial\), \((\Omega, \mathcal{M}, \{X_t\}_{t\geq 0}, \mathbb{P}_x)\) is a stochastic process on \(E_\partial\), that is, \(\Omega, \mathcal{M}, \mathbb{P}_x\) is a probability space and \(X_t : \Omega \to E_\partial\) is a measurable map for each \(t\geq 0\).
\item[(M2)] For each \(t\geq 0\) and \(B\in \mathcal{B}(E_\partial)\), a map \(E_\partial \ni x \mapsto \mathbb{P}_x(X_t\in B) \in \mathbb{R}\) is measurable.
\item[(M3)] There exists a family of increasing sub \(\sigma\)-fields \(\{\mathcal{M}_t\}_{t\geq 0}\) of \(\mathcal{M}\) such that  \(X_t\) is \(\mathcal{M}_t\) measurable for each \(t\geq 0\), and \(\mathbb{P}_x(X_{s+t} \in B | \mathcal{M}_t) = \mathbb{P}_{X_t}(X_s\in B)\), \(\mathbb{P}_x\)-almost surely for any \(x\in E_\partial, s,t \geq 0\) and \(B\in \mathcal{B}(E_\partial)\). Here \(\mathbb{P}_x(\cdot|\cdot)\) is a conditional probability.
\item[(M4)] It holds that \(\mathbb{P}_x(X_0=x)=1\) for any \(x\in E_\partial\), and \(\mathbb{P}_\partial(X_t=\partial)=1\) for any \(t\geq 0\).
\end{enumerate}
\end{definition}
The condition (M3) is called the Markov property, and \(\{\mathcal{M}_t\}_{t\geq 0}\) in (M3) is called an admissible filtration. We call \(\sigma :\Omega \to [0,\infty]\) a stopping time if \(\{\sigma \leq t\} \in \mathcal{M}_t\) for each \(t\geq 0\). For a Markov process \(X\), we set \(\zeta(\omega):= \inf\{t \geq 0 : X_t(\omega)=\partial\}\). This random variable \(\zeta\) is a stopping time and we call \(\zeta\) a lifetime of \(X\).

\begin{definition}[Hunt process]
A Markov process \(X=(\Omega, \mathcal{M}, \{X_t\}_{t\geq 0}, \{\mathbb{P}_x\}_{x\in E_\partial})\) on \(E\) is called a Hunt process if the following conditions hold.
\begin{enumerate}
\item[(H1)] For \(t\geq \zeta(\omega)\), \(X_t(\omega) =\partial\) holds. For each \(t\geq 0\), there exists a map \(\theta_t :\Omega \to \Omega\) such that \(X_s\circ \theta_t = X_{s+t}\) for any \(s\geq 0\). For each \(\omega \in \Omega\), \(X_\cdot(\omega)\) is right continuous on \([0,\infty)\) and has left limits on \((0,\infty)\) in \(E_\partial\).
\item[(H2)]  An admissible filtration \(\{\mathcal{M}_t\}_{t\geq 0}\) in (M3) satisfies \(\bigcap_{s>t}\mathcal{M}_s = \mathcal{M}_t\) for each \(t\geq 0\) and, \(\mathbb{P}_\mu(X_{s+\sigma} \in B | \mathcal{M}_\sigma) = \mathbb{P}_{X_\sigma}(X_s\in B)\) holds \(\mathbb{P}_\mu\)-almost surely for each stopping time \(\sigma\), any \(s\geq 0\), \(B\in \mathcal{B}(E_\partial)\) and any probability measure \(\mu\) on \(E_\partial\).
\item[(H3)] For any increasing stopping times \(\{\sigma_n\}_n\) with \(\sigma := \lim_{n\to \infty} \sigma_n\), it holds that 
\(\mathbb{P}_\mu(\lim_{n\to \infty} X_{\sigma_m} = X_\sigma, \sigma<\infty) = \mathbb{P}_\mu(\sigma < \infty)\) for any probability measure \(\mu\) on \(E_\partial\).
\end{enumerate}
\end{definition}
The condition (H2) is called the strong Markov property, and a Markov process satisfying (H2) is called a strong Markov process. The condition (H3) is called quasi-left-continuity on \((0,\infty)\).

\begin{definition}[regular Dirichlet form]
A Dirichlet form \((\mathcal{E},\mathcal{D}(\mathcal{E}))\) is called \textit{regular} if \(\mathcal{D}(\mathcal{E}) \cap C_c(E)\) is \(\mathcal{E}_1\)-dense in \(\mathcal{D}(\mathcal{E})\) and \(||\cdot||_{\infty}\)-dense in \(C_c(E)\), where \(\|\cdot\|_\infty\) is the essential supremum with respect to \(m\).
\end{definition}

\begin{definition}[nest, polar set, quasi-continuous, \(m\)-inessential set]\label{quasi_notation}
\ \vspace{-3mm}\\
\begin{enumerate}
\item An increasing sequence of closed sets \(\{F_k\}_{k\geq 1}\) of \(E\) is called a \textit{nest} if \(\cup_{k\geq 1} \{f\in \mathcal{D}(\mathcal{E}) : f=0 \ m\text{-a.e.\  on\  }E \setminus F_k\}\) is \(\mathcal{E}_1\)-dense in \(\mathcal{D}(\mathcal{E})\).
\item  \(N\subset E\) is \textit{\(\mathcal{E}\)-polar} if there exists a nest \(\{F_k\}_{k\geq 1}\) such that \(N\subset \cap_{k\geq 1} (E\setminus F_k)\).
\item A statement depending on \(x\in A \subset E\) holds \(\mathcal{E}\)-\textit{quasi-everywhere} (q.e. in abbreviation) on \(A\) if there exists an \(\mathcal{E}\)-polar set \(N\subset A\) such that the statement holds for \(x\in A\setminus N\).
\item A function \(f\) is \textit{quasi-continuous} if there exists a nest \(\{F_k\}_{k\geq 1}\) such that the restriction of \(f\) to \(F_k\) is finite and continuous on \(F_k\) for each \(k\geq 1\).
\item  A subset \(B\subset E\) is a \textit{nearly Borel set} if, for any probability measure \(\mu\) on \(E\cup \{\partial\}\), there exist Borel sets \(B_1, B_2\) such that \(B_1\subset B \subset B_2\) and \(\mathbb{P}_{\mu}(X_t \in B_2\setminus B_1 \text{\ for\ some\ }t\geq 0)=0\).
\item A subset \(N\subset E\) is \(m\)\textit{-inessential} if \(N\) is an \(m\)-negligible nearly Borel set such that \(\mathbb{P}_x(\sigma_{N}<\infty)=0\) for \(x\in E\setminus N,\) where \(\sigma_{N}:=\inf\{t>0;X_t\in N\}\) is the first hitting time to \(N\).
\end{enumerate}
\end{definition}

We remark that an \(\mathcal{E}\)-polar set is \(m\)-negligible.

For a regular Dirichlet form \((\mathcal{E},\mathcal{D}(\mathcal{E}))\) on \(L^2(E;m)\), by Fukushima's theorem, there exists an \(m\)-symmetric Hunt process \(X=(\Omega, \mathcal{M}, \{X_t\}_{t\geq 0}, \{\mathbb{P}_x\}_{x\in E_\partial})\) on \(E\) associated with \((\mathcal{E},\mathcal{D}(\mathcal{E}))\). Here, \(X\) is \(m\)-symmetric if it holds that
\[\int_E P_tf(x) g(x)\,dm(x) = \int_E f(x) P_tg(x)\,dm(x)\]
for any non-negative Borel measurable functions \(f,g\), where 
\[P_tf(x):=\mathbb{E}_x[f(X_t)]:=\int_\Omega f(X_t(\omega))\, d\mathbb{P}_x(\omega).\]
We define the resolvents \(\{R_{\alpha}\}_{\alpha>0}\) by
\[R_{\alpha}f(x):=\int_0^{\infty}e^{-\alpha t}P_tf(x)dt\]
for \(f\in L^2(m)\) and \(\alpha>0.\) We remark that \(P_tf\) (resp. \(R_{\alpha}f\)) is a quasi-continuous version of \(T_tf\) (resp. \(G_{\alpha}f\)) for \(f\in \mathcal{D}(\mathcal{E})=\mathcal{D}(\sqrt{-\mathcal{L}})\), that is, for each \(t\geq 0\) and \(\alpha>0\), \(P_tf = T_tf \) and \(R_\alpha f = G_\alpha f\) \(m\)-almost everywhere.

\begin{example}
Let \(E:=\mathbb{R}^d\), \(m\) be a Lebesgue measure, and \(\mathcal{L}:=\Delta\). Then, the corresponding closed form \((\mathcal{E}, \mathcal{D}(\mathcal{E}))\) on \(L^2(\mathbb{R}^d; dx)\) is \(\mathcal{D}(\mathcal{E})=H^1(\mathbb{R}^d)\), the \(1\)-order Sobolev space, and \(\mathcal{E}(f,g)=\int \nabla f \, \nabla g\, dx\). In this case, \((\mathcal{E}, \mathcal{D}(\mathcal{E}))\) is a regular Dirichlet form on \(L^2(\mathbb{R}^d; dx)\) and its corresponding process is Brownian motion on \(\mathbb{R}^d\).
\end{example}

We define an \textit{extended Dirichlet space} \(\mathcal{D}_e(\mathcal{E})\) by the space of \(m\)-equivalence classes of all \(m\)-measurable functions \(f\) on \(E\) such that \(|f|<\infty, m\)-almost everywhere and there exists an \(\mathcal{E}\)-Cauchy sequence
\(\{f_n\}_{n\geq 1}\subset \mathcal{D}(\mathcal{E})\) such that \(\lim_{n\to \infty}f_n=f\) \(m\)-almost everywhere on \(E\). We can define \(\mathcal{E}(f,f)\) for \(f\in \mathcal{D}_e(\mathcal{E})\) by \(\mathcal{E}(f,f):= \lim_{n\to \infty}\mathcal{E}(f_n,f_n)\) for the above sequence \(\{f_n\}_n \subset \mathcal{D}(\mathcal{E})\). This definition is independent of the choice of an approximation sequence \(\{f_n\}_n\) of \(f\in \mathcal{D}_e(\mathcal{E})\). We remark that any function belonging to \(\mathcal{D}(\mathcal{E})\) has a quasi-continuous version, so without loss of generality, we may treat all functions in \(\mathcal{D}_e(\mathcal{E})\) as quasi-continuous functions.

\begin{definition}[Transience and Recurrence]
A regular Dirichlet form \((\mathcal{E}, \mathcal{D}(\mathcal{E}))\) on \(L^2(E;m)\) is {\it transient} if \((\mathcal{D}_e(\mathcal{E}), \mathcal{E})\) is a real Hilbert space. A regular Dirichlet form \((\mathcal{E}, \mathcal{D}(\mathcal{E}))\) on \(L^2(E;m)\) is {\it recurrent} if \(1\in \mathcal{D}_e(\mathcal{E})\) and \(\mathcal{E}(1,1)=0.\)
\end{definition}

We emphasize that we define the transience and recurrence for a form \((\mathcal{E}, \mathcal{D}(\mathcal{E}))\) corresponding to a stochastic process. There are many other definitions of transience and recurrence for Dirichlet forms, semigroup \(\{T_t\}_t\), \(\{P_t\}_t\), but most of them are equivalent for a regular Dirichlet form. For example, \((\mathcal{E}, \mathcal{D}(\mathcal{E}))\) is transient if and only if the existence of the \(0\)-order resolvent \(Gf:=(-\mathcal{L})^{-1}f\).

\begin{definition}[Irreducibility]
For a strongly continuous semigroup \(\{T_t\}_t\), a set \(A\) is \(\{T_t\}\)-invariant if \(T_t(1_A f)=1_A T_tf\) for any \(f\in L^2(E;m)\) and \(t>0\). A semigroup \(\{T_t\}_t\) is irreducible if, for any \(\{T_t\}\)-invariant set \(A\), either \(m(A)=0\) or \(m(A^c)=0\) holds. 
\end{definition}

We also say a closed form \((\mathcal{E}, \mathcal{D}(\mathcal{E}))\) is irreducible if its corresponding semigroup is irreducible.

\begin{proposition}[{cf. \cite[Proposition 2.1.3]{CF12}}]
An irreducible Dirichlet form is either transient or recurrent.
\end{proposition}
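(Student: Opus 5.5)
The statement to prove is the dichotomy: an irreducible Dirichlet form $(\mathcal{E},\mathcal{D}(\mathcal{E}))$ on $L^2(E;m)$ is either transient or recurrent. I would pass through the $0$-order Green operator. For non-negative $f\in L^2(E;m)$, set $Gf:=\int_0^\infty T_tf\,dt\le\infty$, as the paper does before \eqref{eq:gGg}.

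\textbf{Step 1 (dichotomy for $Gf$).} I first show that for each non-negative $f$ with $f\not\equiv 0$, either $Gf<\infty$ $m$-a.e. or $Gf=\infty$ $m$-a.e. The set $A:=\{Gf=\infty\}$ is the candidate invariant set. Using $T_tGf=\int_t^\infty T_sf\,ds\le Gf$ and the semigroup property, $T_t$ maps functions supported on $A$ into functions supported on $A$ (a.e.), and likewise for $A^c$. This is the same invariance argument as \cite[Proposition 2.1.6]{CF12}, already used in the proof of Theorem \ref{extSoba}. It shows $A$ is $\{T_t\}$-invariant, so irreducibility gives $m(A)=0$ or $m(A^c)=0$.

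\textbf{Step 2 (finite case gives transience).} Suppose $Gf<\infty$ a.e. for some $f>0$. Replacing $f$ by $f\wedge\varphi$ with $\varphi\in L^1$ strictly positive and bounded, I may assume $\langle f,Gf\rangle_m<\infty$. Otherwise I restrict to level sets $\{Gf\le n\}$, where Step 1 keeps $Gf$ finite, and then sum a weighted series. The identity
\[
\sup_u \frac{\langle f,|u|\rangle_m}{\sqrt{\mathcal{E}(u,u)}}=\sqrt{\langle f,Gf\rangle_m}
\]
(the form of \cite[Lemma 2.1.4]{CF12} used earlier) then yields $\int|u|f\,dm\le C\sqrt{\mathcal{E}(u,u)}$ on $\mathcal{D}_e(\mathcal{E})$. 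So every $\mathcal{E}$-Cauchy sequence converges in $L^1(f\,m)$, the limit is identified a.e., and $(\mathcal{D}_e(\mathcal{E}),\mathcal{E})$ is a Hilbert space, i.e. the form is transient.

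\textbf{Step 3 (infinite case gives recurrence).} If $Gf=\infty$ a.e. for every non-negative $f\not\equiv0$, I would prove $1\in\mathcal{D}_e(\mathcal{E})$ with $\mathcal{E}(1,1)=0$ following \cite[Theorem 1.6.3]{FOT11}. Take $g>0$ in $L^1\cap L^\infty$, form $u_\alpha:=\alpha G_\alpha g$-type approximants normalized by $G_\alpha g/\|G_\alpha g\|$ on compacts, or equivalently use
\[
\mathcal{E}\bigl(\beta^{-1}G_\beta g\wedge 1\bigr)\to 0.
\]
I expect this step to be the main obstacle: producing a sequence $h_n\uparrow1$ with $\mathcal{E}(h_n,h_n)\to0$ from the mere divergence of $Gf$. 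The route I would try first is the Markov truncation $h_n:=1\wedge(nG_{\alpha_n}g)$ with $\alpha_n\downarrow0$ chosen so that $\alpha_n\langle G_{\alpha_n}g,g\rangle$ stays small relative to $n^{-2}$ times the divergence rate. Markovianity controls the energy of the truncation, and divergence forces $h_n\to1$ a.e. Steps 2 and 3 exclude each other, since a recurrent form cannot also give a Hilbert space with $\mathcal{E}(1,1)=0$ and $1\ne0$, so the dichotomy follows.
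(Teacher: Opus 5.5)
The paper gives no argument of its own for this statement. It only points to \cite[Proposition 2.1.3]{CF12}, where transience and recurrence are defined through the Green operator $G$. Deducing the paper's version from that also needs the extended-space characterizations: \cite[Theorem 2.1.5]{CF12} for transience and \cite[Theorem 1.6.3]{FOT11} for recurrence. Your Steps 1--3 retrace exactly this chain.

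Step 1 is correct: if $T_t(1_Au)>0$ on a non-null part of $A^c$, then $T_tGf=\infty$ there, contradicting $T_tGf\le Gf<\infty$; symmetry gives the other half. Two smaller repairs are needed.
\begin{itemize}
\item In Step 2, truncating by $\varphi$ does not make $\langle f,Gf\rangle_m$ finite, because a.e. finiteness of $Gf$ gives no integrability. The level-set series $\sum_n a_n f1_{\{Gf\le n\}}$ is the actual argument, not a fallback; it uses the triangle inequality for $\sqrt{\langle\cdot,G_\alpha\cdot\rangle_m}$ and then lets $\alpha\downarrow0$.
\item Your two cases are not exhaustive as stated: Step 2 needs a strictly positive $f$, while Step 3 assumes divergence for every $f\not\equiv0$. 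Fix one strictly positive bounded $g\in L^1(E;m)$ and apply Step 1 to it. Step 3 only needs $Gg=\infty$ for that single $g$.
\end{itemize}

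The genuine gap is Step 3, which is the only nontrivial part, and the construction fails as written. For $h_n=1\wedge(nG_{\alpha_n}g)$ the Markov property only gives
$\mathcal{E}(h_n,h_n)\le n^2\mathcal{E}(G_{\alpha_n}g,G_{\alpha_n}g)\le n^2\langle g,G_{\alpha_n}g\rangle_m$.
In precisely this case $\langle g,G_\alpha g\rangle_m\uparrow\langle g,Gg\rangle_m=\infty$ as $\alpha\downarrow0$, so no choice of $\alpha_n$ makes this bound small. The condition you impose on $\alpha_n\langle G_{\alpha_n}g,g\rangle_m$ is not tied to any bound on $\mathcal{E}(h_n,h_n)$.

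Two ideas are missing. First, the multiplier must tend to $0$, not to $\infty$. Second, you need the one-sided truncation bound $\mathcal{E}(v\wedge1,v\wedge1)\le\mathcal{E}(v\wedge1,v)$ for $0\le v\in\mathcal{D}(\mathcal{E})$, instead of $\mathcal{E}(v\wedge1,v\wedge1)\le\mathcal{E}(v,v)$. This bound holds because
\[
\mathcal{E}\bigl(v\wedge1,(v-1)^+\bigr)=\lim_{t\downarrow0}\frac{1}{t}\bigl\langle v\wedge1-T_t(v\wedge1),(v-1)^+\bigr\rangle_m\ge0,
\]
since $v\wedge1=1\ge T_t(v\wedge1)$ on $\{v>1\}$. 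For $v=cG_\alpha g$ and $h=1\wedge v$ it yields $\mathcal{E}(h,h)\le c\bigl(\langle h,g\rangle_m-\alpha\langle h,G_\alpha g\rangle_m\bigr)\le c\|g\|_{L^1(E;m)}$, uniformly in $\alpha$.

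Now take $c_n=1/n$ and choose $\alpha_n$ with $\int_{\{G_{\alpha_n}g<n^2\}}g\,dm<2^{-n}$; this is possible because $G_\alpha g\uparrow Gg=\infty$ $m$-a.e. Borel--Cantelli for the finite measure $g\,m$ then gives $h_n:=1\wedge(n^{-1}G_{\alpha_n}g)\to1$ $m$-a.e. At the same time $\mathcal{E}(h_n,h_n)\le\|g\|_{L^1(E;m)}/n$. Hence $\{h_n\}$ is $\mathcal{E}$-Cauchy, $1\in\mathcal{D}_e(\mathcal{E})$ and $\mathcal{E}(1,1)=0$. With this replacement for Step 3, your outline goes through.
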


\subsection{Smooth measures, PCAFs and the Revuz correspondence}\label{Appendix_PCAF}
We provide definitions of a smooth measure and a PCAF, and their relationship. See \cite[Section 2.3, Section 4]{CF12} for details.
\begin{definition}[Smooth measure]
A positive Borel measure \(\mu\) on \(E\) is a \textit{smooth measure} if \(\mu\) charges no \(\mathcal{E}\)-polar set and, there exists a nest \(\{F_k\}_k\) such that \(\mu(F_k)<\infty\) for every \(k\geq 1.\) Denote by \(\mathcal{S}\) the family of all smooth measures.
\end{definition}

We remark that a smooth measure is not a Radon measure in general. The following is a subclass of Radon measures in \(\mathcal{S}\).
\begin{definition}[Smooth measure of finite energy integral]
A positive Radon measure \(\mu\) on \(E\) is called a measure of \textit{finite energy integral} if there exists a constant \(C>0\) such that, for any \(f \in \mathcal{D}(\mathcal{E})\), it holds that
\[\int_E |f(x)|\,d\mu(x) \leq  C \sqrt{\mathcal{E}_1(f,f)}.\]
Denote \(\mathcal{S}_0\) by the family of all Radon measures of finite energy integrals.
\end{definition}

By Theorem \cite[Theorem 2.3.7]{CF12}, \(\mathcal{S}_0 \subset \mathcal{S}\) holds. For any \(\mu \in \mathcal{S}_0\), by the Riesz representation theorem, for \(\alpha>0\), there exists a function \(U_\alpha \mu\) called an \(\alpha\)-\textit{potential} of \(\mu\) such that \(\int_E f\,d\mu = \mathcal{E}_\alpha(U_\alpha\mu,f)\) holds for each \(f \in \mathcal{D}(\mathcal{E})\). Moreover, denote by \(\mathcal{S}_{00}\) the family of all \(\mu \in \mathcal{S}_0\) such that \(U_1\mu\) is bounded and \(\mu(E)<\infty\). Then, for any \(\mu \in \mathcal{S}\), there exists a nest \(\{F_k\}_k\) such that \(1_{F_k}\mu \in \mathcal{S}_{00}\) for each \(k\), and \(\mu(\bigcap_{k=1}^\infty F_k^c)=0.\) See \cite[Section 2.3]{CF12} for details.

Let \(\mathcal{M}_{\infty}\) be the smallest \(\sigma\)-field including \(\{\mathcal{M}_t\}_t\), which is an admissible filtration of \(X\).
\begin{definition}[PCAF, positive continuous additive functional]\label{DefPCAF}
A \([-\infty,\infty]\)-valued stochastic process \(A=\{A_t\}_{t\geq 0}\) is called an \textit{additive functional} of \(X\) if there exist \(\Lambda \in \mathcal{M}_{\infty}\) and an \(m\)-inessential set \(N\subset E\) such that \(\mathbb{P}_x(\Lambda)=1\) for \(x\in E\setminus N\) and \(\theta_t \Lambda \subset \Lambda\) for any \(t>0\), and the following conditions hold.
\begin{enumerate}
\item[(A.1)] For each \(t\geq 0,\) \(A_t|_{\Lambda}\) is \(\mathcal{M}_t|_{\Lambda}\)-measurable.
\item[(A.2)] For any \(\omega \in \Lambda\), \(A_{\cdot}(\omega)\) is right continuous on \([0,\infty)\) and has left limits on \((0, \zeta(\omega))\), \(A_0(\omega)=0,\) \(|A_{t}(\omega)|<\infty\) for \(t<\zeta(\omega)\) and \(A_t(\omega)=A_{\zeta(\omega)}(\omega)\) for \(t\geq \zeta (\omega)\). Moreover the equation
\[A_{t+s}(\omega)=A_t(\omega)+A_s(\theta_t\omega)\ \ \text{for\ every\ }t,s\geq 0,\]
is satisfied.
\end{enumerate}

An additive functional \(A\) is called a \textit{positive continuous additive functional} (PCAF in abbreviation) if \(A\) is a \([0,\infty]\)-valued continuous process, and denote by \({\bf A}_c^+\) the family of all PCAFs.
\end{definition}
The set \(\Lambda\) appearing in Definition \ref{DefPCAF} is called the defining set of \(A\). A PCAF \(A\) is called a PCAF in the strict sense if \(N\) appearing in Definition \ref{DefPCAF} is empty. PCAFs \(A\) and \(B\) are called \textit{\(m\)-equivalent} if \(\int_E\mathbb{P}_x(A_t\neq B_t)dm(x)=0\) for any \(t>0\). An \(m\)-equivalence is equivalent to the existence of a common defining set \(\Lambda\) and a common \(m\)-inessential set \(N\) such that \(A_t(\omega)=B_t(\omega)\) for any \(t \geq 0\) and \(\omega \in \Lambda.\) 

It is known that PCAFs and smooth measures are in one-to-one correspondence in the following sense. This correspondence is called \textit{the Revuz correspondence}. Therefore, a smooth measure is also called the \textit{Revuz measure}. See \cite[Theorem 4.1.1]{CF12} for details. For a measure \(\rho,\) we set \(\mathbb{E}_\rho[\cdot]:=\int_E \mathbb{E}_x[\cdot] \,d\rho(x).\)
\begin{theorem}[The Revuz correspondence]
\((i)\) For a PCAF \(A\), there exists a unique smooth measure \(\mu\) such that
\begin{equation}
\int_E fd\mu = \lim_{t \to 0}\frac{1}{t} \mathbb{E}_m\left[ \int_0^t f(X_s) dA_s  \right] \label{eq:AppPCAF-1}
\end{equation}
for any non-negative Borel function \(f\) on \(E.\)\\
\((ii)\) For any smooth measure \(\mu\), there exists a PCAF \(A\) satisfying \((\ref{eq:AppPCAF-1})\) up to the \(m\)-equivalence.
\end{theorem}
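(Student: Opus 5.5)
We will prove the two halves separately. Throughout we rely on three facts: $m$-symmetry of $X$, the excessivity $mP_t\le m$, and the nest calculus of Definition \ref{quasi_notation}.

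\emph{Part (i).} Fix a PCAF $A$ and a non-negative Borel $f$, and set $\varphi(t):=\mathbb{E}_m[\int_0^t f(X_s)\,dA_s]$. By additivity and the Markov property,
\[\varphi(t+s)=\varphi(t)+\mathbb{E}_{mP_t}\Big[\int_0^s f(X_r)\,dA_r\Big]\le \varphi(t)+\varphi(s),\]
since $mP_t\le m$. The function $\varphi$ is therefore subadditive, so $\lim_{t\to0}\varphi(t)/t$ exists in $[0,\infty]$ and equals $\sup_{t>0}\varphi(t)/t$. Linearity in $f$ and monotone convergence, applied to this supremum, show that $f\mapsto\lim_t\varphi(t)/t$ is given by a positive Borel measure $\mu$. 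Uniqueness of $\mu$ is immediate, because the right-hand side of \eqref{eq:AppPCAF-1} is prescribed for every Borel $f\ge0$.

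To see that $\mu$ is smooth, we first check that $\mu$ charges no $\mathcal{E}$-polar set $N$. An $m$-inessential enlargement of $N$ is not hit by $X$ under $\mathbb{P}_m$, so $\int_0^t 1_N(X_s)\,dA_s=0$ a.s. For the nest, we use the function
\[h(x):=\mathbb{E}_x\Big[\int_0^\infty e^{-t-A_t}\,dt\Big].\]
It satisfies $0<h\le1$, it is $1$-excessive and hence quasi-continuous, and
\[\mathbb{E}_x\Big[\int_0^\infty e^{-t}h(X_t)\,dA_t\Big]\le 1\]
by the identity $1-e^{-A}=\int e^{-A}\,dA$. Set $F_k:=\{h\ge 1/k\}$ (quasi-closed, and closed after modification on a polar set). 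Then $\mu(F_k)\le k\int h\,d\mu$, and $\int h\,d\mu$ is finite after localizing with an $L^1(m)$ weight. That $\{F_k\}_k$ is a nest follows from $h>0$ q.e. together with quasi-continuity. We thus obtain $\mu\in\mathcal{S}$.

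\emph{Part (ii).} By \cite[Theorem 2.3.15]{CF12} (already used in the proof of the Stollmann--Voigt inequality), choose a nest $\{F_k\}_k$ with $\nu_k:=1_{F_k}\mu\in\mathcal{S}_{00}$. For $\nu\in\mathcal{S}_{00}$, the potential $u:=U_1\nu$ is bounded, quasi-continuous and $1$-excessive. We construct its PCAF by the classical approximation
\[A^{(n)}_t:=\int_0^t n\big(u-nR_{n+1}u\big)(X_s)\,ds.\]
The key step is to show that $A^{(n)}$ is Cauchy in $\mathbb{E}_x[\sup_{t\le T}|\cdot|^2]$, uniformly for q.e. $x$. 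For this we write $u(X_t)-u(X_0)+A^{(n)}_t$ (with $e^{-t}$ weights) as a martingale plus an error, bound the error by $\|u-nR_{n+1}u\|$ in $\mathcal{E}_1$-capacity, and apply Doob's inequality and Borel--Cantelli along a subsequence. The limit $A^{\nu}$ is continuous, increasing and additive. Its $1$-potential is $\mathbb{E}_x[\int e^{-t}\,dA^\nu_t]=u(x)$ q.e., and this identification yields the Revuz formula \eqref{eq:AppPCAF-1} for $\nu$ through $\mathcal{E}_1(U_1\nu,g)=\int g\,d\nu$.

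Applying this to each $\nu_k$ gives PCAFs $A^k$. Uniqueness of PCAFs with a given bounded $1$-potential makes the family consistent: $A^k=A^{k+1}$ up to the time $\tau_{F_k^c}$, up to $m$-equivalence. We define $A_t:=A^k_t$ for $t<\tau_{F_k^c}$. Because $\{F_k\}_k$ is a nest, $\tau_{F_k^c}\uparrow\zeta$ $\mathbb{P}_x$-a.s. for q.e. $x$, so $A$ is a PCAF. Its Revuz measure agrees with $\mu$ on every $F_k$, and $\mu(\bigcap_k F_k^c)=0$, so $A$ corresponds to $\mu$.

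Uniqueness up to $m$-equivalence runs as follows. Two PCAFs with the same Revuz measure have equal $1$-potentials of $1_{F_k}$ q.e., and hence coincide by a supermartingale argument: the difference of the two is a continuous martingale of finite variation, which must vanish.

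The main obstacle is the existence step for $\nu\in\mathcal{S}_{00}$. Specifically, we must show the uniform-in-$x$ convergence of $A^{(n)}$ and identify the $1$-potential of the limit with $U_1\nu$ quasi-everywhere, not merely $m$-a.e. We will first try to control the differences by capacity estimates of the form $\mathrm{Cap}\big(|R_1 g|>\lambda\big)\le \lambda^{-2}\mathcal{E}_1(R_1g,R_1g)$ applied to $u-nR_{n+1}u$.
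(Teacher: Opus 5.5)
The paper does not prove this statement; it only cites \cite[Theorem 4.1.1]{CF12}, and your outline follows the standard textbook proof behind that citation. Several parts are fine: the subadditivity step (which also needs that $\varphi$ is nondecreasing), the argument that $\mu$ charges no $\mathcal{E}$-polar set, and the overall structure of part (ii). The smoothness step of part (i), however, fails as written, for two reasons.

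\emph{The function $h$ is not $1$-excessive for $X$.} $h(x)=\mathbb{E}_x[\int_0^\infty e^{-t-A_t}\,dt]$ is $1$-excessive for the killed semigroup $f\mapsto\mathbb{E}_x[e^{-A_t}f(X_t)]$, not for $X$. For one-dimensional Brownian motion and $A$ the local time at $0$, one gets $h(x)=1-c\,e^{-\kappa|x|}$ with $c\in(0,1)$, $\kappa>0$, and the corner at $0$ gives $e^{-t}P_th(0)>h(0)$ for small $t$. Quasi-continuity must instead come from $h=R_11-U^1_Ah$, where $U^1_Af(x):=\mathbb{E}_x[\int_0^\infty e^{-t}f(X_t)\,dA_t]$; this is a difference of two bounded $1$-excessive functions of $X$. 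Also, $\{h\ge1/k\}$ is only quasi-closed, and a quasi-closed set need not be closed up to a polar set. You must intersect it with a nest on which $h$ is continuous.

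\emph{The sets $F_k$ can have infinite measure.} For Brownian motion on $\mathbb{R}^d$ and $A_t=t$ one has $h\equiv1/2$ and $\mu=m$, so $F_k=\mathbb{R}^d$ for $k\ge2$. The phrase ``after localizing with an $L^1(m)$ weight'' has to become an actual construction. For example, take $h_g(x):=\mathbb{E}_x[\int_0^\infty e^{-t-A_t}g(X_t)\,dt]$ with $g>0$ bounded and $m$-integrable, so that $U^1_Ah_g=R_1g-h_g$. Then set $F_k:=\{h_g\ge1/k\}\cap\{R_11\ge1/k\}$, which gives
\[
\mu(F_k)\le k^2\int_E (R_11)\,h_g\,d\mu=k^2\,\langle 1,U^1_Ah_g\rangle_m\le k^2\,\|g\|_{L^1(E;m)}.
\]

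The equality in the middle is the missing lemma: $\langle w,U^1_Af\rangle_m=\int_E (R_1w)\,f\,d\mu$ for non-negative $w,f$. This is the Revuz formula with the excessive reference measures $(R_1w)\cdot m$ in place of $m$. It does not follow from your subadditivity argument, which only produces the limit under $\mathbb{E}_m$. Proving it needs a separate argument using the fine continuity of excessive functions along paths together with the symmetry of $X$; in the standard treatments it is a distinct step.

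You also use this identity implicitly in two other places. In part (ii), you need it to go from $U^1_{A^\nu}1=U_1\nu$ q.e.\ to the conclusion that the Revuz measure of $A^\nu$ is $\nu$. In the uniqueness argument, you need it to get $U^1_A1_{F_k}=U^1_B1_{F_k}$ $m$-a.e.\ (and then q.e.\ by quasi-continuity) from equality of the Revuz measures. Once you establish this lemma and correct the nest in part (i) as above, the rest of your outline, including the approximation and patching in part (ii), goes through.
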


For example, for a bounded positive Borel function \(f\), we set
\[A_t:=\int_0^t f(X_s)ds,\]
then \(A:=\{A_t\}_t\) is a PCAF and \(A\) corresponds to a smooth measure \(fdm\). As another example, when a capacity of \(x_{0}\in E\) is positive, the local time \(L^{x_0}\) is a PCAF and its corresponding smooth measure is a Dirac measure \(\delta_{x_0}\). Denote by \(A^\mu\) a PCAF corresponding to \(\mu\). We remark that \(R_\alpha\mu(x):=\mathbb{E}_x[\int_0^\infty e^{-\alpha t}\,dA_t^\mu]\) is a quasi-continuous version of \(U_\alpha\mu\) for any \(\mu \in \mathcal{S}_0\).

\bigskip
\noindent
{\large \bf Acknowledgments.}

\noindent
This work was supported by JSPS KAKENHI Grant Numbers  25K17270 (T.O.) and 26K06884 (M.S.).

\end{document}